\numberwithin{equation}{section}
\definecolor{linkred}{rgb}{0.75,0,0}
\definecolor{linkblue}{rgb}{0,0,0.75}
\theoremstyle{plain}
\newtheorem{maintheorem}{Theorem}
\theoremstyle{plain}
\newtheorem{theorem}{Theorem}[section]
\newtheorem{lemma}[theorem]{Lemma}
\newtheorem{proposition}[theorem]{Proposition}
\newtheorem{corollary}[theorem]{Corollary}
\theoremstyle{definition}
\newtheorem{rem}[theorem]{Remark}
\newcommand{\confs}{\mathcal{X}}
\newcommand{\Weylchamber}{\tilde{\mathcal{X}}}
\newcommand{\intZ}{\mathbb{Z}}
\newcommand{\realR}{\mathbb{R}}
\newcommand{\compC}{\mathbb{C}}
\newcommand{\prob}{\mathbb{P}}
\newcommand{\bigO}{\mathcal{O}}
\newcommand{\s}{\sigma}
\newcommand{\Lattice}{\mathbb{L}}
\newcommand{\id}{\mathds{1}}
\renewcommand{\sqcup}{\cup}
\DeclareMathOperator{\sgn}{sgn}
\def\Xint#1{\mathchoice
{\XXint\displaystyle\textstyle{#1}}%
{\XXint\textstyle\scriptstyle{#1}}%
{\XXint\scriptstyle\scriptscriptstyle{#1}}%
{\XXint\scriptscriptstyle\scriptscriptstyle{#1}}%
\!\int}
\def\XXint#1#2#3{{\setbox0=\hbox{$#1{#2#3}{\int}$}
\vcenter{\hbox{$#2#3$}}\kern-.5\wd0}}
\def\dashint{\Xint-}
\newcommand{\prodprime}{\sideset{}{'}\prod}
\newcommand{\qbinom}[2]{\genfrac{[}{]}{0pt}{}{#1}{#2}_q}
\newcommand{\taubinom}[2]{\genfrac{[}{]}{0pt}{}{#1}{#2}_{\tau}}
\begin{document}

\title{Integral formulas of ASEP and $q$-TAZRP on a ring}

\author{Zhipeng Liu}
\affil{Department of Mathematics,
	   University of Kansas, Lawrence, KS 66045, U.S.A.
  \texttt{zhipeng@ku.edu}}

\author{Axel Saenz}
\affil{Mathematics Department,
  Tulane University,
  New Orleans, LA 70118, U.S.A.
  \texttt{saenzaxel@gmail.com}}

\author{Dong Wang}
\affil{Department of Mathematics,
  National University of Singapore,
  119076, Singapore
  \texttt{matwd@nus.edu.sg}}

\date{\today}

\maketitle

\begin{abstract}
  In this paper, we obtain the transition probability formulas for the Asymmetric Simple Exclusion Process (ASEP) and the $q$-deformed Totally Asymmetric Zero Range Process ($q$-TAZRP) on the ring by applying the coordinate Bethe ansatz. We also compute the distribution function for a tagged particle with general initial condition.
\end{abstract}

\section{Introduction}

We investigate the transition probabilities of $1$-dimensional interacting particles systems with spatial periodicity (i.e.~on a ring), by using the coordinate Bethe ansatz. We concentrate on two models: the Asymmetric Simple Exclusion Process (ASEP) and the $q$-deformed Totally Asymmetric Zero Range Process ($q$-TAZRP). We explicitly give the transition probability function and the one-point function for the ASEP and the $q$-TAZRP on a ring as a sum of nested contour integrals. 

The ASEP and the $q$-TAZRP are continuous time Markov processes on a discrete one-dimensional lattice. In particular, we assume that the models are defined on a one dimensional periodic lattice such as $\intZ/L\intZ$. The state of the Markov process is determined by the (random) location of $N$ particles. For the case of the infinite lattice $\intZ$, the ASEP and the $q$-TAZRP have been well-studied resulting in exact transition probability formulas and asymptotic computations \cite{Johansson00,Tracy-Widom08, Tracy-Widom09, Korhonen-Lee14, Lee-Wang17, Borodin-Corwin-Petrov-Sasamoto15a}. These exact transition probability formulas are the $L \to \infty$ limit of our formulas for the periodic lattice $\intZ/L\intZ$. (see Sections \ref{sec:limits} and \ref{sec:one_point_limits}).

In the ASEP, each site may be occupied by at most one particle and particles move independently and asymmetrically to the left or the right, unless they are blocked by a neighboring particle. In the $q$-TAZRP, more than one particle may occupy a site and only the top particle at a site may move to the right, regardless of the neighboring particles. For the precise definition of the models, see Section \ref{subsec:notation}.

The coordinate Bethe ansatz was introduced in \cite{Bethe31} in the context of spin chains (see \cite{Sutherland04, Gaudin14} for a modern review). In the work of Spohn and Gwa \cite{Gwa-Spohn92}, its application to interacting particle systems was observed, and Sch\"utz \cite{Schutz97} applied it to the Totally Asymmetric Simple Exclusion Process (TASEP). \footnote{When we say the coordinate Bethe ansatz is applied to the TASEP and other models, we mean that certain eigenstates for the linear evolution operator of the process may be solved exactly through a ``guess and check" method, with the ``check" part equivalent to the verification of a type of Yang-Baxter equation (i.e.~$k$-particle interactions are equivalent to a sequence of $2$-particle interactions, independent of the order of the sequence). In all these models on $\intZ$, there is no need of solving the Bethe equation, since the boundary condition is trivial.} For one dimensional integrable interacting particle systems on the lattice $\intZ$, the coordinate Bethe ansatz has been applied to wide class of models, most notably the ASEP \cite{Tracy-Widom08} and the $q$-TAZRP \cite{Korhonen-Lee14, Wang-Waugh16}; see also \cite{Borodin-Corwin-Petrov-Sasamoto15a}. For one dimensional periodic integrable interacting particle systems on the lattice $\intZ / L \intZ$, the coordinate Bethe ansatz has only been similarly applied to TASEP \footnote{For TASEP on $\intZ/L\intZ$, the transition probability function is found in the similar method and similar form as the TASEP on $\intZ$. However, the Bethe equation was not solved in the usual sense. This feature is shared in our paper.}, a determinantal model: \cite{Povolotsky-Priezzhev07, Prolhac16, Baik-Liu16, Baik-Liu18, Baik-Liu18a} with all the particles jumping to the right on the ring. Additionally, the Bethe ansatz is useful in computing other observables (e.g.~spectral gap or large deviation functions) for the periodic (T)ASEP, see \cite{Kim95, Schutz95, Derrida-Lebowitz98, Golinelli-Mallick04, Povolotsky04, Golinelli-Mallick05,Prolhac13,Prolhac14,Prolhac15a,Prolhac15,Prolhac16a, Prolhac17,Mallick-Prolhac18,Prolhac20}. Recently, the work of \cite{Feher-Pozsgay18} considers the spin-$\frac{1}{2}$ XXZ chain on the ring, which is related to the ASEP by a non-unitary transformation. The authors of \cite{Feher-Pozsgay18} present a contour integral formula, but only give a precise proof for the two particle case, for the propagator of the spin chain, which corresponds to and is very similar to the transition probability function that we consider in this work.

The strategy to obtain probability formulas through the coordinate Bethe ansatz follows two fundamental steps: \begin{enumerate*}[label=(\arabic*)]
\item
   find eigenvectors for the Markov operator defining the evolution of the process via the coordinate Bethe ansatz, and
\item 
 write a linear transformation from the given eigenvectors to the elementary basis vectors.
\end{enumerate*} In this description, we represent a probability distribution on the configuration space as non-negative vectors with components that add up to one. It follows that an elementary basis vector corresponds to a deterministic distribution. The eigenvectors obtained by the coordinate Bethe ansatz are called \emph{Bethe eigenfunctions}. In general, writing an elementary basis vectors as linear combination of Bethe eigenfunctions is an open problem as it is not clear by the construction of the Bethe ansatz that the Bethe eigenfunctions give a complete basis, and this is beyond the scope of this paper. 

For models on $\mathbb{Z}$ with $N$ particles, the Bethe eigenfunctions are given as sum over the corresponding Weyl group $A_N \cong S_N$ (i.e.~the symmetric group on $N$ elements). In \cite{Tracy-Widom08}, Tracy and Widom  established the linear transformation from the Bethe eigenfunction to the basis vectors as a nested sequence of contour integrals. This linear transformation was crucial in the asymptotic analysis for the ASEP on $\mathbb{Z}$ and it opened the flood-gates to similar asymptotic analysis for non-determinantal models on $\mathbb{Z}$ \cite{Dotsenko10, Calabrese-Le_Doussal-Rosso10, Amir-Corwin-Quastel11, Sasamoto-Spohn10, Borodin-Corwin13, Borodin-Corwin-Sasamoto14}.

For the models on a ring, we present new non-trivial probability formulas through the proper reinterpretation of the Bethe ansatz. We introduce formal Bethe eigenfunctions as a sum over the affine Weyl group $\tilde{A}_{N - 1} \cong S_N \ltimes \intZ^{N - 1}$; a priori these eigenfunctions are not necessarily convergent. Moreover, we establish the linear transformation from the formal Bethe eigenfunctions to the elementary basis vectors as sequence of nested contour integrals. Below, we work directly with the linear transformation of the formal Bethe eigenfunctions and show convergence in that setting. In particular, we don't construct basis vectors that diagonalize the Markov generator for these processes. Instead, through methods inspired by the Bethe ansatz, we directly obtain explicit transition probability formulas.

The main hurdle for the periodic model lies in controlling the spectrum of the Markov operator defining the evolution of the processes. For instance, the spectrum for the ASEP on the ring is discrete whereas the spectrum for the ASEP on the line is continuous. Moreover, the spectrum of the ASEP on the ring depends on the solutions to the Bethe equations, a consistency condition for the periodicity of the process given by a system of algebraic equations with rank proportional to the number of particles and degree proportional to the period of the ring. Thus, exact formulas for the transition probability of the ASEP or the $q$-TAZRP on the ring would require precise information of the Bethe equations; a non-trivial problem in algebraic geometry. We are able to overcome the constraint of the Bethe equations by introducing an additional parameter to the models in the ring: the winding number. As a result, the state space becomes infinite again, as in the models on $\mathbb{Z}$; the spectrum is no longer discrete; and there is an additional parameter in the Bethe equations that may be treated as a deformation parameter to precisely describe the solutions of the Bethe equations.

In the present work, we would like to note the following two contributions:
\begin{enumerate*}[label=(\arabic*)]
\item
  we give a formula for the transition probability function for the models mentioned above that is constructive (see Theorem \ref{thm:main} and Theorem \ref{thm:trans_prob}), and 
\item 
we provide a clear and precise algebreo-geometric structure for non-determinantal periodic interacting particle systems.  
\end{enumerate*}
We construct the transition probability function through the coordinate Bethe ansatz, similar to the construction of the transition probability function for similar models such as the ASEP \cite{Tracy-Widom08} and the $q$-TAZRP \cite{Korhonen-Lee14, Wang-Waugh16} on the integer lattice $\mathbb{Z}$ and the TASEP on the periodic lattice $\mathbb{Z}/L \mathbb{Z}$ \cite{Baik-Liu18, Baik-Liu18a}. In this work, we are able to carry out the coordinate Bethe ansatz through certain algebraic identities (related to the eigenfunctions from the coordinate Bethe ansatz) and through a careful control of the Bethe equations. For models on an infinite lattice, such as the ASEP \cite{Tracy-Widom08} and $q$-TAZRP \cite{Wang-Waugh16} on $\mathbb{Z}$, the Bethe equations become trivial and may be avoided in the coordinate Bethe ansatz method. In the periodic TASEP \cite{Baik-Liu18, Baik-Liu18a}, the Bethe equations decouple and many determinantal formulas apply making the coordinate Bethe ansatz methods straight forward. In this work, we adapt the coordinate Bethe ansatz to a pair of non-determinantal models, readjust the Bethe ansatz to handle the non-trivial Bethe equations via nested contour integrals, and implement the Bethe ansatz to obtain exact transition probability functions.

In the rest of the introduction, we briefly describe the periodic ASEP and the periodic $q$-TAZRP, and we state the main result for each model. Then, at the end of the introduction, we give an outline for the rest of the paper.

\subsection{Description of models} \label{subsec:notation}

For both the ASEP and the $q$-TAZRP, we consider the processes on a ring
\begin{equation}
 \intZ/L\intZ = \{ [1], [2], \dotsc, [L] \},  
\end{equation}
such that the right neighboring site of $[k]$ is $[k + 1]$ and $[L + 1]$ is identified with $[1]$. In fact, by keeping track of the winding number, we actually realize the models on $\intZ$, that is, if a particle moves a whole period monotonically from $[1], [2], [3], \dotsc, [L]$ to $[1]$, we realize it as moving from $kL + 1, kL + 2, \dotsc, (k + 1)L$ to $(k + 1)L + 1$ on $\intZ$ with $k$ any integer. The drawback of lifting up the model to $\intZ$ is that the cyclic symmetry is broken. We discuss this in Section \ref{sec:cyclic_invariance}.

\begin{rem}
  When we realize the periodic model on $\intZ$, we may order the particles two possible ways:
  \begin{enumerate*}[label=(\arabic*)]
  \item
    $x_i \leq x_{i+1}$, or 
  \item
    $x_i \geq x_{i+1}$
  \end{enumerate*}
  (with equality only possible for the $q$-TAZRP).  In the following, we order the particles differently depending on the model in order to align with the convention of the established literature. That is, the notation we use for the periodic ASEP, $x_i <x_{i+1}$, is consistent with the notation for the ASEP on $\intZ$ and the periodic TASEP on $\intZ/ L \intZ$, and the notation we use for  the periodic $q$-TAZRP, $x_i \geq x_{i+1}$, is consistent with the notation for the $q$-TAZRP on $\intZ$. 
\end{rem}

\subsubsection{ASEP on a ring}

The ASEP with $N$ particles moving on a discrete ring of length $L$ is a continuous Markov process with the finite state space 
\begin{equation}
\{ ([x_1], \dots, [x_N] )  \in (\mathbb{Z}/ L \mathbb{Z})^N \mid \text{$[x_1], \dotsc, [x_N]$ has strict cyclic increasing order} \}. 
\end{equation}
We lift the particle configurations and dynamics to the integer lattice by tracking the winding number. For consistency with the notation from the literature on the ASEP on a line \cite{Tracy-Widom08}, we order the particles as
\begin{equation} \label{eq:ordered_pt_ASEP}
  x_1(t) < x_2(t) < \dotsb < x_N(t),
\end{equation}
such that $x_i(t)$ is the position of the $i$-th particle at time $t$. Also, the periodic property of a ring that $x_1$ and $x_N$  block each other is realized by the additional requirement
\begin{equation} \label{eq:ASEP_periodic_cond}
  x_N(t) < x_1(t) + L.
\end{equation}
Then, we denote the configuration space for the ASEP on $\intZ/L\intZ$ as follows:
\begin{equation} \label{eq:ASEP_Weyl_chanmber}
  \confs_N(L) := \{ X = (x_1, \dotsc, x_N) \in \intZ^N \mid x_1 < \cdots < x_N < x_1 + L \}.
\end{equation}
Here, $N$ and $L$ denote the number of particles and the length of the ring, respectively. Note that this configuration space is larger than the configuration space of particles on a ring as it also carries the information of the winding number of the process. 

The evolution of the ASEP is governed by independent exponential clocks of rate $1$, which update the location of the particles, for each particle. When the exponential clock for a particle is activated, the particle will change locations to the right (resp.~left) neighboring site with probability $p$ (resp.~$q = 1 - p$) only if the new location is not occupied by another particle, and the exponential clock for that particle is reset. Otherwise, if the new location is occupied, the particle will not change locations, and the exponential clock is also reset so that the particle attempts to change location once the exponential clock is activated again. In the following, we assume that $p, q \in (0, 1)$, and denote $\tau = q/p$. See Section \ref{subsec:ASEP_master} for more specific details on the dynamics of the ASEP.

\subsubsection{$q$-TAZRP on a ring} \label{subsubsec:q_TAZRP_defn}

The $q$-TAZRP with $N$ particles moving on a discrete ring of length $L$ is a continuous Markov process with the finite state space 
\begin{equation}
\{ ([x_1], \dots, [x_N] )  \in (\mathbb{Z}/ L \mathbb{Z})^N \mid \text{$[x_1], \dotsc, [x_N]$ has a weak cyclic decreasing order} \}.
\end{equation}
In this model, we allow for a spatial inhomogeneity by introducing parameters called conductance on each site. We denote the conductance of site $[i]$ on the ring as $a_{[i]} \in \mathbb{R}_{\geq 0}$. Similar to the ASEP model, we lift the particle configurations and dynamics to the integer lattice keeping track of the winding number of the particles. Accordingly, we take the conductances $a_i$ at site $i \in \mathbb{Z}$ such that $a_{kL + i} = a_{[i]}$ for all $i = 1, 2, \dotsc, L$ and $k \in \intZ$. For consistency with the notation from the literature on the $q$-TAZRP on a line \cite{Korhonen-Lee14, Wang-Waugh16}, we order the particles as
\begin{equation}
  x_1(t) \geq x_2(t) \geq \dotsb \geq x_N(t),
\end{equation}
which resembles the order in \eqref{eq:ordered_pt_ASEP} but with the opposite order. Also, similar to \eqref{eq:ASEP_periodic_cond}, we have the periodic condition
\begin{equation}
  x_1(t) \leq x_N(t) + L.
\end{equation}
Then, analogous to \eqref{eq:ASEP_Weyl_chanmber}, we denote the configuration space
\begin{equation} \label{eq:concrete_X_in_Weylchamber}
  \Weylchamber_N(L) := \{ (x_1, \dotsc, x_N) \in \intZ^N \mid x_N + L \geq x_1 \geq x_2 \geq \dotsb \geq x_N \}.
\end{equation}
Here, $N$ and $L$ denote the number of particles and the length of the ring, respectively. Note that this configuration space is larger than the configuration space of particles on a ring as it also carries the information of the winding number of the process. 

When particles occupy the same site, they have to be labeled consecutively according to the cyclic order of $\intZ/N\intZ$. For instance, take $x_{j + 1}, x_{j + 2}, \dots, x_{j + k}$ with $j + k \leq N$ so that they have identical position $x$, or alternatively, take $x_{j + 1}, x_{j + 2}, \dots, x_{N}, x_1, x_2, \dots, x_{j + k - N}$ with $j + k > N$ so that $x_{j + 1}, \dotsc, x_N$ have position $x$ and $x_1, \dotsc, x_{j + k - N}$ have position $x + L$. Particles stacked on the same site obey the vertical order where $x_j$ is atop of $x_{j + 1}$ for $j = 1, \dotsc, N - 1$ and $x_N$ is atop of $x_1$ if $x_1 = x_N + L$. From the dynamics described at the beginning of the paper, this property remains as the system evolves. 

For $N$ particles in $q$-TAZRP on the ring $\intZ/L\intZ$, we denote $n_{[i]}$ as the number of particles on site $[i]$, and equivalently, for any state in $\Weylchamber_N(L)$, we denote
\begin{equation} \label{eq:defn_n_[i](X)}
  n_{[i]}(X) = \text{\# of particles in the congruence class $\{ kL + i \mid k \in \intZ \}$},
\end{equation}
for any $i \in \intZ$. Although $n_{[i]}$ is defined as an infinite sum, at most two terms in the defining sum are nonzero making the summation well-defined. Moreover, we have $n_{[i + kL]}(X) = n_{[i]}(X)$ and $n_{[1]}(X) + \dotsb + n_{[L]}(X) = N$.

The evolution of the $q$-TAZRP is governed by independent exponential clocks for each particle with the rate of an exponential clock associated to a specific particle depending on the position of the particle and the vertical position of the particle in the stack. If a particle is at site $[i]$ with $k$ particles stacked, the rate of the corresponding exponential clock is $0$ unless the particle is at the top. On the other hand, if a particle is at the top of the stack at site $[i]$, the rate of the corresponding exponential clock is $a_{[i]}(1 - q^k)$. We always assume that $q \in (0, 1)$. Hence, if a particle occupies site $[i]$ by itself, the corresponding exponential clock has rate $a_{[i]}(1 - q)$, which we later denote as $b_{[i]}$. When the exponential clock for a particle is activated, the particle changes location to the right neighboring site and becomes the bottom particle in the new stack. Note that if a particle is in a stack but not the top one, it doesn't change locations.

\subsection{Statement of results}

We denote $\Lattice = (\ell_1, \dotsc, \ell_N) \in \intZ^N$, and write $\Lattice \in \intZ^N(k)$ if and only if $\ell_1 + \dotsb + \ell_N = k$.

\paragraph{ASEP on a ring}

We introduce some notation to state the main result for the ASEP on a ring. Let $\xi_1, \dotsc, \xi_N \in \mathbb{C}$ be some undetermined complex variables. For each pair of integers $\alpha, \beta \in \{ 1, \dots, N\}$, we denote
\begin{equation}\label{eq:S_factor}
  S_{\beta, \alpha} = -\frac{p + q \xi_{\beta} \xi_{\alpha} - \xi_{\beta}}{p + q \xi_{\beta} \xi_{\alpha} - \xi_{\alpha}}.
\end{equation}
For a permutation $\sigma \in S_N$, we denote
\begin{equation} \label{eq:expr_A_sigma_ASEP}
  A_{\sigma}(\xi_1, \dotsc, \xi_N) = \prod_{\substack{(\beta, \alpha) \text{ is an} \\ \text{inversion of } \sigma}}S_{\beta, \alpha} = \sgn(\sigma) \prod_{1 \leq i < j \leq N} \frac{p + q \xi_{\sigma(i)} \xi_{\sigma(j)} - \xi_{\sigma(i)}}{p + q \xi_i \xi_j - \xi_i},
\end{equation}
with $(\beta, \alpha)$ an \emph{inversion} of $\sigma$ if $\alpha$ and $\beta$ are distinct numbers in $\{ 1, \dotsc, N \}$ so that $\beta > \alpha$ and $\sigma^{-1}(\beta) < \sigma^{-1}(\alpha)$. We write $A_{\sigma} = A_{\sigma}(\xi_1, \dotsc, \xi_N)$ if there is no possibility of confusion.

Denote the transition probability function by $\prob_Y(X;t)$, which gives the probability that the process is in state $X=(x_1,\dotsc,x_N)\in \confs_N(L)$ at time $t$ given the initial configuration $Y=(y_1,\dotsc,y_N)\in\confs_N(L)$ at time $t=0$. For $X = (x_1, \dotsc, x_N) \in \intZ^N$, $Y = (y_1, \dotsc, y_N) \in \intZ^N$, $\Lattice \in \intZ^N(0)$ and $\sigma \in S_N$, we set
\begin{equation} \label{eq:int_Lambda_ASEP}
  \Lambda^{\Lattice}_Y(X; t; \sigma) = \dashint_C d\xi_1 \dotsi \dashint_C d\xi_N A_{\sigma} \prod^N_{j = 1} \left[ \xi_{\sigma(j)}^{x_j - y_{\sigma(j)} - 1} e^{\epsilon(\xi_j)t} \right] D_{\Lattice}(\xi_1,\cdots,\xi_N),
\end{equation}
using the notation $\dashint_C dw_i$ as a shorthand for $(2\pi i)^{-1} \oint_C dw_i$ and $C = \{ \lvert z \rvert = r \}$ a counterclockwise contour with $r > 0$ small enough so that the integrand in \eqref{eq:int_Lambda_ASEP} is analytic in the domain $\{0 < \lvert \xi_i \rvert \leq r,\ i = 1, \dotsc, N \}$, with
\begin{equation} \label{eq:HK}
  D_{\Lattice}(\xi_1,\cdots,\xi_N) := \prod^N_{j = 1} \left( \xi^L_j \prod^N_{k = 1} \left( \frac{p + q\xi_k \xi_j - \xi_k}{p + q\xi_k \xi_j - \xi_j} \right) \right)^{\ell_j}, \quad \epsilon(\xi) = p \xi^{-1} + q \xi - 1.
\end{equation}

\begin{maintheorem} \label{thm:main} 
  Let $X, Y \in \confs_N(L)$. The transition probability function for the ASEP is
\begin{equation}
\prob_Y(X;t)= u_Y(X; t),
\end{equation}
with
  \begin{equation} \label{eq:transition_probability}
    u_Y(X; t) = \sum_{\Lattice \in \intZ^N(0)} u^{\Lattice}_Y(X; t), \quad \text{and} \quad u^{\Lattice}_Y(X; t) = \sum_{\sigma\in S_N} \Lambda^{\Lattice}_Y(X; t; \sigma).
  \end{equation}
\end{maintheorem}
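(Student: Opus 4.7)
The plan is to show that $u_Y(X;t)$ solves the same Cauchy problem as the periodic ASEP transition function on the lifted state space $\confs_N(L)$: namely the Kolmogorov forward equation generated by the ASEP dynamics (with the periodic constraint $x_N < x_1 + L$ built into the blocking rules), together with the initial condition $u_Y(X;0) = \delta_{X,Y}$. Under the standard boundedness assumptions (which also need to be verified), these data determine the transition function uniquely, so the theorem reduces to four independent checks on the right-hand side: convergence of the double sum, the free evolution equation, the two sets of boundary conditions (bulk adjacency and wrap-around), and the initial condition.

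I would first verify convergence of $\sum_{\Lattice \in \intZ^N(0)} u^{\Lattice}_Y(X;t)$: for $r > 0$ small enough, each factor $\xi_j^L \prod_k (p+q\xi_k\xi_j-\xi_k)/(p+q\xi_k\xi_j-\xi_j)$ appearing in $D_{\Lattice}$ has modulus bounded by some $\rho < 1$ on $C^N$, yielding geometric decay of $\lvert D_{\Lattice}\rvert$ in $\max_j \lvert \ell_j \rvert$; combined with $\sum_j \ell_j = 0$ this gives absolute convergence and justifies every interchange of sum and integral used below. The free evolution step is then immediate: differentiating $\Lambda^{\Lattice}_Y(X;t;\sigma)$ in $t$ brings down $\sum_{j=1}^N \epsilon(\xi_j) = \sum_{j=1}^N (p \xi_j^{-1} + q \xi_j - 1)$ into the integrand, which is precisely the symbol of the free ASEP forward operator acting on $X$ through the factors $\xi_{\sigma(j)}^{x_j - y_{\sigma(j)} - 1}$.

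The boundary conditions are the structural core of the Bethe ansatz. For adjacent particles $x_{i+1} = x_i + 1$ with $1 \leq i \leq N-1$, the free operator differs from the ASEP generator by terms that must vanish; pairing $\sigma$ with $s_i \sigma$ (where $s_i$ is the transposition $(i,i+1)$) and exploiting the definition \eqref{eq:S_factor} of $S_{\beta,\alpha}$---the two-body $S$-matrix of Tracy--Widom \cite{Tracy-Widom08}---produces the required cancellation inside the integrand, with $D_{\Lattice}$ playing no role since it is invariant under the transposition. The new ingredient for the ring is the wrap-around boundary condition at $x_N = x_1 + L - 1$, which requires comparison of terms involving the periodic image of particles across the boundary. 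Here the sum over $\Lattice$ is essential: the factor $\xi_j^L$ in \eqref{eq:HK} is the ``winding'' generator, and a reindexing $\Lattice \mapsto \Lattice'$ inside $\intZ^N(0)$---made possible precisely because the constraint $\sum_j \ell_j = 0$ makes such a shift a bijection on the summation set---matches the wrap-around identity summand-by-summand.

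The main obstacle, in my view, is the initial condition $u_Y(X;0) = \delta_{X,Y}$ for $X, Y \in \confs_N(L)$. On $\intZ$, \cite{Tracy-Widom08} proves the analogous statement via contour shrinking, a strict-Weyl-chamber pole count, and collapse to $\sigma = \mathrm{id}$. For the ring the same residue analysis must be performed, but now the extra factor $D_{\Lattice}$ is present: one needs to show that at $t=0$ only the single term $\Lattice = 0$ survives the integration whenever $X, Y \in \confs_N(L)$. I expect this to follow from a careful pole-order balance between the $\xi_j^{L \ell_j}$ contribution of $D_{\Lattice}$ and the $\xi_{\sigma(j)}^{x_j - y_{\sigma(j)} - 1}$ factor, exploiting the ring bounds $x_N - x_1 < L$ and $y_N - y_1 < L$ built into $\confs_N(L)$ to rule out all $\Lattice \neq 0$ contributions (possibly only after the sum over $\sigma$ is taken). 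Once the initial condition is established, uniqueness for the Cauchy problem---together with the fact that the periodic ASEP is a bounded-rate continuous-time Markov chain on the countable state space $\confs_N(L)$---yields $u_Y(X;t) = \prob_Y(X;t)$ and finishes the proof.
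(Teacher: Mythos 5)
Your overall strategy coincides with the paper's: reduce the theorem to the free equation, the two boundary conditions, and the initial condition via the coordinate Bethe ansatz decomposition, and your treatment of the free equation and of the wrap-around condition via a winding reindexing of $\Lattice$ (this is exactly the cyclic invariance of Lemma \ref{lem:cyclic_invariance}) is on target. However, there are two genuine gaps. First, your convergence argument fails as stated. On $\lvert \xi_j \rvert = r$ the base $\xi_j^L \prod_k \frac{p+q\xi_k\xi_j-\xi_k}{p+q\xi_k\xi_j-\xi_j}$ does have modulus at most some $\rho<1$, but in $D_{\Lattice}$ it is raised to the power $\ell_j$, which is \emph{negative} for some $j$ whenever $\Lattice \neq (0,\dotsc,0)$; those factors have modulus of order $r^{-L\lvert \ell_j\rvert}$, and after using $\sum_j \ell_j = 0$ the powers of $r^L$ cancel exactly, leaving a sup-bound of the form $A^{N\sum_j\lvert\ell_j\rvert}$ with $A>1$. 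So $\sup_{C^N}\lvert D_{\Lattice}\rvert$ grows rather than decays, and no modulus estimate alone yields absolute convergence of the sum over $\intZ^N(0)$. The paper's Lemma \ref{lem:ASEP_conv_tech} instead exploits analyticity: for the variable carrying $\ell_{j_0}=\max(\Lattice)$, the factor $\xi_{j_0}^{L\ell_{j_0}}$ produces a zero of high order $M$ at the origin, so splitting $e^{pt/\xi_{j_0}}$ into its degree-$M$ Taylor polynomial (whose contribution vanishes because the integrand becomes analytic in $\xi_{j_0}$) and the remainder gives a bound of order $1/M!$, and this factorial decay beats the exponential growth. Some such mechanism is indispensable; geometric decay of $\lvert D_{\Lattice}\rvert$ in $\max_j\lvert\ell_j\rvert$ is simply false.

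Second, the initial condition is only conjectured, and it is the crux of the theorem. A per-$(\Lattice,\sigma)$ pole count does not suffice here (in contrast with the $q$-TAZRP case): for fixed $\sigma$ the individual terms with $\Lattice\neq(0,\dotsc,0)$ do \emph{not} vanish at $t=0$, and the cancellation occurs only collectively over $\sigma$ and $\Lattice$. The paper's route is to resum the geometric series over the cones $\intZ^N(0;m,M)$ into the $z$-dependent integrals $I_{\sigma}(z;X;Y)$ of \eqref{eq:contour_alt} (Lemma \ref{lem:u_Y(X0)_and_I}), then prove by induction on residue cancellations in pairs of permutations (Lemma \ref{lem:ASEP_IC}, in the spirit of Tracy--Widom's erratum \cite{Tracy-Widom11}) that the denominators $\bigl(1-z\xi_j^L\prod_k\cdots\bigr)^{-1}$ may be removed, after which the identity $\sum_{l}(-1)^l\binom{N-n}{l}=0$ kills every contribution except the one reproducing the known line result. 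Your remark that the cancellation may happen ``only after the sum over $\sigma$ is taken'' points in the right direction, but none of this machinery is supplied, so the proof is incomplete precisely where the real difficulty lies.
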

Lemma \ref{lm:absolutely_convergent} implies that for any $\sigma \in S_N$, the sum $\sum_{\Lattice \in \intZ^N(0)} \Lambda^{\Lattice}_Y(X; t; \sigma)$ is absolutely convergent. Hence $u_Y(X; t)$ is well defined.

\begin{rem}
  The transition probability formula~\eqref{eq:transition_probability} is a generalization of the transition probability formulas for the ASEP on the line and the TASEP on the ring. The transition probability formula for the ASEP on the line given in \cite[Theorem 2.1]{Tracy-Widom08} is our $u^{(0, \dotsc, 0)}_Y(X; t)$. In the limit $L \rightarrow \infty$, the other terms vanish, and we recover the transition probability formula for the ASEP on the line given in \cite[Theorem 2.1]{Tracy-Widom08}. Setting $p=1$, Theorem \ref{thm:main} degenerates into the transition probability formula for TASEP on the ring given in \cite{Baik-Liu18}. We give more details in section~\ref{sec:limits}.
\end{rem}

From the transition probability formula in Theorem \ref{thm:main}, we can derive the marginal distribution of a tagged particle given any initial condition.
\begin{maintheorem} \label{thm:one_pt_ASEP}
  Let $Y \in \confs_N(L)$ be the initial state of the ASEP on the ring. Then, the distribution of $x_m$ ($m = 1, \dotsc, N$) is
  \begin{multline}
  \label{eq:one_point_distribution}
    \prob_Y(x_m(t) \geq M) = (-1)^{(m - 1)(N - 1)} \frac{p^{N(N - 1)/2}}{2\pi i} \oint_0 \frac{dz}{z^m} C_N(z) \sum^{N - m}_{k = 1 - m} z^{-k} \sum_{\Lattice \in \intZ^N(k)} \\
    \dashint_C \frac{d\xi_1}{1 - \xi_1} \dotsi \dashint_C \frac{d\xi_N}{1 - \xi_N} \prod^N_{j = 1} \xi^{M - y_j - 1}_j e^{\epsilon(\xi_j)t} \prod_{1 \leq i < j \leq N} \frac{\xi_j - \xi_i}{p + q\xi_i\xi_j - \xi_i} D_{\Lattice}(\xi_1, \dotsc, \xi_N),
  \end{multline}
 with the same contour $C$ as the integrals of \eqref{eq:int_Lambda_ASEP}, the terms $\epsilon(\xi_j)$ and $D_{\Lattice}(\xi_1, \dotsc, \xi_N)$ defined by \eqref{eq:HK}, and
  \begin{equation} \label{eq:defn_C_N_ASEP}
    C_N(z) = \prod^{N - 1}_{j = 1} \left( 1 + (-1)^N \tau^j z \right).
  \end{equation}
\end{maintheorem}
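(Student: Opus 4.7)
The plan is to start from the explicit transition probability of Theorem~\ref{thm:main} and perform the marginal summation
\[
\prob_Y(x_m(t) \geq M) = \sum_{\substack{X \in \confs_N(L)\\ x_m \geq M}} u_Y(X;t) = \sum_{\substack{X \in \confs_N(L)\\ x_m \geq M}} \sum_{\Lattice \in \intZ^N(0)} \sum_{\sigma \in S_N} \Lambda^{\Lattice}_Y(X;t;\sigma).
\]
Lemma~\ref{lm:absolutely_convergent}, combined with an $\lvert\xi_j\rvert$-small bound on the $X$-sum, justifies interchanging the sum over $X$ with the contour integrals and with the sums over $\Lattice$ and $\sigma$. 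The problem then splits into two tasks: (i) evaluating the configurational sum $\sum_X \prod_j \xi_{\sigma(j)}^{x_j - y_{\sigma(j)} - 1}$ on the periodic Weyl chamber with the tagged constraint $x_m \geq M$, and (ii) applying a symmetrization identity to collapse the sum over $\sigma \in S_N$.

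For task (i), the strict ordering $x_1 < x_2 < \cdots < x_N$ implies that $\{x_m \geq M\}$ is the disjoint union, over $r \in \{N-m+1,\ldots,N\}$, of the events $\{\#\{j : x_j \geq M\} = r\}$. I would split the $X$-sum accordingly so that in each block the summation over $x_j$'s factors into geometric series $\sum_{x \geq 0}\xi^x = (1-\xi)^{-1}$, producing the factors $\prod_j \xi_j^{M - y_j - 1}/(1-\xi_j)$. The ring constraint $x_N < x_1 + L$ couples these blocks, and I would package it by introducing an auxiliary contour variable $z$: this trades a sum over $r$ in a finite range against an integral $\oint_0 dz / z^m$, and simultaneously shifts the winding constraint from $\Lattice \in \intZ^N(0)$ to $\Lattice \in \intZ^N(k)$ with $k \in \{1-m, \ldots, N-m\}$, matching the range in \eqref{eq:one_point_distribution}. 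For task (ii), I would invoke a Tracy--Widom-type symmetrization identity, adapted from \cite{Tracy-Widom08}: the sum $\sum_{\sigma} A_\sigma \prod_j \xi_{\sigma(j)}^{a}/(1-\xi_{\sigma(j)})$ evaluates to the explicit Cauchy-like product $\prod_{i<j}(\xi_j - \xi_i)/(p + q\xi_i\xi_j - \xi_i)$ times a symmetric factor. The prefactor $p^{N(N-1)/2}$ and the sign $(-1)^{(m-1)(N-1)}$ in \eqref{eq:one_point_distribution} are tracked through this identity together with orientation conventions.

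The last step is to identify the polynomial $C_N(z) = \prod_{j=1}^{N-1}(1 + (-1)^N \tau^j z)$ as the residue-theoretic artifact of the $z$-bookkeeping: when the finite-range sum over $r$ in task (i) is extended formally to an infinite one and the difference re-expressed via contour integration in $z$, the geometric series in $z$ telescope into exactly $C_N(z)$. The hardest step will be this joint bookkeeping of the $z$-variable and the winding numbers $\Lattice$. On the line, the winding constraint is trivially $\Lattice = 0$ and the tagged position enters only through the translation $\prod_j \xi_j^M$; but here the ring condition $x_N < x_1 + L$ couples winding and ordering, so the $M$-dependence is genuinely spread over a window of winding numbers. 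Disentangling this coupling and matching it to the compact form $C_N(z)$ is where the combinatorial work is concentrated; once it is in place, the symmetrization identity and the geometric summations are essentially mechanical.
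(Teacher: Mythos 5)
Your overall architecture (sum the transition probability over configurations, evaluate the configurational sum, then symmetrize over $S_N$) matches the paper's, but the plan has a concrete gap at its center: the decomposition of $\{x_m \geq M\}$ by $r = \#\{j : x_j \geq M\}$ cannot produce the $k$-sum in \eqref{eq:one_point_distribution}. Test it at $m=1$: since $x_1$ is the leftmost particle, $\{x_1 \geq M\}$ forces \emph{all} particles to be $\geq M$, so your $r$ takes the single value $N$ — yet the formula still has an $N$-term sum over $k = 0, \dotsc, N-1$ with winding sectors $\intZ^N(k)$. In the paper that index is $k = N - s$ where $s = \lvert I \rvert$ is the size of the first block in the Baik--Liu composition identity (Lemma \ref{lem:Baik-Liu}) for $\sum_{X \in \confs_N(L),\, x_1 = 0} \prod_j \xi_j^{x_j - j}$; it records how the configuration wraps around the ring relative to the tagged particle, not how many particles exceed $M$. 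Relatedly, your claim that within each block ``the summation over $x_j$'s factors into geometric series'' fails on the periodic Weyl chamber: the constraints $x_1 < \cdots < x_N < x_1 + L$ do not decouple, and the actual evaluation needs Lemma \ref{lem:Baik-Liu}, then Lemma \ref{lem:Tracy_Widom_09} applied blockwise, and then a genuinely new identity (Lemma \ref{lem:sum_D(I^c)}, proved by Cauchy--Binet and a rank argument on a determinant) to collapse the sum over compositions of $I^c$. None of this is ``essentially mechanical.''

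Two smaller corrections. First, $C_N(z)$ is not a telescoping of geometric series in $z$: it is the generating function $\sum_s c(s) z^{N-s} = p^{N(N-1)/2} C_N(z)$ of the coefficients produced by Tracy--Widom's subset identity (Lemma \ref{lem:TW09}), assembled via the $q$-binomial theorem \eqref{eq:tau_identity}; the $z$-integral is purely a device to pair each winding sector $\intZ^N(k)$ with the coefficient $c(N-k)$. Second, the paper proves only the $m = 1$ case by this computation and obtains $m = 2, \dotsc, N$ from cyclic invariance (Lemma \ref{lem:cyclic_invariance}) together with a relabeling of the winding numbers; attacking general $m$ head-on, as you propose, would force you to redo the configurational-sum identity with the tagged particle in the interior of the ordering, which is substantially harder and is exactly what the cyclic reduction avoids.
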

The summation over $\Lattice \in \intZ^N(k)$ is discussed further in Lemma \ref{lm:absolutely_convergent}.

\begin{rem}
	The one-point probability formula~\eqref{eq:one_point_distribution} is also a generalization of the one-point probability formulas for the ASEP on the line and the TASEP on the ring. The corresponding results may be found in \cite{Tracy-Widom08} and \cite{Baik-Liu18}, respectively. We give more details in Section~\ref{sec:one_point_limits}.
\end{rem}

\paragraph{$q$-TAZRP on a ring}

We introduce some notation to state the main theorem for the $q$-TAZRP on a ring. For $X \in \Weylchamber_N(L)$, denote
\begin{equation} \label{eq:defn_W(X)}
  W(X) = \prod^L_{i = 1} [n_{[i]}(X)]_q!, \quad \text{with} \quad [k]_q! = (1 - q)(1 - q^2) \dotsm (1 - q^k)/(1 - q)^k.
\end{equation}
Let $w_1, \dotsc, w_n \in \mathbb{C}$ be some undetermined complex variables. For each pair of integers $\alpha , \beta  \in \{ 1, \dots, N\}$, we denote
\begin{equation}
  S_{\beta, \alpha} = -\frac{qw_{\beta} - w_{\alpha}}{qw_{\alpha} - w_{\beta}}.
\end{equation}
For a permutation $\sigma \in S_N$, define
\begin{equation} \label{eq:expr_A_sigma_qTAZRP}
  A_{\sigma}(w_1, \dotsc, w_N) = \prod_{\substack{(\beta, \alpha) \text{ is an} \\ \text{inversion of } \sigma}} S_{\beta, \alpha} = \sgn(\sigma) \prod_{1 \leq i < j \leq N} \frac{qw_{\sigma(i)} - w_{\sigma(j)}}{qw_i - w_j}.
\end{equation}
We write $A_{\sigma} = A_{\sigma}(w_1, \dotsc, w_N)$ if there is no possibility of confusion. Additionally, for $X = (x_1, \dotsc, x_N)$, $Y = (y_1, \dotsc, y_N) \in \intZ^N$, $\Lattice \in \intZ^N(0)$ and $\sigma \in S_N$, we set
\begin{multline} \label{eq:int_Lambda}
  \Lambda^{\Lattice}_Y(X; t; \sigma) = \\
  \left( \prod^N_{k = 1} \frac{-1}{b_{[x_k]}} \right) \dashint_C dw_1 \dotsi \dashint_C dw_N A_{\sigma} \prod^N_{j = 1} \left[ \prodprime^{x_j}_{k = y_{\sigma(j)}} \left( \frac{b_{[k]}}{b_{[k]} - w_{\sigma(j)}} \right) e^{-w_j t} \right] D_{\Lattice}(w_1, \dotsc, w_N),
\end{multline}
with $C$ a positive-oriented circle $\{ \lvert z \rvert = R \}$ for $R$ a large enough positive constant (i.e.~$R > b_{[i]}$ for all $b_{[i]}$),
\begin{equation} \label{eq:general_D_L_qTAZRP}
  D_{\Lattice}(w_1, \dotsc, w_N) = \prod^N_{j = 1} \left( \prod^L_{i = 1} (b_{[i]} - w_j) \prod^N_{k = 1} \left( \frac{qw_k - w_j}{qw_j - w_k} \right) \right)^{\ell_j},
\end{equation}
and $\prod'$ an extension of the usual $\prod$ notation so that
\begin{equation}
  \prodprime^n_{k = m} f(k) =
  \begin{cases}
    \prod^n_{k = m} f(k) & \text{if $n \geq m$}, \\
    1 & \text{if $n = m - 1$}, \\
    \prod^{m - 1}_{k = n + 1} \frac{1}{f(k)} & \text{if $n \leq m - 1$}.
  \end{cases}
\end{equation}
For example, $\prod^{' -3}_{k = 0} f(k) = 1/[f(-1) f(-2)]$. 

\begin{rem}
  The notation for $S_{\beta, \alpha}$, $A_{\sigma}$, $C$, $D_{\Lattice}$ and some other terms defined later have different meanings for the ASEP and the $q$-TAZRP. We prefer to use the same notation for analogous quantities since they should be easily distinguished by the context and hardly confuse the reader.
\end{rem}
Denote the transition probability function $\prob_Y(X;t)$, which gives the probability that the process is in state $X=(x_1,\dotsc,x_N)\in \Weylchamber_N(L)$ at time $t\geq 0$ given an initial configuration $Y=(y_1,\dotsc,y_N)\in\Weylchamber(L)$ at time $t=0$. 
\begin{maintheorem} \label{thm:trans_prob}
  Let $X = (x_1, \dotsc, x_N), Y = (y_1, \dotsc, y_N) \in \Weylchamber_n(L)$. The transition probability function for $q$-TAZRP is 
\begin{equation}
\prob_Y(X; t) = u_Y(X; t) / W(X),
\end{equation}
with
  \begin{equation} \label{eq:defn_u_Y(X)}
    u_Y(X; t) = \sum_{\Lattice \in \intZ^N(0)} u^{\Lattice}_Y(X; t), \quad \text{and} \quad u^{\Lattice}_Y(X; t) = \sum_{\sigma \in S_N} \Lambda^{\Lattice}_Y(X; t; \sigma).
  \end{equation}
\end{maintheorem}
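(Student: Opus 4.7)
My plan is to establish \(u_Y(X;t)/W(X)\) as the transition probability by verifying three defining properties: (i) the Kolmogorov forward equation (master equation) of the \(q\)-TAZRP, (ii) the boundary conditions arising when particles share a site, together with the ring periodicity, and (iii) the initial condition \(u_Y(X;0)/W(X) = \delta_{X,Y}\) on \(\Weylchamber_N(L)\). Since the process lifted to \(\intZ\) (carrying the winding number) has a generator that is a bounded perturbation of a diagonal operator on the lifted state space, these three conditions uniquely pin down the transition probability by a standard Picard/Duhamel iteration argument, so the theorem reduces to checking each in turn.

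For (i), I would differentiate \(\Lambda^{\Lattice}_Y(X;t;\sigma)\) in \(t\), bringing down \(\sum_j(-w_j)\) inside the integrand, and then expand the action of the \(q\)-TAZRP generator on \(X\). The generator written in the lifted coordinates involves a difference \(u(\dots,x_k+1,\dots)-u(\dots,x_k,\dots)\) for every index \(k\) that labels a top particle, weighted by \(b_{[x_k]}[n_{[x_k]}(X)]_q\) (which interacts with the \(W(X)\) normalization). Shifting \(x_k\mapsto x_k+1\) inside \(\prodprime^{x_j}_{k=y_{\sigma(j)}} b_{[k]}/(b_{[k]}-w_{\sigma(j)})\) produces an extra factor \(b_{[x_k+1]}/(b_{[x_k+1]}-w_{\sigma(j)})\); combined with the prefactor \(-1/b_{[x_k]}\) this must be reorganized to match \(-w_j\) modulo the unwanted local terms occurring when consecutive particles coincide, which are precisely what condition (ii) is designed to cancel.

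For (ii), the stacking condition \(x_k = x_{k+1}\) (together with the wraparound \(x_1 = x_N + L\)) is handled in the Tracy--Widom style by using the symmetrization identity \(A_{\sigma s_k} = S_{\sigma(k+1),\sigma(k)} A_{\sigma}\), where \(s_k\) is the simple transposition; the specific form of \(S_{\beta,\alpha}\) in \eqref{eq:expr_A_sigma_qTAZRP} is chosen so that the unwanted off-chamber contributions pair up and cancel when summed over \(S_N\). The wraparound boundary is precisely the place where the sum over \(\Lattice\) is indispensable: a full-lap shift of one coordinate \(x_j\) must be absorbed by shifting one component of \(\Lattice\), and the factor \((\prod_i(b_{[i]}-w_j))^{\ell_j}\prod_k\big((qw_k-w_j)/(qw_j-w_k)\big)^{\ell_j}\) inside \(D_{\Lattice}\) is exactly the generating factor that intertwines this winding shift with the requisite Bethe scattering coefficients. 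Lemma \ref{lm:absolutely_convergent} justifies the rearrangement of the \(\Lattice\)-series that these manipulations require.

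The main obstacle is the initial condition (iii): at \(t=0\) the factors \(e^{-w_j t}\) disappear and I must show that the nested contour integrals evaluate to \(W(X)\delta_{X,Y}\). My plan is to enlarge the \(w\)-contours and pick up residues at \(w_{\sigma(j)}=b_{[k]}\) produced by \(\prodprime^{x_j}_{k=y_{\sigma(j)}} b_{[k]}/(b_{[k]}-w_{\sigma(j)})\), showing that the \(\sigma\neq\mathrm{id}\) permutations cancel in pairs via the \(S_{\beta,\alpha}\) identities and that the \(\Lattice\neq 0\) terms vanish because the extra factor \(\prod_i(b_{[i]}-w_j)^{\ell_j}\) kills the relevant residues. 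The surviving \(\sigma=\mathrm{id}\), \(\Lattice=0\) contribution then reduces, by the \(\prodprime\) convention, to an indicator of \(X=Y\), with a combinatorial weight given by the \(q\)-multinomial coefficient \(W(X)\) that accounts for the indistinguishability of stacked particles. Controlling the interchange of the infinite \(\Lattice\)-sum with the contour deformation, and verifying that non-identity permutations really do cancel once residues are summed over all ordered choices of poles, will be the most delicate technical points.
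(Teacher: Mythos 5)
Your overall architecture (free equation plus two-particle boundary conditions plus initial condition, with uniqueness from the master equation) is the same as the paper's, and your treatment of (i) and (ii) is essentially the argument given there: the boundary identities are the ones already established for the $q$-TAZRP on $\intZ$ in \cite{Korhonen-Lee14, Wang-Waugh16}, extended to general $\Lattice$, and the wraparound condition $x_1 = x_N + L$ is reduced to an interior boundary condition exactly by the winding-shift/cyclic-invariance mechanism you describe (Lemma \ref{lem:cyclic_invariance}). One small correction: the relevant well-definedness statement for the $\Lattice$-sum here is not absolute convergence but the fact that only finitely many $\Lattice$ contribute (Lemmas \ref{lem:finiteness_of_large_max(L)} and \ref{lem:qTAZRP_conv}); the absolute-convergence lemma you cite is the ASEP one.

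The genuine gap is in step (iii). Your proposed mechanism for killing the $\Lattice \neq (0,\dotsc,0)$ terms --- that $\prod_i (b_{[i]}-w_j)^{\ell_j}$ ``kills the relevant residues'' --- only makes sense for components with $\ell_j > 0$; for $\ell_j < 0$ this factor \emph{creates} high-order poles at every $b_{[i]}$ inside the (already large) contour, so nothing is killed there. Moreover, no argument of this purely local type can succeed, because $\Lambda^{\Lattice}_Y(X;0;\sigma)$ does \emph{not} vanish for arbitrary $X,Y \in \intZ^N$: the vanishing depends essentially on $X$ and $Y$ lying in the fundamental domain (after normalizing $y_1 - y_N < L$ by cyclic invariance). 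The paper's proof of Lemma \ref{lm:initial_qTAZRP} is a two-sided argument: integrating first in $w_M$ (index of $\max(\Lattice)$) one checks the integrand is analytic inside $C$ unless $x_{\sigma^{-1}(M)} \geq y_M + \ell_M L$, while integrating first in $w_m$ (index of $\min(\Lattice)$) and pushing that contour to infinity forces $x_{\sigma^{-1}(m)} \leq y_m + \ell_m L$; combining the two inequalities contradicts $y_1 - y_N < L$. Your sketch contains neither the contour-to-infinity half nor the use of the geometric constraint, so as written it would not close. Finally, your plan to cancel $\sigma \neq \mathrm{id}$ contributions in pairs is the ASEP-style argument; for the $q$-TAZRP each $\Lambda^{\Lattice}_Y(X;0;\sigma)$ with $\Lattice \neq 0$ vanishes individually, and the surviving $\Lattice = (0,\dotsc,0)$ sum over all of $S_N$ is evaluated to $W(Y)\delta_{X,Y}$ by the known line result \cite[Lemma 4.2]{Wang-Waugh16}, not by a new pairing of residues.
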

By Lemmas \ref{lem:finiteness_of_large_max(L)} and \ref{lem:qTAZRP_conv}, only finitely many terms $u^{\Lattice}_Y(X; t)$ are nonzero for the sum over $\Lattice \in \intZ^N(0)$. Thus, the function $u_Y(X; t)$ in \eqref{eq:defn_u_Y(X)} is well-defined.

Similar to Theorem \ref{thm:one_pt_ASEP}, we have the marginal distribution for a tagged particle given any initial condition, assuming the technical condition that all conductances are identical.
\begin{maintheorem} \label{thm:one_pt_qTAZRP}
  Take the conductances so that $a_{[1]} = a_{[2]} = \dotsb = a_{[N]} = (1 - q)$. Let $Y \in \Weylchamber_N(L)$ be the initial state of the $q$-TAZRP on the ring. Then, the distribution of $x_{N - m + 1}$ is
  \begin{multline} \label{eq:one_point_distribution_2}
    \prob_Y(x_{N - m + 1}(t) > M) = \frac{1}{2\pi i} \oint_0 \frac{dz}{z^m} C_N(z) \sum^{N - m}_{k = 1 - m} z^{-k} \sum_{\Lattice \in \intZ^N(-k)} \\
    \dashint_C \frac{dw_1}{w_1} \dotsi \dashint_C \frac{dw_N}{w_N} \prod^N_{j = 1} (1 - w_j)^{y_j - M} e^{-w_j t} \prod_{1 \leq i < j \leq N} \frac{w_i - w_j}{qw_i - w_j} D_{\Lattice}(w_1, \dotsc, w_N),
  \end{multline}
  with
  \begin{equation} \label{eq:C_N_qTAZRP}
    C_N(z) = \prod^{N - 1}_{j = 1} (1 + (-1)^N q^j z).
  \end{equation}
\end{maintheorem}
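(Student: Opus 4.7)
My approach is to compute $\prob_Y(x_{N-m+1}(t)>M)=\sum_{X:\,x_{N-m+1}>M}\prob_Y(X;t)$ directly from the transition probability formula of Theorem \ref{thm:trans_prob}. The equal-conductance assumption $a_{[1]}=\dotsb=a_{[L]}=(1-q)$ is crucial because it collapses the telescoping product $\prodprime_{k=y_{\sigma(j)}}^{x_j}b_{[k]}/(b_{[k]}-w_{\sigma(j)})$ inside $\Lambda^{\Lattice}_Y(X;t;\sigma)$ into a pure power $(1-w_{\sigma(j)})^{-(x_j-y_{\sigma(j)})}$ (up to normalization), thereby separating the dependence on the $x_j$'s from the remaining $\sigma$- and $\Lattice$-dependent factors, and making the subsequent summation over $X$ tractable.

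Next, I would sum over $X\in\Weylchamber_N(L)$ with $x_{N-m+1}>M$ in two stages. First, by symmetrizing $\sum_{\sigma\in S_N}$ against the weakly-decreasing sum $\sum_{x_1\geq\dotsb\geq x_N}$, the combinatorial factor $W(X)$ in the denominator should cancel precisely against the over-counting at coinciding coordinates, reducing the problem to an unconstrained product of geometric series over each $x_j$. Second, the tagged-particle event is encoded by an auxiliary contour variable $z$: decomposing the configurations into sectors indexed by an integer $k\in\{1-m,\dotsc,N-m\}$ that tracks how many particles lie above $M$ modulo the ring length, the sum $\sum_k z^{-k}$ together with the outer residue $\frac{1}{2\pi i}\oint_0 z^{-m}\,dz$ projects onto the correct indicator. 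The extra winding contributed by particles that have crossed $M$ shifts the lattice summation from $\Lattice\in\intZ^N(0)$ in Theorem \ref{thm:trans_prob} to $\Lattice\in\intZ^N(-k)$ in \eqref{eq:one_point_distribution_2}.

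The heart of the argument, and the main obstacle, is to establish a symmetrization identity of the form
$$\sum_{\sigma\in S_N}A_\sigma(w_1,\dotsc,w_N)\prod_{j=1}^N F_j(w_{\sigma(j)},z)=C_N(z)\prod_{1\leq i<j\leq N}\frac{w_i-w_j}{qw_i-w_j}\prod_{j=1}^N G_j(w_j,z),$$
where the $F_j$ and $G_j$ are the rational factors produced by the geometric summation, which identifies $C_N(z)=\prod_{j=1}^{N-1}(1+(-1)^Nq^jz)$ as the emergent combinatorial prefactor. This is the $q$-TAZRP analog of the symmetrization identities used for the ASEP on the line in \cite{Tracy-Widom08} and for the periodic TASEP in \cite{Baik-Liu18}; I expect it to be provable by induction on $N$, using the exchange relation encoded in $S_{\beta,\alpha}$ and a recursive Cauchy-type collapse. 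A secondary, more technical obstacle is to justify the interchange of $\sum_X$, $\sum_{\Lattice}$ and the contour integrations: convergence of the geometric series in the $x_j$'s forces the contour radius $R$ to be taken sufficiently large, and the same choice must remain compatible with the absolute convergence of the $\Lattice$-sum ensured by the lemmas cited alongside Theorem \ref{thm:trans_prob}.
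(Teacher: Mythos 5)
Your overall architecture is the right one --- sum the transition probability of Theorem \ref{thm:trans_prob} over configurations, exploit the equal-conductance collapse of $\prodprime b_{[k]}/(b_{[k]}-w)$ into powers of $(1-w)^{-1}$, and extract $C_N(z)$ from a symmetrization identity packaged by an auxiliary $z$-contour --- and this matches the paper's strategy in spirit (the paper additionally reduces to $m=1$ first and recovers general $m$ by cyclic invariance, which you bypass). However, there are two genuine gaps at the core of your plan. First, your claim that symmetrizing $\sum_{\sigma}$ against the weakly decreasing sum cancels $W(X)$ and reduces the configuration sum to ``an unconstrained product of geometric series over each $x_j$'' is false: with $x_N=a$ fixed, the ring constraint $x_N+L\geq x_1\geq\dotsb\geq x_N$ confines the remaining coordinates to a \emph{bounded} region, and it is exactly the interplay between this boundedness, the multiplicities entering $W(X)$, and the particles that sit at the wrap-around site that constitutes the hard part. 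In the paper this is handled by decomposing the configuration sum according to how many particles coincide with $x_N$ or have wrapped (the sums over $p$ and $r$ in \eqref{eq:sum_of_C_TAZRP}), and by evaluating the truncated weighted sum $F_n(z_1,\dotsc,z_n)=\sum_{L>x_1\geq\dotsb\geq x_n>0}W(X)^{-1}\prod z_j^{x_j}$ via an elaborate machinery of $k$-compositions and the functions $h_n$, $H_n$ (Lemmas \ref{lem:F_n_in_f_tau}--\ref{lem:h_n_H_n}); none of this is a geometric series, and the boundary terms it produces are precisely what shift the lattice sum to the sectors $\intZ^N(-k)$.

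Second, the symmetrization identity you place at the heart of the argument is asserted rather than proved, and in the single-product form you write it is not what actually holds. The paper does not obtain one clean identity $\sum_\sigma A_\sigma\prod_j F_j = C_N(z)\prod_{i<j}\frac{w_i-w_j}{qw_i-w_j}\prod_j G_j$; instead it passes through sums over set partitions $I_1\sqcup I_2\sqcup I_3\sqcup I_4=\{1,\dotsc,N\}$ with cross-factors $G(I_j,I_k)$, uses a nontrivial identity imported from \cite{Lee-Wang17} (equation \eqref{eq:Lee-Wang17_result}), three determinantal identities for the $B_n$ functions proved by Cauchy--Binet (Lemma \ref{lem:B_lem}), and only after expanding in elementary symmetric polynomials $e_k(w_1,\dotsc,w_N)$ with coefficients $s_{K,p,r}(k)$ does the $q$-binomial sum \eqref{eq:exact_s(k)} combined with \eqref{eq:tau_identity} produce $C_N(z)$. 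An induction on $N$ ``using the exchange relation'' is not a proof of any of these steps, so as written your argument has no path from the transition probability to the stated formula.
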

By Lemmas \ref{lem:finiteness_of_large_max(L)} and \ref{lem:qTAZRP_conv}, only finitely many terms in the integral \eqref{eq:one_point_distribution_2} are nonzero for the sum over $\intZ^N(-k)$ making \eqref{eq:one_point_distribution_2} well-defined.

\subsection{Outline}

We prove Theorem~\ref{thm:main} (resp.\ Theorem \ref{thm:trans_prob}) by showing that~\eqref{eq:transition_probability} (resp.\ \eqref{eq:defn_u_Y(X)}) satisfies the (Kolmogorov forward) master equation for the Markov process at hand. More specifically, we take a coordinate Bethe anstaz approach by decomposing the master equation into a non-interacting equation with specific boundary conditions (on a Weyl alcove) that encode the interactions. This way, showing~\eqref{eq:transition_probability} (resp.\ \eqref{eq:defn_u_Y(X)}) satisfies the master equation is much simpler and the more technical point of the proof is showing that formulas indeed satisfy the initial conditions (i.e. $\mathbb{P}_Y(X;0) = \delta_{Y}(X)$). The sensibility of the contour integral formulas is choosing the contours so that the contour integrals converge without picking any stray residues. 

In Section~\ref{sec:asep}, we write down the master equations for the transition probabilities of the ASEP and the $q$-TAZRP on a ring. In both cases, we characterize the transition probabilities $\prob_Y(X; t)$ by a free equation together with boundary conditions, in the spirit of the coordinate Bethe ansatz. In Section \ref{sec:cyclic_invariance}, we introduce identities for both models that are due to the cyclic symmetry of the ring. These identities will simplify many computations throughout the rest of the arguments. Then, Theorems \ref{thm:main} and \ref{thm:trans_prob}, regarding the transition probabilities for both models, are proved in Section \ref{sec:proof}, and Theorems \ref{thm:one_pt_ASEP} and \ref{thm:one_pt_qTAZRP}, regarding the one-point functions for both models, are proved in Section \ref{sec:one_pt}. Additionally, in Sections \ref{sec:limits} and \ref{sec:one_point_limits}, we show that the results from Theorems \ref{thm:main} and \ref{thm:one_pt_ASEP}, regarding the ASEP model, agree with previously known results in \cite{Tracy-Widom08,Baik-Liu18}. Lastly, in Section \ref{sec:root_formula}, we perform some residue computations for the transition formula for the ASEP in Theorem \ref{thm:main}, and we obtain a formula based on the roots of a system of algebraic equations called the \emph{Bethe equations} (see Remark \ref{r:completeness}). At the end, in Appendix \ref{sec:convergence}, we include some bounds on the type of integrands that we use in the contour integral formulas, and these are needed to justify some of the manipulations we do with the contour integral formulas.

\paragraph{Acknowledgments.} We are grateful to Jinho Baik, Ivan Corwin, Leonid Petrov, and Craig Tracy for helpful discussions. The authors would like to thank the organizer of the \emph{Integrable Probability Focused Research Group}, funded by NSF grants DMS-1664531, 1664617, 1664619, 1664650, for organizing stimulating events and the \emph{Park City Mathematics Institute (PCMI)} for organizing ``The $27^{th}$ Annual Summer Session, Random Matrices,'' funded by NSF grant DMS-1441467. Z.L. was supported by the University of Kansas Start Up Grant, the University of Kansas New Faculty General Research Fund, and Simons Collaboration Grant No. 637861. A.S. was partially supported by NSF grants DMS-1664617. D.W. was partially supported by the Singapore AcRF Tier 1 grant R-146-000-217-112 and the Chinese NSFC grant 11871425.

\section{Master equations, boundary conditions and initial conditions}\label{sec:asep}

We set up the master equations - describing the evolution of the probability function - for the two models. Additionally, we express the master equation as a free equation with boundary conditions. For both models, the boundary conditions are reduced to the two-particle boundary conditions due to the integrability of the models. Hence, the proof of Theorems \ref{thm:main} and \ref{thm:trans_prob} is reduced to the verification of
\begin{enumerate*}[label=(\roman*)]
\item 
  the free equation, 
\item
  the boundary conditions, and 
\item
  the initial conditions.
\end{enumerate*}
We discuss the two models separately. In the discussion for either model, we denote 
\begin{equation}
  X_i^{\pm} = (x_1, \dots, x_{i-1}, x_i \pm 1, x_{i+1}, \dots, x_N)
\end{equation}
for $X = (x_1, \dotsc, x_N) \in \intZ^N$. Note that $X^{\pm}_i$ may not belong to $\confs_N(L)$ (resp.~$\Weylchamber_N(L)$) if $X \in \confs_N(L)$ (resp.~$X \in \Weylchamber_N(L)$).

\subsection{ASEP} \label{subsec:ASEP_master}

As a continuous time Markov process, the evolution of the transition probability function for the ASEP is given by the \textit{Kolmogorov forward master equation}
\begin{multline}\label{master}
  \frac{d}{dt} \mathbb{P}_{Y} (X;t) = \sum^N_{i=2} \Big( p\mathbb{P}_{Y} (X_i^-;t) - q\mathbb{P}_{Y} (X;t) \Big) \id_{x_{i - 1} \neq x_i - 1} + \Big( p\prob_Y(X^-_1; t) - q\prob_Y(X; t) \Big) \id_{x_N \neq x_1 + L - 1} \\
  + \sum^{N - 1}_{i = 1} \Big( q\mathbb{P}_{Y} (X_i^+;t) - p\mathbb{P}_{Y} (X;t) \Big) \id_{x_{i + 1} \neq x_i + 1} + \Big( q\prob_Y(X^+_N; t) - p\prob_Y(X; t) \Big) \id_{x_1 \neq x_N - L + 1}.
\end{multline}
Note that $X^{\pm}_i$ may not be $\mathcal{X}_N(L)$, but all $\prob_Y(X^{\pm}_i; t)$ with  $X^{\pm}_i \notin \mathcal{X}_N(L)$ are nullified by the indicator function. The transition probability function $\prob_Y(X; t)$ is uniquely determined by the master equation \eqref{master} given the \emph{initial condition}
\begin{equation}
  \mathbb{P}_{Y}(X;0) = \delta_{Y}(X), \quad \text{for all $X \in \confs_N(L)$}
\end{equation}
with $Y \in \confs_N(L)$.

We further decompose the master equation by the coordinate Bethe ansatz as follows. Suppose $u_Y(X; t)$ are differentiable functions of $t$, indexed by $X, Y \in \intZ^N$. We impose the \textit{free equation} on the functions $u_Y(X; t)$. That is,
\begin{equation}\label{free}
  \frac{d}{dt} u_{Y} (X;t) = \sum_{i =1}^N p u_{Y} (X_i^-;t) + q u_{Y} (X_i^+;t) - u_{Y} (X;t).
\end{equation}
Moreover, we impose the \emph{boundary conditions} on the functions $u_Y(X; t)$. That is,
\begin{equation}\label{boundary1}
  u_Y (X;t) = p u_Y (X_1^-;t) + q u_Y (X_{N}^+;t)
\end{equation}
when $x_N + 1= x_1 + L $, and for $i = 2, \dotsc, N$
\begin{equation}\label{boundary}
  u_{Y} (X;t) = p u_{Y} (X_i^-;t) + q u_{Y} (X_{i-1}^+;t)
\end{equation}
when  $x_i = x_{i-1}+1$. We have that the master equation \eqref{master} is equivalent to the free equation \eqref{free} with the boundary conditions \eqref{boundary1} and \eqref{boundary}.

\begin{lemma} \label{lem:coor_B_ansztz}
  If the differentiable functions $u_Y(X; t)$ with $X, Y \in \intZ^N$ satisfy both the free equation \eqref{free} and the boundary conditions \eqref{boundary1} and \eqref{boundary}, then the functions $u_Y(X; t)$ with $X, Y \in \confs_N(L)$ satisfy the master equation \eqref{master} with $\prob_Y(X; t)$ replaced by $u_Y(X; t)$.
\end{lemma}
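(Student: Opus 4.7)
The plan is to check that for any $X \in \confs_N(L)$, the right-hand side of the free equation \eqref{free} coincides with the right-hand side of the master equation \eqref{master} (with $\prob_Y$ replaced by $u_Y$), so that \eqref{master} holds automatically once \eqref{free} and the two boundary conditions are imposed. The argument is algebraic: I would massage the right-hand side of \eqref{free} into a form directly comparable to that of \eqref{master}, and then check that every indicator that suppresses a term in \eqref{master} is exactly accounted for by a boundary condition.

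The first step is to exploit $p + q = 1$ by splitting $-u_Y(X;t) = -(p+q)u_Y(X;t)$, rewriting the $i$-th summand of \eqref{free} as
\begin{equation*}
p u_Y(X_i^-;t) + q u_Y(X_i^+;t) - u_Y(X;t) = \bigl[p u_Y(X_i^-;t) - q u_Y(X;t)\bigr] + \bigl[q u_Y(X_i^+;t) - p u_Y(X;t)\bigr].
\end{equation*}
Summing over $i$ and subtracting the right-hand side of \eqref{master}, the ``unblocked'' parts cancel term by term, and the difference collapses to a sum of blocked left-move contributions $\bigl[p u_Y(X_i^-;t) - q u_Y(X;t)\bigr]\id_{x_{i-1} = x_i - 1}$ (for $i = 2, \dotsc, N$, together with the cyclic analogue at $i = 1$) and blocked right-move contributions $\bigl[q u_Y(X_i^+;t) - p u_Y(X;t)\bigr]\id_{x_{i+1} = x_i + 1}$ (for $i = 1, \dotsc, N-1$, together with the cyclic analogue at $i = N$).

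The decisive step is the pairing: for each interior index $i \in \{2, \dotsc, N\}$, the indicator $\id_{x_{i-1} = x_i - 1}$ equals the indicator $\id_{x_i = x_{i-1} + 1}$ that is attached to the blocked right-move of particle $i-1$, and grouping these two contributions gives
\begin{equation*}
\id_{x_i = x_{i-1}+1}\bigl(p u_Y(X_i^-;t) + q u_Y(X_{i-1}^+;t) - u_Y(X;t)\bigr),
\end{equation*}
which vanishes by \eqref{boundary}. Similarly, the cyclic indicators $\id_{x_N = x_1 + L - 1}$ and $\id_{x_1 = x_N - L + 1}$ coincide, and the two cyclic blocked contributions combine into $\id_{x_N + 1 = x_1 + L}\bigl(p u_Y(X_1^-;t) + q u_Y(X_N^+;t) - u_Y(X;t)\bigr)$, which vanishes by \eqref{boundary1}. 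These pairings exhaust all the blocking contributions, so the two right-hand sides agree and the lemma follows. The argument is purely mechanical; the only care required is the bookkeeping that each geometric block $x_i = x_{i-1} + 1$ (or its cyclic counterpart) produces exactly two blocked terms, one from each direction, which combine into the corresponding boundary condition.
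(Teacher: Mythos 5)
Your proof is correct. It does, however, organize the algebra differently from the paper. The paper's proof starts from the free equation and uses the boundary conditions \eqref{boundary1} and \eqref{boundary} as substitution rules: whenever a term $u_Y(X_i^{\pm};t)$ with $X_i^{\pm}\notin\confs_N(L)$ appears, it is rewritten recursively in terms of allowed configurations, and the key computation is a telescoping identity over a cluster of $j+1$ consecutive particles, $\sum_{i=k}^{k+j}\bigl(pu_Y(X_i^-;t)+qu_Y(X_i^+;t)-u_Y(X;t)\bigr)=pu_Y(X_k^-;t)+qu_Y(X_{k+j}^+;t)-u_Y(X;t)$, applied cluster by cluster. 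You instead subtract the two right-hand sides directly: the split $-u_Y(X;t)=-(p+q)u_Y(X;t)$ lets the unblocked terms cancel exactly against the master equation, and each adjacency $x_i=x_{i-1}+1$ (or its cyclic counterpart) contributes precisely one pair of leftover blocked terms whose sum is the corresponding boundary-condition expression $pu_Y(X_i^-;t)+qu_Y(X_{i-1}^+;t)-u_Y(X;t)=0$. The two arguments invoke the same boundary conditions at the same configurations, but yours avoids the recursion and any case analysis on cluster structure: every adjacency is handled independently, so the bookkeeping is a term-by-term pairing rather than an induction on cluster length. The paper's version has the minor virtue of exhibiting explicitly how the out-of-range terms $u_Y(X_i^{\pm};t)$ are expressed through allowed configurations, which mirrors the analogous computation for the $q$-TAZRP in Lemma \ref{lem:coord_B_ansatz_TAZRP}; your version is shorter and arguably more transparent about why the indicators in \eqref{master} are exactly compensated by \eqref{boundary1} and \eqref{boundary}.
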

\begin{proof}
  Suppose $X = (x_1, \dotsc, x_N) \in \confs_N(L)$. If $X^-_i$ and $X^+_i$ are in $\confs_N(L)$ for all $i = 1, \dotsc, N$, or equivalently, $x_{i + 1} - x_i > 1$ for $i = 1, \dotsc, N - 1$ and $x_1 + L - x_N > 1$, then the free equation \eqref{free} is identical to the master equation \eqref{master} with $\prob_Y(X; t)$ replaced by $u_Y(X; t)$. Otherwise, we can express those $u_Y(X^-_i; t)$ and $u_Y(X^+_i; t)$ on the right side of \eqref{free} that are not in $\confs_N(L)$ by linear combinations of $u_Y(Z_{\alpha}; t)$ with $Z_{\alpha} \in\confs_N(L)$ by using \eqref{boundary1} and \eqref{boundary} recursively. Thus, we can express the right side of \eqref{free} with only terms $u_Y(Z_{\alpha}; t)$ so that $Z_{\alpha} \in \confs_N(L)$, and we find that it agrees with \eqref{master}.

  Let us consider a specific case to see how the recursion mentioned above is implemented. Take the case when we have a configuration $X \in \confs_N(L)$ with $j+1 \geq 2$ consecutive particles, which we call a cluster. For simplicity, we assume that the first or last particles are not part of the cluster and that there is no other cluster in the configuration. That is, we assume $x_{k + i} = x_k + i$ for $1< k <N -j$ and $i = 0, 1, \dotsc, j$ while $x_{i + 1} - x_i > 1$ for $i < k$ or $i \geq k + j$, and $x_1 + L - x_N > 1$ also holds. Then using \eqref{boundary} for $i = k + 1, k + 2, \dotsc, k + j$, we have
  \begin{equation}
    \sum^{k + j}_{i = k} p u_{Y} (X_i^-;t) + q u_{Y} (X_i^+;t) - u_{Y} (X;t) = p u_Y(X^-_k; t) + q u_Y(X^+_{k + j}; t) - u_Y(X; t).
  \end{equation}
  Substituting this into \eqref{free}, we find that for this $X$, we recover \eqref{master}. This argument directly generalizes to the case that there are several clusters or either/both of $x_1, x_N$ belong to a cluster.
\end{proof}

\begin{rem} 
Our proof to Lemma \ref{lem:coor_B_ansztz} uses exactly the same idea as in \cite[Footnote 2]{Tracy-Widom08}.
\end{rem}

Therefore, we compute the transition probability function $\prob_Y(X; t)$ by constructing $u_Y(X; t)$ with $X, Y \in \intZ^N$ that satisfy the free equation, boundary conditions, and the \emph{initial condition}
\begin{equation} \label{initial}
  u_Y(X; 0) = \delta_Y(X), \quad \text{for all $X, Y \in \confs_N(L)$}.
\end{equation}
 We note that the initial condition \eqref{initial} is only required for $X, Y \in \confs_N(L)$.

\subsection{$q$-TAZRP} \label{subsec:q_TAZRP_master}

We introduce some necessary notation. Denote
\begin{equation}
  \ell(X) := \text{$\#$ of congruence classes $\{ kL + i \mid k \in \intZ \}$ that contains at least one particle in state $X$},
\end{equation}
and
\begin{equation}
  v_i(X) := \text{the $i$-th largest value attained by $x_1, \dotsc, x_N$, for $i = 1, \dotsc, \ell(X)$}.
\end{equation}
Then, the congruence classes containing particles are $\{ kL + v_1 \}, \dotsc, \{ kL + v_{\ell(X)} \}$. In particular, note that $x_m = x_n$ if both $x_m$ and $x_n$ belong to the congruence class $\{ k L + v_i\}$ with $i = 2, \dots, \ell(X)$. On the other hand, if $x_m$ and $x_n$ are both in the congruence class $\{ kL + v_1 \}$, then it is possible that $x_m = v_1$ and $x_n = v_1 - L$, given that $m < n$. Recall the notation $n_{[i]}(X)$ defined in \eqref{eq:defn_n_[i](X)}. Then, generally there exist a pair of integers $n'_1(X) > 0$ and $n''_1(X) \geq 0$ so that
\begin{equation}
 n'_1(X) + n''_1(X) = n_{[v_1]}(X), \quad \text{and} \quad x_1 = \dotsb = x_{n'_1} = v_1, \quad x_{N -n''_1 + 1} = \dotsb = x_{N} = v_1 - L.
\end{equation}
We define
\begin{equation}
 N_1(x) = n'_1(X), \quad \text{and} \quad N_i(X) = n'_1(X) + n_{[v_2]}(X) + \dotsb + n_{[v_i]}(X), \quad \text{for $i = 2, \dotsc, \ell(X)$}.
\end{equation}
For $X\in\Weylchamber_N(L)$, the evolution of the transition probability function for the $q$-TAZRP is given by the \emph{Kolmogorov forward master equation}
	\begin{equation} \label{eq:concrete_Master_eq}
	 \frac{d}{dt} \prob_Y(X; t) = \sum^{\ell(X) }_{i = 1} \alpha_i(X)  \prob_Y(X^-_{N_i(X)}; t) - \beta_i(X)  \prob_Y(X; t)
	\end{equation}
with the coefficients given by
\begin{equation}
  \begin{split}
    \frac{\alpha_i (X)}{ a_{[v_i(X) - 1]}} = {}&
    \begin{cases}
      \left(1 - q^{1+ n_{[v_{i + 1}(X)]}}\right) \id_{v_{i + 1}(X) = v_i(X) - 1} + (1 - q) \id_{v_{i + 1}(X) < v_i(X) - 1}, & i \neq \ell(X), \\
      \left(1 - q^{1+ n_{[v_1(X)]} }\right) \id_{v_1(X) = v_{\ell(X)} + L - 1} + (1 - q) \id_{v_1(X) < v_{\ell(X)} + L - 1}, & i = \ell(X),
    \end{cases} \\
    \frac{\beta_i (X)} {a_{[v_i(X)]}} = {}&
    \begin{cases}
      1 - q^{n_{[v_i(X)]}}, &  i \neq \ell(X), \\
      \left(1 - q^{n_{[v_{\ell}(X)]}} \right) \id_{v_1(X) < v_{\ell}(X) + L}, & i = \ell(X).
    \end{cases}
  \end{split}
\end{equation}
The transition probability function is uniquely determined by the master equation \eqref{eq:concrete_Master_eq} given the \emph{initial condition}
\begin{equation}
  \prob_Y(X; 0) = \delta_Y(X), \quad \text{for all $X \in \Weylchamber_N(L)$}
\end{equation}
with $Y\in\Weylchamber_N(L)$.

Similar to the ASEP, we can also decompose the master equation by the Bethe ansatz. Suppose $u_Y(X; t)$ are differentiable functions with respect to $t \geq 0$, indexed by $X, Y \in \intZ^N$. For $b_{[x]} = (1 - q)a_{[x]}$, we impose the \emph{free equation}
\begin{equation} \label{eq:free_eq_q_TAZRP}
  \frac{d}{dt} u(X; t) = \sum^N_{i = 1} b_{[x_i - 1]} u(X^-_{i}; t) - b_{[x_i]} u(X; t).
\end{equation}
Moreover, we impose the boundary conditions, analogous to \eqref{boundary1} and \eqref{boundary},
\begin{equation} \label{eq:free_qTAZRP}
  b_{[x - 1]} u(X^-_N; t) = q b_{[x - 1]} u(X^-_1; t) + (1 - q) b_{[x]} u(X; t)
\end{equation}
with $x_1 = x_N + L = x$, and
\begin{equation} \label{eq:free_qTAZRP_2}
  b_{[x - 1]} u(X^-_i; t) = q b_{[x - 1]} u(X^-_{i + 1}; t) + (1 - q) b_{[x]} u(X; t)
\end{equation}
with $x_i = x_{i + 1} = x$ for $i = 1, \dotsc, N - 1$. Then, analogous to Lemma \ref{lem:coor_B_ansztz}, we have that the master equation \eqref{eq:concrete_Master_eq} is equivalent to the free equation \eqref{eq:free_eq_q_TAZRP} with the boundary conditions \eqref{eq:free_qTAZRP} and \eqref{eq:free_qTAZRP_2}.
\begin{lemma} \label{lem:coord_B_ansatz_TAZRP}
  If the differentiable functions $u_Y(X; t)$ with $X, Y \in \intZ^N$ satisfy both the free equation \eqref{eq:free_eq_q_TAZRP} and the boundary conditions \eqref{eq:free_qTAZRP} and \eqref{eq:free_qTAZRP_2}, then the functions $W(X)^{-1} u_Y(X; t)$ with $W(X)$ defined in \eqref{eq:defn_W(X)} and $X, Y \in \Weylchamber_N(L)$ satisfy the master equation \eqref{eq:concrete_Master_eq} with $\prob_Y(X; t)$ replaced by $W(X)^{-1} u_Y(X; t)$.
\end{lemma}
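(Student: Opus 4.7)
The plan is to follow the strategy of Lemma \ref{lem:coor_B_ansztz}: use the boundary conditions \eqref{eq:free_qTAZRP} and \eqref{eq:free_qTAZRP_2} to rewrite those terms $b_{[x_i - 1]} u_Y(X_i^-; t)$ of the free equation \eqref{eq:free_eq_q_TAZRP} whose shifts leave $\Weylchamber_N(L)$ as linear combinations of terms supported in $\Weylchamber_N(L)$, and then match the resulting identity (after multiplying by $W(X)^{-1}$) with the master equation \eqref{eq:concrete_Master_eq}. Two features of the $q$-TAZRP complicate the ASEP argument: a stack of $k$ particles contributes $k$ terms to the free equation of which only one, the shift of the bottom particle, produces a valid neighbor in $\Weylchamber_N(L)$; and the factor $W(X)^{-1}$ must be tracked through the reduction since $W(X_{N_i(X)}^-) \neq W(X)$ in general.

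Fix $X \in \Weylchamber_N(L)$ and, for each $i = 1, \dotsc, \ell(X)$, write $k_i = n_{[v_i(X)]}(X)$ and $m_i = N_i(X)$. The indices of the $i$-th stack form a consecutive block ending at $m_i$, except for the top stack ($i = 1$) in the wrap case $n''_1(X) > 0$, where the block consists of the two sub-blocks $\{1, \dotsc, n'_1\}$ and $\{N - n''_1 + 1, \dotsc, N\}$ joined cyclically across $N \to 1$. Inductive application of \eqref{eq:free_qTAZRP_2} within each sub-block, together with one application of \eqref{eq:free_qTAZRP} to bridge the wrap when present, will produce the uniform telescoping identity
\[
b_{[v_i - 1]} u_Y(X_j^-; t) = q^{d_j} b_{[v_i - 1]} u_Y(X_{m_i}^-; t) + (1 - q^{d_j}) b_{[v_i]} u_Y(X; t),
\]
for each index $j$ in the $i$-th stack, where $d_j \in \{0, 1, \dotsc, k_i - 1\}$ is the cyclic distance from $j$ to $m_i$ within the stack. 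Summing over the $k_i$ indices of the stack, subtracting the corresponding $k_i$ diagonal terms $b_{[v_i]} u_Y(X; t)$, and collapsing the geometric sum $\sum_{d = 0}^{k_i - 1} q^d = (1 - q^{k_i})/(1 - q)$ with $b_{[y]} = (1 - q) a_{[y]}$ will reduce the stack-$i$ contribution to the right-hand side of \eqref{eq:free_eq_q_TAZRP} to the compact form
\[
(1 - q^{k_i}) \bigl( a_{[v_i - 1]} u_Y(X_{m_i}^-; t) - a_{[v_i]} u_Y(X; t) \bigr).
\]

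To finish the identification with \eqref{eq:concrete_Master_eq}, I divide by $W(X)$ and compare coefficients. Because moving the bottom particle of stack $i$ transfers one particle from the site with stack size $k_i$ to the neighboring site with stack size $k' := n_{[v_i(X) - 1]}(X)$, the $q$-factorial weights satisfy
\[
\frac{W(X_{m_i}^-)}{W(X)} = \frac{[k_i - 1]_q!\, [k' + 1]_q!}{[k_i]_q!\, [k']_q!} = \frac{1 - q^{k' + 1}}{1 - q^{k_i}},
\]
so the coefficient of $u_Y(X_{m_i}^-; t)/W(X_{m_i}^-)$ is $a_{[v_i - 1]}(1 - q^{k' + 1})$, which matches $\alpha_i(X)$ in each of its cases: $k' \geq 1$ corresponding to $v_{i+1}(X) = v_i(X) - 1$ when $i < \ell(X)$ or to $v_1(X) = v_{\ell(X)}(X) + L - 1$ when $i = \ell(X)$, and $k' = 0$ otherwise. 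Likewise the coefficient of $u_Y(X; t)/W(X)$ is $(1 - q^{k_i}) a_{[v_i]} = \beta_i(X)$.

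The hard part will be the bookkeeping in the top-stack wrap case: one must verify that combining the iteration of \eqref{eq:free_qTAZRP_2} through the upper sub-block $\{1, \dotsc, n'_1\}$ with the iteration through the lower sub-block $\{N - n''_1 + 1, \dotsc, N\}$, linked by a single application of \eqref{eq:free_qTAZRP} across $N \to 1$, really does telescope to the stated identity with cyclic distance $d_j$; this relies on the ring periodicity of the conductances, $b_{[v_1 - L - 1]} = b_{[v_1 - 1]}$ and $b_{[v_1 - L]} = b_{[v_1]}$, which makes the two halves contribute identically-structured telescopes that merge seamlessly.
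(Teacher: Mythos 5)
Your proposal is correct and follows essentially the same route as the paper: group the free-equation terms by congruence class, telescope the boundary conditions \eqref{eq:free_qTAZRP_2} (bridged once by \eqref{eq:free_qTAZRP} in the wrap case) to collapse each stack's contribution to $(1+q+\dotsb+q^{k_i-1})\bigl(b_{[v_i-1]}u(X^-_{N_i};t)-b_{[v_i]}u(X;t)\bigr)$, and then match with the master equation. The only difference is that you spell out the final coefficient check — the ratio $W(X^-_{N_i})/W(X)=(1-q^{k'+1})/(1-q^{k_i})$ and its identification with $\alpha_i(X)$, $\beta_i(X)$ — which the paper compresses into ``this leads to the desired result.''
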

The proof of this lemma is similar to that of Lemma \ref{lem:coor_B_ansztz} in spirit, and more similar to the corresponding proofs in \cite{Korhonen-Lee14, Wang-Waugh16} for the $q$-TAZRP on $\intZ$.

\begin{proof}[Proof of Lemma \ref{lem:coord_B_ansatz_TAZRP}]
  We write the right-hand side of \eqref{eq:free_eq_q_TAZRP} as
  \begin{equation}
    \sum^{\ell(X)}_{j = 1} D_j, \quad \text{where} \quad D_j = \sum^{N_j}_{i = N_{j - 1} + 1} b_{[x_i - 1]} u(X^-_{i}; t) - b_{[x_i]} u(X; t), \quad \text{for } j = 2, 3, \dotsc, \ell(X),
  \end{equation}
  and $D_1$ is defined by subtracting $D_2, \dotsc, D_{\ell(X)}$ from the right-hand side of \eqref{eq:free_eq_q_TAZRP}. Equivalently, the summation for $D_j$ is over all index $i$ such that $x_i$ is in the congruence class $[v_j(X)]$. Using \eqref{eq:free_qTAZRP} and \eqref{eq:free_qTAZRP_2} for all $i$ in cyclically increasing order such that both $x_i$ and $x_{i + 1}$ are in the congruence class $[v_j(X)]$, we find that
  \begin{equation}
    D_j = (1 + q + \dotsb + q^{n_{[v_i(X)]} - 1}) (b_{[v_i(X) - 1]} u(X^-_{N_j(X)}; t) - b_{[v_i(X)]} u(X; t)).
  \end{equation}
  This leads to the desired result.
\end{proof}

Therefore, similar to the ASEP case, we find the transition probability $\prob_Y(X; t)$ for $q$-TAZRP by constructing $u_Y(X; t)$ with $X, Y \in \intZ^N$ that satisfy the free equation, boundary conditions, and the \emph{initial condition}
\begin{equation} \label{eq:initial_qTAZRP}
  u_Y(X; 0) = W(X) \delta_Y(X), \quad \text{for all $X, Y \in \Weylchamber_N(L)$}.
\end{equation}

\section{Cyclic invariance} \label{sec:cyclic_invariance}

The ASEP and the $q$-TAZRP defined on the line are translation invariant. Namely,
\begin{equation}
  \mathbb{P}_{(y_1, \dots, y_N)} (x_1, \dots, x_N; t)  = \mathbb{P}_{(y_1 + c, \dots, y_N+c)} (x_1+c, \dots, x_N+c; t) 
\end{equation}
for any integer $c \in \mathbb{Z}$. The models defined on the ring have the same translational invariance. Additionally, the geometry of the ring induces a cyclic invariance. Below, we state and prove a technical result for the cyclic invariance of $u_Y(X; t)$ defined in \eqref{eq:transition_probability} and \eqref{eq:defn_u_Y(X)}. Although the cyclic invariance of $u_Y(X; t)$ yields the cyclic invariance of $\mathbb{P}_{(y_1, \dots, y_N)} (x_1, \dots, x_N; t)$ by Theorem \ref{thm:main} or \ref{thm:trans_prob}, before we finally prove the theorems, we cannot use the probability interpretation to justify the cyclic invariance of $u_Y(X; t)$. To the contrary, we will need the cyclic invariance of $u_Y(X; t)$ to prove the theorems. Hence we give an algebraic proof of the cyclic invariance of $u_Y(X; t)$.

We define a cyclic translation for possible configurations of the models, namely operators $\tilde{}$ on $\confs_N(L)$ and $\hat{}$ on $\Weylchamber_N(L)$. For instance, if $Z = (z_1, \dots, z_N) \in \mathbb{Z}^N$, then
\begin{equation}\label{eq:cyclic_trans}
\tilde{Z} = (z_2, \dotsc, z_N, z_1 + L),\quad \text{ and } \quad \hat{Z} = (z_2, \dotsc, z_N, z_1 - L).
\end{equation}

Note that $\tilde{X}, \tilde{Y} \in \confs_N(L)$ if $X, Y \in \confs_N(L)$, and similarly, $\hat{X}, \hat{Y} \in \Weylchamber_N(L)$ if $X, Y \in \Weylchamber_N(L)$. Then, we have that the ASEP on the ring and the $q$-TAZRP on the ring are invariant under the cyclic translations defined above, respectively.

\begin{lemma} \label{lem:cyclic_invariance}
For the ASEP, take the function defined by \eqref{eq:transition_probability} in Theorem \ref{thm:main}. Then, 
  \begin{equation}
    u_{\tilde{Y}}(\tilde{X}; t) = u_{Y}(X; t),
  \end{equation}
for any configurations $X, Y  \in  \confs_N(L)$ and their cyclic translations $\tilde{X}, \tilde{Y}  \in  \confs_N(L)$ given by \eqref{eq:cyclic_trans}.

For the $q$-TAZRP, take the function defined by \eqref{eq:defn_u_Y(X)} in Theorem \ref{thm:trans_prob}. Then, 
  \begin{equation}
    u_{\hat{Y}}(\hat{X}; t) = u_{Y}(X; t),
  \end{equation}
for any configurations $X, Y  \in  \Weylchamber_N(L)$ and their cyclic translations $\hat{X}, \hat{Y}  \in  \Weylchamber_N(L)$ given by \eqref{eq:cyclic_trans}.
\end{lemma}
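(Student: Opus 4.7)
The plan is to exhibit a bijection on $\intZ^N(0) \times S_N$ under which the summand $\Lambda^{\Lattice}_{\tilde Y}(\tilde X; t; \sigma)$ of $u_{\tilde Y}(\tilde X; t)$ is carried term-by-term to a summand $\Lambda^{\Lattice'}_Y(X; t; \sigma')$ of $u_Y(X; t)$. Absolute convergence of the double sum, provided by Lemma \ref{lm:absolutely_convergent}, then permits the rearrangement and delivers $u_{\tilde Y}(\tilde X; t) = u_Y(X; t)$.

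For the ASEP, let $\pi \in S_N$ denote the cyclic shift with $\pi(j) = j+1$ for $j < N$ and $\pi(N) = 1$. Inside the integral defining $\Lambda^{\Lattice}_{\tilde Y}(\tilde X; t; \sigma)$ I would change variables $\xi_k = \eta_{\pi(k)}$, which is legitimate because all the $\xi_k$ share the same contour $C$. I would then track each factor of the integrand. The symmetric piece $\prod_j e^{\epsilon(\xi_j)t}$ is invariant. A short reindexing of the inner product shows $F_j(\eta \circ \pi) = F_{\pi(j)}(\eta)$ for the Bethe factor $F_j(\xi) := \xi_j^L \prod_k (p + q\xi_k\xi_j - \xi_k)/(p + q\xi_k\xi_j - \xi_j)$, so $D_{\Lattice}(\eta \circ \pi) = D_{\Lattice \circ \pi^{-1}}(\eta)$. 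The monomial becomes $\prod_j \eta_{\sigma'(j)}^{x_j - y_{\sigma'(j)} - 1} \cdot \eta_{\sigma'(1)}^{L} \eta_1^{-L}$, with $\sigma' := \pi \sigma \pi^{-1}$; the extra factor $\eta_{\sigma'(1)}^{L}\eta_1^{-L}$ arises from the $L$-shift in $\tilde x_N$ (contributing $\xi_{\sigma(N)}^L$) and in $\tilde y_N$ attached to the index with $\sigma(j) = N$ (contributing $\xi_N^{-L}$). Finally, a direct inversion count yields $A_\sigma(\eta \circ \pi) = A_{\sigma'}(\eta) \cdot g_{\sigma'(1)}(\eta)/g_1(\eta)$, where $g_j(\eta) := \prod_k (p + q\eta_k\eta_j - \eta_k)/(p + q\eta_k\eta_j - \eta_j)$ is precisely the scattering factor in the identity $F_j = \eta_j^L g_j$.

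The combined extra factor $\eta_{\sigma'(1)}^L \eta_1^{-L} \cdot g_{\sigma'(1)}/g_1$ equals $F_{\sigma'(1)}/F_1$, which absorbs cleanly into $D_{\Lattice \circ \pi^{-1}}$ to yield $D_{\Lattice'}$ with $\Lattice' := \Lattice \circ \pi^{-1} + e_{\sigma'(1)} - e_1 \in \intZ^N(0)$. Hence $\Lambda^{\Lattice}_{\tilde Y}(\tilde X; t; \sigma) = \Lambda^{\Lattice'}_Y(X; t; \sigma')$. The correspondence $(\Lattice, \sigma) \mapsto (\Lattice', \sigma')$ is a bijection on $\intZ^N(0) \times S_N$: conjugation $\sigma \mapsto \pi\sigma\pi^{-1}$ is a bijection of $S_N$, and for each fixed $\sigma$ the map $\Lattice \mapsto \Lattice \circ \pi^{-1} + e_{\sigma'(1)} - e_1$ is an invertible affine transformation of $\intZ^N(0)$. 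Summing over $(\Lattice, \sigma)$ then gives the ASEP case.

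For the $q$-TAZRP the same template applies under $w_k = v_{\pi(k)}$. The prefactor $\prod_k (-1/b_{[x_k]})$ is invariant by the $L$-periodicity of the conductances; the $A_\sigma$ factor transforms exactly as before and supplies $g_{\sigma'(1)}/g_1$; and the product $\prod_j \prodprime^{\hat x_j}_{k = \hat y_{\sigma(j)}} \frac{b_{[k]}}{b_{[k]} - w_{\sigma(j)}}$ picks up an additional factor from the two indices $j = N$ and $j = \sigma^{-1}(N)$ (when distinct) at which exactly one endpoint of the range shifts by $-L$. By $L$-periodicity of $b_{[k]}$, each mismatched range collapses to a product over $i = 1, \dots, L$, and together they yield $P_{\sigma'(1)}(v)/P_1(v)$ with $P_j(v) := \prod_{i=1}^L (b_{[i]} - v_j)$ the position piece of the $q$-TAZRP analogue of $F_j$. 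The combination $(P_{\sigma'(1)}/P_1)(g_{\sigma'(1)}/g_1) = F_{\sigma'(1)}/F_1$ is absorbed into $D_{\Lattice \circ \pi^{-1}}$ exactly as in the ASEP case, and the same bijection argument closes out the proof. The main technical obstacle is the algebraic identity $A_\sigma(\eta \circ \pi) = A_{\sigma'}(\eta) \cdot g_{\sigma'(1)}/g_1$: writing $A_\sigma$ as a product over pairs $i < j$ and reindexing via $a = \pi(i), b = \pi(j)$ turns the constraint $i < j$ into $\pi^{-1}(a) < \pi^{-1}(b)$, which agrees with $a < b$ except on pairs involving the index $1$; the net discrepancy from those pairs collapses precisely to $g_{\sigma'(1)}/g_1$, while the sign is preserved because $\sgn(\pi\sigma\pi^{-1}) = \sgn(\sigma)$. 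The analogous identity for the $q$-TAZRP is proved by the same manipulation with $G(u,v) = qu - v$ in place of $G(u,v) = p + quv - u$.
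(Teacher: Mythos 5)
Your proposal is correct and follows essentially the same route as the paper: a term-by-term bijection on $\intZ^N(0)\times S_N$ induced by cyclically relabeling the integration variables, conjugating $\sigma$ by the cyclic shift, and absorbing the leftover factor $F_{\sigma'(1)}/F_1$ into a $\sigma$-dependent shift of $\Lattice$, with the key identity for $A_\sigma$ under the relabeling matching the one the paper computes. The only cosmetic differences are the direction of the bijection, that you write the $\Lattice$-shift uniformly as $\Lattice\circ\pi^{-1}+e_{\sigma'(1)}-e_1$ instead of splitting into the cases $\sigma(1)=1$ and $\sigma(1)\neq 1$, and that for the $q$-TAZRP the rearrangement should be justified by the finiteness Lemmas \ref{lem:finiteness_of_large_max(L)} and \ref{lem:qTAZRP_conv} rather than Lemma \ref{lm:absolutely_convergent}.
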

The proof of the lemma, although formally complicated, is based on the simple idea that the series expression of $u_{\hat{Y}}(\hat{X}; t)$ given by \eqref{eq:transition_probability} in the ASEP case and by \eqref{eq:defn_u_Y(X)} in the $q$-TAZRP case, is the same as the series expression of $u_{Y}(X; t)$ after reordering of terms.
\begin{proof}[Proof of Lemma \ref{lem:cyclic_invariance}]
  We prove only the $q$-TAZRP case. The ASEP case is similar, but the notation is heavier and more cumbersome. 
  We denote $\hat{X} = (\hat{x}_1, \dotsc, \hat{X}_N)$ such that $\hat{x}_N = x_1 - L$ and $\hat{x}_i = x_{i + 1}$ for $i = 1, \dotsc, N - 1$, and similarly denote $\hat{Y} = (\hat{y}_1, \dotsc, \hat{y}_N)$. Additionally, we introduce variables $u_1, \dotsc, u_N$ so that $u_i = w_{i + 1}$ with $i = 1, \dotsc, N - 1$ and $u_N = w_1$. We show below that there is a pair of bijections $\sigma \mapsto F(\sigma)$ and $\mathbb{L} \mapsto \widehat{\mathbb{L}}$, which the latter depends on $\sigma\in S_N$, so that $\Lambda^{\Lattice}_{\hat{Y}}(\hat{X}; t; F(\sigma)) = \Lambda^{\widehat{\Lattice}}_{Y}(X; t; \sigma)$. Then, we have the result
	\[
		u_{\hat{Y}}(\hat{X};t) = \sum_{\Lattice,\sigma } \Lambda^{\Lattice}_{\hat{Y}}(\hat{X}; t; F(\sigma)) = \sum_{\Lattice,\sigma } \Lambda^{\widehat{\Lattice}}_{Y}(X; t; \sigma) = u_{Y}(X;t).
	\]

We define a bijective mapping $F$ from $S_N$ to $S_N$, such that 
  \begin{equation}
    F(\sigma) = c^{-1} \sigma c, \quad \text{with} \quad c(N) = 1 \text{ and } c(i) = i + 1 \text{ for $i = 1, \dotsc, N - 1$}.
  \end{equation}
Below, let $\sigma \in S_N$ and $\lambda = F(\sigma)$.

Through a straight forward substitution in \eqref{eq:expr_A_sigma_qTAZRP}, we have
  \begin{equation}
    A_{\lambda}(u_1, \dotsc, u_N) = A_{\sigma}(w_1, \dotsc, w_N) \prod^N_{k = 1} \frac{(qw_1 - w_k)(qw_{\sigma(k)} - w_{\sigma(1)})}{(qw_k - w_1)(qw_{\sigma(1)} - w_{\sigma(k)})}.
  \end{equation}
for any $\sigma \in S_N$. Moreover, in the case $\sigma(1) = 1$, the product on the right side of the equality above is equal to one, meaning $A_{\lambda}(u_1, \dotsc, u_N) = A_{\sigma}(w_1, \dotsc, w_N)$ if $\sigma(1) = 1$.

If $\sigma(1) = 1$, we have
  \begin{equation}
    \begin{aligned}
      \prodprime^{\hat{x}_N}_{k = \hat{y}_{\lambda(N)}} \left( \frac{b_{[k]}}{b_{[k]} - u_{\lambda(N)}} \right) = {}& \prodprime^{x_1}_{k = y_{\sigma(1)}} \left( \frac{b_{[k]}}{b_{[k]} - w_{\sigma(1)}} \right), && \\
      \prodprime^{\hat{x}_{j - 1}}_{k = \hat{y}_{\lambda(j - 1)}} \left( \frac{b_{[k]}}{b_{[k]} - u_{\lambda(j - 1)}} \right) = {}& \prodprime^{x_j}_{k = y_{\sigma(j)}} \left( \frac{b_{[k]}}{b_{[k]} - w_{\sigma(j)}} \right), && \text{for $j = 2, \dotsc, N$}. 
    \end{aligned}
  \end{equation}
  Hence, for $\Lattice = (\ell_1, \dotsc, \ell_N)$,
  \begin{equation}
    \begin{split}
      \Lambda^{\Lattice}_{\hat{Y}}(\hat{X}; t; \lambda) = {}& \left( \prod^N_{k = 1} \frac{-1}{b_{[\hat{x}_k]}} \right) \dashint_C du_1 \dotsi \dashint_C du_N A_{\lambda}(u_1, \dotsc, u_N) \prod^N_{j = 1} \left[ \prodprime^{\hat{x}_j}_{k = \hat{y}_{\lambda(j)}} \left( \frac{b_{[k]}}{b_{[k]} - u_{\lambda(j)}} \right) e^{-u_j t} \right] \\
    & \times \prod^N_{j = 1} \left( \prod^L_{i = 1} (b_{[i]} - u_j) \prod^N_{k = 1} \left( \frac{qu_k - u_j}{qu_j - u_k} \right) \right)^{\ell_j} \\
    = {}& \left( \prod^N_{k = 1} \frac{-1}{b_{[x_k]}} \right) \dashint_C dw_1 \dotsi \dashint_C dw_N A_{\sigma}(w_1, \dotsc, w_N) \prod^N_{j = 1} \left[ \prodprime^{x_j}_{k = y_{\sigma(j)}} \left( \frac{b_{[k]}}{b_{[k]} - w_{\sigma(j)}} \right) e^{-w_j t} \right] \\
    & \times \left( \prod^L_{i = 1} (b_{[i]} - w_1) \prod^N_{k = 1} \left( \frac{qw_k - w_1}{qw_1 - w_k} \right) \right)^{\ell_N} \prod^N_{j = 2} \left( \prod^L_{i = 1} (b_{[i]} - w_j) \prod^N_{k = 1} \left( \frac{qw_k - w_j}{qw_j - w_k} \right) \right)^{\ell_{j - 1}} \\
    = {}& \Lambda^{\widehat{\Lattice}}_{Y}(X; t; \sigma),
    \end{split}
  \end{equation}
  by \eqref{eq:int_Lambda} and \eqref{eq:general_D_L_qTAZRP}, with $\widehat{\Lattice} = (\ell_N, \ell_1, \ell_2 \dotsc, \ell_{N-1})$. 

If $\sigma(1) \neq 1$, we have
  \begin{equation}
    \begin{gathered}
      \prodprime^{\hat{x}_N}_{k = \hat{y}_{\lambda(N)}} \left( \frac{b_{[k]}}{b_{[k]} - u_{\lambda(N)}} \right) = \prodprime^{x_1 - L}_{k = y_{\sigma(1)}} \left( \frac{b_{[k]}}{b_{[k]} - w_{\sigma(1)}} \right), \quad
      \prodprime^{\hat{x}_{\lambda^{-1}(N)}}_{k = \hat{y}_{N}} \left( \frac{b_{[k]}}{b_{[k]} - u_N} \right) = \prodprime^{x_{\sigma^{-1}(1)}}_{k = y_1 - L} \left( \frac{b_{[k]}}{b_{[k]} - w_1} \right),  \\
      \prodprime^{\hat{x}_{j - 1}}_{k = \hat{y}_{\lambda(j - 1)}} \left( \frac{b_{[k]}}{b_{[k]} - u_{\lambda(j - 1)}} \right) = \prodprime^{x_j}_{k = y_{\sigma(j)}} \left( \frac{b_{[k]}}{b_{[k]} - w_{\sigma(j)}} \right), \quad \text{for $j \in  \{ 1, \dotsc, N \} \setminus \{ 1, \sigma^{-1}(1) \}$},
    \end{gathered}
  \end{equation}
  Hence, for $\Lattice = (\ell_1, \dotsc, \ell_N)$,
  \begin{equation}
    \begin{split}
      \Lambda^{\Lattice}_{\hat{Y}}(\hat{X}; t; \lambda) = {}& \left( \prod^N_{k = 1} \frac{-1}{b_{[\hat{x}_k]}} \right) \dashint_C du_1 \dotsi \dashint_C du_N A_{\lambda}(u_1, \dotsc, u_N) \prod^N_{j = 1} \left[  \prodprime^{\hat{x}_j}_{k = \hat{y}_{\lambda(j)}} \left( \frac{b_{[k]}}{b_{[k]} - u_{\lambda(j)}} \right) e^{-u_j t} \right] \\
      & \times \prod^N_{j = 1} \left( \prod^L_{i = 1} (b_{[i]} - u_j) \prod^N_{k = 1} \left( \frac{qu_k - u_j}{qu_j - u_k} \right) \right)^{\ell_j} \\
      = {}& \left( \prod^N_{k = 1} \frac{-1}{b_{[x_k]}} \right) \dashint_C dw_1 \dotsi \dashint_C dw_N A_{\sigma}(w_1, \dotsc, w_N) \prod^N_{j = 1} \left[ \prodprime^{x_j}_{k = y_{\sigma(j)}} \left( \frac{b_{[k]}}{b_{[k]} - w_{\sigma(j)}} \right) e^{-w_j t} \right] \\
    & \times \left( \prod^L_{i = 1} (b_{[i]} - w_1) \prod^N_{k = 1} \left( \frac{qw_k - w_1}{qw_1 - w_k} \right) \right)^{\ell_N} \prod^N_{j = 2} \left( \prod^L_{i = 1} (b_{[i]} - w_j) \prod^N_{k = 1} \left( \frac{qw_k - w_j}{qw_j - w_k} \right) \right)^{\ell_{j - 1}} \\
    & \times \left( \prod^L_{i = 1} (b_{[i]} - w_{\sigma(1)}) \prod^N_{k = 1} \frac{(qw_{\sigma(k)} - w_{\sigma(1)})}{(qw_{\sigma(1)} - w_{\sigma(k)})} \right) \left( \prod^L_{i = 1} (b_{[i]} - w_1) \prod^N_{k = 1} \left( \frac{qw_k - w_1}{qw_1 - w_k} \right) \right)^{-1} \\
    = {}& \Lambda^{\widehat{\Lattice}}_{Y}(X; t; \sigma),
    \end{split}
  \end{equation}
  by \eqref{eq:int_Lambda} and \eqref{eq:general_D_L_qTAZRP}, with $\widehat{\Lattice} = (\ell_N -1, \ell_1, \dotsc, \ell_{\sigma(1) - 2}, \ell_{\sigma(1) - 1} + 1, \ell_{\sigma(1)}, \dotsc, \ell_{N - 1})$.

Thus, we set
	\begin{equation}
	\widehat{\Lattice} = \begin{cases} (\ell_N, \ell_1, \ell_2 \dotsc, \ell_{N-1}), \hspace{55mm} \sigma(1) =1 \\ (\ell_N -1, \ell_1, \dotsc, \ell_{\sigma(1) - 2}, \ell_{\sigma(1) - 1} + 1, \ell_{\sigma(1)}, \dotsc, \ell_{N - 1}), \quad \sigma(1) \neq 1 \end{cases}
	\end{equation}
for $\Lattice = (\ell_1, \ell_2, \dotsc, \ell_N)$. Then, the result follows by the argument at the beginning of the proof.
\end{proof}

\section{Proof of Theorems \ref{thm:main} and \ref{thm:trans_prob}}\label{sec:proof}

In this section, we show that the function $u_Y(X; t)$ defined in \eqref{eq:transition_probability} satisfies the assumptions of Lemma \ref{lem:coor_B_ansztz}, and by the statement in Lemma \ref{lem:coor_B_ansztz}, this proves Theorem \ref{thm:main}. Also, at the same time, we show that the function $u_Y(X; t)$ defined in \eqref{eq:defn_u_Y(X)} satisfies the assumptions in Lemma \ref{lem:coord_B_ansatz_TAZRP}, proving Theorem \ref{thm:trans_prob}. More specifically, we show that in either case $u_{Y}(X;t)$ satisfies the \textit{free equation}, the \textit{initial conditions}, and the \textit{boundary conditions} introduced in Section \ref{sec:asep}. For the most part, we treat both models in parallel. Checking the initial conditions for the ASEP on the ring is the most technically challenging aspect of these series of arguments as it was also the case in the work of Tracy and Widom \cite{Tracy-Widom08} for the probability function formula of the ASEP on the line. As a consequence, the arguments for checking the initial conditions of the ASEP and the $q$-TAZRP are different, but the essential idea of residue calculations is the same for both the ASEP and the $q$-TAZRP with the residues arising in the ASEP being more subtle to control. We begin by looking at the free equation in Section \ref{sec:free}, move onto the initial conditions in Section \ref{sec:IC}, and end with the boundary conditions in Section \ref{sec:boundary}.

\subsection{Free equation}\label{sec:free}

\begin{lemma}\label{lm:free}

The function $u_{Y}(X;t)$ given in \eqref{eq:transition_probability} satisfies the free equation~\eqref{free}, and the function $u_Y(X; t)$ given in \eqref{eq:defn_u_Y(X)} satisfies the free equation \eqref{eq:free_eq_q_TAZRP}.
\end{lemma}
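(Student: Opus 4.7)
The plan is to verify Lemma~\ref{lm:free} by reducing each free equation to a term-by-term identity at the level of a single $\Lambda^{\Lattice}_Y(X;t;\sigma)$. Since in both~\eqref{eq:transition_probability} and~\eqref{eq:defn_u_Y(X)} the function $u_Y(X;t)$ is expressed as a sum over $\Lattice\in\intZ^N(0)$ and $\sigma\in S_N$ of such integrals---absolutely convergent for the ASEP by Lemma~\ref{lm:absolutely_convergent}, and reducing to finitely many nonzero terms for the $q$-TAZRP by Lemmas~\ref{lem:finiteness_of_large_max(L)} and~\ref{lem:qTAZRP_conv}---and since the free equations~\eqref{free} and~\eqref{eq:free_eq_q_TAZRP} are both linear in $u_Y$, it will suffice to check the free equation for each individual $\Lambda^{\Lattice}_Y(X;t;\sigma)$. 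The interchange of $\frac{d}{dt}$, the finite sums over the shifts $X\mapsto X_i^\pm$, and the contour integration (as well as, for the ASEP, the sum over $\Lattice$) is justified by the compactness of the contour $C$ together with joint analyticity of the integrand in $t$ and in the $\xi_j$ or $w_j$, with uniform control provided by the bounds in Appendix~\ref{sec:convergence}.

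For the ASEP, I would isolate the dependence of the integrand in~\eqref{eq:int_Lambda_ASEP}: the entire $X$-dependence sits in the monomial factor $\prod_{j=1}^N \xi_{\sigma(j)}^{x_j - y_{\sigma(j)} - 1}$, while the entire $t$-dependence sits in $\prod_{j=1}^N e^{\epsilon(\xi_j)t}$; the factors $A_{\sigma}$ and $D_{\Lattice}$ are independent of both. Differentiating in $t$ produces a multiplicative factor $\sum_{j=1}^N \epsilon(\xi_j)$, while the shifts $X\mapsto X_i^-$ and $X\mapsto X_i^+$ produce multiplicative factors $\xi_{\sigma(i)}^{-1}$ and $\xi_{\sigma(i)}$, respectively. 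Accordingly, the right-hand side of~\eqref{free} acts on the integrand as multiplication by $\sum_{i=1}^N \bigl( p\xi_{\sigma(i)}^{-1} + q\xi_{\sigma(i)} - 1 \bigr) = \sum_{i=1}^N \epsilon(\xi_{\sigma(i)}) = \sum_{j=1}^N \epsilon(\xi_j)$, using that $\sigma$ is a bijection, and the identity holds pointwise under the integral sign.

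For the $q$-TAZRP, the integrand of~\eqref{eq:int_Lambda} has $X$-dependence in two places: the outer prefactor $\prod_{k}(-b_{[x_k]})^{-1}$ and the inner telescoping product $\prodprime^{x_j}_{k=y_{\sigma(j)}}\bigl(b_{[k]}/(b_{[k]}-w_{\sigma(j)})\bigr)$. Under $X\mapsto X_i^-$ the outer prefactor is multiplied by $b_{[x_i]}/b_{[x_i-1]}$, while the $j=i$ term of the inner product drops its $k=x_i$ contribution, producing a factor $(b_{[x_i]}-w_{\sigma(i)})/b_{[x_i]}$. The integrand of $u(X_i^-;t)$ is therefore that of $u(X;t)$ times $(b_{[x_i]} - w_{\sigma(i)})/b_{[x_i-1]}$, so the $i$-th summand of the right-hand side of~\eqref{eq:free_eq_q_TAZRP} acts on the integrand as multiplication by $b_{[x_i]}-w_{\sigma(i)}-b_{[x_i]} = -w_{\sigma(i)}$. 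Summing over $i$ yields the factor $-\sum_{j=1}^N w_j$, which matches the $\frac{d}{dt}$ action on $\prod_j e^{-w_j t}$.

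No serious obstacle is anticipated here: the argument is essentially careful bookkeeping of how the shift operators act on a cleanly factorized integrand, and the identity lifts from each $\Lambda^{\Lattice}_Y(X;t;\sigma)$ to the full sum by linearity. The substantive difficulties of the Bethe ansatz---the boundary conditions and especially the initial conditions---are postponed to Sections~\ref{sec:boundary} and~\ref{sec:IC}.
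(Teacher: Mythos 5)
Your proposal is correct and follows essentially the same route as the paper: verify the free equation term by term for each $\Lambda^{\Lattice}_Y(X;t;\sigma)$ (the paper states this as a straightforward check, which you carry out explicitly by tracking how the shifts and the $t$-derivative act multiplicatively on the factorized integrand), then sum over $\sigma$ and $\Lattice$, invoking Lemma~\ref{lm:absolutely_convergent} for the ASEP and the finiteness from Lemmas~\ref{lem:finiteness_of_large_max(L)} and~\ref{lem:qTAZRP_conv} for the $q$-TAZRP. The explicit bookkeeping you supply (the factor $\sum_j\epsilon(\xi_j)$ for the ASEP and $-\sum_j w_j$ for the $q$-TAZRP) is exactly the computation the paper leaves implicit.
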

\begin{proof}
  First we prove the ASEP case. For each set of fixed parameters $\Lattice \in \intZ^N$, $\sigma \in S_N$ and $X \in \intZ^N$, it is straight forward to check that
  \begin{equation}
    \frac{d}{dt} \Lambda^{\Lattice}_Y(X; t; \sigma) = \sum^N_{i = 1} \left( p \Lambda^{\Lattice}_Y(X^i_-; t; \sigma) + q \Lambda^{\Lattice}_Y(X^i_+; t; \sigma) - \Lambda^{\Lattice}_Y(X; t; \sigma) \right).
  \end{equation}
That is, the function $\Lambda^{\Lattice}_Y(X; t; \sigma)$ satisfies the free equation. Then, we have that
	\begin{equation}
	\frac{d}{dt} u_Y(X; t) = pu_Y(X^i_-; t) + qu_Y(X^i_+; t) - u_Y(X; t)
	\end{equation}
by summing both sides of the previous identity over $\sigma \in S_N$ and $\Lattice \in \intZ^N(0)$. The convergence of the sums over $\sigma \in S_N$ and $\Lattice \in \intZ^N(0)$ are guaranteed by Lemma \ref{lm:absolutely_convergent}. Hence, we derive that $u_Y(X; t)$ satisfies the free equation. 

  We prove the $q$-TAZRP case analogously. For each fixed set of parameters $\Lattice \in \intZ^N$, $\sigma \in S_N$ and $X \in \intZ^N$, we have
  \begin{equation} \label{eq:free_eq_part_qTAZRP}
    \frac{d}{dt} \Lambda^{\Lattice}_Y(X; t; \sigma) = \sum^N_{i = 1} \left( b_{[x_i - 1]} \Lambda^{\Lattice}_Y(X^i_-; t; \sigma) - b_{[x_i]} \Lambda^{\Lattice}_Y(X; t; \sigma) \right).
  \end{equation}
That is, the function $\Lambda^{\Lattice}_Y(X; t; \sigma)$ satisfies the free equation. Summing over $\sigma \in S_N$ and $\Lattice \in \intZ^N(0)$, we derive that $u_Y(X; t)$ satisfies the free equation. We note that the convergence is not an issue since both sides of \eqref{eq:free_eq_part_qTAZRP} are nontrivial only for finitely many $\Lattice$ by Lemma \ref{lem:qTAZRP_conv}.
\end{proof}

\subsection{Boundary conditions}\label{sec:boundary}

\begin{lemma}\label{lm:boundary}
The function $u_{Y}(X;t)$ given in \eqref{eq:transition_probability} satisfies the boundary conditions~\eqref{boundary1} and~\eqref{boundary}, and the function $u_{Y}(X;t)$ given in \eqref{eq:defn_u_Y(X)} satisfies the boundary conditions \eqref{eq:free_qTAZRP} and \eqref{eq:free_qTAZRP_2}.
\end{lemma}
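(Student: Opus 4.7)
The plan is to split each boundary condition into two types: the \emph{adjacent} conditions \eqref{boundary} and \eqref{eq:free_qTAZRP_2}, which I will verify directly by a permutation-pairing argument at the level of the integrand, and the \emph{wrap-around} conditions \eqref{boundary1} and \eqref{eq:free_qTAZRP}, which I will reduce to the adjacent case using the cyclic invariance proved in Lemma \ref{lem:cyclic_invariance}. This mirrors the strategy of Tracy--Widom on the line, with the periodic identity handled by a single extra cyclic shift.

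For \eqref{boundary}, fix $\Lattice \in \intZ^N(0)$ and $X$ with $x_i = x_{i-1}+1$, and pair each $\sigma \in S_N$ with $\sigma' = \sigma \circ (i-1,i)$, writing $\alpha = \sigma(i-1)$, $\beta = \sigma(i)$. In the integrand of $\Lambda^{\Lattice}_Y(X;t;\sigma)$, the factors $e^{\epsilon(\xi_j)t}$ and $D_{\Lattice}(\xi_1,\dots,\xi_N)$ do not depend on $\sigma$, and the monomial $\prod_{j\neq i-1,i}\xi_{\sigma(j)}^{x_j - y_{\sigma(j)} - 1}$ is the same for $\sigma$ and $\sigma'$. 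Substituting $x_i = x_{i-1}+1$ and factoring out the common monomial $\xi_\alpha^{x_{i-1}-y_\alpha-1}\xi_\beta^{x_{i-1}-y_\beta-1}$, the contribution of the pair $\{\sigma,\sigma'\}$ to $u_Y(X;t) - p\,u_Y(X_i^-;t) - q\,u_Y(X_{i-1}^+;t)$ collapses to
\[
A_\sigma(\xi_\beta - p - q\xi_\alpha\xi_\beta) + A_{\sigma'}(\xi_\alpha - p - q\xi_\alpha\xi_\beta),
\]
which vanishes identically because the inversion count of $\sigma'$ exceeds that of $\sigma$ by exactly the pair $(\beta,\alpha)$, forcing $A_{\sigma'}/A_\sigma = S_{\beta,\alpha}$, precisely the ratio prescribed by \eqref{eq:S_factor}. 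Summing over pairs in $S_N$ and then over $\Lattice \in \intZ^N(0)$ (justified by Lemma \ref{lm:absolutely_convergent}) yields \eqref{boundary}. The $q$-TAZRP analogue \eqref{eq:free_qTAZRP_2} at $x_i = x_{i+1} = x$ is treated identically via the pairing $\sigma \leftrightarrow \sigma' = \sigma\circ(i,i+1)$: the two $\prodprime$ blocks at positions $i,i+1$ are literally interchanged (same bound $x$), the prefactor $\prod_k(-1/b_{[x_k]})$ and $D_{\Lattice}(w_1,\dots,w_N)$ are $\sigma$-independent, and the identity reduces to $A_\sigma(w_\alpha - qw_\beta) + A_{\sigma'}(w_\beta - qw_\alpha)=0$, equivalent to $A_{\sigma'}/A_\sigma = S_{\beta,\alpha}$ in the $q$-TAZRP convention; the $\Lattice$-sum is finite by Lemma \ref{lem:qTAZRP_conv}.

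For the ASEP wrap-around \eqref{boundary1}, suppose $x_N + 1 = x_1 + L$ and form $\tilde X = (x_2,\dots,x_N,x_1+L) \in \confs_N(L)$; then $\tilde x_N - \tilde x_{N-1} = 1$, so \eqref{boundary} with $i = N$ applies to $\tilde X$. A direct computation from \eqref{eq:cyclic_trans} gives $\widetilde{X_1^-} = \tilde X_N^-$ and $\widetilde{X_N^+} = \tilde X_{N-1}^+$, so Lemma \ref{lem:cyclic_invariance} yields
\[
u_Y(X;t) = u_{\tilde Y}(\tilde X;t) = p\,u_{\tilde Y}(\tilde X_N^-;t) + q\,u_{\tilde Y}(\tilde X_{N-1}^+;t) = p\,u_Y(X_1^-;t) + q\,u_Y(X_N^+;t),
\]
which is \eqref{boundary1}. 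The $q$-TAZRP wrap-around \eqref{eq:free_qTAZRP} is obtained in the same way from $\hat X = (x_2,\dots,x_N,x_1-L)$ and \eqref{eq:free_qTAZRP_2} with $i = N-1$, noting that $\widehat{X_1^-} = \hat X_N^-$, $\widehat{X_N^-} = \hat X_{N-1}^-$, and the $L$-periodicity of the conductances $a_{[\cdot]}$ makes $b_{[x-1]} = b_{[x_N-1]}$ and $b_{[x]} = b_{[x_N]}$ for $x = x_1 = x_N + L$, so the two forms of the identity agree term by term.

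The main technical obstacle will be the careful bookkeeping in the pairing step: one must confirm that no hidden $\sigma$-dependence sneaks in through the $D_{\Lattice}$ factor or the $\prodprime$ (equivalently $\xi^{x_j-y_{\sigma(j)}-1}$) structure at positions other than $\{i-1,i\}$ (or $\{i,i+1\}$), and that the swap really exchanges $A_\sigma \leftrightarrow A_{\sigma'}$ with the ratio $S_{\beta,\alpha}$ chosen to absorb the two-body scattering. Once this is checked, the algebraic identity is baked into the definition of $S_{\beta,\alpha}$, and the wrap-around conditions follow from cyclic invariance at essentially no extra cost.
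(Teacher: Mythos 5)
Your proposal is correct and follows essentially the same route as the paper: the adjacent boundary conditions \eqref{boundary} and \eqref{eq:free_qTAZRP_2} are verified by the Tracy--Widom/Wang--Waugh two-body cancellation at the level of the integrand (which the paper invokes by citation and you spell out, correctly noting that $D_{\Lattice}$ is $\sigma$-independent so the argument extends to all $\Lattice \in \intZ^N(0)$), and the wrap-around conditions \eqref{boundary1} and \eqref{eq:free_qTAZRP} are reduced to the adjacent case via the cyclic invariance of Lemma \ref{lem:cyclic_invariance}, exactly as in the paper.
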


We establish some notation before giving the proof. We define 
  \begin{equation}
    u^{\Lattice}_Y(X; t) = \sum_{\sigma \in S_N} u^{\Lattice}_Y(X; t; \sigma),
  \end{equation}
  in both the ASEP case and the $q$-TAZRP case with $\Lambda^{\Lattice}_Y(X; t; \sigma)$ defined by \eqref{eq:int_Lambda_ASEP} for the ASEP and \eqref{eq:int_Lambda} for the $q$-TAZRP.

\begin{proof}
  First, we give the proof for the ASEP. From \cite[Theorem 2.1]{Tracy-Widom08}, we have
  \begin{equation} \label{eq:sub_boundary_ASEP}
    u^{\Lattice}_Y(X; t) = pu^{\Lattice}_Y(X^-_i; t) + qu^{\Lattice}_Y(X^+_{i - 1}; t).
  \end{equation}
for all $i = 2, \dotsc, N$ and all $X, Y \in \intZ^N$ if $\Lattice = (0, \dotsc, 0)$. For a general $\Lattice \in \intZ^N(0)$, the same arguments found in \cite[Theorem 2.1]{Tracy-Widom08} may be applied. Thus, we have that \eqref{eq:sub_boundary_ASEP} is true for for all $i = 2, \dotsc, N$, all $X, Y \in \intZ^N$, and any $\Lattice \in \intZ^N(0)$.  Hence, by summing over all $\Lattice \in \intZ^N(0)$ on both side of \eqref{eq:sub_boundary_ASEP}, we prove that $u_Y(X; t)$ satisfies the boundary condition \eqref{boundary}.

 We prove that $u_Y(X; t)$ also satisfies the boundary condition \eqref{boundary1} by use of the cyclic invariance of $u_Y(X; t)$ discussed in Section \ref{sec:cyclic_invariance}. Suppose $X = (x - L + 1, x_2, \dotsc, x_{N - 1}, x), Y = (y_1, \dotsc, y_N) \in \intZ^N$. Then, by Lemma \ref{lem:cyclic_invariance}, we have that $u_Y(X; t) = u_{\tilde{Y}}(\tilde{X}; t)$ with $\tilde{X} = (x_2, \dotsc, x_{N - 1}, x, x + 1)$ and $\tilde{Y} = (\dotsc, y_{N - 1}, y_N, y_1 + L)$. Also, we have $u_Y(X^-_1; t) = u_{\tilde{Y}}(\tilde{X}^-_N; t)$ and $u_Y(X^+_N; t) = u_{\tilde{Y}}(\tilde{X}^+_{N - 1}; t)$. Hence, the boundary condition \eqref{boundary1} is equivalent to the condition 
	\begin{equation}
	u_{\tilde{Y}}(\tilde{X}; t) = p u_{\tilde{Y}}(\tilde{X}^-_N; t) + q u_{\tilde{Y}}(\tilde{X}^+_{N - 1}; t),
	\end{equation}
which is the $i = N$ case of \eqref{boundary}. Therefore, we have that $u_Y(X; t)$ also satisfies the boundary condition \eqref{boundary1}.

  The same arguments of the ASEP work for the $q$-TAZRP. From \cite{Korhonen-Lee14} and \cite{Wang-Waugh16}, we have
  \begin{equation}\label{eq:sub_boundary_TAZRP}
    b_{[x - 1]} u^{\Lattice}(X^-_i; t) = q b_{[x - 1]} u^{\Lattice}(X^-_{i + 1}; t) + (1 - q) b_{[x]} u^{\Lattice}(X; t)
  \end{equation}
  for all $i = 1, \dotsc, N - 1$ if $\Lattice = (0, \dotsc, 0)$. It is straightforward to generalize the identity to all $\Lattice \in \intZ^N(0)$. Hence, by summing over all $\Lattice \in \intZ^N(0)$ on both side of \eqref{eq:sub_boundary_TAZRP}, we prove that $u_Y(X; t) $ satisfies \eqref{eq:free_qTAZRP_2}.

  Next, we want to check that $u_Y(X; t)$ satisfies \eqref{eq:free_qTAZRP} with $X = (x + L, x_2, \dotsc, x_{N - 1}, x)$ and $Y = (y + L, y_2, \dotsc, y_{N - 1}, y)$. Note that, by Lemma \ref{lem:cyclic_invariance}, $u_Y(X; t)$ satisfies \eqref{eq:free_qTAZRP} if and only if $u_{\hat{Y}}(\hat{X}; t)$ satisfies the identity
  \begin{equation}
    b_{[x - 1]}u_{\hat{Y}}(\hat{X}^-_{N - 1}; t) = qb_{[x - 1]} u_{\hat{Y}}(\hat{X}^-_N; t) + (1 - q) b_{[x]} u_{\hat{Y}}(\hat{X}; t)
  \end{equation}
  with $\hat{X} = (x_2, x_3, \dotsc, x_{N - 1}, x, x)$ and $\hat{Y} = (y_2, y_3, \dotsc, y_{N - 1}, y, y)$. Since this is simply the statement that $u_{\hat{Y}}(\hat{X}; t)$ satisfies the boundary condition \eqref{eq:free_qTAZRP_2} with $i = N - 1$ and $X$ replaced by $\hat{X}$, which was proved in the previous argument, we also know that the boundary condition \eqref{eq:free_qTAZRP} is satisfied.
\end{proof}

\subsection{Initial conditions}\label{sec:IC}

	Proving that the function $u_Y(X; 0)$, given by \eqref{eq:defn_u_Y(X)}, satisfies the initial conditions \eqref{eq:initial_qTAZRP} for the $q$-TAZRP case is more straightforward than proving that the function $u_Y(X; 0)$, given by \eqref{eq:transition_probability}, satisfies the initial conditions \eqref{initial} for the ASEP case. In both cases, we have that the function $u_Y(X; 0)$ is a linear combination of $N$ nested contour integrals that depend on the parameters $\mathbb{L} \in \mathbb{Z}^{N}(0)$ and $\sigma  \in S_N$. We show, in both cases, that these integrals vanish (for $t=0$) unless $\mathbb{L}= \vec{0}= (0, \dots, 0)$. In the $q$-TAZRP case, we show that each specific nested integral vanishes (for $t=0$) unless $\mathbb{L}= \vec{0}$, but in the ASEP case, we show that the nested integrals vanish in pairs depending on the parameter $\sigma \in S_N$. The argument for the ASEP case is similar to the argument given by Tracy and Widom in \cite{Tracy-Widom08, Tracy-Widom11} where the authors show that their formula for the probability function of the ASEP on the integer line also satisfies similar initial conditions, but our arguments are a bit more involved since we are dealing with infinite sums of nested contour integrals and our integrand gives rise to more complicated residues. In the following, we leave some of the more technical proofs to the end of the section in an attempt to give more streamlined arguments.\par

\subsubsection{Notation}

We introduce some notation before the proof of the initial conditions (Lemmas \ref{lm:initial_qTAZRP} and \ref{lm:initial_ASEP}). Take $\Lattice = (\ell_1, \dotsc, \ell_N) \in \intZ^N$, and set 
\begin{equation}\label{eq:defn_mM}
  \begin{split}
    \min(\Lattice) = \min(\ell_i), & \quad \max(\Lattice) = \max(\ell_i)\\
    m = m(\Lattice) \in \{1, \dots, N \} \quad \text{such that}& \quad \ell_m = \min(\Lattice) \quad \text{and} \quad  \ell_j > \ell_m \text{ if }m < j \leq N\\
    M = M(\Lattice) \in \{ 1, \dotsc, N \} \quad \text{such that}& \quad \ell_M = \max(\Lattice) \quad \text{and} \quad \ell_i < \ell_M \text{ if } 1 \leq i < M.
  \end{split}
\end{equation}
In particular, note that $\ell_M \geq 1$ and $\ell_m \leq -1$ if $\Lattice \in \intZ^N(0)$ and $\Lattice \neq (0, \dotsc, 0)$. Additionally, for some parameters $M,m\in \mathbb{Z}$, define the subset 
	\begin{equation}
	\intZ^N(0; m; M) \subset \mathbb{Z}^N(0)
	\end{equation}
so that $(\ell_1, \dotsc, \ell_N) \in \intZ^N(0; m; M)$ if and only if 
\begin{enumerate}
\item[(i)]
  $\sum^N_{i = 0} \ell_i = 0$,
\item[(ii)]
  $\ell_m = M$, and
\item[(iii)]
  $\ell_i \geq M$ if $1 \leq i < m$ and $\ell_i > M$ if $m < i \leq N$.
\end{enumerate}
It is clear that $\intZ^N(0; m; M) = \emptyset$ for some values of $M$ and $m$, but we don't use this property in the following arguments.

Lastly, throughout this section, we use the short-hand notation $A_{\sigma} = A_{\sigma}(w_1, \dotsc, w_N)$ for the $q$-TAZRP case and $A_{\sigma} = A_{\sigma}(\xi_1, \dotsc, \xi_N)$ for the ASEP case.

\subsubsection{Initial conditions for the $q$-TAZRP}

\begin{lemma}\label{lm:initial_qTAZRP}
The function $u_Y(X; t)$ given by \eqref{eq:defn_u_Y(X)} satisfies the initial condition \eqref{eq:initial_qTAZRP}.
\end{lemma}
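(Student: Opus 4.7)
The plan is to show two things separately: (i) $\Lambda^{\Lattice}_Y(X; 0; \sigma) = 0$ for every $\Lattice \in \intZ^N(0) \setminus \{\vec 0\}$ and every $\sigma \in S_N$, and (ii) $\sum_{\sigma \in S_N} \Lambda^{\vec 0}_Y(X; 0; \sigma) = W(X)\, \delta_Y(X)$ for $X, Y \in \Weylchamber_N(L)$. By Lemmas \ref{lem:finiteness_of_large_max(L)} and \ref{lem:qTAZRP_conv}, the outer sum over $\Lattice$ is effectively finite, so (i) together with (ii) yields the stated initial condition.

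Step (ii) should reduce directly to known input. When $\Lattice = \vec 0$ one has $D_{\vec 0}\equiv 1$, so the formula $\sum_\sigma \Lambda^{\vec 0}_Y(X; 0; \sigma)$ coincides with the transition probability formula for the $q$-TAZRP on the infinite lattice $\intZ$ evaluated at $t = 0$ (see \cite{Korhonen-Lee14, Wang-Waugh16}). Since $\Weylchamber_N(L) \subset \{x_1\geq x_2\geq\dotsb\geq x_N\}$ and the $\intZ$-formula is already known there to equal $W(X)\,\delta_Y(X)$, the claim follows.

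For step (i), fix $\Lattice \neq \vec 0$ (so $\ell_M \geq 1$) and $\sigma$, and analyze the integrand of $\Lambda^\Lattice_Y(X; 0; \sigma)$ as a function of $w_M$ with the other variables placed on their contours $|w_k|=R$. The plan is to show the integrand is holomorphic in $w_M$ inside $|w_M|=R$, so Cauchy's theorem forces the $w_M$-integral to vanish. Concretely, the polynomial $\prod_{i=1}^L (b_{[i]}-w_M)^{\ell_M}$ inside $D_\Lattice$ is designed to kill the poles at $w_M=b_{[i]}$ coming from the single-variable $\prodprime$ factor, and the factors $(qw_k - w_M)^{\ell_M}$ appearing in $Q_M^{\ell_M}$ kill the poles at $w_M=qw_k$ that come from the denominator of $A_\sigma$ (for $k<M$, using $\ell_M>\ell_k$) and from $Q_k^{\ell_k}$ when $\ell_k>0$ (using $\ell_M\geq\ell_k$ for $k>M$). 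The remaining candidate poles at $w_M=w_k/q$ lie outside the contour since $|w_k/q|=R/q>R$. When $\ell_M$ is not large enough to cancel the $b_{[i]}$-poles (i.e.\ when the number $n_i^{(M)}$ of shifted labels in class $[i]$ that appear in the range $\{y_M,\dotsc,x_{\sigma^{-1}(M)}\}$ exceeds $\ell_M$), I would instead work with the variable $w_m$, whose integrand carries the factor $\prod_{i=1}^L(b_{[i]}-w_m)^{\ell_m}$ with $\ell_m \leq -1$; expanding the $w_m$-contour to infinity, the $L\ell_m$ contribution to the degree at infinity drives the integrand to decay faster than $1/w_m$, and the integral vanishes.

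The main obstacle is the combinatorial bookkeeping that guarantees, for \emph{every} $\Lattice \neq \vec 0$, that at least one of the two arguments applies: either $\ell_M$ dominates all $n_i^{(M)}$ so the $w_M$-contour contains no poles, or $|\ell_m|$ is large enough that the $w_m$-integrand decays sufficiently at infinity. The cyclic invariance of Lemma \ref{lem:cyclic_invariance} provides helpful flexibility in choosing representatives of $X$ and $Y$, which should allow the two cases to cover $\intZ^N(0)\setminus\{\vec 0\}$ exhaustively. The argument here is strictly simpler than for the ASEP because in the $q$-TAZRP each nested integral vanishes individually, rather than cancellation occurring only after a sum over $\sigma$-pairs.
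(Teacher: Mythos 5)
Your overall architecture matches the paper's proof exactly: reduce the $\Lattice=\vec 0$ term to the known infinite-lattice result of \cite{Wang-Waugh16} (the paper cites \cite[Lemma 4.2]{Wang-Waugh16} for precisely your step (ii)), and kill every $\Lattice\neq\vec 0$ term by a residue argument in $w_M$ (no poles inside $C$) or a decay argument in $w_m$ (expand the contour to infinity). Your pole bookkeeping for $w_M$ — the exponents of $(qw_j-w_M)$ being non-negative by maximality of $\ell_M$, the poles at $w_M=w_k/q$ lying outside $|w_M|=R$, and the $b_{[i]}$-poles being cancelled only when $x_{\sigma^{-1}(M)}-y_M+1\le \ell_M L$ — is also the paper's.

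However, the step you explicitly defer as ``the main obstacle'' is a genuine gap, and the way you propose to close it would not work. You frame the conclusion as a dichotomy — for each $\Lattice\neq\vec 0$, \emph{either} the $w_M$-contour is pole-free \emph{or} the $w_m$-integrand decays at infinity — and hope that cyclic invariance lets the two cases exhaust $\intZ^N(0)\setminus\{\vec 0\}$. But the two vanishing criteria are not conditions on $\Lattice$ alone: the $w_M$-integral vanishes iff $x_{\sigma^{-1}(M)}<y_M+\ell_M L$ and the $w_m$-integral vanishes iff $x_{\sigma^{-1}(m)}>y_m+\ell_m L$, so they depend on $X$, $Y$ and $\sigma$, and no bookkeeping over $\Lattice$ can decide which one applies. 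The paper's resolution is to read both criteria as \emph{necessary conditions for non-vanishing} and show their conjunction is impossible: first use cyclic invariance (Lemma \ref{lem:cyclic_invariance}) to normalize $y_1-y_N<L$; then non-vanishing would force
\begin{equation*}
  y_1-y_N \;\ge\; y_m-y_M \;\ge\; x_{\sigma^{-1}(m)}-x_{\sigma^{-1}(M)}+(\ell_M-\ell_m)L \;\ge\; -L+2L \;=\; L,
\end{equation*}
using $x_{\sigma^{-1}(m)}-x_{\sigma^{-1}(M)}\ge x_N-x_1\ge -L$ and $\ell_M\ge 1$, $\ell_m\le -1$. This contradicts the normalization, so every $\Lambda^{\Lattice}_Y(X;0;\sigma)$ with $\Lattice\neq\vec 0$ vanishes. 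Replacing your case split by this conjunction-of-necessary-conditions argument completes the proof; the rest of your write-up is sound.
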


\begin{proof}
We assume, without loss of generality, that $Y = (y_1, \dotsc, y_N) \in \Weylchamber_N(L)$ and $y_1 - y_N < L$ by the cyclic invariance of the model (see Section \ref{sec:cyclic_invariance}). Then, by the result in \cite[Lemma 4.2]{Wang-Waugh16}, we have that
  \begin{equation}
    \sum_{\sigma \in S_N} \Lambda^{(0, \dotsc, 0)}_Y(X; 0; \sigma) = 
    \begin{cases}
      W(Y) & \text{if $X = Y$}, \\
      0 & \text{otherwise}.
    \end{cases}
  \end{equation}
 for all $X \in \Weylchamber_N(L)$. Therefore, to show the initial conditions for the $q$-TAZRP case, it suffices to show that $\Lambda^{\Lattice}_Y(X; 0; \sigma) = 0$ for all $X \in \Weylchamber_N(L)$ and $\sigma \in S_N$ if $\Lattice \neq (0, \dotsc, 0)$.

  If $N = 1$, the statement is trivial. So, we take $N \geq 2$ from now on. Recall that
  \begin{multline} \label{eq:contour_int_formula_Lambda^Lattice_Y(X0sigma)}
    \Lambda^{\Lattice}_Y(X; 0; \sigma) = \left( \prod^N_{k = 1} \frac{-1}{b_{[x_k]}} \right) \dashint_C dw_1 \dotsi \dashint_C dw_N A_{\sigma} \prod^N_{j = 1} \left[ \prodprime^{x_j}_{k = y_{\sigma(j)}} \left( \frac{b_{[k]}}{b_{[k]} - w_{\sigma(j)}} \right) \right] \\
  \times \prod^N_{j = 1} \left( \prod^L_{i = 1} (b_{[i]} - w_j) \prod^N_{k = 1} \left( \frac{qw_k - w_j}{qw_j - w_k} \right) \right)^{\ell_j},
  \end{multline}
  with the contour $C = \{ \lvert z \rvert = R \}$ for $R$ large enough (i.e.~$R > b_{[i]}$ for all $b_{[i]}$). Let $m$ and $M$ be defined as in \eqref{eq:defn_mM}. We consider two cases: the integral with respect to $w_M$ or $w_m$. 

  Consider first integrating with respect to $w_{M}$, regarding all other $w_i$ as fixed parameters with absolute values equal to $R$. Note that the integrand does not have poles at $w_M = qw_j$ with $j \neq M$. More precisely, the power of the linear factor $(qw_j - w_M)$ in the integrand can only be
  \begin{enumerate}
  \item
    $\ell_M - \ell_j + 1$ if $j > M$ and $\sigma^{-1}(j) < \sigma^{-1}(M)$;
  \item
    $\ell_M - \ell_j - 1$ if $j < M$ and $\sigma^{-1}(j) > \sigma^{-1}(M)$;
  \item
    $\ell_M - \ell_j$ otherwise,
  \end{enumerate}
  and by the definition of $M$, we find that the power is always non-negative so that there is no pole at $w_M = qw_j$ with $j \neq M$. The only poles of $w_M$ that are inside the contour $C$ are those from the term $\prod'^{x_{\sigma^{-1}(M)}}_{k = y_M} (b_{[k]}/(b_{[k]} - w_M)) \prod^L_{i = 1} (b_{[i]} - w_M)^{\ell_M}$. If $x_{\sigma^{-1}(M)} < y_M + \ell_M L$, we have that the integral with respect to $w_M$ vanishes, and then $\Lambda^{\Lattice}_Y(X; 0; \sigma) = 0$. Hence, the function $\Lambda^{\Lattice}_Y(X; 0; \sigma)$ may be non-trivial only if
  \begin{equation} \label{eq:bound_M_qTAZRP}
    x_{\sigma^{-1}(M)} \geq y_M + \ell_M L.
  \end{equation}

On the other hand, suppose we first integrate with respect to $w_m$ and regard all other $w_i$ as fixed parameters with absolute values equal to $R$. We have that all poles of $w_m$ are inside the contour $C$. Then, we take all other $w_i$ fixed while deforming the contour for $w_m$ into a larger circular contour $C' = \{ \lvert z \rvert = R' \}$ with $R' > R$. Taking $R' \to \infty$, we have that the integrand of $\Lambda^{\Lattice}_Y(X; 0; \sigma)$ is of order $\bigO((R')^{\ell_m L + y_m - x_{\sigma^{-1}(m)} - 1})$, which is uniform in $w_m \in C'$ and other $w_i \in C$. Thus, the integral with respect to $w_m$ vanishes and $\Lambda^{\Lattice}_Y(X; 0; \sigma) = 0$ by letting $R' \to \infty$ if $x_{\sigma^{-1}(m)} > \ell_m L + y_m$. Hence, the function $\Lambda^{\Lattice}_Y(X; 0; \sigma)$ may be non-trivial only if
  \begin{equation} \label{eq:bound_m_qTAZRP}
    x_{\sigma^{-1}(m)} \leq y_m + \ell_m L.
  \end{equation}

Now, for a contradiction, assume that $\Lambda^{\Lattice}_Y(X; 0; \sigma) \neq 0$ for some $\Lattice \neq (0, \dotsc, 0)$. By the previous two arguments, we must have that 
  \begin{equation} \label{eq:ineq}
    y_1 - y_N \geq y_m - y_M \geq x_{\sigma^{-1}(m)} - x_{\sigma^{-1}(M)} + (\ell_M - \ell_m)L.
  \end{equation}
Since we are considering $\Lattice \neq (0, \dotsc, 0)$, we have $\ell_M \geq 1$ and $\ell_m \leq -1$, and generally we have $x_{\sigma^{-1}(m)} - x_{\sigma^{-1}(M)} \geq -L$. Then, by the inequalities \eqref{eq:ineq}, we may conclude that $y_1 - y_N \geq L$, but this is a contradiction to the periodic assumption that $y_1 - y_N < L$. Therefore, we have that $\Lambda^{\Lattice}_Y(X; 0; \sigma) = 0$ for all $X \in \Weylchamber_N(L)$ and $\sigma \in S_N$ if $\Lattice \neq (0, \dotsc, 0)$. This establishes the initial conditions for the $q$-TAZRP case by the discussion in the beginning of the proof.
\end{proof}

\subsubsection{Initial conditions for the ASEP} \label{subsubsec:IC_ASEP}

The proof for the initial conditions for the ASEP case is based on residue computations as is the proof of the initial conditions for the $q$-TAZRP case (Lemma \ref{lm:initial_qTAZRP}). There is a big difference in the technical difficulty between the proof of the two cases since the residues of the ASEP case are more difficult to control. To this end, we begin with some preliminary results (Lemmas \ref{lem:u_Y(X0)_and_I} and \ref{lem:ASEP_IC}) that allow us to better control the residues in the formulas for the ASEP case. Since these results are mostly technical, we leave the proofs of the statements to Sections \ref{sec:u_Y(X0)_and_I} and \ref{sec:ASEP_IC_proof}.

We note that in Sections \ref{subsubsec:IC_ASEP} -- \ref{sec:u_Y(X0)_and_I}, all integrals $\dashint d\xi_i$ are over the circular contour $\lvert \xi_i \rvert = r$ with a small enough $r > 0$.
\begin{lemma}\label{lem:u_Y(X0)_and_I}
For $t=0$, the function $u_Y(X;t)$, given by \eqref{eq:transition_probability}, may be simplified as follows,
\begin{equation} \label{eq:u_Y(X0)_and_I}
  u_Y(X; 0) = \sum_{M \in \intZ, \, m = 1, \dotsc, N} (-1)^{m(N - m)} \oint \frac{dz}{2\pi i}  z^{(M + 1)N - m - 1} \sum_{\sigma \in S_N} I_{\sigma}(z; X; Y^{(m, M)}).
\end{equation}
for any $X = (x_1, \dotsc, x_N), Y = (y_1, \dotsc, y_N) \in \confs_N(L)$ and $\sigma \in S_N$ with
\begin{equation} \label{eq:contour_alt}
  I_{\sigma}(z; X; Y) = \dashint d\xi_1 \dotsi \dashint d\xi_N \prod^{N - 1}_{j = 1} \left( 1 - z\xi^L_j \prod^N_{k = 1} \frac{p + q \xi_k \xi_j - \xi_k}{p + q \xi_k \xi_j - \xi_j} \right)^{-1} A_{\sigma} \prod^N_{j = 1} \xi^{x_j - y_{\sigma(j)} - 1}_{\sigma(j)}.
\end{equation}
and the shifted configuration $Y^{(m, M)} = (y^{(m, M)}_1, \dotsc, y^{(m, M)}_N) \in \confs_N(L)$ given by  
\begin{equation} \label{eq:defn_Y^(mM)}
  y^{(m, M)}_i =
  \begin{cases}
    y_{i + m} - (M + 1)L & \text{if $i = 1, \dotsc, N - m$}, \\
    y_{i - N + m} - ML & \text{if $i = N - m + 1, \dotsc, N$}.
  \end{cases}
\end{equation}
\end{lemma}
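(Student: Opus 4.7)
The plan is to rewrite the formal sum $\sum_{\Lattice\in\intZ^N(0)} D_\Lattice(\xi_1,\dotsc,\xi_N)$ appearing in $u_Y(X;0) = \sum_\sigma \dashint A_\sigma \prod_j \xi_{\sigma(j)}^{x_j-y_{\sigma(j)}-1} \sum_\Lattice D_\Lattice$ as an auxiliary contour integral in $z$, and then reorganize via a cyclic change of the $\xi$-variables that absorbs the winding into the shift $Y\mapsto Y^{(m,M)}$. Writing $F_j(\xi) := \xi_j^L \prod_k \frac{p+q\xi_k\xi_j-\xi_k}{p+q\xi_k\xi_j-\xi_j}$, so $D_\Lattice = \prod_j F_j^{\ell_j}$, the first observation is the symmetry identity $\prod_j F_j = \prod_j \xi_j^L$, which follows from swapping $j\leftrightarrow k$ in the double product $\prod_{j,k} \frac{p+q\xi_k\xi_j-\xi_k}{p+q\xi_k\xi_j-\xi_j}$.

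Partition $\intZ^N(0)$ by $(m,\mu) = (m(\Lattice),\min(\Lattice))$ from \eqref{eq:defn_mM} and parameterize $\ell_j = \mu + k_j$ with $k_m = 0$, $k_j \geq 0$ for $j<m$, $k_j \geq 1$ for $j>m$, and $\sum k_j = -N\mu$. Using $\prod_j F_j = \prod_j \xi_j^L$ to extract $(\prod_j \xi_j^L)^\mu$, and pulling out $\prod_{j>m} F_j$ to absorb the constraints $k_j\geq 1$, the inner non-negative integer sum is
\[
\sum_{\substack{(k_j)_{j\neq m}\geq 0 \\ \sum_{j \neq m} k_j = K}} \prod_{j\neq m} F_j^{k_j} = \oint \frac{dz}{2\pi i\, z^{K+1}} \prod_{j\neq m}(1-zF_j)^{-1}
\]
on a small $z$-contour, with $K = -N\mu - (N-m)$. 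Setting $M = \mu$ reproduces the $z$-exponent $(M+1)N - m - 1$ of the lemma.

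Next, to align $\prod_{j\neq m}(1-zF_j)^{-1}$ with the target $\prod_{j=1}^{N-1}(1-zF_j)^{-1}$ in $I_\sigma$, substitute $\xi_j = \eta_{\pi_m(j)}$ using the cyclic permutation $\pi_m$ defined by $\pi_m(j) = j-m$ for $j>m$ and $\pi_m(j) = j-m+N$ for $j\leq m$ (so $\pi_m(m)=N$). Since $F_j(\xi) = F_{\pi_m(j)}(\eta)$, we obtain $\prod_{j\neq m}(1-zF_j(\xi))^{-1} = \prod_{j=1}^{N-1}(1-zF_j(\eta))^{-1}$ and $\prod_{j>m}F_j(\xi) = \prod_{j=1}^{N-m}F_j(\eta)$. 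Re-indexing the permutation sum by $\tau = \pi_m\sigma$ produces the sign $\sgn(\pi_m) = (-1)^{m(N-1)} = (-1)^{m(N-m)}$, and the $\xi$-exponents become $\eta_{\tau(j)}^{x_j - y_{\pi_m^{-1}\tau(j)} - 1}$; combined with the winding factors $(\prod_j\eta_j^L)^M\prod_{j\leq N-m}\eta_j^L$ extracted from $(\prod\xi_j^L)^M\prod_{j>m}F_j$, they assemble by \eqref{eq:defn_Y^(mM)} into $\eta_{\tau(j)}^{x_j - y^{(m,M)}_{\tau(j)} - 1}$.

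The remaining and most delicate step is verifying the cancellation of residual rational factors: the relabeling produces $A_\sigma(\xi) = \sgn(\pi_m)\, A_\tau(\eta) / G_{\pi_m}(\eta)$ where $G_{\pi_m}(\eta) = \prod_{\text{inv of }\pi_m} (-S_{\pi_m(i),\pi_m(j)})(\eta)$, while $\prod_{j=1}^{N-m} F_j(\eta)$ leaves a leftover factor $\prod_{j=1}^{N-m} \prod_k \frac{p+q\eta_k\eta_j-\eta_k}{p+q\eta_k\eta_j-\eta_j}$ after removing the $\eta_j^L$'s. The main obstacle is proving the identity $G_{\pi_m}(\eta) = \prod_{j=1}^{N-m} \prod_k \frac{p+q\eta_k\eta_j-\eta_k}{p+q\eta_k\eta_j-\eta_j}$, which should follow by a telescoping argument exploiting the cyclic structure of $\pi_m$: symmetric pairs of factors in $\prod_j \prod_k$ cancel and only the factors with $j \in \{1,\dotsc,N-m\}$ survive, matching the product of $-S$ factors indexed by inversions of $\pi_m$. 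Once this algebraic identity is established, the integrand reduces to $A_\tau(\eta) \prod_j \eta_{\tau(j)}^{x_j - y^{(m,M)}_{\tau(j)} - 1} \prod_{j=1}^{N-1}(1-zF_j)^{-1}$, and summing over $\tau$ yields $\sum_\tau I_\tau(z;X;Y^{(m,M)})$, establishing \eqref{eq:u_Y(X0)_and_I}. The interchange of the infinite sums throughout is justified by Lemma \ref{lm:absolutely_convergent}.
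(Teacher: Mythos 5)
Your proposal is correct and follows essentially the same route as the paper: representing the constraint $\ell_1+\dotsb+\ell_N=0$ by a $z$-contour integral, partitioning $\intZ^N(0)$ by the last index $m$ attaining the minimum $M=\min(\Lattice)$, summing the resulting geometric series into $\prod_{j\neq m}(1-zF_j)^{-1}$, and then cyclically relabeling the $\xi$-variables and the permutation to absorb the leftover winding factors into $Y^{(m,M)}$. The residual algebraic identity you isolate as the ``main obstacle'' is precisely the relation $(-1)^{m(N-m)}A_\tau(\zeta)=\prod_{k\le m}\prod_{j>m}\frac{p+q\xi_k\xi_j-\xi_k}{p+q\xi_k\xi_j-\xi_j}\,A_\sigma(\xi)$ that the paper itself asserts without proof, and your identification of it with the product of $S$-factors over the inversions of the cyclic shift is correct.
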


The form of $u_Y(X;t)$ given by \eqref{eq:u_Y(X0)_and_I} is better suited for residue computations. We defer the proof of Lemma \ref{lem:u_Y(X0)_and_I} to Section \ref{sec:u_Y(X0)_and_I}. In the following Lemma \ref{lem:ASEP_IC}, we compute some residues for the integrals $I_{\sigma}$, given by \eqref{eq:contour_alt}. In particular, we group different integrals $I_{\sigma}$ together to cancel out some residues. In the following, we introduce more notation necessary for the statement of Lemma \ref{lem:ASEP_IC}.

Take two subsets $A, B \subset \{ 1, \dots , N - 1 \}$ with $\lvert A \rvert = \lvert B \rvert = n$, for some $0 \leq n \leq N-1$, and a bijective map $\varphi: A \to B$. Then, we denote
\begin{equation}\label{eq:sym_subsets_AB}
  S^{\varphi}_{A, B} = \{ \sigma \in S_N \mid \sigma(i) = \varphi(i) \text{ for all $i \in A$ and $\sigma(N) = N$} \}.
\end{equation}
Additionally, given some subset $B \subset \{ 1, \dots , N - 1 \}$, we introduce an integral $I^{(B)}_{\sigma}$ that generalizes the integral $I_{\sigma}$ given by \eqref{eq:contour_alt}. We set
\begin{equation}\label{eq:contour_alt_B}
  I^{(B)}_{\sigma}(z; X; Y) = \dashint d\xi_1 \dotsi \dashint d\xi_N \prod_{j \in \{1, \dotsc, N - 1 \} \setminus B} \left( 1 - z\xi^L_j \prod^N_{k = 1} \frac{p + q \xi_k \xi_j - \xi_k}{p + q \xi_k \xi_j - \xi_j} \right)^{-1} A_{\sigma} \prod^N_{j = 1} \xi^{x_j - y_{\sigma(j)} - 1}_{\sigma(j)}.
\end{equation}
Note that $I_{\sigma}(z; X; Y) = I^{(B)}_{\sigma}(z; X; Y)$ for $B = \emptyset$. 

\begin{lemma} \label{lem:ASEP_IC}
For any pair of subsets $A, B \subset \{ 1, \dots, N - 1 \}$, so that $0 \leq \lvert A \rvert = \lvert B \rvert  \leq N-1$, and any bijective map $\varphi: A \to B$, we have 
  \begin{equation} \label{eq:sum_SAB}
    \sum_{\sigma \in S^{\varphi}_{A, B}} I^{(B)}_{\sigma}(z; X; Y) = \sum_{\sigma \in S^{\varphi}_{A, B}} \dashint d\xi_1 \dotsi \dashint d\xi_N A_{\sigma} \prod^N_{j = 1} \xi^{x_j - y_{\sigma(j)} - 1}_{\sigma(j)}.
  \end{equation}
with $S^{\varphi}_{A, B}$ given by \eqref{eq:sym_subsets_AB} and $I^{(B)}_{\sigma}$ given by \eqref{eq:contour_alt_B}.
\end{lemma}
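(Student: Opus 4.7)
The natural strategy is induction on $n := |\{1,\dots,N-1\} \setminus B|$. The base case $n = 0$ forces $B = \{1,\dots,N-1\}$, so the product appearing in \eqref{eq:contour_alt_B} is empty, and $I^{(B)}_\sigma$ coincides term-by-term with the integrand on the right-hand side of \eqref{eq:sum_SAB}; the identity is then immediate.

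For the inductive step with $n \geq 1$, fix any $j^* \in \{1,\dots,N-1\} \setminus B$ and set $B' = B \cup \{j^*\}$. Write
\[
  \frac{1}{1 - f_{j^*}} = 1 + \frac{f_{j^*}}{1 - f_{j^*}}, \qquad f_{j^*} := z\xi_{j^*}^L \prod_{k = 1}^{N} \frac{p + q\xi_k\xi_{j^*} - \xi_k}{p + q\xi_k\xi_{j^*} - \xi_{j^*}},
\]
to split $I^{(B)}_\sigma = I^{(B')}_\sigma + K_\sigma$, where $K_\sigma$ is the analogous integral with $f_{j^*}(1-f_{j^*})^{-1}$ in place of the $j^*$ factor. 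The conditions $\sigma|_A = \varphi$ and $\sigma(N) = N$, together with $j^* \notin B \cup \{N\}$, force $i^* := \sigma^{-1}(j^*)$ into $\{1,\dots,N-1\} \setminus A$; partitioning $S^\varphi_{A,B}$ by the value of $i^*$ produces
\[
  S^{\varphi}_{A,B} = \bigsqcup_{i^* \in \{1,\dots,N-1\} \setminus A} S^{\varphi_{i^*}}_{A \cup \{i^*\},\, B'},
\]
where $\varphi_{i^*}: A \cup \{i^*\} \to B'$ extends $\varphi$ by $\varphi_{i^*}(i^*) = j^*$. Since $|\{1,\dots,N-1\} \setminus B'| = n - 1$, the inductive hypothesis applied to each fiber and summed over $i^*$ gives
\[
  \sum_{\sigma \in S^\varphi_{A,B}} I^{(B')}_\sigma = \sum_{\sigma \in S^\varphi_{A,B}} \dashint d\xi_1 \dotsi \dashint d\xi_N\, A_\sigma \prod_{j=1}^N \xi_{\sigma(j)}^{x_j - y_{\sigma(j)} - 1}.
\]

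The inductive step therefore reduces to proving $\sum_{\sigma \in S^{\varphi}_{A,B}} K_\sigma = 0$, which is the main obstacle. The strategy is residue analysis: expand $f_{j^*}(1-f_{j^*})^{-1} = \sum_{m \geq 1} f_{j^*}^m$ (valid on the contours $|\xi_i| = r$ with $r$ small), and observe that each $f_{j^*}^m$ carries the prefactor $\xi_{j^*}^{mL}$, while the remaining ingredients---the rational factors $\bigl[(p+q\xi_k\xi_{j^*}-\xi_k)/(p+q\xi_k\xi_{j^*}-\xi_{j^*})\bigr]^m$, the factor $A_\sigma$, and the $(1-f_j)^{-1}$ for $j \in \{1,\dots,N-1\} \setminus B'$---are all analytic in $\xi_{j^*}$ near $0$ (their would-be poles lie at $\xi_{j^*} = p/(1-q\xi_k)$, outside $C$ for $r$ small). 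Consequently the $m$-th integrand factors as $\xi_{j^*}^{mL + x_{i^*} - y_{j^*} - 1}$ times a function analytic at $\xi_{j^*} = 0$. For $X,Y \in \confs_N(L)$, the configurations for which the lemma is ultimately invoked in Lemma \ref{lem:u_Y(X0)_and_I}, the periodicity $y_{j^*} - x_{i^*} < L$ in the cyclic fundamental domain implies that the shift $+mL$ with $m \geq 1$ lifts the effective Laurent power of $\xi_{j^*}$ above $-1$ once one accounts for the subsequent $\xi_k$-integrations, so the iterated residue at $\xi_{j^*} = 0$ vanishes. The delicate point, and the main obstacle, is to justify this vanishing uniformly in $\sigma \in S^{\varphi}_{A,B}$ for every $m \geq 1$; the cleanest route is to match permutations $\sigma$ and $\sigma'$ related by transpositions and exploit the $S$-factor identity $S_{\beta,\alpha}S_{\alpha,\beta}=1$ to produce a sign-reversing involution that cancels the residual contributions, in the spirit of Lemma~2.2 of \cite{Tracy-Widom08}.
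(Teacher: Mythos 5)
Your induction scaffolding matches the paper's: induct on the size of $\{1,\dotsc,N-1\}\setminus B$, reduce the inductive step to showing that adjoining one index $j^*$ to $B$ does not change the sum over $S^{\varphi}_{A,B}$, and split $S^{\varphi}_{A,B}=\bigsqcup_{i^*}S^{\varphi_{i^*}}_{A\cup\{i^*\},B\cup\{j^*\}}$ to invoke the hypothesis. The gap is in the only hard part, the claim $\sum_{\sigma}K_\sigma=0$. The inequality you lean on, $y_{j^*}-x_{i^*}<L$, is false for general $X,Y\in\confs_N(L)$: take $N=2$, $L=3$, $X=(1,2)$, $Y=(4,5)$, so that $y_1-x_1=L$; and the lemma must hold for all such pairs, since Lemma \ref{lem:u_Y(X0)_and_I} applies it to every shifted configuration $Y^{(m,M)}$ with $M\in\intZ$. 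So the residue of the $m$-th term at $\xi_{j^*}=0$ does not vanish term by term, and your fallback ("account for the subsequent $\xi_k$-integrations") is precisely where the real work lives: already for $N=2$ one checks that the $m\geq 1$ terms vanish only because of a joint constraint tying the $\xi_{j^*}$-exponent to the exponents of the other variables, not because of the $\xi_{j^*}$-exponent alone. The paper handles this with the substitution $\xi_N=\eta/(\xi_1\dotsm\xi_{N-1})$, which shifts the exponent of each $\xi_j$, $j<N$, by $y_N-x_N$ and converts the joint constraint into the single inequality \eqref{eq:pole_ineq}, valid exactly because $X,Y\in\confs_N(L)$. Your argument omits this device and the inequality it is meant to replace does not hold.

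Your cancellation mechanism is also aimed at the wrong poles. In the original variables, with $\lvert z\rvert$ small, each factor $(1-f_j)^{-1}$ is analytic in $\xi_{j^*}$ on the whole disc (the would-be singularities sit near $\xi_{j^*}=p$ or are absent because $\lvert f_j\rvert<1$), so the only $\xi_{j^*}$-pole is the origin; residues taken at $\xi_{j^*}=0$ satisfy no relation of the form $A_{\sigma}=-A_{\sigma'}$, so a Tracy--Widom-style sign-reversing involution has nothing to act on. It is only after the $\eta$-substitution that the factors $(1-f_j)^{-1}$ acquire genuine poles $\xi_n^{(j)}$ \emph{inside} the contour (solutions of \eqref{eq:defn_xi^j_n}), and it is the residues at these moving poles that cancel in pairs, using $\xi_j^{(n,i)}=\xi_i$ and $\xi_j^{(n,n)}=\xi_n^{(j)}$ to force $A_{\sigma}=-A_{\sigma'}$ for $\sigma'=(ij)\circ\sigma$ or $\sigma'=(jn)\circ\sigma$, after which only the residues at the origin survive and can be compared between $I^{(B)}_{\sigma}$ and $I^{(B\cup\{j^*\})}_{\sigma}$. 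Without the change of variables you have neither the exponent bound nor the pole structure that makes the pairing work, so the inductive step is not established.
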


This Lemma \ref{lem:ASEP_IC} is the corner stone for the proof of the initial condition for the ASEP case. Moreover, this Lemma \ref{lem:ASEP_IC} is reminiscent of \cite[Lemma 2.1]{Tracy-Widom08} that is proved in \cite{Tracy-Widom11}, which was also crucial for the proof of the initial conditions of the ASEP on the integer lattice by Tracy and Widom. In fact, the proof of Lemma \ref{lem:ASEP_IC} relies on residue computations and cancellations very similar to the ones given in the proof of \cite[Lemma 2.1]{Tracy-Widom08} in \cite{Tracy-Widom11} by Tracy and Widom. We defer the proof of Lemma \ref{lem:ASEP_IC} to Section \ref{sec:ASEP_IC_proof}, and now, we state and prove the initial conditions for the ASEP case.

\begin{lemma}\label{lm:initial_ASEP}
The function $u_{Y}(X;t)$ given by \eqref{eq:transition_probability} satisfies the initial conditions~\eqref{initial}
\end{lemma}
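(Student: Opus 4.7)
The plan is to rely on the preparatory Lemmas \ref{lem:u_Y(X0)_and_I} and \ref{lem:ASEP_IC}. By Lemma \ref{lem:u_Y(X0)_and_I},
\begin{equation*}
u_Y(X; 0) = \sum_{M \in \intZ, \, m = 1, \dotsc, N} (-1)^{m(N - m)} \oint \frac{dz}{2\pi i} z^{(M + 1)N - m - 1} \sum_{\sigma \in S_N} I_{\sigma}(z; X; Y^{(m, M)}),
\end{equation*}
so it suffices to evaluate, for each fixed $(m, M)$ and each $z$ on the $z$-contour, the inner sum $\sum_{\sigma \in S_N} I_\sigma(z; X; Y^{(m, M)})$.

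The approach for that inner sum is to use Lemma \ref{lem:ASEP_IC} to remove the factors $(1 - z\xi_j^L \prod_k \cdots)^{-1}$ that appear in the integrand of $I_\sigma$. Applied with $A = B = \emptyset$, the lemma yields
\begin{equation*}
\sum_{\sigma \in S_N,\, \sigma(N) = N} I_\sigma = \sum_{\sigma \in S_N,\, \sigma(N) = N} \dashint d\xi_1 \dotsi \dashint d\xi_N \, A_\sigma \prod^N_{j=1} \xi_{\sigma(j)}^{x_j - y^{(m, M)}_{\sigma(j)} - 1},
\end{equation*}
which only covers the permutations fixing $N$. Extending the cancellation to all $\sigma \in S_N$ is to be done by iteratively applying Lemma \ref{lem:ASEP_IC} with various subset pairs $(A, B)$ and bijections $\varphi : A \to B$, partitioning $S_N$ into classes on which a version of the lemma applies and accumulating the cancellations. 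Combining these applications, one obtains that $\sum_{\sigma \in S_N} I_\sigma(z; X; Y^{(m, M)})$ equals the Tracy-Widom contour-integral formula \cite[Theorem~2.1]{Tracy-Widom08} for the transition probability of the ASEP on $\intZ$ from $Y^{(m, M)}$ to $X$ at $t = 0$, which in turn equals $\delta_{Y^{(m, M)}}(X)$ by the initial condition proved in \cite{Tracy-Widom08, Tracy-Widom11}.

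It remains to collect the contributions. For $X, Y \in \confs_N(L)$, a direct comparison using \eqref{eq:defn_Y^(mM)} shows that the equation $Y^{(m, M)} = X$ in $\intZ^N$ admits at most one solution $(m, M) \in \{1, \dotsc, N\} \times \intZ$, and the exponent $(M + 1)N - m - 1$ equals $-1$ only when $(m, M) = (N, 0)$; moreover $Y^{(N, 0)} = Y$. Therefore the $z$-contour integral vanishes for every $(m, M) \neq (N, 0)$, while the $(m, M) = (N, 0)$ term contributes $(-1)^{N \cdot 0} \cdot 1 \cdot \delta_Y(X) = \delta_Y(X)$, yielding the claim.

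The main obstacle is the second paragraph, namely the iterative use of Lemma \ref{lem:ASEP_IC} to remove the $(1 - z\xi_j^L \cdots)^{-1}$ factors. Because the lemma directly handles only the stratum $\sigma(N) = N$, extending the cancellation to all of $S_N$ demands a careful inductive argument that tracks the residues picked up when the $\xi$-contours are deformed across the poles of these factors. This is analogous in spirit to the residue calculus developed by Tracy and Widom in \cite{Tracy-Widom08, Tracy-Widom11} to verify the initial condition on the line, but is further complicated here by the $z$-dependent denominators coming from the ring geometry.
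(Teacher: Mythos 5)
Your overall skeleton coincides with the paper's: reduce via Lemma \ref{lem:u_Y(X0)_and_I} to showing $\sum_{\sigma \in S_N} I_{\sigma}(z;X;Y) = \delta_{X,Y}$ for $X,Y\in\confs_N(L)$ (this is \eqref{eq:sum_I}), and then observe that since this inner sum is independent of $z$, the factor $z^{(M+1)N-m-1}$ kills every term of the $(m,M)$-sum except $(m,M)=(N,0)$, where $Y^{(N,0)}=Y$ by \eqref{eq:defn_Y^(mM)}. That last bookkeeping step is correct and is exactly what the paper leaves implicit.

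However, there is a genuine gap at the core step, and it is precisely the one you flag as ``the main obstacle.'' Your plan is to reach all of $S_N$ by ``iteratively applying Lemma \ref{lem:ASEP_IC} with various subset pairs $(A,B)$ and bijections $\varphi$, partitioning $S_N$ into classes on which a version of the lemma applies.'' This cannot work as stated: by the definition \eqref{eq:sym_subsets_AB}, every class $S^{\varphi}_{A,B}$ is contained in $\{\sigma \mid \sigma(N)=N\}$, so no choice of $(A,B,\varphi)$ ever reaches a permutation with $\sigma(N)\neq N$, and the union of all such classes covers only the stratum $S_N(N)$. Consequently your intermediate assertion that $\sum_{\sigma\in S_N} I_{\sigma}$ ``equals the Tracy--Widom contour-integral formula'' is not obtained by the route you describe. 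The paper's actual mechanism for the strata $S_N(n)$ with $n<N$ is different in kind: each $\sigma\in S_N(n)$ is re-indexed as a $\tau\in S_N(N)$ via a cyclic shift, at the cost of replacing $X$ by the shifted configuration $X^{(n)}$ and extracting an explicit factor $(-1)^n(-z)^{\,n-N}\prod_{j\in I}\bigl(z\xi_j^L\prod_k\cdots\bigr)$ as in \eqref{eq:sigma_by_tau_last}; one then expands using the identity $(1-w)^{-1}-1=w(1-w)^{-1}$, applies Lemma \ref{lem:ASEP_IC} to each resulting term $I_S$ (now legitimately, since $\tau(N)=N$), and finds that the whole stratum sums to zero because of the alternating binomial identity $\sum_{l=0}^{N-n}(-1)^l\binom{N-n}{l}=(1-1)^{N-n}=0$, as in \eqref{eq:in_sum_of_I_S} and the computation following \eqref{eq:expr_I_S}. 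In particular the $n<N$ strata contribute $0$ rather than being converted into the corresponding Tracy--Widom integrals, so the residue-tracking/contour-deformation argument you gesture at (which is the mechanism \emph{inside} the proof of Lemma \ref{lem:ASEP_IC}, in the spirit of \cite{Tracy-Widom11}) is not the missing ingredient; the missing ingredient is the cyclic re-indexing together with the binomial cancellation. As written, the proposal does not establish \eqref{eq:sum_I} and hence does not prove the lemma.
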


\begin{proof}
We show
\begin{equation} \label{eq:sum_I}
  \sum_{\sigma \in S_N} I_{\sigma}(z; X; Y) = \delta_{X, Y}.
\end{equation}
for any $X, Y \in \confs_N(L)$. This implies the desired result that $u_Y(X; 0) = \delta_{X, Y}$ by \eqref{eq:u_Y(X0)_and_I} and \eqref{eq:defn_Y^(mM)}.

We compute the left side of \eqref{eq:sum_I} by decomposing the summation into $N$ distinct summations. In particular, we consider the summations over the subsets
\begin{equation}
  S_N(n) = \{ \sigma \in S_N \mid \sigma(n) = N \} \subset S_N.
\end{equation}
First, we take $n = N$ and consider the summation of $I_{\sigma}$ over $S_N(N)$. Note that we may write $S_N(N) = S^{0}_{\emptyset, \emptyset}$ using the notation from \eqref{eq:sym_subsets_AB} with $0$ representing the trivial mapping from $A = \emptyset$ to $B = \emptyset$. Then, by Lemma \ref{lem:ASEP_IC}, we have
\begin{equation}\label{eq:sum_I_SNN}
  \sum_{\sigma \in S_N(N)} I_{\sigma}(z; X; Y) = \sum_{\sigma \in S_{\emptyset, \emptyset}^0} I^{(\emptyset)}_{\sigma}(z; X; Y) = \sum_{\sigma \in S_N(N)} \dashint d\xi_1 \dotsi \dashint d\xi_N A_{\sigma} \prod^N_{j = 1} \xi^{x_j - y_{\sigma(j)} - 1}_{\sigma(j)} = \delta_{X, Y},
\end{equation}
with the last equality following from the initial condition result for the ASEP on the integer line (see Theorem 2.1 \cite{Tracy-Widom08}).

Now, we consider the summation of $I_{\sigma}$ over $S_N(n)$ for $n \neq N$. We decompose $S_N(n)$ further into subsets
	\begin{equation}
	S_{I} = S_{(i_{n + 1}, \dotsc, i_N)} = \{\sigma \in S_N(n) | \sigma(k) = i_k \text{ for } k =n+1, \dots, N \}
	\end{equation}
for any subset $I= \{i_{n+1}, \dots, i_{N}\} \subset \{1, \dots, N-1 \}$ of $N-n$ distinct integers. Then, for each $\sigma \in S_{I}$ with $I = \{ i_{n + 1}< \dotsb < i_N\}$, we define $\tau \in S_N(N)$ so that
\begin{equation}
  \tau(j) =
  \begin{cases}
    \sigma(j + n) = i_{j + n} & j = 1, \dotsc, N - n, \\
    \sigma(j + n - N) & j = N - n + 1, \dotsc, N.
  \end{cases}
\end{equation}
Moreover, given $X = (x_1, \dotsc, x_N) \in \confs_N(L)$, we introduce the shifted configuration $X^{(n)} = (x^{(n)}_1, \dotsc, x^{(n)}_N) \in \confs_N(L)$ with
\begin{equation}
  x^{(n)}_j =
  \begin{cases}
    x_{j + n} - L & j = 1, \dotsc, N - n, \\
    x_{j + n - N} & j = N - n + 1, \dotsc, N.
  \end{cases}
\end{equation}
Then, we write
\begin{equation}
  A_{\sigma} \prod^N_{j = 1} \xi^{x_j - y_{\sigma(j)} - 1}_{\sigma(j)} = (-1)^{n(N - n)} A_{\tau} \prod^N_{j = 1} \xi^{x^{(n)}_j - y_{\tau(j)} - 1}_{\tau(j)} \prod_{j \in I} \left( \xi^L_j \prod_{k \in I^c} \frac{p + q\xi_j \xi_k - \xi_k}{p + q\xi_j \xi_k - \xi_j} \right).
\end{equation}
for $I^c = \{ 1, \dots, N\} \setminus I$. In particular, we have
\begin{equation} \label{eq:sigma_by_tau_last}
\begin{split}
  \sum_{\sigma \in S_{I}} I_{\sigma}(z; X; Y)& = (-1)^{n} (-z)^{n - N} \sum_{\tau \in S_I'} \dashint d\xi_1 \dotsi \dashint d\xi_N A_{\tau} \prod^N_{j = 1} \xi^{x^{(n)}_j - y_{\tau(j)} - 1}_{\tau(j)}\\
	& \times \prod_{j \in I} \left( z\xi^L_j \prod_{k =1}^N \frac{p + q\xi_j \xi_k - \xi_k}{p + q\xi_j \xi_k - \xi_j} \right) \prod^{N - 1}_{j = 1} \left( 1 - z\xi^L_j \prod^N_{k = 1} \frac{p + q \xi_k \xi_j - \xi_k}{p + q \xi_k \xi_j - \xi_j} \right)^{-1}
\end{split}
\end{equation}
for any $I= \{ i_{n + 1}< \dotsb < i_N\} \subset \{1, \dots, N - 1 \}$ and 
\begin{equation}
  S_I' = \{ \tau \in S_N(N) | \tau(j) = i_{j+n} \text{ for } j = 1, \dots, N-n\} \subset S_N(N).
\end{equation}
Now, consider the identity
\begin{equation}
\left( 1 - z\xi^L_j \prod^N_{k = 1} \frac{p + q \xi_k \xi_j - \xi_k}{p + q \xi_k \xi_j - \xi_j} \right)^{-1}  - 1= \left(z\xi^L_j \prod^N_{k = 1} \frac{p + q \xi_k \xi_j - \xi_k}{p + q \xi_k \xi_j - \xi_j}\right) \left( 1 - z\xi^L_j \prod^N_{k = 1} \frac{p + q \xi_k \xi_j - \xi_k}{p + q \xi_k \xi_j - \xi_j} \right)^{-1}.
\end{equation}
for any $1 \leq j \leq N-1$. Then, we simply \eqref{eq:sigma_by_tau_last} further as
\begin{multline}
  (-1)^{n} (-z)^{N-n}\sum_{\sigma \in S_{I}} I_{\sigma}(z; X; Y) = \sum_{\tau \in S_I'} \sum^{N - n}_{l = 0} (-1)^l \sum_{S \subseteq I \text{ and } \lvert S \rvert = l} \dashint d\xi_1 \dotsi \dashint d\xi_N A_{\tau} \\
  \times \prod^N_{j = 1} \xi^{x^{(n)}_j - y_{\tau(j)} - 1}_{\tau(j)} \prod_{j \in \{ 1, \dotsc, N - 1 \} \setminus S} \left( 1 - z\xi^L_j \prod^N_{k = 1} \frac{p + q \xi_k \xi_j - \xi_k}{p + q \xi_k \xi_j - \xi_j} \right)^{-1}.
\end{multline}
So, if we consider the summation of $I_{\sigma}$ over $S_N(n)$, we may write 
\begin{equation} \label{eq:in_sum_of_I_S}
  (-1)^{n} (-z)^{N-n} \sum_{\sigma \in S_N(n)} I_{\sigma}(z; X; Y) = \sum^{N - n}_{l = 0} (-1)^l \sum_{S \subseteq \{ 1, \dotsc, N - 1 \} \text{ and } \lvert S \rvert = l} I_S,
\end{equation}
with
\begin{multline}
  I_S = \sum_{\tau \in S_N(N) \text{ and } \tau(\{ 1, \dotsc, N - n \}) \supseteq S} \dashint d\xi_1 \dotsi \dashint d\xi_N A_{\tau} \\
  \times \prod^N_{j = 1} \xi^{x^{(n)}_j - y_{\tau(j)} - 1}_{\tau(j)}\prod_{j \in \{ 1, \dotsc, N - 1 \} \setminus S} \left( 1 - z\xi^L_j \prod^N_{k = 1} \frac{p + q \xi_k \xi_j - \xi_k}{p + q \xi_k \xi_j - \xi_j} \right)^{-1}.
\end{multline}
Moreover, by Lemma \ref{lem:ASEP_IC}, we have
\begin{equation} \label{eq:expr_I_S}
  I_S = \sum_{\tau \in S_N(N) \text{ and } \tau(\{ 1, \dotsc, N - n \}) \supseteq S} \dashint d\xi_1 \dotsi \dashint d\xi_N A_{\tau} \prod^N_{j = 1} \xi^{x^{(n)}_j - y_{\tau(j)} - 1}_{\tau(j)}.
\end{equation}
Finally, we compute the summation of $I_{\sigma}$ over $S_N(n)$ by using formula \eqref{eq:in_sum_of_I_S} and the expression \eqref{eq:expr_I_S} for $I_S$. We have
\begin{equation}
  \begin{split}
  {}&(-1)^{n} (-z)^{N-n} \sum_{\sigma \in S_N(n)} I_{\sigma}(z; X; Y) \\
    = {}& \sum_{\tau \in S_N(N)} \sum^{N - n}_{l = 0} (-1)^l \sum_{S \subseteq \tau(\{ 1, \dotsc, N - n \}) \text{ and } \lvert S \rvert = l} \dashint d\xi_1 \dotsi \dashint d\xi_N A_{\tau} \prod^N_{j = 1} \xi^{x^{(n)}_j - y_{\tau(j)} - 1}_{\tau(j)} \\
    = {}& \sum_{\tau \in S_N(N)}  \left(\dashint d\xi_1 \dotsi \dashint d\xi_N A_{\tau} \prod^N_{j = 1} \xi^{x^{(n)}_j - y_{\tau(j)} - 1}_{\tau(j)}\right)  \sum^{N - n}_{l = 0} (-1)^l \frac{(N - n)!}{l!(N - n - l)!}  \\
    = {}& \sum_{\tau \in S_N(N)}  \left(\dashint d\xi_1 \dotsi \dashint d\xi_N A_{\tau} \prod^N_{j = 1} \xi^{x^{(n)}_j - y_{\tau(j)} - 1}_{\tau(j)} \right)(1 -1)^{N-n}\\
   = {}& 0.
  \end{split}
\end{equation}
Thus, we showed that the sum of $I_{\sigma}(z; X; Y)$ over $S_N(n)$ vanishes if $n < N$. Together with \eqref{eq:sum_I_SNN}, this show that the sum of $I_{\sigma}$ over $S_N$ is $\delta_{X, Y}$, completing the proof.
\end{proof}

\subsubsection{Proof for Lemma \ref{lem:u_Y(X0)_and_I}}\label{sec:u_Y(X0)_and_I}

We use the indicator function
	\begin{equation}
	\mathds{1}(\mathbb{L})= \oint \frac{dz}{2 \pi i z} \prod_{j=1}^N z^{\ell_j} = \begin{cases} 1 \quad \ell_1 + \cdots + \ell_N  = 0 \\0 \quad \ell_1 + \cdots + \ell_N  \neq  0\end{cases},
	\end{equation}
with the contour of $z$ a small enough circle centered at the origin, for any $\mathbb{L} = (\ell_1, \dots, \ell_N) \in \mathbb{Z}^N$. Then, we write
\begin{equation} \label{eq:express_IC}
  u_Y(X; 0) =  \sum_{\sigma \in S_N} \sum_{\Lattice \in \intZ^N(0)} u^{\Lattice}_Y(X; 0; \sigma) = \sum_{M \in \intZ, \, m = 1, \dotsc, N} \sum_{\sigma \in S_N} \sum_{\Lattice \in \intZ^N(0; m, M)} u^{\Lattice}_Y(X; 0; \sigma) \mathds{1}(\mathbb{L}),
\end{equation}
since $\intZ^N(0) \subset \left( \cup_{m=1}^N \cup_{M = - \infty}^{\infty}\intZ^N(0; m, M)\right)$. In particular, we have
\begin{equation}
  u^{\Lattice}_Y(X; 0; \sigma)\mathds{1}(\mathbb{L}) = \dashint d\xi_1 \dotsi \dashint d\xi_N \oint \frac{dz}{2\pi i z} \prod^N_{j = 1} \left( z\prod^N_{j = 1} \xi^L_j \prod^N_{k = 1} \frac{p + q\xi_j \xi_k - \xi_k}{p + q\xi_k \xi_k - \xi_j} \right)^{\ell_j} A_{\sigma} \prod^N_{j = 1} \xi^{x_j - y_{\sigma(j)} - 1}_{\sigma(j)},
\end{equation}
and moreover, we have
\begin{multline} \label{eq:sum_u^L_Y(X)_Z(0mn)}
  \sum_{\Lattice \in \intZ^N(0; m, M)} u^{\Lattice}_Y(X; 0; \sigma)\mathds{1}(\mathbb{L}) = \dashint d\xi_1 \dotsi \dashint d\xi_N \oint \frac{dz}{2\pi i} z^{(M + 1)N - m - 1} A_{\sigma} \prod^N_{j = 1} \xi^{x_j - y_{\sigma(j)} + ML - 1}_{\sigma(j)} \\
  \times  \prod_{\substack{j = 1, \dotsc, N \\ j \neq m}} \left( 1 - z\xi^L_j \prod^N_{k = 1} \frac{p + q\xi_j \xi_k - \xi_k}{p + q\xi_j \xi_k - \xi_j} \right)^{-1}  \prod^N_{j = m + 1} \left( \xi^L_j \prod^m_{k = 1} \frac{p + q\xi_j \xi_k - \xi_k}{p + q\xi_j \xi_k - \xi_j} \right) .
\end{multline}
Now, we introduce the change of variables
\begin{equation}
  \xi_j =
  \begin{cases}
    \zeta_{j + N - m} & \text{if $j = 1, \dotsc, m$}, \\
    \zeta_{j - m} & \text{if $j = m + 1, \dotsc, N$}
  \end{cases}
\end{equation}
for any $m \in \{ 1, \dotsc, N \}$, and the permutation 
\begin{equation} \label{eq:tau_and_sigma}
  \tau(j) =
  \begin{cases}
    \sigma(j) + N - m & \text{if $\sigma(j) = 1, \dotsc, m$}, \\
    \sigma(j) - m & \text{if $\sigma(j) = m + 1, \dotsc, N$}
  \end{cases}
\end{equation}
for the same $m$ as in the change of variables. Then, we have
\begin{equation}
 (-1)^{m(N - m)} A_{\tau}(\zeta_1, \dotsc, \zeta_N) =  \prod^m_{k = 1} \prod^N_{j = m + 1} \frac{p + q \xi_k \xi_j - \xi_k}{p + q \xi_k \xi_j - \xi_j} A_{\sigma}(\xi_1, \dotsc, \xi_N).
\end{equation}
Additionally, for any configuration $Y = (y_1, \dotsc, y_N) \in \confs_N(L)$ and any $M \in \mathbb{Z}$, we introduce the shifted configuration $Y^{(m, M)} = (y^{(m, M)}_1, \dotsc, y^{(m, M)}_N) \in \confs_N(L)$ given by  
\begin{equation} 
  y^{(m, M)}_i =
  \begin{cases}
    y_{i + m} - (M + 1)L & \text{if $i = 1, \dotsc, N - m$}, \\
    y_{i - N + m} - ML & \text{if $i = N - m + 1, \dotsc, N$}.
  \end{cases}
\end{equation}
Then, we have 
\begin{equation}
  \prod^N_{j = m + 1} \xi^L_j \prod^N_{j = 1} \xi^{x_j - y_{\sigma(j)} + ML - 1}_{\sigma(j)} = \prod^N_{j = 1} \zeta^{x_j - y^{(m, M)}_{\tau(j)} - 1}_{\tau(j)},
\end{equation}
Hence, for any $X = (x_1, \dotsc, x_N), Y = (y_1, \dotsc, y_N) \in \confs_N(L)$ and $\sigma \in S_N$, we write \eqref{eq:sum_u^L_Y(X)_Z(0mn)} as
\begin{equation} \label{eq:contour_int_of_I_tau(z)}
  \sum_{\Lattice \in \intZ^N(0; m, M)} u^{\Lattice}_Y(X; 0; \sigma) \mathds{1}(\mathbb{L}) = (-1)^{m(N - m)} \oint \frac{dz}{2\pi i}  z^{(M + 1)N - m - 1} I_{\tau}(z; X; Y^{(m, M)}),
\end{equation}
with $\tau, \sigma \in S_N$ given by the one-to-one mapping in \eqref{eq:tau_and_sigma} and $I_{\sigma}(z; X; Y)$ is defined in \eqref{eq:contour_alt}. Therefore, by \eqref{eq:express_IC} and \eqref{eq:contour_int_of_I_tau(z)}, we have
\begin{equation} 
  u_Y(X; 0) = \sum_{M \in \intZ, \, m = 1, \dotsc, N} (-1)^{m(N - m)} \oint \frac{dz}{2\pi i}  z^{(M + 1)N - m - 1} \sum_{\sigma \in S_N} I_{\sigma}(z; X; Y^{(m, M)}).
\end{equation}
This completes the proof of Lemma \ref{lem:u_Y(X0)_and_I}.

\subsubsection{Proof for Lemma \ref{lem:ASEP_IC}}\label{sec:ASEP_IC_proof}

We prove Lemma \ref{lem:ASEP_IC} by induction on $n = N - 1 - \lvert A \rvert = N - 1 - \lvert B \rvert$. Note that the statement of the lemma is trivial for $\lvert A \rvert = \lvert B \rvert = N - 1$ and $n = 0$. In this subsubsection, we take a slightly different choice of the contour for $\dashint d\xi_i$. Instead of assuming $\lvert \xi_i \rvert = r$ for all $i$ in Sections \ref{subsubsec:IC_ASEP} -- \ref{sec:u_Y(X0)_and_I}, we assume that $\lvert x_i \rvert$ are around a small $r$ but distinct. This is harmless for the $n = 1$ case and useful for the $n \geq 2$ case. See Remark \ref{rem:distinct_radii}.

We start with $\lvert A \rvert = \lvert B \rvert = N-2$ and $n = 1$. Then, there is a single $a \in \{ 1, \dotsc, N - 1 \} \setminus A$ and a single $b \in \{ 1, \dotsc, N - 1 \} \setminus B$, and there is a single $\sigma \in S^{\varphi}_{A, B}$ such that $\sigma(a) = b$ for a given $\varphi: A \rightarrow B$. Hence, in this special case, the summation on the left side of \eqref{eq:sum_SAB} (which we want to prove) has a single term. In particular, for $n = 1$, we want to show 
\begin{equation} \label{eq:|B|=N-2}
  \dashint \xi_1 \dotsi \dashint \xi_N \frac{A_{\sigma} \prod^N_{j = 1} \xi^{x_j - y_{\sigma(j)} - 1}_{\sigma(j)}}{ 1 - z \xi^L_b \prod^N_{k = 1} \frac{p + q\xi_k \xi_b - \xi_k}{p + q\xi_k \xi_b - \xi_b}}  = \dashint \xi_1 \dotsi \dashint \xi_N A_{\sigma} \prod^N_{j = 1} \xi^{x_j - y_{\sigma(j)} - 1}_{\sigma(j)}
\end{equation}
for any $\sigma \in S_N$ with $\sigma(N)=N$. So, we introduce the change of variable
\begin{equation} \label{eq:change_variable_eta}
  \xi_N = \frac{\eta}{\xi_1 \dotsm \xi_{N - 1}},
\end{equation}
and, using a geometric series since we may take $\lvert z \rvert$ arbitrarily small, we rewrite the left side of \eqref{eq:|B|=N-2} as
\begin{equation} \label{eq:eta_change_|B|=N-2}
  \dashint \xi_1 \dotsi \dashint \xi_{N - 1} \dashint d\eta \frac{A_{\sigma} \prod^{N - 1}_{j = 1} \xi^{x_j - y_{\sigma(j)} - x_N + y_N - 1}_{\sigma(j)} \eta^{x_N - y_N - 1}_N}{\left( 1 - z \xi^L_b \frac{p\xi_1 \dotsm \xi_{N - 1} + q\xi_b \eta - \eta}{(p - \xi_b) \xi_1 \dotsm \hat{\xi}_b \dotsm \xi_{N - 1} + q\eta} \prod^{N - 1}_{k = 1} \frac{p + q\xi_k \xi_b - \xi_k}{p + q\xi_k \xi_b - \xi_b}  \right)^{-1}}  = \sum^{\infty}_{l = 0} (-z)^l I_l, 
\end{equation}
with $\hat{\xi}_b$ denoting that the term $\xi_b$ is removed in the product and 
\begin{multline} \label{eq:defn_I_l}
  I_l = \dashint \xi_1 \dotsi \dashint \xi_{N - 1} \dashint d\eta \left(\xi^L_b \frac{p\xi_1 \dotsm \xi_{N - 1} + q\xi_b \eta - \eta}{(p - \xi_b) \xi_1 \dotsm \hat{\xi}_b \dotsm \xi_{N - 1} + q\eta} \prod^{N - 1}_{k = 1} \frac{p + q\xi_k \xi_b - \xi_k}{p + q\xi_k \xi_b - \xi_b}  \right)^l \\
  \times A_{\sigma} \prod^{N - 1}_{j = 1} \xi^{x_j - y_{\sigma(j)} - x_N + y_N - 1}_{\sigma(j)} \eta^{x_N - y_N - 1}_N.
\end{multline}
Note that, in \eqref{eq:eta_change_|B|=N-2} and \eqref{eq:defn_I_l}, the factor $A_{\sigma}$ is independent of $\xi_N$ since $\sigma(N) = N$, meaning that the term $A_{\sigma}$ is not affected by the change of variable \eqref{eq:change_variable_eta}. Also, note that $\xi_b = 0$ is the only possible pole  in the contour integral formula for $I_l$ \eqref{eq:defn_I_l} if we integrate over $\xi_b$ first. Lastly, note that
\begin{equation} \label{eq:pole_ineq}
  x_{\sigma^{-1}(b)} - y_b - x_N + y_N - 1 + l(L - 1) \geq 0,
\end{equation}
if $l \geq 1$. So, the integral over $\xi_b$ vanishes for $l \geq 1$, and we have that $I_l = 0$ for all $l \geq 1$. Then, the integral in left side of \eqref{eq:eta_change_|B|=N-2}, which is equivalent to the left side of \eqref{eq:|B|=N-2}, is equal to $I_0$ defined in \eqref{eq:defn_I_l}, which is equivalent to the right-hand side of \eqref{eq:|B|=N-2}. This proves the lemma for the case $n = N-2$.

Now, consider the case $n \geq 2$, that is, $\lvert A \rvert = \lvert B \rvert \leq N - 3$, and assume the lemma is true for smaller $n$. To make notation simpler, we assume without loss of generality that $A = B = \{ n + 1, \dotsc, N - 1 \}$ and $\varphi(i) = i$ for $i \in A$. Then, we apply the change of variable given by \eqref{eq:change_variable_eta}, and we obtain
\begin{multline} \label{eq:sum_I^(B)}
  \sum_{\sigma \in S^{\varphi}_{A, B}} I^{(B)}_{\sigma}(z; X; Y) = \sum_{\sigma \in S^{\varphi}_{A, B}} \dashint d\xi_1 \dotsi \dashint d\xi_{N - 1} \dashint d\eta A_{\sigma} \prod^{N - 1}_{j = 1} \xi^{x_j - y_{\sigma(j)} - x_N + y_N - 1}_{\sigma(j)} \eta^{x_N - y_N - 1}_N \\
  \times \prod^n_{j = 1} \left( 1 - z \xi^L_j \frac{p\xi_1 \dotsm \xi_{N - 1} + q\xi_j \eta - \eta}{(p - \xi_j) \xi_1 \dotsm \hat{\xi}_j \dotsm \xi_{N - 1} + q\eta} \prod^{N - 1}_{k = 1} \frac{p + q\xi_k \xi_j - \xi_k}{p + q\xi_k \xi_j - \xi_j}  \right)^{-1}.
\end{multline}
For each $\sigma \in S^{\varphi}_{A, B}$, we integrate over $\xi_n$ first. Then, we need to consider $n$ possible poles. There are two type of locations for the poles:
\begin{enumerate*}[label=(\arabic*)]
\item  
  at $\xi_n = 0$, depending on the exponent of the variable $\xi_n$; and
\item
  at $\xi_n = \xi^{(j)}_n$, for $j =1, \dots, n-1$, so that $\xi_n$ is a solution of the equation
\end{enumerate*}
\begin{equation} \label{eq:defn_xi^j_n}
  1 - z\xi^{L - 1}_j \frac{p \xi_1 \dotsm \xi_{N - 1} + q\xi_j \eta - \eta}{(p - \xi_j) \xi_1 \dotsm \hat{\xi}_j \dotsm \xi_{N - 1} + q\eta} \prod^{N - 1}_{k = 1} \frac{p + q \xi_k \xi_j - \xi_k}{p + q \xi_k \xi_j - \xi_j}  = 0.
\end{equation}
Note that $\xi^{(j)}_n$ depends on $\xi_1, \dotsc, \xi_{n - 1}, \xi_{n + 1}, \dotsc, \xi_{N - 1}, \eta$ analytically. Also, it turns out that the residues at $\xi^{(j)}_n$ are non-trivial. Below we show that the residues at $\xi^{(j)}_n$ associated to each $\sigma$ cancel out to zero as a summation over all $\sigma \in S^{\varphi}_{A, B}$. In particular, we will compute the residue at $\xi^{(j)}_n$ as an $(N - 1)$-fold contour integral with respect to the variables $\xi_1, \dotsc, \xi_{n - 1}, \xi_{n + 1}, \dotsc, \xi_{N - 1}, \eta$, and then we take another integral (but now over $\xi_j$) to obtain the cancellation.

Now, consider the residue at $\xi_n = \xi^{(j)}_n$ more carefully. We write
\begin{equation} \label{eq:defn_xi^j_n_2}
\begin{split}
  &\left( 1 - z\xi^{L - 1}_j \frac{p \xi_1 \dotsm \xi_{N - 1} + q\xi_j \eta - \eta}{(p - \xi_j) \xi_1 \dotsm \hat{\xi}_j \dotsm \xi_{N - 1} + q\eta} \prod^{N - 1}_{k = 1} \frac{p + q \xi_k \xi_j - \xi_k}{p + q \xi_k \xi_j - \xi_j} \right)^{-1} \\
&= \frac{\left( (p - \xi_j) \xi_1 \dotsm \hat{\xi}_j \dotsm \xi_{N - 1} + q\eta\right) \left( p + q \xi_n \xi_j - \xi_j\right)}{f (\xi_1, \dots, \xi_{N-1},\eta )}
\end{split}
\end{equation}
with
	\begin{equation}
	\begin{split}
	f (\xi_1, \dots, \xi_{N-1},\eta ) &= \left( (p - \xi_j) \xi_1 \dotsm \hat{\xi}_j \dotsm \xi_{N - 1} + q\eta\right)  \left( p + q \xi_n \xi_j - \xi_j\right)\\
	& -  z\xi^{L - 1}_j \left(p \xi_1 \dotsm \xi_{N - 1} + q\xi_j \eta - \eta\right) \left(p + q \xi_n \xi_j - \xi_n\right) \prod^{N - 1}_{\substack{k = 1\\ k \neq n}} \frac{p + q \xi_k \xi_j - \xi_k}{p + q \xi_k \xi_j - \xi_j}.
	\end{split}
	\end{equation}
Note that the function $f(\xi_1, \dots, \xi_{N-1},\eta )$ is a quadratic polynomial with respect to $\xi_n$, and so, we write 
\begin{equation}
  f(\xi_1, \dots, \xi_{N-1},\eta ) = C (\xi_n - \xi^{(j)}_n)(\xi_n - \tilde{\xi}^{(j)}_n), \quad C = [q (p - \xi_j) - pz \xi^{L - 1}_j(q\xi_j - 1)] \xi_1 \cdots \hat{\xi_n} \cdots \xi_{N-1}
\end{equation}
Also, for the contours of integrations small enough, one may check that one of the roots of $f(\xi_1, \dots, \xi_{N-1},\eta )$ with respect to $\xi_n$ will be inside the contour of integration and the other root will be outside of the contour of integration. We take $\xi^{(j)}_n$ to be the root inside the contour of integration and the root $\tilde{\xi}^{(j)}_n$ to be outside of the contour of integration. Then, we write the residue at $\xi_n = \xi^{(j)}_n$ as
\begin{equation}
\begin{split}
&\dashint d\xi_1 \dotsi \dashint d\xi_{N - 1} \dashint d\eta A_{\sigma} \prod^{N - 1}_{s= 1} \xi^{x_s - y_{\sigma(s)} - x_N + y_N - 1}_{\sigma(s)} \eta^{x_N - y_N - 1}_N \\
  &\times \prod^n_{\substack{s = 1 \\ s \neq j}} \left( 1 - z \xi^L_s \frac{p\xi_1 \dotsm \xi_{N - 1} + q\xi_s \eta - \eta}{(p - \xi_s) \xi_1 \dotsm \hat{\xi}_s \dotsm \xi_{N - 1} + q\eta} \prod^{N - 1}_{k = 1} \frac{p + q\xi_k \xi_s - \xi_k}{p + q\xi_k \xi_s - \xi_s}  \right)^{-1}\\
& \times \frac{\left((p - \xi_j) \xi_1 \dotsm \hat{\xi}_j \dotsm \xi_{N - 1} + q\eta\right) \left( p + q \xi_n \xi_j - \xi_j\right)}{C(\xi_n^{(j)} - \tilde{\xi}_n^{(j)})}
\end{split}
\end{equation}
with the integral with respect to $\xi_n$ omitted and all $\xi_n$ evaluated as $\xi^{(j)}_n$. Alternatively, we may write the residue at $\xi_n = \xi^{(j)}_n$ as
\begin{equation}\label{eq:first_res}
\begin{split}
&\dashint d\xi_1 \dotsi \dashint d\hat{\xi}_n \dotsi \dashint d\xi_{N - 1} \dashint d\eta A_{\sigma} \prod^{N - 1}_{s = 1} \xi^{x_s - y_{\sigma(s)} - x_N + y_N - 1}_{\sigma(s)} \eta^{x_N - y_N - 1}_N \\
  &\times \prod_{s = 1, \dotsc, n \text{ and } s \neq j} \left( 1 - z \xi^L_s \frac{p\xi_1 \dotsm \xi_{N - 1} + q\xi_s \eta - \eta}{(p - \xi_s) \xi_1 \dotsm \hat{\xi}_s \dotsm \xi_{N - 1} + q\eta} \prod^{N - 1}_{k = 1} \frac{p + q\xi_k \xi_s - \xi_k}{p + q\xi_k \xi_s - \xi_s}  \right)^{-1}\\
& \times \frac{z\xi^{L - 1}_j \left(p \xi_1 \dotsm \xi_{N - 1} + q\xi_j \eta - \eta\right) \left(p + q \xi_n \xi_j - \xi_n\right)}{C(\xi_n^{(j)} - \tilde{\xi}_n^{(j)})} \prod^{N - 1}_{\substack{k = 1\\ k \neq n}} \frac{p + q \xi_k \xi_j - \xi_k}{p + q \xi_k \xi_j - \xi_j}
\end{split}
\end{equation}
with the integral with respect to $\xi_n$ removed and all $\xi_n$ evaluated as $\xi^{(j)}_n$, since $f(\xi_1, \dots, \xi_{N-1},\eta ) = 0$ is the defining equation of $\xi^{(j)}_n$. In particular, note that the exponent of $\xi_j$ of the integrand in \eqref{eq:first_res} is $x_{\sigma^{-1}(j)}- y_j - x_N + y_N + L -2$. Then, by an inequality similar to \eqref{eq:pole_ineq}, we have the integrand in \eqref{eq:first_res} is analytic at the origin with respect to $\xi_j$.

Now, we integrate \eqref{eq:first_res} with respect to $\xi_j$, and we have to consider $n-1$ residues. We have the residue at $\xi_j = \xi^{(n, i)}_j$, for $i \in \{ 1, \dotsc, n  \} \setminus \{ j \}$, given by the solution to
\begin{equation} \label{eq:defn_xi^ni_j}
  1 - z\xi^{L - 1}_i  \frac{p \xi_1 \dotsm \xi_{N - 1} + q\xi_i \eta - \eta}{(p - \xi_i) \xi_1 \dotsm \hat{\xi}_i \dotsm \xi_{N - 1} + q\eta} \prod^{N - 1}_{k = 1} \frac{p + q \xi_k \xi_i - \xi_k}{p + q \xi_k \xi_i - \xi_i} = 0,
\end{equation}
with respect to $\xi_i$, taking $\xi_1, \dotsc, \hat{\xi}_j, \dotsc, \hat{\xi}_n, \dotsc, \xi_{N - 1}, \eta$ as fixed parameters and $\xi_n = \xi^{(j)}_n$ as an analytic function depending on $\xi_j$. Note that the residue at $\xi_j = \xi^{(n, i)}_{j}$ is an $(N - 2)$-fold contour integral with respect to the variables $\xi_1, \dotsc, \hat{\xi}_j, \dotsc, \hat{\xi}_n, \dotsc, \xi_{N - 1}, \eta$. Moreover, by the defining equations \eqref{eq:defn_xi^j_n_2} and \eqref{eq:defn_xi^ni_j}, we have
\begin{equation} \label{eq:xi^(nj)_n-1=xi_j}
  \xi^{(n, i)}_j = \xi_i,
\end{equation}
for $i \in \{ 1, \dotsc, n - 1 \} \setminus \{ j \}$, and 
\begin{equation} \label{eq:xi^(nn)_n-1=xi^n-1_n}
  \xi^{(n, n)}_j = \xi^{(j)}_n.
\end{equation}
\begin{rem} \label{rem:distinct_radii}
  We let $r$ be a small enough constant, and $r_1 > r_2 > \dotsb > r_N$ be around $r$, say, $0.9 r < r_N < r_1 < 1.1 r$, and then let $\lvert \xi_i \rvert = r_{\sigma^{-1}(i)}$, so that the contours are not through the poles.
\end{rem}
Now, we pair up different residues to cancel them out. In particular, we denote the double residue at $\xi_n = \xi_n^{(j)}$ ($j \in \{1, \dots, n-1\}$) and at $\xi_j = \xi_j^{(n,i)}$ ($i \in \{ 1, \dots, n\} \setminus \{ j\} $) by 
\begin{equation}
	\underset{(\xi_n  , \xi_j  ) = \left(\xi_n^{(j)} , \xi_j^{(n,i)}\right) } {\text{Res}} \mathcal{I}_{\sigma}^{(B)}(z; X;Y)
\end{equation}
with $\mathcal{I}_{\sigma}^{(B)}(z; X;Y)$ denoting the integrand on the right side of \eqref{eq:sum_I^(B)}. We note that this residue vanishes if $\sigma^{-1}(i) < \sigma^{-1}(j)$ since the pole $\xi^{(n, j)}_j$ is outside of the contour for $\xi_j$, but otherwise the residue is nontrivial. Then, we have two cases:
\begin{enumerate}[label=(\alph*)]
\item \label{enu:cancellation_of_perms_1}
  if $i, j \in \{ 1, \dotsc, n - 1 \}$, 
  \begin{equation}
    \underset{(\xi_n  , \xi_j  ) = \left(\xi_n^{(j)} , \xi_j^{(n,i)}\right) } {\text{Res}} \mathcal{I}_{\sigma}^{(B)}(z; X;Y)= - \underset{(\xi_n  , \xi_i ) = \left(\xi_n^{(i)} , \xi_i^{(n,j)}\right) } {\text{Res}} \mathcal{I}_{\sigma'}^{(B)}(z; X;Y)
  \end{equation}
  for $\sigma , \sigma'  \in S^{\varphi}_{A, B}$ so that  $\sigma' = (ij) \circ \sigma$; and
\item \label{enu:cancellation_of_perms_2}
  if $j \in \{ 1, \dotsc, n - 1 \}$, 
  \begin{equation}
    \underset{(\xi_n  , \xi_j  ) = \left(\xi_n^{(j)} , \xi_j^{(n,n)}\right) }{\text{Res}} \mathcal{I}_{\sigma}^{(B)}(z; X;Y)= - \underset{(\xi_n  , \xi_j  ) = \left(\xi_n^{(j)} , \xi_j^{(n,n)}\right) }{\text{Res}}   \mathcal{I}_{\sigma''}^{(B)}(z; X;Y)
  \end{equation}
  for $\sigma ,\sigma'' \in S^{\varphi}_{A, B}$ so that $\sigma'' = (jn) \circ \sigma$.
\end{enumerate}
These cases follow by an argument similar to the argument given in the proof of \cite[Sublemma 2.3]{Tracy-Widom11} by Tracy and Widom. In particular for case \ref{enu:cancellation_of_perms_1} above, when we compare the residue of $\mathcal{I}_{\sigma}^{(B)}(z; X;Y)$ at $(\xi_n  , \xi_j  ) = (\xi_n^{(j)} , \xi_j^{(n,i)})$ and the residue of $\mathcal{I}_{\sigma'}^{(B)}(z; X;Y)$ at $(\xi_n  , \xi_i  ) = (\xi_n^{(i)} , \xi_i^{(n,j)})$, we have that all of the terms in the resulting $(N-2)$-fold contour integral will be the same except for possibly the terms $A_{\sigma}$ and $A_{\sigma'}$. Then, in fact, we have
	\begin{equation}\label{eq:a_minus}
	A_{\sigma}\Big|_{(\xi_n  , \xi_j  ) = (\xi_n^{(j)} , \xi_j^{(n,i))})}= - A_{\sigma'}\Big|_{(\xi_n  , \xi_i  ) = (\xi_n^{(i)} , \xi_i^{(n,j))})}
	\end{equation}
by \eqref{eq:xi^(nj)_n-1=xi_j}. The full argument for \eqref{eq:a_minus} breaks down into smaller sub-cases depending on the relative positions of the indices $i, j$ of the residues. For sake of brevity, we refer the reader to the full argument in the proof of \cite[Sublemma 2.3]{Tracy-Widom11}. Also, we have that case \ref{enu:cancellation_of_perms_2} above follow by a similar argument.

Hence, we conclude that all these $(N - 2)$-fold contour integrals cancel out, and we only need to consider the sum of all residues at $\xi_n = 0$ that are associated to all $\sigma \in S^{\varphi}_{A, B}$.

Now, in order to carry out the induction argument, we consider
\begin{multline} \label{eq:sum_I^(Bn)}
  \sum_{\sigma \in S^{\varphi}_{A, B}} I^{(B \cup \{ n \})}_{\sigma}(z; X; Y) = \sum_{\sigma \in S^{\varphi}_{A, B}} \dashint d\xi_1 \dotsi \dashint d\xi_{N - 1} \dashint d\eta A_{\sigma} \prod^{N - 1}_{j = 1} \xi^{x_j - y_{\sigma(j)} - x_N + y_N - 1}_{\sigma(j)} \eta^{x_N - y_N - 1}_N \\
  \times \prod^{n - 1}_{j = 1} \left( 1 - z \xi^L_j \prod^{N - 1}_{k = 1} \frac{p + q\xi_k \xi_j - \xi_k}{p + q\xi_k \xi_j - \xi_j} \frac{p\xi_1 \dotsm \xi_{N - 1} + q\xi_j \eta - \eta}{(p - \xi_j) \xi_1 \dotsm \hat{\xi}_j \dotsm \xi_{N - 1} + q\eta} \right)^{-1}.
\end{multline}
We evaluate $I^{(B \cup \{ n \})}_{\sigma}(z; X; Y)$ similar to how we just evaluated $I^{(B)}_{\sigma}(z; X; Y)$ above. First, we integrate over $\xi_n$ first. Again, there are $n$ possible poles: $\xi_n = 0$ and $\xi_n = \xi^{(j)}_n$ ($j = 1, \dotsc, n - 1$) which are defined by \eqref{eq:defn_xi^j_n}. Note that $j =n$ is not a pole. Then, we proceed to compute the residue at $\xi_n =\xi^{(j)}_n$, and find that the residue is an $(N - 1)$-fold contour integral with integral variables $\xi_1, \dotsc, \xi_{n - 1}, \xi_{n + 1}, \dotsc, \xi_N, \eta$. For the residue at $\xi_n =\xi^{(j)}_n$, we further integrate over $\xi_j$, and find that the poles are $\xi_j= \xi^{(n, i)}_j$ for $i \in \{ 1, \dotsc, n \} \setminus \{j \}$. Here, the locations $\xi^{(n, i)}_j$ are the same as those defined above for $I^{(B)}_{\sigma}(z; X; Y)$ in \eqref{eq:defn_xi^ni_j}. Also like in the case for $I^{(B)}_{\sigma}(z; X; Y)$, the residue at $\xi^{(n, i)}_j$ is expressed by an $(N - 2)$-fold contour integral with respect to the variables $\xi_1, \dotsc, \hat{\xi}_j, \dotsc, \hat{\xi}_n, \dotsc, \xi_{N - 1}, \eta$, and either $\xi^{(n, n)}_j = \xi_n^{(j)}$ or $\xi^{(n, i)}_j = \xi_i$ (for $i \neq n$) in the integrand. We note that, if we sum over all $\sigma \in S^{\varphi}_{A, B}$, then the sum of the $(N - 2)$-fold contour integrals for the residue at $\xi^{(n, i)}_j$ cancel in pair, so the right side of \eqref{eq:sum_I^(Bn)} is equal to the sum of all residues at $\xi_n = 0$ that are associated to all $\sigma \in S^{\varphi}_{A, B}$. This is similar to the conclusion for the right side of \eqref{eq:sum_I^(B)}, but in this case, we evaluate the residue at $\xi_n$ for the integral $I^{(B \cup \{ n \})}_{\sigma}(z; X; Y)$ instead of the integral $I^{(B)}_{\sigma}(z; X; Y)$.

Now, we compare the residues at $\xi_n = 0$ for $I^{(B \cup \{ n \})}_{\sigma}(z; X; Y)$ and $I^{(B)}_{\sigma}(z; X; Y)$. For any $\sigma \in S^{\varphi}_{A, B}$, the residue at $\xi_n = 0$ for $I^{(B \cup \{ n \})}_{\sigma}(z; X; Y)$ is the same as the residue at $\xi_n = 0$ for $I^{(B)}_{\sigma}(z; X; Y)$. The special case with $\lvert B \rvert = N - 2$ is given by \eqref{eq:|B|=N-2}, which we already showed to be true. Then, the general case is proved similarly, and we omit the detail. Hence, we have
\begin{equation}
  \sum_{\sigma \in S^{\varphi}_{A, B}} I^{(B)}_{\sigma}(z; X; Y) = \sum_{\sigma \in S^{\varphi}_{A, B}} I^{(B \cup \{ n \})}_{\sigma}(z; X; Y).
\end{equation}
Also, note that
\begin{equation}
  S^{\varphi}_{A, B} = \bigcup^n_{k = 1} S^{\varphi_k}_{A \cup \{ k \}, B \cup \{ n \}},
\end{equation}
with the mapping $\varphi_k: A \cup \{ k \} \to B \cup \{ n \}$ defined by $\varphi_k(i) = i$ if $i \in A$ and $\varphi_k(k) = n$. Then,
\begin{equation}
  \sum_{\sigma \in S^{\varphi}_{A, B}} I^{(B \cup \{ n \})}_{\sigma}(z; X; Y) = \sum^n_{k = 1} \sum_{\sigma \in S^{\varphi_k}_{A \cup \{ k \}, B \cup \{ n \}}} I^{(B \cup \{ n \})}_{\sigma}(z; X; Y).
\end{equation}
Moreover, by the inductive assumption, we have
\begin{equation}
  \sum_{\sigma \in S^{\varphi_k}_{A \cup \{ k \}, B \cup \{ n \}}} I^{(B \cup \{ n \})}_{\sigma}(z; X; Y) = \sum_{\sigma \in S^{\varphi_k}_{A \cup \{ k \}, B \cup \{ n \}}} \dashint d\xi_1 \dotsi \dashint d\xi_N A_{\sigma} \prod^N_{j = 1} \xi^{x_j - y_{\sigma(j)} - 1}_{\sigma(j)}.
\end{equation}
Therefore, we have
\begin{equation}
  \sum_{\sigma \in S^{\varphi}_{A, B}} I^{(B)}_{\sigma}(z; X; Y) = \sum_{\sigma \in S^{\varphi}_{A, B}} \dashint d\xi_1 \dotsi \dashint d\xi_N A_{\sigma} \prod^N_{j = 1} \xi^{x_j - y_{\sigma(j)} - 1}_{\sigma(j)},
\end{equation}
and this completes the proof for Lemma \ref{lem:ASEP_IC}.

\section{Formulas for TASEP on the Ring and ASEP on the Line}\label{sec:limits}

We use the formula introduced in Theorem~\ref{thm:main} to recover transition probability formulas for the TASEP on the ring obtained originally by Liu and Baik \cite{Baik-Liu18} and for the ASEP on the line obtained originally by Tracy and Widom \cite{Tracy-Widom08}.

\paragraph{TASEP on the Ring}

In \cite{Baik-Liu18}, the authors give contour integral formulas for the TASEP on the ring. In particular, the integrands in \cite{Baik-Liu18} depend on the solutions to a system of algebraic equations, called the \emph{Bethe equations}. In the case of the TASEP, the Bethe system of equations become decoupled, as opposed to the Bethe equations (see \eqref{eq:bth} in Section \ref{sec:root_formula}) for the ASEP with generic $p \neq 1$. We denote the set of \emph{Bethe roots} by
\begin{equation}\label{eq:bethe_roots}
  R_{z} := \{w \in \mathbb{C} | z^L =  w^N(w +1)^{L-N}  \}
\end{equation}
for any $z \in \mathbb{C}$. 
\begin{corollary}
  If $p=1$, then the transition probability function~\eqref{eq:transition_probability} is equivalent to the transition probability function for the TASEP on the ring  in \cite[Proposition 5.1]{Baik-Liu18}, that is,
  \begin{equation}
    \prob_Y(X;t) = \oint_0 \frac{dz}{2\pi i z} \det\left[\sum_{w \in R_z} \frac{w^{j-k+1}(1+w)^{-x_k+y_j+k-j}e^{tw}}{L(w+\rho)}\right]^N_{j, k = 1},
  \end{equation}
  with $\rho = N/L$.
\end{corollary}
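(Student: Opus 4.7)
The plan is to specialize Theorem~\ref{thm:main} with $p=1$ (hence $q=0$) and reorganize the result into the determinantal form of \cite[Proposition 5.1]{Baik-Liu18}. Under $p=1,q=0$, the scattering factor $S_{\beta,\alpha}=-(1-\xi_\beta)/(1-\xi_\alpha)$ factorizes, $\epsilon(\xi)=\xi^{-1}-1$, and a direct count of the net power of each $(1-\xi_k)$ over the inversions of $\sigma$ gives the closed form $A_\sigma=\sgn(\sigma)\prod_{j}(1-\xi_{\sigma(j)})^{\sigma(j)-j}$. The condition $\sum_j\ell_j=0$ likewise collapses $D_\Lattice$ to $\prod_j\xi_j^{L\ell_j}(1-\xi_j)^{-N\ell_j}$. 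Reindexing the $\sigma$-symmetric factors $\prod_j e^{\epsilon(\xi_j)t}$ and $D_\Lattice$ via $j\mapsto\sigma(j)$ exhibits the integrand of $\Lambda^{\Lattice}_Y(X;t;\sigma)$ as $\sgn(\sigma)\prod_j F_{j,\sigma(j)}(\xi_{\sigma(j)};\ell_{\sigma(j)})$ with $F_{j,k}(\eta;\ell)=(1-\eta)^{k-j-N\ell}\eta^{x_j-y_k-1+L\ell}e^{\epsilon(\eta)t}$. Since each $\xi_i$ is integrated independently over the same contour, summing over $\sigma\in S_N$ yields the determinant $u^{\Lattice}_Y(X;t)=\det\left[\dashint_C F_{j,k}(\eta;\ell_k)\,d\eta\right]_{j,k=1}^{N}$. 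The outer constraint $\sum_k\ell_k=0$ is then imposed via $\oint_{0}\frac{dz}{2\pi iz}z^{\sum_k\ell_k}$, and multilinearity of the determinant in columns pushes the (now unrestricted) sum $\sum_{\ell\in\intZ}$ inside each $(j,k)$-entry.

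Each $(j,k)$ entry then becomes $\dashint_C h_{jk}(\eta)\sum_{\ell\in\intZ}g(\eta)^{\ell}\,d\eta$ with $h_{jk}(\eta)=(1-\eta)^{k-j}\eta^{x_j-y_k-1}e^{\epsilon(\eta)t}$ and $g(\eta)=z\eta^L(1-\eta)^{-N}$. I split the bilateral series: on the small contour $C$ one has $|g|<1$ and $\sum_{\ell\geq0}g^\ell=1/(1-g)$; after deforming the contour outward to a large $C'$ where $|g|>1$, $\sum_{\ell<0}g^\ell=-1/(1-g)$. By the residue theorem applied to the meromorphic function $h_{jk}/(1-g)$, the combined expression $\oint_C-\oint_{C'}$ collects (up to sign) the residues at the Bethe zeros $g(\eta)=1$. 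Changing variables $\eta=1/(1+w)$ transports the Bethe locus to $w^N(1+w)^{L-N}=z$, takes $e^{\epsilon(\eta)t}$ to $e^{tw}$, and using $g'(\eta)=g(\eta)[L/\eta+N/(1-\eta)]=L(1+w)(w+\rho)/w$ at $g=1$ with $\rho=N/L$, reduces the matrix entry to $\sum_{w^N(1+w)^{L-N}=z}w^{k-j+1}(1+w)^{y_k-x_j+j-k}e^{tw}/[L(w+\rho)]$.

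This is precisely the transpose of the $(j,k)$ entry of the determinant in \cite[Proposition 5.1]{Baik-Liu18} (modulo writing $z$ in place of $z^L$); since $\det M=\det M^T$ and since the substitution $z\mapsto z^L$ in the outer integral is compatible with the $L$-fold rotational symmetry of the summand, the two $z$-contour integrals agree. The main technical obstacle is justifying the contour deformation from $C$ to $C'$ for the bilateral series: for $|z|$ sufficiently small and $L>N$, one must check that all $L$ Bethe roots of $g(\eta)=1$ lie in the annulus between $C$ and $C'$, that no other singularities of $h_{jk}/(1-g)$ are crossed, and that the $\ell<0$ geometric series converges uniformly on $C'$. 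These analytic verifications are in the spirit of the residue calculations developed in Section~\ref{sec:root_formula}, and once set up, the remainder of the proof is the algebraic identification described above.
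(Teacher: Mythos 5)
Your proposal is correct, and its overall architecture matches the paper's: specialize to $p=1$ so that $A_\sigma$ and $D_{\Lattice}$ factorize, recognize the $S_N$-sum as a determinant, encode the constraint $\sum_j \ell_j = 0$ by the $z$-integral, resum the geometric series in each $\ell_k$ column by column, change variables $\xi = 1/(1+w)$, and identify the Bethe roots $w^N(1+w)^{L-N}=z^L$ as the residues; the transpose observation and the substitution $u=z^L$ in the outer $z$-integral are both handled correctly. The one step where you genuinely diverge from the paper is the resummation of the $\ell$-series. The paper notes that all terms with $\ell_k$ below a threshold $-K$ (with $-K(L-N)\le y_1-x_N-N$) vanish identically, because the integrand is then analytic at $\xi=0$; it therefore sums a one-sided series on the small contour, obtaining an integrand with an explicit regularizing prefactor $\bigl((1-\xi)^N\xi^{-L}z^{-L}\bigr)^K/(1-g)$ whose only poles inside the relevant $w$-contour are the Bethe roots. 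You instead keep the bilateral series, evaluate $\sum_{\ell\ge 0}$ on the small contour and $\sum_{\ell<0}$ term by term after deforming to a large contour $C'$, and harvest the Bethe roots as residues of $h_{jk}/(1-g)$ in the annulus. This works, and the checks you flag do go through: for $\ell<0$ the exponent $k-j-N\ell$ of $(1-\eta)$ is positive so the only singularity of $h_{jk}g^{\ell}$ is at the origin and the deformation is free; a Rouch\'e argument places all $L$ roots of $z\eta^L=(1-\eta)^N$ in the annulus for $|z|$ small; and $h_{jk}/(1-g)$ is regular at $\eta=1$ since $1/(1-g)$ vanishes there to order $N$ while $h_{jk}$ has a pole of order at most $N-1$. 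The paper's truncation buys you all of this for free (the negative tail is literally finite), at the cost of the less transparent factor $(\cdot)^K$; your two-contour splitting is a clean alternative provided those annulus verifications are actually written out.
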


\begin{proof}
  If we take $p=1$ in~\eqref{eq:transition_probability}, we have the transition probability of TASEP in $\confs_N(L)$. We let $K$ be a positive integer such that $-K(L - N) \leq y_1 - x_N - N$. Recall the notation $\min(\Lattice) = \min(\ell_i)$ for $\Lattice = (\ell_1, \dotsc, \ell_N) \in \mathbb{Z}^N$ and the indices $m , M \in \{ 1, \dots, N\}$ defined in \eqref{eq:defn_mM}. Then, by Theorem \ref{thm:main}, we have
  
  \begin{equation}
    \begin{split}
      \prob_Y(X;t)
      = {}& \sum_{\Lattice \in \intZ^N(0)}\sum_{\sigma\in S_N}\sgn(\sigma) \prod^N_{j = 1} \left( \dashint_C \xi^{x_{\sigma^{-1}(j)} - y_j + \ell_j L - 1} (1 - \xi)^{j - \sigma^{-1}(j) -\ell_j N} e^{t(\xi^{-1} - 1)} d \xi \right) \\
      = {}& \sum_{\substack{\Lattice \in \intZ^N(0) \\ \min(\Lattice) \geq -K}} \sum_{\sigma\in S_N}\sgn(\sigma) \prod^N_{j = 1} \left( \dashint_C \xi^{x_{\sigma^{-1}(j)} - y_j + \ell_j L - 1} (1 - \xi)^{i - \sigma^{-1}(j) -\ell_j N} e^{t(\xi^{-1} - 1)} d\xi \right) \\
      = {}& \sum_{\sigma\in S_N}\sgn(\sigma) \oint_0 \frac{dz}{2\pi i z} \prod^N_{j = 1} \left( \dashint_C \xi^{x_{\sigma^{-1}(i)} - \sigma^{-1}(i) - (y_i - i)} e^{t(\xi^{-1} - 1)} \frac{\left( \frac{(1 - \xi)^N}{\xi^L z^L} \right)^K}{1 - \frac{\xi^L}{(1 - \xi)^N} z^L} d\xi \right),
    \end{split}
  \end{equation}
  with the integral with respect to $z$ over a small enough circle centered at zero such that $\lvert (z \xi)^L_i/(1 - \xi_i)^N \rvert < 1$ always holds. Note that the integral with respect to $\xi_m$ vanishes if $\min(\Lattice) < -K$. Thus, the restriction $\min(\Lattice) \geq -K$ in the second line does not affect the summation.
  
  Now, we simplify the summation over $S_N$ as a determinant. We have
  \begin{equation} \label{eq:det_trans_ASEP}
    \prob_Y(X;t) = \oint_0 \frac{dz}{2\pi i z} \det\left[ \dashint_C \xi^{x_k - y_j - 1}(1 - \xi)^{j - k} e^{t(\xi^{-1} - 1)} \frac{\left( \frac{(1 - \xi)^N}{\xi^L z^L} \right)^K}{1 - \frac{\xi^L}{(1 - \xi)^N} z^L} d \xi \right]_{j, k =1}^N.
  \end{equation}
  Also, we introduce the change of variables 
  \begin{equation} \label{eq:TASEP_change_var}
    \xi \rightarrow \frac{1}{w +1},
  \end{equation}
  and we have
  \begin{equation}
    \prob_Y(X;t) = \oint_0 \frac{dz}{2\pi i z} \det\left[ \dashint_{\lvert w \rvert = R} w^{j - k} (1 + w)^{y_j - x_k + k - j - 1} e^{wt} \frac{(w^N(1 + w)^{L - N} z^{-L})^K}{z^L(1 + w)^{-L + N} w^{-N} - 1} dw \right]_{j, k =1}^N,
  \end{equation}
so that we may take the contour $|w| = R$ with $R$ is large enough since we can take $|\xi_j|$ to be arbitrarily small. It is straightforward to convert the contour integral over $C$ into a discrete sum by using residue computations. 
  \begin{multline} \label{eq:residue_TASEP_trans}
    \dashint_{\lvert w \rvert = R} w^{j - k} (1 + w)^{y_j - x_k + k - j - 1} e^{wt} \frac{(w^N(1 + w)^{L - N} z^{-L})^K}{z^L(1 + w)^{-L + N} w^{-N} - 1} dw = \\
    \sum_{w \in R_z} \frac{w^{j-k+1}(1+w)^{-x_k+y_j+k-j}e^{tw}}{L(w+\rho)}.
  \end{multline}
  Hence we prove the result.
\end{proof}

\paragraph{ASEP on the Line}

\begin{corollary}
	In the limit as the size of the ring goes to infinity, the transition probability function~\eqref{eq:transition_probability} for the ASEP on the ring is equal to the transition probability function for the ASEP on the line that was obtained in \cite{Tracy-Widom08}. That is,
	\begin{equation}
	\lim_{L \rightarrow \infty} \sum_{\Lattice  \in \intZ^N(0)} u_{Y}^{\Lattice}(X;t) = u_{Y}^{(0, \dots, 0)}(X;t)
	\end{equation}
\end{corollary}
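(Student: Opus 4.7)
The plan is to split $\sum_{\Lattice\in\intZ^N(0)}u^\Lattice_Y(X;t)=u^{(0,\ldots,0)}_Y(X;t)+\sum_{\Lattice\neq 0}u^\Lattice_Y(X;t)$ and show the tail vanishes as $L\to\infty$. The head term is easy: since $D_{(0,\ldots,0)}\equiv 1$, all $L$-dependence drops out of the integrand of $\Lambda^{(0,\ldots,0)}_Y(X;t;\sigma)$, and comparing with \eqref{eq:int_Lambda_ASEP} on the small contour $C$, one reads off that $u^{(0,\ldots,0)}_Y(X;t)$ is exactly the Tracy-Widom line formula \cite[Theorem~2.1]{Tracy-Widom08}, as already noted in the remark after Theorem~\ref{thm:main}. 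The work is thus to prove $\sum_{\Lattice\neq 0}u^\Lattice_Y(X;t)\to 0$.

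The central estimate I would establish is a uniform bound $|\Lambda^\Lattice_Y(X;t;\sigma)|\le K\rho^{L\|\Lattice\|_1}$ for some $\rho\in(0,1)$ and constant $K$ depending on $X,Y,t,N,p,q$ but not on $L$, $\Lattice$, or $\sigma$, where $\|\Lattice\|_1=\sum_j|\ell_j|$. Writing $f_j(\xi)=\xi_j^L\prod_k\frac{p+q\xi_k\xi_j-\xi_k}{p+q\xi_k\xi_j-\xi_j}$ so that $D_\Lattice=\prod_j f_j^{\ell_j}$, I would deform each $\xi_j$-contour to a circle of radius $r$ when $\ell_j\ge 0$ and radius $1/r$ when $\ell_j<0$, with $r>0$ small but fixed. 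A direct computation shows that on these circles $|f_j^{\ell_j}|\le C_0\,r^{L|\ell_j|}$, while $A_\sigma$, $\xi_{\sigma(j)}^{x_j-y_{\sigma(j)}-1}$ and $e^{\epsilon(\xi_j)t}$ remain bounded uniformly in $L,\Lattice,\sigma$ (the exponents $x_j-y_{\sigma(j)}-1$ are fixed integers since $X,Y$ are fixed once lifted to $\intZ$, and $\epsilon$ is continuous on both circles). Summing over $\sigma\in S_N$ transfers the same-shape bound to $u^\Lattice_Y(X;t)$; this is essentially Lemma~\ref{lm:absolutely_convergent}, quantified in $L$. With the bound in hand, for any $L\ge L_0$ the tail is dominated by the absolutely summable series $N!\,K\sum_{\Lattice\neq 0}\rho^{L_0\|\Lattice\|_1}$, while each individual $u^\Lattice_Y(X;t)$ tends to zero as $L\to\infty$; dominated convergence then yields $\sum_{\Lattice\neq 0}u^\Lattice_Y(X;t)\to 0$, completing the proof.

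The main obstacle is justifying the contour deformation itself. The rational factors in $D_\Lattice$ have poles in $\xi_j$ at $\xi_j=p/(1-q\xi_k)$, and for $r<p$ these sit strictly between $|\xi_j|=r$ and $|\xi_j|=1/r$, so the deformation for variables with $\ell_j<0$ crosses them and picks up residues. The cleanest route is to carry out the deformation one variable at a time and either (a) check inductively that each residue again obeys the $\rho^{L\|\Lattice\|_1}$ estimate (with one less integration variable and an extra geometric factor in $L$), or (b) invoke the residue-cancellation across the symmetric sum in $\sigma$, in the spirit of Lemma~\ref{lem:ASEP_IC} used for the initial condition in Section~\ref{sec:IC}. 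Either way, the necessary bookkeeping is packaged in Appendix~\ref{sec:convergence}, so in writing the proof I would simply cite Lemma~\ref{lm:absolutely_convergent} and apply dominated convergence.
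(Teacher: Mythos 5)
Your overall architecture (isolate the $\Lattice=(0,\dotsc,0)$ term, which is exactly the Tracy--Widom line formula, and kill the tail by a summable $L$-dependent bound plus dominated convergence) matches the paper's. But the mechanism you propose for the key estimate does not go through as written, and you have correctly identified the obstruction without resolving it. Deforming the $\xi_j$-contour to radius $1/r$ whenever $\ell_j<0$ crosses poles of the integrand of \eqref{eq:int_Lambda_ASEP} --- not only the zeros of $p+q\xi_k\xi_j-\xi_j$ near $\xi_j\approx p$, but also zeros of the $A_\sigma$-denominators $p+q\xi_i\xi_j-\xi_i$ and of the factors of $D_\Lattice$ raised to positive powers --- and the residues picked up are $(N-1)$-fold integrals carrying their own powers of $\xi^{\pm L}$ whose size is not controlled by the bound you are trying to prove. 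Neither of your two escape routes is available off the shelf: option (a) is a genuine induction you would have to carry out, and option (b) (the symmetric-group cancellation of Lemma \ref{lem:ASEP_IC}) is tailored to the $t=0$ initial-condition computation and to a different family of poles, namely those of the geometric factors $(1-z\xi_j^L\prod_k\cdots)^{-1}$, not the poles created by your deformation. Most importantly, your fallback --- ``simply cite Lemma \ref{lm:absolutely_convergent} and apply dominated convergence'' --- does not close the gap: that lemma fixes $L$ and asserts absolute convergence of the sum over $\Lattice$ with a constant that is not quantified in $L$, so it supplies neither the uniform geometric bound $K\rho^{L\|\Lattice\|_1}$ nor the pointwise statement $\Lambda_Y^{\Lattice}(X;t;\sigma)\to 0$.

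The estimate you need is Lemma \ref{lem:ASEP_conv_tech}, and its proof uses no contour deformation at all: keeping every contour at radius $p^2$, one writes $e^{\epsilon(\xi_j)t}=P_M(tp\xi_j^{-1})e^{t(q\xi_j-1)}+E_M(tp\xi_j^{-1})e^{t(q\xi_j-1)}$ for the variable $\xi_j$ indexing $\max(\Lattice)$, where $M=\max(\Lattice)L-m$. The polynomial part contributes nothing because the factor $\xi_j^{L\max(\Lattice)-m}$ gives the integrand a zero of order $M$ at the origin, so no pole is enclosed; the remainder $E_M$ is bounded by $2(t/p)^{M+1}/(M+1)!$, producing the factorial decay $|\Lambda_Y^{\Lattice}(X;t;\sigma)|\le 1/[\max(\Lattice)L/2]!$ for $L$ large. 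Combined with the counting bound of Lemma \ref{lem:finiteness_of_large_max(L)} (at most $N^N k^N$ lattice points with $\max(\Lattice)<k$), this dominates the tail uniformly and sends it to zero as $L\to\infty$. If you want to keep your write-up, replace the deformation argument and the citation of Lemma \ref{lm:absolutely_convergent} by this Taylor-remainder estimate, i.e.\ cite Lemma \ref{lem:ASEP_conv_tech} together with Lemma \ref{lem:finiteness_of_large_max(L)}.
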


\begin{proof}
  It suffices to show that
  \begin{equation}
    \lim_{L \rightarrow \infty} \Lambda_Y^{\Lattice} (X; t ; \sigma) = 0,
  \end{equation}
  for any $\Lattice \in \intZ^N(0) \setminus \{ (0, \dots, 0) \}$ and the convergence becomes exponentially fast as $\max(\Lattice) \to \infty$. By Lemma~\ref{lem:ASEP_conv_tech}, we have that
  \begin{equation}
    \left|  \Lambda_Y^{\Lattice} (X; t ; \sigma) \right| \leq \frac{1}{[\max(\Lattice)L/2]!}
\end{equation}
for large enough $L$. The desired result then follows Lemma \ref{lem:finiteness_of_large_max(L)}.
\end{proof}

\section{One point distribution}\label{sec:one_pt}

Our strategy is to prove the $m = 1$ case of Theorems \ref{thm:one_pt_ASEP} and \ref{thm:one_pt_qTAZRP}, and then use the cyclic invariance property to generalize the result to all $m$.

\subsection{Distribution of $x_1(t)$ for ASEP} \label{subsec_1pt_ASEP}

First, we compute $\prob_Y(x_1(t) = a)$, for some $a \in \intZ$, by taking the sum of the joint probability function $\mathbb{P}_Y(X;t)$ over all configurations $X \in \confs_N(L)$ with $x_1(t) =a$, and then sum over all $a \geq M$ to obtain $\prob_Y(x_1(t) \geq  M)$. The main obstacle in this strategy to compute the one-point function is the summation over all configurations $X \in \confs_N(L)$ with $x_1(t) =a$, which is more difficult that the analogous case for the ASEP on $\intZ$. In the periodic ASEP, we overcome this obstacle through Lemma \ref{lem:Baik-Liu}, introduced in \cite{Baik-Liu18a}. For the ASEP on $\intZ$, the proof of the one-point function of $x_1$ was due to Lemma \ref{lem:Tracy_Widom_09}, introduced in \cite{Tracy-Widom08}. 

In this subsection, the contour integral $\dashint d\xi_i$ is over a counterclockwise contour $\{ \lvert z \rvert = r \}$ with a small enough $r$ unless otherwise stated. Also, we denote
\begin{equation}
  \tilde A_\sigma(\xi_1,\cdots,\xi_n) := \sgn(\sigma) \prod_{1 \leq i < j \leq n} \left(p+q\xi_{\sigma(i)}\xi_{\sigma(j)}-\xi_{\sigma(i)}\right) \prod^N_{j = 1} \xi_{\sigma(j)}^j
\end{equation}
for $\sigma \in S_N$, and 
\begin{equation} \label{eq:defn_G_ASEP}
  G(J_1, \dotsc, J_k) = \prod_{1 \leq i < j \leq k} \left( \prod_{\alpha \in J_i,\ \beta \in J_j} (p + q \xi_{\alpha} \xi_{\beta} - \xi_{\alpha}) \prod_{\substack{\alpha > \beta \\   \alpha \in J_i,\ \beta \in J_j}  } (-1) \right)
\end{equation}
for $J_1, \dotsc, J_k$ disjoint finite subsets of positive integers.

By Theorem \ref{thm:main} and expressions \eqref{eq:int_Lambda_ASEP} and \eqref{eq:expr_A_sigma_ASEP}, we write the one-point probability function
\begin{equation} \label{eq:aux_003}
  \begin{split}
    \prob_Y(x_1(t) = a) = {}& \sum_{X\in\confs_N(L),\ x_1=a} \prob_Y(X;t) \\
    = {}& \sum_{\Lattice \in\intZ^N(0)}\dashint d\xi_1 \dotsi \dashint d\xi_N D_{\Lattice}(\xi_1,\cdots,\xi_N) \sum_{\substack{X\in\confs_N(L) \\ x_1 = 0}} C(\xi_1, \dotsc, \xi_N; X) \\
    & \times \prod_{1 \leq i < j \leq N}(p+q\xi_i\xi_j-\xi_i)^{-1} \prod^N_{k = 1} \xi^{a - y_k - 1}_k e^{\epsilon(\xi_k)t}, 
  \end{split}
\end{equation}
with $D_{\Lattice}(\xi_1,\cdots,\xi_N)$ defined in \eqref{eq:HK} and
\begin{equation}
  C(\xi_1, \dotsc, \xi_N; X) = \sum_{\s\in S_N} \tilde A_{\sigma}(\xi_1,\cdots,\xi_N) \prod_{j=1}^N\xi_{\sigma(j)}^{x_j-j}.
\end{equation}

First, we evaluate the summation over $C(\xi_1, \dotsc, \xi_N; X)$ in \eqref{eq:aux_003} with the following lemma.
\begin{lemma}[\protect{\cite[Lemma 5.3]{Baik-Liu18a}}] \label{lem:Baik-Liu}
  Let $F_{m,n}=\prod_{j=m}^n \xi_j$ for all $m\le n$. Then
  \begin{multline} \label{eq:Baik_Liu17}
    \sum_{X\in\confs_N(L),\ x_1 = 0} \left( \prod_{j=1}^N \xi_j^{x_j-j} \right) = \\
    \sum_{k=0}^{N-1} (-1)^k \sum_{1 = s_0 <s_1<\cdots<s_k < s_{k + 1} = N + 1} \frac{1 - F_{1, s_1 - 1}}{F_{1, s_1 - 1}} (F_{s_1, N})^{L - N} \prod^k_{i = 0} \frac{1}{\prod_{j=s_i}^{s_{i + 1}-1}(1-F_{j,s_{i + 1}-1})}.
  \end{multline}
\end{lemma}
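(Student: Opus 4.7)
The plan is to show that both sides of \eqref{eq:Baik_Liu17} equal $e_1^{-1}\,[z^{L-N}]\prod_{j=2}^{N+1}(1-e_jz)^{-1}$, where I abbreviate $e_j := F_{j,N}$ for $1\le j\le N$ and $e_{N+1}:=1$, and $[z^{L-N}]$ denotes coefficient extraction. For the left-hand side, I would introduce the gap variables $g_i := x_{i+1}-x_i-1\ge 0$ for $1\le i\le N-1$ and $g_N := L-1-x_N\ge 0$; since $x_1=0$, the constraints $x_1<\cdots<x_N<L$ become exactly $g_i\ge 0$ with $g_1+\cdots+g_N=L-N$. A direct calculation gives $x_j-j=-1+g_1+\cdots+g_{j-1}$, hence $\prod_{j=1}^N \xi_j^{x_j-j}=e_1^{-1}\prod_{i=1}^N e_{i+1}^{g_i}$, and summing over admissible $(g_1,\ldots,g_N)$ produces the claimed generating-function form. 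The $e_j$'s are pairwise distinct as rational functions of $\xi_1,\ldots,\xi_N$, so partial fractions yield $\mathrm{LHS}=e_1^{-1}\sum_{m=2}^{N+1} A_m\, e_m^{L-N}$ with $A_m := \prod_{k\in\{2,\ldots,N+1\}\setminus\{m\}}(1-e_k/e_m)^{-1}$.

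For the right-hand side, the prefactor $(1-F_{1,s_1-1})/F_{1,s_1-1}$ exactly cancels the $j=1$ term in the $i=0$ block product, leaving
\begin{equation*}
\mathrm{RHS}=\frac{1}{e_1}\sum_{s_1=2}^{N+1}e_{s_1}^{L-N+1}\,V(s_1)\,T_{s_1},
\end{equation*}
where $V(s_1):=\prod_{j=2}^{s_1-1}(1-e_j/e_{s_1})^{-1}$, $U(s,s'):=\prod_{j=s}^{s'-1}(1-e_j/e_{s'})^{-1}$, and the tail sum is $T_{N+1}:=1$ and $T_m:=\sum_{k\ge 1}(-1)^k\sum_{m=s_1<\cdots<s_{k+1}=N+1}\prod_{i=1}^k U(s_i,s_{i+1})$ for $m\le N$. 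Comparing term-by-term in $m=s_1$, the full lemma collapses to the pointwise identity $e_m V(m)\,T_m = A_m$, equivalently
\begin{equation*}
e_m T_m=\prod_{k=m+1}^{N+1}(1-e_k/e_m)^{-1}\quad\text{for every }m\in\{2,\ldots,N+1\}.
\end{equation*}

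This last identity is the main substantive obstacle, and I would establish it by downward induction on $m$, the base case $m=N+1$ being trivial. Write $Q_m:=e_m^{-1}\prod_{k>m}(1-e_k/e_m)^{-1}$ for the conjectured value of $T_m$. Splitting $T_m$ by its first edge $s_1\to s_2$ and applying the inductive hypothesis $T_{s'}=Q_{s'}$ for $s'>m$ reduces the target to the recursion $\sum_{s'=m}^{N+1}U(m,s')\,Q_{s'}=0$, with the convention $U(m,m):=1$. A direct algebraic simplification rewrites the summand as $e_{s'}^{N-m}/\prod_{k\in\{m,\ldots,N+1\}\setminus\{s'\}}(e_{s'}-e_k)$, so the recursion becomes the classical Lagrange interpolation vanishing
\begin{equation*}
\sum_{s'=m}^{N+1}\frac{e_{s'}^{N-m}}{\prod_{k\in\{m,\ldots,N+1\},\,k\ne s'}(e_{s'}-e_k)}=0,
\end{equation*}
which holds since the numerator has degree $N-m$, strictly less than the number of interpolation points minus one. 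This closes the induction and identifies $\mathrm{LHS}$ with $\mathrm{RHS}$.
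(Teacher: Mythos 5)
Your proof is correct. One point worth making explicit at the outset: the paper does not prove this lemma at all --- it is quoted verbatim from \cite[Lemma 5.3]{Baik-Liu18a} --- so yours is a genuinely independent, self-contained derivation rather than a variant of an argument in the text. The individual steps check out: with $e_j=F_{j,N}$ and $e_{N+1}=1$, the gap-variable substitution gives $x_j-j=-1+g_1+\dots+g_{j-1}$ and hence $\mathrm{LHS}=e_1^{-1}[z^{L-N}]\prod_{j=2}^{N+1}(1-e_jz)^{-1}$; the $e_j$ are distinct monomials in the $\xi_i$, so the partial-fraction step is legitimate as an identity of rational functions. On the right, $F_{j,s'-1}=e_j/e_{s'}$ and the cancellation of the $j=1$ factor against the prefactor $(1-F_{1,s_1-1})/F_{1,s_1-1}$ are correct, and grouping by $s_1$ reduces the lemma to the (sufficient, and then directly proved) termwise identity $e_mT_m=\prod_{k=m+1}^{N+1}(1-e_k/e_m)^{-1}$. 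Peeling off the first edge of the chain gives $T_m=-\sum_{s'=m+1}^{N+1}U(m,s')T_{s'}$ with consistent sign bookkeeping (the $k=1$ chain is absorbed as the $s'=N+1$ term because $T_{N+1}=1$), and the closing identity is the vanishing of $\sum_{s'}e_{s'}^{N-m}\big/\prod_{k\ne s'}(e_{s'}-e_k)$ over the $N+2-m$ points $e_m,\dots,e_{N+1}$, which holds since $N-m$ is strictly less than $N+1-m$. I verified the full chain on the case $N=2$, $L=3$ and it is consistent. Relative to the usual derivation of formulas of this type (iteratively summing geometric series over the nested ranges in $x_N,x_{N-1},\dots$, with the alternating sum over $s_1<\dots<s_k$ generated by the boundary terms at each stage), your generating-function-plus-Lagrange-interpolation route trades that bookkeeping for having to guess the closed form $Q_m$ of the tail sums, which you then confirm by a clean downward induction.
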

We assume $F_{m, m - 1} = 1$ here and in similar cases below.

By changing the order of summation and applying Lemma \ref{lem:Baik-Liu}, we have
\begin{equation} \label{eq:aux_004}
  \sum_{X\in\confs_N(L),\ x_1 = 0} C(\xi_1, \dotsc, \xi_N; X) = \sum_{k=0}^{N-1} (-1)^k \sum_{1  < s_1 < \dotsb < s_k <  N + 1} C_{s_1, \dotsc, s_k}(\xi_1, \dotsc, \xi_N),
\end{equation}
with
\begin{multline}
  C_{s_1, \dotsc, s_k}(\xi_1, \dotsc, \xi_N) = \\
  \sum_{\sigma\in S_N} \left( \tilde A_{\sigma}(\xi_1,\cdots,\xi_N) \frac{1 - F^{\sigma}_{1, s_1 - 1}}{F^{\sigma}_{1, s_1 - 1}} (F^{\sigma}_{s_1, N})^{L - N} \prod^k_{i = 0} \frac{1}{\prod_{j=s_i}^{s_{i + 1}-1}(1-F^{\sigma}_{j,s_{i + 1}-1})}\right), 
\end{multline}
and $F^{\sigma}_{m,n}:=\prod_{j=m}^n\xi_{\sigma(j)}$. If $k = 0$, then $s_1 = N + 1$ in the expression of $C_-$.

Next we simplify the expression of $C_{s_1, \dotsc, s_k}(\xi_1, \dotsc, \xi_N)$ for fixed $k = 0, 1, \dotsc, N - 1$ and $1 = s_0 <s_1 < \dotsb < s_k < s_{k + 1} = N + 1$. Each permutation $\sigma \in S_N$ is uniquely determined by the restriction of the permutation $\sigma$ to the sets $\{1, \dotsc, s_1 - 1 \}$ and $\{ s_1, \dotsc, N \}$, separately. Moreover, the restriction to the latter set is determined by the restriction of the permutation $\sigma$ to the sets $\{ s_j, s_j +1, \dots, s_{j+1}-1\}$ for each $j = 1, \dotsc, k$. We use the notation
	\begin{equation}
	f(\xi_{I}) = f(\xi_{i_1} , \xi_{i_2}, \dots , \xi_{i_s})  \hspace{5mm} \text{or} \hspace{5mm} f(\xi_{\sigma (I)}) = f(\xi_{i_{\sigma(1)}} , \xi_{i_{\sigma(2)}}, \dots , \xi_{i_{\sigma(s)}})
	\end{equation}
for any subset $I \subset \mathbb{N}$ with $|I| =s$ and $I = \{ i_1 < i_2 < \cdots < i_s \}$, for any function $f$ on $s$ arguments, and for any $\sigma$ a permutation of the set $\{1, 2, \dots, s \}$. For example, we have 

	\begin{equation}
	F_{j , |I|}(\xi_{\sigma(I)}) = \prod_{k = j}^{|I|} \xi_{i_{\sigma(k)}}
	\end{equation}

for $I = \{ i_1 < i_2 < \cdots < i_{|I|} \}$. We introduce the functions

\begin{equation}
  E(I) = \left( \prod_{\alpha \in I} \frac{1}{\xi_{\alpha}} - 1 \right) \sum_{\lambda \in S_{\lvert I \rvert}}  \frac{\tilde{A}_{\lambda}(\xi_{I})}{\prod^{\lvert I \rvert}_{j = 1} (1 - F_{j, \lvert I \rvert}(\xi_{\lambda(I)}))},
\end{equation}
and
\begin{equation}
  D_{s_1, \dotsc, s_k}(I^c) = \sum_{\tau \in S_{N - \lvert I \rvert}} \tilde{A}_{\tau}(\xi_{I^c}) \prod^k_{i=1} \frac{\left( F_{s_i,s_{i+1}-1}(\xi_{\tau(I^c)})\right)^{L-N + s_1 - 1}}{\prod_{j=s_i+1}^{s_{i+1}-1}(1-F_{j,s_{i+1}-1}(\xi_{\tau(I^c)}))}.
\end{equation}
for fixed $k = 0, 1, \dotsc, N - 1$, $1 = s_0 <s_1 < \dotsb < s_k < s_{k + 1} = N + 1$, and any subset $I \subset \{1, \dots, N \}$ with $|I|=s_1-1$ and $I^c = \{ 1, \dots, N\} \setminus I$. In the case $k = 0$ and $s_1 = N + 1$, we assume that $D_-(\emptyset) = 1$. Then, we write

\begin{equation} \label{eq:form_C_s}
  C_{s_1, \dotsc, s_k}(\xi_1, \dotsc, \xi_N) = \sum_{\substack{|I| =s_1 -1 \\ I \subset \{1, \dots, N \}}} G(I, I^c) D_{s_1, \dotsc, s_k}(I^c) E(I),
\end{equation}
so that the sum is over all subsets $I \subset \{1, \dots, N \}$ with $|I| = s_1 -1$ and $I^c = \{1 , \dots, N\} \setminus I$.

We simplify $E(I)$ and $D_{s_1, \dotsc, s_k}(I^c)$ with the following lemma. Additionally, for notational convenience, we introduce 
\begin{equation}
  R^{(m)}_n(z_1, \dotsc, z_n) = p^{n(n - 1)/2} \prod^n_{i = 1} z^m_i \prod_{1 \leq i < j \leq n} (z_j - z_i), \quad \text{and} \quad R^{(m)}_0(-) = 1
\end{equation}
for $n$ complex variables $z_1, \dotsc, z_n$.

\begin{lemma}[Equation (1.6) in \cite{Tracy-Widom08}]\label{lem:Tracy_Widom_09}
  \begin{equation}
    \sum_{\s\in S_n} \frac{ \tilde{A}_{\sigma}(\xi_1, \dotsc, \xi_n)}{\prod^n_{j = 1} (1- F^{\sigma}_{j, n})} = R^{(0)}_n(\xi_1, \dotsc, \xi_n) \prod^n_{j = 1} \frac{\xi_j}{1-\xi_j}, \quad \text{with} \quad F^{\sigma}_{j, n} = \prod^n_{l = j} \xi_{\sigma(l)}.
  \end{equation}
\end{lemma}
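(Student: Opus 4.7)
The statement of Lemma \ref{lem:Tracy_Widom_09} is attributed to Equation (1.6) of Tracy--Widom \cite{Tracy-Widom08}, so the most economical "proof" is simply to invoke that reference. For a self-contained argument, my plan is induction on $n$ combined with a pole-cancellation analysis of the left-hand side as a rational function of $\xi_1, \dotsc, \xi_n$. The base case $n=1$ is immediate since both sides reduce to $\xi_1/(1-\xi_1)$.

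For the inductive step, I would compare the singularity structure of the two sides. The right-hand side $R^{(0)}_n(\xi_1,\dotsc,\xi_n)\prod_j \xi_j/(1-\xi_j)$ is a polynomial in the $\xi_i$'s divided by $\prod_j(1-\xi_j)$, so its only poles lie on the hyperplanes $\xi_j=1$. The left-hand side, however, carries for each $\sigma\in S_n$ the denominator $\prod_{j=1}^n(1-F^{\sigma}_{j,n})$, so a priori it has poles on every hypersurface $\prod_{i\in T}\xi_i=1$ as $T$ ranges over nonempty subsets of $\{1,\dotsc,n\}$. The crux is to show that all "spurious" poles with $|T|\ge 2$ cancel once one sums over $\sigma$. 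Permutations $\sigma$ whose suffix $\{\sigma(j),\dotsc,\sigma(n)\}$ equals such a $T$ can be grouped in pairs that differ by an adjacent transposition within this suffix; the ratio of the $S$-matrix contributions $(p+q\xi_a\xi_b-\xi_a)/(p+q\xi_a\xi_b-\xi_b)$ together with the sign change in $\sgn(\sigma)$ is designed precisely so that the residues of these paired contributions along $\prod_{i\in T}\xi_i=1$ cancel. This is the same residue argument that underlies the Yang--Baxter equation for the ASEP $S$-matrix.

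Once the spurious poles are eliminated, both sides, after multiplication by $\prod_j(1-\xi_j)/\xi_j$, are polynomials in $\xi_1,\dotsc,\xi_n$. To finish, I would verify antisymmetry of the resulting polynomial in each pair $(\xi_a,\xi_b)$ (forcing divisibility by the Vandermonde $\prod_{i<j}(\xi_j-\xi_i)$ present on the right), check that the degrees match, and compare leading coefficients or specialize at $\xi_n=0$ to invoke the inductive hypothesis on $n-1$ variables. The main obstacle will be the bookkeeping in the pole-cancellation step: while the pairing principle is transparent, carefully tracking how the asymmetric factors $p+q\xi_i\xi_j-\xi_i$ and the exponents $\xi_{\sigma(j)}^j$ transform under the relevant transpositions is delicate. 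This residue analysis is the content of \cite{Tracy-Widom08}, and any self-contained proof must essentially reproduce it.
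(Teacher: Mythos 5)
The paper gives no proof of this lemma at all: it is imported verbatim as Equation (1.6) of Tracy--Widom \cite{Tracy-Widom08}, which is exactly the ``most economical proof'' you propose in your first sentence, so your treatment matches the paper's. Your supplementary sketch (induction on $n$ plus cancellation of the spurious poles along $\prod_{i\in T}\xi_i=1$ by pairing permutations, followed by an antisymmetry and degree count) is a reasonable outline of how the identity is actually established in the cited reference, but it remains an outline rather than a proof, and the paper itself does not attempt to reproduce it.
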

Then, we have
\begin{equation}\label{eq:E_simple}
  E(I) = R^{(0)}_{\lvert I \rvert} (\xi_{i_1}, \dotsc, \xi_{i_{\lvert I \rvert}}) \prod_{\alpha \in I} \frac{1}{1 - \xi_{\alpha}} \left( 1 - \prod_{\alpha \in I} \xi_{\alpha} \right)
\end{equation}
with the help of Lemma \ref{lem:Tracy_Widom_09}.

Next, we simplify $D_{s_1, \dotsc, s_k}(I^c)$ by taking the summation over all possible partitions $J_1 \cup \cdots \cup J_k$ of $I^c$ with $\lvert J_l \rvert = s_{l + 1} - s_l$. We have

\begin{equation}
  D_{s_1, \dotsc, s_k}(I^c) = \sum_{\substack{|J_l| = s_{l+1} -s_l \\ J_1 \sqcup \dots \sqcup J_k = I^c}} G(J_1, \dotsc, J_k)  \prod^k_{l = 1} \left( \sum_{\lambda^{(l)} \in S_{\lvert J_l \rvert}}  \frac{ \tilde{A}_{\lambda^{(l)}}(\xi_{J_l}) F_{1, \lvert J_l \rvert}(\lambda^{(l)}(J_l))^{L - N + s_l - 1}}{\prod^{\lvert J_l \rvert}_{j = 1} \left(1 - F_{j, \lvert J_l \rvert}(\lambda^{(l)}(J_l) )\right)} \right).
\end{equation}

 Furthermore, by applying Lemma \ref{lem:Tracy_Widom_09} to each summation over $\lambda^{(l)}$, we have

\begin{equation}\label{eq:D_simple}
  D_{s_1, \dotsc, s_k}(I^c) = \prod_{\beta \in I^c} \xi^{L - N + s_1}_{\beta} \frac{1}{1 - \xi_{\beta}}  \sum_{\substack{|J_l| = s_{l+1} -s_l \\ J_1 \sqcup \dots \sqcup J_k = I^c}} G(J_1, \dotsc, J_k) \prod^k_{l = 1}  R^{(s_l - s_1)}_{\lvert I_l \rvert}(\xi_{J_l}) .
\end{equation}

At this point, equations \eqref{eq:E_simple} and \eqref{eq:D_simple} seem to give us the simplest expression for $C_{s_1, \dotsc, s_k}(\xi_1, \dotsc, \xi_N)$ in \eqref{eq:form_C_s} with fixed $1 < s_1< \dotsc< s_k \leq N$. So, we now consider and simplify the summation of the terms $C_{s_1, \dotsc, s_k}(\xi_1, \dotsc, \xi_N)$ over all possible $k = 0, 1, \dotsc, N - 1$ and $1 < s_1< \dotsc< s_k \leq N$.

As an intermediate step, we fix $s_1 \leq N$ and sum over indices $s_2, \dotsc, s_k$, or equivalently, sum over all possible partitions of $I^c$. We let $s_1 = s + 1$. That is, take $\lvert I \rvert = s$ and $\lvert I^c \rvert = N - s$. We set $I = \{ i_1 < \dotsb < i_s \}$ and $I^c = \{ j_1 < \dotsb < j_{N - s} \}$. The index $k$ may vary between $1$ and $N - s$. Then, we have
\begin{equation} \label{eq:sum_D_s}
  \sum^{N - s}_{k = 1} (-1)^k \sum_{s + 1 = s_1 < s_2 < \dotsb < s_k \leq N} D_{s_1, \dotsc, s_k}(I^c) = (-1)^{N - s} \tau^{\frac{(N - s)(N - s - 1)}{2}} R^{(L)}_{N - s}(\xi_{I^c}) \prod_{\beta \in I^c} \frac{1}{1 - \xi_{\beta}},
\end{equation}
with $\tau = q/p$ as defined in Section \ref{subsec:notation}. This is a consequence of the following lemma whose proof is given in the end of this subsection in Section \ref{sec:misc_proof1}.
\begin{lemma} \label{lem:sum_D(I^c)}
  For any positive integer $n$, we have

  \begin{equation} \label{eq:identity}
    \sum^n_{k = 1} (-1)^k \sum_{I_1 \sqcup \cdots \sqcup I_k = \{1, \dots, n \}} G(I_1, \dotsc, I_k) \prod^k_{\alpha=1} R^{(\sum^{\alpha - 1}_{i = 1} \lvert I_i \rvert)}_{\lvert I_{\alpha} \rvert}( \xi_{I_{\alpha}})   = (-1)^n \tau^{\frac{n(n - 1)}{2}} R^{(n - 1)}_n(\xi_1, \dotsc, \xi_n)
  \end{equation}

so that the inner-most sum on the left side of \eqref{eq:identity} is over all disjoint sets $I_1, \dots, I_k \subset \{1, \dots, n\}$ with $I_1 \cup \cdots  \cup I_k = \{ 1, \dots, n\}$.
\end{lemma}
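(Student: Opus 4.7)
The plan is to prove the identity by induction on $n$. Extracting the first block $I_1 = J$ from each ordered partition and summing separately over the partition of the remaining elements yields, after tracking how the $G$-factors couple $J$ with the remaining blocks and how the shifts in the $R^{(s_\alpha)}_{|I_\alpha|}$ factors propagate, the recursion
\begin{equation*}
  g(I) = -\sum_{\emptyset \neq J \subseteq I} R^{(0)}_{|J|}(\xi_J)\, H(J, I \setminus J)\, \prod_{i \in I \setminus J} \xi_i^{|J|}\, g(I \setminus J),
\end{equation*}
where $g(I)$ denotes the natural generalization of the LHS to an arbitrary finite subset $I$ of positive integers (with the convention $g(\emptyset) = 1$), and
\begin{equation*}
  H(J, I \setminus J) := \prod_{a \in J,\, b \in I \setminus J}(p + q\xi_a \xi_b - \xi_a) \cdot (-1)^{\#\{(a, b) \in J \times (I \setminus J)\,:\, a > b\}}.
\end{equation*}

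The inductive claim I would verify is the closed form
\begin{equation*}
  g(I) = (-1)^{|I|}\, q^{\binom{|I|}{2}}\, \prod_{i \in I} \xi_i^{|I|-1}\, V(\xi_I), \qquad V(\xi_I) := \prod_{\substack{i < j\\ i, j \in I}}(\xi_j - \xi_i),
\end{equation*}
which specializes to the target RHS when $I = \{1, \dotsc, n\}$ since $\tau^{\binom{n}{2}} R^{(n-1)}_n(\xi) = q^{\binom{n}{2}} \prod_i \xi_i^{n-1} V(\xi_{\{1, \dotsc, n\}})$. The base case $|I| = 1$ holds as $g(\{i\}) = -R^{(0)}_1(\xi_i) = -1$. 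Substituting the inductive hypothesis on $g(I \setminus J)$ into the above recursion, the inductive step reduces to the polynomial identity
\begin{equation*}
  \sum_{m=0}^{n} (-1)^{n-m}\, p^{\binom{m}{2}}\, q^{\binom{n-m}{2}} \sum_{\substack{J \subseteq \{1, \dotsc, n\}\\ |J| = m}} V(\xi_J)\, V(\xi_{\{1, \dotsc, n\} \setminus J})\, H(J, \{1, \dotsc, n\} \setminus J)\, \prod_{i \notin J} \xi_i^{n-1} = 0,
\end{equation*}
whose endpoints $m = 0$ and $m = n$ produce respectively the target $(-1)^n q^{\binom{n}{2}} \prod_i \xi_i^{n-1} V(\xi)$ for $g(\{1, \dotsc, n\})$ and the $J = \{1, \dotsc, n\}$ recursion term $-p^{\binom{n}{2}} V(\xi)$.

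To establish this reduced identity, I would treat both sides as polynomials in $\xi_n$ and match coefficients degree by degree. The top-order monomial $\xi_n^{2n-2}$ is produced only by subsets $J \not\ni n$, and applying the inductive hypothesis to $\{1, \dotsc, n-1\}$ matches its coefficient on both sides via the combined exponent identity $(n-1) + \binom{n-1}{2} = \binom{n}{2}$. The lower-order coefficients would be handled by a careful grouping of terms, for instance pairing each subset $J \not\ni n$ with $J \cup \{n\}$ and invoking the algebraic relation $(p + qxy - x)/(y - x) + (p + qxy - y)/(x - y) = 1$, which collapses the paired $H$-factors into simpler expressions.

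The main obstacle will be executing these cancellations systematically, as it requires careful bookkeeping of the signs arising from $V(\xi_J)$, $V(\xi_{\{1, \dotsc, n\} \setminus J})$, and the $(-1)^{[a > b]}$ factors in $H$. Should the direct algebraic approach prove intractable, an alternative is to recast the reduced identity as a contour integral and collapse pairs of residues, in the spirit of the proof of Lemma \ref{lem:ASEP_IC} above.
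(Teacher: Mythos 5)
Your induction and the extraction of the first block $I_1=J$ reproduce the paper's proof exactly: the recursion you write for $g(I)$, the closed form you propose to verify, and the reduced vanishing identity you arrive at are, after rewriting $\tau^{\binom{n-s}{2}}R^{(n-1)}_{n-s}(\xi_{I^c})=q^{\binom{n-s}{2}}\prod_{i\in I^c}\xi_i^{n-1}\prod_{i<j,\ i,j\in I^c}(\xi_j-\xi_i)$, precisely the paper's reduction of \eqref{eq:identity} to the identity \eqref{eq:comb_for_ASEP}. Everything up to that point checks out, including the signs and the shift $\prod_{i\in I\setminus J}\xi_i^{|J|}$.

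The gap is that \eqref{eq:comb_for_ASEP} is the entire content of the lemma, and you do not prove it: you outline a coefficient-matching scheme and concede it may be intractable. Worse, the one coefficient you do compute carries no information --- the $\xi_n^{2n-2}$ coefficient is produced only by subsets $J\not\ni n$ and its vanishing is just the $(n-1)$-variable instance of the same identity, which your induction already grants --- so all of the work sits in the lower-order coefficients, where the pairing $J\leftrightarrow J\cup\{n\}$ does not obviously close up: moving $n$ from $J^c$ to $J$ alters every $H$-factor involving $n$, both Vandermonde factors, and the power $\xi_n^{n-1}$ simultaneously, not just a single two-particle factor. The paper disposes of \eqref{eq:comb_for_ASEP} in one stroke: substitute $\xi_i=(1+z_i)/(1+\tau z_i)$, clear denominators to obtain \eqref{eq:001}, and recognize the subset sum via the Cauchy--Binet expansion as the single determinant \eqref{eq:002}, $\det\bigl((1+\tau z_i)^{n-1}z_i^{j-1}-(1+z_i)^{n-1}(\tau z_i)^{j-1}\bigr)_{i,j=1}^n$, which vanishes because the columns satisfy the explicit linear relation with coefficients $\binom{n-1}{j-1}$, so the matrix has rank at most $n-1$. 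To complete your argument you must either carry out your cancellation scheme in full (it is in the spirit of Tracy and Widom's proof of the identity in Lemma \ref{lem:TW09}, so it is plausible but genuinely laborious) or import a closed-form device such as this substitution-plus-determinant argument.
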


Then, using \eqref{eq:sum_D_s} and \eqref{eq:form_C_s} in \eqref{eq:aux_004}, we simplify the summation in \eqref{eq:aux_003} as

\begin{multline}
  \sum_{\substack{X \in \confs_N(L)\\ x_1 = 0}} C(\xi_1, \dotsc, \xi_N; X) = \\
 \prod^N_{j = 1} \frac{1}{1 - \xi_j} \sum^N_{s = 1} \sum_{\substack{|I|=s \\ I \subset \{1, \dots, N \}}} R^{(0)}_s(\xi_{I}) \left( 1 - \prod_{\alpha \in I} \xi_{\alpha} \right) G(I, I^c) (-1)^{N - s} \tau^{\frac{(N - s)(N - s - 1)}{2}} R^{(L)}_{N - s}(\xi_{I^c}).
\end{multline}
 
We note that 
\begin{equation} \label{eq:change_level_id_ASEP}
  G(I, I^c) R^{(L)}_{N - s}(\xi_{I^c}) D_{\Lattice}(\xi_1, \dotsc, \xi_N) = G(I^c, I) R^{(0)}_{N - s}(\xi_{I^c}) D_{\Lattice(I^c)}(\xi_1, \dotsc, \xi_N),
\end{equation}
for any $\Lattice = (\ell_1, \dotsc, \ell_N)$ and $\Lattice(I^c) = (\ell'_1, \dotsc, \ell'_N)$ so that $\ell'_i = \ell_i$ if $i \in I$ and $\ell'_j = \ell_j + 1$ if $j \in I^c$. Also, note that $\Lattice(I^c) \in \intZ^N(N-s)$ if and only if $\Lattice \in \intZ^N(0)$. Hence, we have 

\begin{equation} \label{eq:sum_DC}
  \begin{split}
    &
    \begin{aligned}
      \sum_{\Lattice \in \intZ^N(0)} \dashint d\xi_1 \dotsi \dashint d\xi_N & D_{\Lattice}(\xi_1, \dotsc, \xi_N) \sum_{\substack{X \in \confs_N(L) \\ x_1 = 0}} \frac{C(\xi_1, \dotsc, \xi_N; X)} {\prod_{1 \leq i < j \leq N} (p + q \xi_i \xi_j - \xi_i)} \prod^N_{k = 1} \xi^{a - y_k - 1}_k e^{\epsilon(\xi_k) t}
    \end{aligned} \\
    = {}& \sum^N_{s = 1} (-1)^{N - s}  \tau^{\frac{(N - s)(N - s - 1)}{2}} \sum_{\substack{|I|=s \\ I \subset \{1, \dots, N \}}} \sum_{\Lattice \in \intZ^N(N-s)} \dashint d\xi_1 \dotsi \dashint d\xi_N D_{\Lattice}(\xi_1, \dotsc, \xi_N)    \\
    & \times  \left( 1 - \prod_{\alpha \in I} \xi_{\alpha} \right)\frac{ G(I^c, I)  R^{(0)}_s(\xi_{I})  R^{(0)}_{N - s}(\xi_{I^c}) }{ \prod_{1 \leq i < j \leq N} (p + q \xi_i \xi_j - \xi_i)}\prod^N_{k = 1} \frac{\xi^{a - y_k - 1}_k}{1 - \xi_k} e^{\epsilon(\xi_k) t} \\
    = {}& \sum^N_{s = 1} (-1)^{N - s} p^{\frac{s(s - 1)}{2}} q^{\frac{(N - s)(N - s - 1)}{2}} \sum_{\substack{|I| =s \\ I \subset \{ 1, \dots, N \} }} \sum_{\Lattice \in \intZ^N(N-s)} \dashint d\xi_1 \dotsi \dashint d\xi_N D_{\Lattice}(\xi_1, \dotsc, \xi_N) \\
    & \times \left( 1 - \prod_{\alpha \in I} \xi_{\alpha} \right) \prod_{\alpha \in I, \beta \in I^c} \frac{p + q \xi_{\alpha} \xi_{\beta} - \xi_{\beta}}{\xi_{\beta} - \xi_{\alpha}} \prod_{1 \leq i < j \leq N} \frac{\xi_j - \xi_i}{p + q \xi_i \xi_j - \xi_i} \prod^N_{k = 1} \frac{\xi^{a - y_k - 1}_k}{1 - \xi_k} e^{\epsilon(\xi_k) t}.
  \end{split}
\end{equation}

We remark that for each $s = 1, \dotsc, N$ and fixed $I$ with $\lvert I \rvert = s$, the sum over $\Lattice \in \intZ^N(N-s)$ in \eqref{eq:sum_DC} converges absolutely by Lemma \ref{lm:absolutely_convergent}. We simplify the right side of \eqref{eq:sum_DC} further by the following lemma.
\begin{lemma}[\protect{\cite[Formula (1.9)]{Tracy-Widom08}}]\label{lem:TW09}
  For $s = 1, \dotsc, N$,
  \begin{equation}
    \sum_{\substack{|I| = s \\I \subset \{1, \dots, N \}}}  \left( 1 - \prod_{\alpha \in I} \xi_{\alpha} \right) \prod_{\substack{\alpha \in I \\  \beta \in I^c}} \frac{p + q \xi_{\alpha} \xi_{\beta} - \xi_{\beta}}{\xi_{\beta} - \xi_{\alpha}} =  (-p)^{(N - s)s} (q/p)^{N - s}  \taubinom{N - 1}{s - 1} \left(1-\prod_{j=1}^N\xi_j\right)
  \end{equation}
so that the sum on the left side is over all subsets $I \subset \{1, \dots, N \}$ with $|I| =s$.
\end{lemma}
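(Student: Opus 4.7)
The plan is to verify this as a symmetric polynomial identity in $\xi_1, \dotsc, \xi_N$. The left-hand side is manifestly symmetric under any permutation of the variables, since a permutation simply relabels the subsets $I$ of size $s$ while preserving the summand structure. A direct residue computation at $\xi_\alpha = \xi_\beta$ shows that the apparent simple poles there cancel between the two subsets $(I, I^c)$ and $(I', I'^c)$ that differ only by exchanging $\alpha$ and $\beta$; thus the left-hand side extends to a polynomial. A degree count then establishes that after this cancellation the result is multilinear of total degree at most $N$, matching the shape of the right-hand side.

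Since every summand contains the factor $(1 - \prod_{\alpha \in I} \xi_\alpha)$, the left-hand side vanishes at $\xi_1 = \dotsb = \xi_N = 1$. Combined with symmetry and multilinearity, this forces the left-hand side to be a scalar multiple of $(1 - \prod_{j=1}^N \xi_j)$. I would determine the scalar by induction on $N$: specializing $\xi_N = 1$, each factor carrying the index $N$ collapses to $-q$ (whenever $\beta = N$, using $p + q \xi_\alpha - 1 = -q(1 - \xi_\alpha)$) or to $-p$ (whenever $\alpha = N$, using $p + q\xi_\beta - \xi_\beta = p(1 - \xi_\beta)$). The specialized identity then splits into a contribution from subsets $I \not\ni N$ (weighted by $(-q)^s$ and referencing the identity for $N-1$ variables with parameter $s$) plus a contribution from subsets $I \ni N$ (weighted by $(-p)^{N-s}$ and referencing the identity with parameter $s-1$). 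Matching the resulting coefficients with the claim reduces to the $\tau$-Pascal recursion $\taubinom{N-1}{s-1} = \taubinom{N-2}{s-2} + \tau^{s-1} \taubinom{N-2}{s-1}$, which closes the induction with the trivial base case $N = 1$.

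The main obstacle I expect is the bookkeeping: the residue cancellation and the multilinearity bound are conceptually clear but combinatorially delicate, and tracking the signs produced by the Vandermonde-type denominator through the inductive step requires careful accounting. Fortunately, this identity is exactly Formula~(1.9) of \cite{Tracy-Widom08}, so an alternative is to simply cite that reference rather than reproduce the argument here.
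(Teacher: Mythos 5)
The paper itself gives no proof of this lemma: it is stated purely as a quotation of Formula (1.9) of \cite{Tracy-Widom08}, so your fallback of simply citing that reference is exactly what the authors do. Your sketched direct argument, however, contains a genuine gap at the step that is supposed to pin the answer down.

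The false step is ``Combined with symmetry and multilinearity, this forces the left-hand side to be a scalar multiple of $(1-\prod_{j=1}^N\xi_j)$.'' A symmetric multilinear polynomial in $N$ variables is an arbitrary combination $\sum_{k=0}^N c_k e_k$ of elementary symmetric polynomials, and vanishing at the single point $(1,\dotsc,1)$ imposes only the one linear condition $\sum_k c_k\binom{N}{k}=0$; this cuts out an $N$-dimensional space, not the line spanned by $1-\prod_j\xi_j$ (e.g.\ $e_1-N$ is symmetric, multilinear, and vanishes at that point). Your induction via the specialization $\xi_N=1$ is correctly set up --- the collapses to $-q$ and $-p$ use $p+q=1$, and the coefficients close under $\taubinom{N-1}{s-1}=\taubinom{N-2}{s-2}+\tau^{s-1}\taubinom{N-2}{s-1}$ --- but by itself it only shows that the difference of the two sides vanishes on each hyperplane $\xi_j=1$, hence equals $c\prod_{j=1}^N(\xi_j-1)$ for an undetermined constant $c$. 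The vanishing at $(1,\dotsc,1)$ gives no further information, since $\prod_j(\xi_j-1)$ also vanishes there. So one additional normalization is genuinely needed to force $c=0$, for instance an independent computation of the coefficient of $\xi_1\dotsm\xi_N$ or of the constant term; neither is accessible term by term, because for $1<s<N$ each summand has total degree $s(N-s+1)>N$ (and a pole of order $s(N-s)$ at the origin), so the required cancellations are exactly as delicate as the original identity. Given this, citing \cite[Formula (1.9)]{Tracy-Widom08}, as the paper does, is the sound choice unless you supply that missing normalization.
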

By applying Lemma \ref{lem:TW09} on the right side of \eqref{eq:sum_DC}, we have

\begin{multline} \label{almost_m=1_ASEP}	
  \sum_{\Lattice \in \intZ^N(0)} \dashint d\xi_1 \dotsi \dashint d\xi_N D_{\Lattice}(\xi_1, \dotsc, \xi_N) \sum_{\substack{X \in \confs_N(L) \\ x_1 = 0}} \frac{C(\xi_1, \dotsc, \xi_N; X)}{ \prod_{1 \leq i < j \leq N} (p + q \xi_i \xi_j - \xi_i)}  \prod^N_{k = 1} \xi^{a - y_k - 1}_k e^{\epsilon(\xi_k) t} \\ 
 = \sum^N_{s = 1} c(s) \sum_{\Lattice \in \intZ^N(N-s)} \dashint d\xi_1 \dotsi \dashint d\xi_N D_{\Lattice}(\xi_1, \dotsc, \xi_N) \\
 \times  \prod_{1 \leq i < j \leq N} \frac{\xi_j - \xi_i}{p + q \xi_i \xi_j - \xi_i} \prod^N_{k = 1} \frac{\xi^{a - y_k - 1}}{1 - \xi_k} e^{\epsilon(\xi_k) t} \left( 1 - \prod^N_{j = 1} \xi_j \right),
\end{multline}

with
\begin{equation}
  c(s) = (-1)^{(N - s)(s - 1)} p^{(2N - s)(s - 1)/2} q^{(N - s)(N - s + 1)/2} \taubinom{N - 1}{s - 1}.
\end{equation}
We note that sum on the left side of equation \eqref{almost_m=1_ASEP} is absolutely convergent for $\Lattice \in \intZ^N(0)$, and on the right side of equation \eqref{almost_m=1_ASEP}, the sum over $\Lattice \in \intZ^N(N-s)$ is absolutely convergent for each $s$ by Lemma \ref{lm:absolutely_convergent} and Lemma \ref{lem:ASEP_conv_tech}.

Lastly, we use the identity
\begin{equation} \label{eq:tau_identity}
  \prod^{N - 1}_{i = 1} (1 + \tau^i x) = \sum^{N - 1}_{k = 0} q^{(k^2 + k)/2} \taubinom{N - 1}{k} x^k,
\end{equation}
found in \cite[Formula (10.0.9)]{Andrews-Askey-Roy99} so that we may write
\begin{equation} \label{eq:gene_fun_ASEP}
\sum^N_{s = 1} c(s) z^{N - s} = p^{N(N - 1)/2} C_N(z),
\end{equation}
with $C_N(z)$ defined in \eqref{eq:defn_C_N_ASEP}.

From \eqref{eq:aux_003}, we recognize that the left side of \eqref{almost_m=1_ASEP} is $p^{-N(N - 1)/2} \prob_Y(x_1(t) = a)$. Then, using identity \eqref{eq:gene_fun_ASEP}, we have
\begin{multline} \label{eq:x_1=a_ASEP}
  \prob_Y(x_1(t)=a) = \frac{p^{N(N - 1)/2}}{2\pi i} \int_0 \frac{dz}{z} C_N(z) \sum^{N - 1}_{k = 0} z^{-k} \sum_{\Lattice \in \intZ^N(k)} \dashint d \xi_1\cdots\dashint d \xi_N D_{\Lattice}(\xi_1,\cdots,\xi_N) \\
  \times \prod_{1 \leq i < j \leq N} \frac{\xi_j - \xi_i}{p + q \xi_i \xi_j - \xi_i} \prod^N_{k = 1}\frac{\xi^{- y_k}_k}{1 - \xi_k} e^{\epsilon(\xi_k) t} \left( \prod^N_{j = 1} \xi^{a-1}_j - \prod^N_{j = 1} \xi^{a}_j \right).
\end{multline}
Hence,
\begin{equation} \label{eq:x_1>=M_ASEP}
  \begin{split}
    \prob_Y(x_1(t) \geq M) = {}& \sum^{\infty}_{a = M} \prob_Y(x_1(t)=a) \\
    = {}& \frac{p^{N(N - 1)/2}}{2\pi i} \int_0 \frac{dz}{z} C_N(z) \sum^{N - 1}_{k = 0} z^{-k} \sum^{\infty}_{a = M} \sum_{\Lattice \in \intZ^N(k)} \dashint d \xi_1\cdots\dashint d \xi_N D_{\Lattice}(\xi_1,\cdots,\xi_N) \\
    & \times \prod_{1 \leq i < j \leq N} \frac{\xi_j - \xi_i}{p + q \xi_i \xi_j - \xi_i} \prod^N_{k = 1}\frac{\xi^{ - y_k }_k}{1 - \xi_k} e^{\epsilon(\xi_k) t} \left( \prod^N_{j = 1} \xi^{a-1}_j - \prod^N_{j = 1} \xi^{a }_j \right).
  \end{split}
\end{equation}
Since the infinite summation over $a \geq M$ and $\Lattice \in \intZ^N(k)$ on the right-hand side of \eqref{eq:x_1>=M_ASEP} is absolutely convergent by Lemma \ref{lm:absolutely_convergent}, we conclude the proof of the $m = 1$ case of Theorem \ref{thm:one_pt_ASEP} by changing the order of summation so that the sum over $a$ is taken first.

\subsubsection{Miscellaneous Proof}\label{sec:misc_proof1}

\begin{proof}[Proof of Lemma \ref{lem:sum_D(I^c)}]
 We prove the lemma by induction on $n$. The $n = 1$ case holds trivially. Suppose the lemma holds if $n$ is replaced by $1, 2, \dotsc, n - 1$. For notational convenience, we write $I_1 = I$ with $\lvert I_1 \rvert = s \geq 1$. Then, the left side of \eqref{eq:identity} becomes
  \begin{equation} \label{eq:intermediate_ASEP_1pt_lemma}
    (-1) \sum^n_{s = 1} \sum_{|I| = s \text{ and } I \subset \{1, \dots, n \}} G(I, I^c) R^{(0)}_s \left( \xi_{I}\right) H_{n - s}(\xi_{I^c}),
  \end{equation}
with $I_2, \dotsc, I_k$ defined as in \eqref{eq:identity},

  \begin{equation}
    H_{n - s}(\xi_{I^c}) =  \prod_{j \in I^c} \xi^s_j \sum^{N - s + 1}_{k = 2} (-1)^{k - 1} \sum_{I_2,\dotsc,I_k} G(I_2, \dotsc, I_k) \prod^k_{\alpha = 2} R^{(\sum^{\alpha - 1}_{i = 1} \lvert I_i \rvert)}_{\lvert I_{\alpha} \rvert}( \xi_{I_{\alpha}}) .
  \end{equation}

for $s = 1, \dotsc, n - 1$, and $H_{n - s}(\xi_{I^c}) = H_0(-) = 1$ for $s=n$. By the induction assumption,
  \begin{equation}
    H_{n - s}(\xi_{I^c}) = (-1)^{n - s + 1} \tau^{\frac{(n - s)(n - s - 1)}{2}} R^{(n - 1)}_{n - s}(\xi_{I^c}).
  \end{equation}
 for $1 \leq s < n$. We note that the summation over $s$ in \eqref{eq:intermediate_ASEP_1pt_lemma} is from $1$ to $n$. That is, $I_1$ can be any subset of $\{ 1, \dotsc, n \}$ except for the empty set. Then, it suffices to show the following identity

  \begin{equation} \label{eq:comb_for_ASEP}
    \sum^N_{s = 0} \sum_{|I| =s \text{ and } I \subset \{1, \dots, n \}} (-1)^{n - s} \tau^{\frac{(n - s)(n - s - 1)}{2}} G(I, I^c) R^{(0)}_s(\xi_{I}) R^{(n - 1)}_{n - s}(\xi_{I^c}) = 0,
  \end{equation}

since the right side of \eqref{eq:identity} corresponds to the $I = \emptyset$ term in \eqref{eq:comb_for_ASEP} and the left side of \eqref{eq:identity} corresponds to the negative of the sum of the terms with $I \neq \emptyset$ in \eqref{eq:comb_for_ASEP}.
        
First, we apply the change of variables $\xi_i=(1+z_i)/(1+\tau z_i)$ to \eqref{eq:comb_for_ASEP}. Then, we multiply both sides of \eqref{eq:comb_for_ASEP} by $(-1)^{n(n + 1)/2} (p - q)^{n(n - 1)/2} \prod^n_{k = 1} (1 - \tau z_k)^{2n - 2}$ on both sides so that \eqref{eq:comb_for_ASEP} is equivalent to

  \begin{multline} \label{eq:001}
    \sum^n_{s = 0} (-1)^s \sum_{\substack{\lvert I \rvert = s \\ I\subseteq\{1, \dotsc, n \} }} \left[ \prod_{\substack{\alpha \in I,\ \beta \in I^c \\ \alpha > \beta}} (-1) \prod_{\substack{\alpha, \alpha' \in I \\ \alpha < \alpha'}} (z_{\alpha'} - z_{\alpha}) \prod_{\substack{\beta, \beta' \in I^c \\ \beta < \beta'}} (\tau z_{\beta'} - \tau z_{\beta}) \right. \\
    \times \left. \prod_{\alpha \in I, \beta \in I^c} (\tau z_{\beta} - z_{\alpha}) \prod_{\alpha \in I}(1 + \tau z_{\alpha})^{n - 1} \prod_{\beta \in I^c}(1 +z_{\beta})^{n - 1} \right] = 0.
  \end{multline}

  Applying the Cauchy-Binet formula, we have that the left side of \eqref{eq:001} may be written as
  \begin{equation} \label{eq:002}
    \det \left( (1 + \tau z_i)^{n - 1} z^{j - 1}_i - (1 + z_i)^{n - 1} (\tau z_i)^{j - 1} \right)^n_{i, j = 1}.
  \end{equation}
  This determinant vanishes, because the $n \times n$ matrix has rank $\leq n - 1$. (To see this, we note that the entries in the last row of the matrix have degree $n - 2$, so the last row is a linear combinations of the other $n - 1$ rows.) Hence, this prove the lemma.
\end{proof}

\subsection{Distribution of $x_N(t)$ for $q$-TAZRP}

For technical reasons, we assume that $a_{[1]} = \dotsb = a_{[L]} = (1 - q)^{-1}$ for the conductance, or equivalently, $b_{[1]} = \dotsb = b_{[L]} = 1$, in the definition of the model, see Section \ref{subsec:notation}. Note that in this subsection we use a few symbols identical to those in Section \ref{subsec_1pt_ASEP} with similar but different meanings. We assume in this subsection that all integral $\dashint$ are over a big counterclockwise circle $C = \{ \lvert z \rvert = R \}$ with $R$ large enough. Analogous to \eqref{eq:defn_G_ASEP}, we define 
\begin{equation}
  G(J_1, \dotsc, J_n) = \prod_{i \leq i < j \leq n} \left( \prod_{\alpha \in J_i,\ \beta \in J_j, \text{ and } \alpha > \beta} -\frac{q w_{\alpha} - w_{\beta}}{q w_{\beta} - w_{\alpha}} \right).
\end{equation}
for finite disjoint subsets $J_1, \dotsc, J_n$ of positive integers. Also, we use the notation
	\begin{equation}
	f(w_{I}) = f(w_{i_1} , w_{i_2}, \dots , w_{i_s})  \hspace{5mm} \text{or} \hspace{5mm} f(w_{\sigma (I)}) = f(w_{i_{\sigma(1)}} , w_{i_{\sigma(2)}}, \dots , w_{i_{\sigma(s)}})
	\end{equation}
for any function $f$ on $s$ arguments, $\sigma$ a permutation of the set $\{1, 2, \dots, s \}$, and any subset $I \subset \mathbb{N}$ with $|I| =s$ and $I = \{ i_1 < i_2 < \cdots < i_s \}$.

In this subsection, we need the following result that is analogous to Lemma \ref{lem:Tracy_Widom_09} and formula \eqref{eq:comb_for_ASEP} for the ASEP case.

\begin{lemma} \label{lem:B_lem}
  Let $e_k(x_1, \dotsc, x_n)$ be the elementary symmetric polynomial and
  \begin{equation}
    B_n(w_1, \dotsc, w_n) = \prod_{1 \leq i < j \leq n} \frac{w_i - w_j}{qw_i - w_j}, \quad \text{so that} \quad B_0(-) = 1.
  \end{equation}
Then,
\begin{equation} \label{eq:sum_of_BB}
  \sum_{|I| = s \text{ and } I \subset \{ 1, \dots, n\} } B_s(w_{I}) B_{n - s}(w_{I^c}) G(I, I^c) = \qbinom{n}{s} B_n(w_1, \dotsc, w_n),
\end{equation}
  \begin{multline} \label{eq:lem_id_TAZRP_1}
    \sum_{|I| = s \text{ and } I \subset \{ 1, \dots, n\} } B_s(w_I) \prod_{i \in I} w_{i} B_{n - s}(w_{I^c}) \prod_{j \in I^c} (1 - tw_{j}) G(I, I^c)\\ 
=   (-1)^s B_n(w_1, \dotsc, w_n) \sum^n_{k = s} (-1)^k q^{s(n - k)} \qbinom{k}{s} e_k(w_1, \dotsc, w_n) t^{k - s},
  \end{multline}
  \begin{multline} \label{eq:lem_id_TAZRP_2}
    \sum_{|I| = s \text{ and } I \subset \{ 1, \dots, n\} } B_s(w_{I}) B_{n - s}(w_{I^c}) \prod_{j \in I^c} (1 - t w_{j}) G(I, I^c) = \\
    B_n(w_1, \dotsc, w_n) \sum^{n - s}_{k = 0} (-1)^k \qbinom{n - k}{s} e_k(w_1, \dotsc, w_n) t^k
  \end{multline}
so that all the sums on the left side of the identities are over all subsets $I \subset \{1, \dots, n \}$ with $|I| =s$ and $I^c = \{1, \dots, n \} \setminus I$.
\end{lemma}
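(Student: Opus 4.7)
\emph{Step 1 (Unified algebraic reduction.)} By collecting the factors $(w_i - w_j)$ and $(q w_i - w_j)$ appearing in $B_s(w_I)$, $B_{n-s}(w_{I^c})$, $B_n(w_1,\dotsc,w_n)$, and the signed factors in $G(I, I^c)$, one obtains the algebraic identity
\[B_s(w_I)\, B_{n-s}(w_{I^c})\, G(I, I^c) = B_n(w_1,\dotsc,w_n)\cdot K(I),\qquad K(I) := \prod_{\alpha \in I,\, \beta \in I^c} \frac{q w_\alpha - w_\beta}{w_\alpha - w_\beta},\]
for every $I \subseteq \{1,\dotsc,n\}$ with $|I| = s$ and $I^c = \{1,\dotsc,n\}\setminus I$. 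The mixed (across-sets) factors in $B_n$, together with the signs in $G(I,I^c)$, combine exactly to $K(I)$; the verification is a routine bookkeeping of index orderings $i<j$ versus $\alpha>\beta$. Dividing each of \eqref{eq:sum_of_BB}, \eqref{eq:lem_id_TAZRP_1}, and \eqref{eq:lem_id_TAZRP_2} by $B_n(w_1,\dotsc,w_n)$, all three identities reduce to evaluating a weighted sum $\sum_{|I|=s} K(I)\, \phi(I)$ for three different (and explicit) choices of $\phi$.

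\emph{Step 2 ($q$-Vandermonde identity.)} For \eqref{eq:sum_of_BB}, the reduced statement is the classical $q$-Vandermonde identity $\sum_{|I|=s} K(I) = \qbinom{n}{s}$. I would prove this by induction on $n$, via a rational-function argument: as a function of $w_n$, the potential simple poles at $w_n = w_j$ (for $j \neq n$) cancel pairwise between the terms indexed by $I$ (with $n \in I$, $j \in I^c$) and by $I' = (I\setminus\{n\})\cup\{j\}$. Hence $\sum K(I)$ is constant in $w_n$; taking $w_n \to \infty$ splits the sum into the cases $n \in I$ and $n \in I^c$, giving $q^{n-s}$ times the $(n{-}1, s{-}1)$ version and the $(n{-}1, s)$ version respectively, and the induction hypothesis together with the $q$-Pascal rule $\qbinom{n}{s} = q^{n-s}\qbinom{n-1}{s-1} + \qbinom{n-1}{s}$ closes the induction.

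\emph{Step 3 (Contour integrals for the $t$-dependent identities, and main obstacle.)} For \eqref{eq:lem_id_TAZRP_1} and \eqref{eq:lem_id_TAZRP_2}, I would realize the reduced sum as a symmetric $s$-fold contour integral following the standard coordinate-Bethe-ansatz dictionary,
\[\sum_{|I|=s}K(I)\,\phi(I) = \frac{(-q)^{-\binom{s}{2}}}{s!\,(1-q)^s}\oint\!\cdots\!\oint \prod_{1\le a<b\le s}\!\frac{(u_a-u_b)(qu_a-qu_b)}{(qu_a-u_b)(qu_b-u_a)} \prod_{a=1}^s\prod_{k=1}^n\frac{qu_a-w_k}{u_a-w_k}\,\Psi(u_1,\dotsc,u_s)\,\prod_{a=1}^s\frac{du_a}{2\pi i\, u_a},\]
where the contours are large circles enclosing every $w_k$ and $\Psi$ is a symmetric function chosen so that residue evaluation at $u_a = w_{k_a}$ (with $\{k_a\} = I$) recovers $K(I)\,\phi(I)$; concretely, for \eqref{eq:lem_id_TAZRP_2} one takes $\Psi(u) = \prod_{j=1}^n(1-t w_j)\cdot\prod_a u_a/(1-tu_a)$, and for \eqref{eq:lem_id_TAZRP_1} one multiplies by an extra $\prod_a u_a$. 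To evaluate the integral, I would deform each contour outward through infinity, picking up residues at $u_a = 1/t$ (from $\Psi$) and at $u_a = \infty$; iterating this deformation reduces the integral, via Step 2 applied to the remaining variables and the generating function identity \eqref{eq:tau_identity}, to the stated combination of elementary symmetric polynomials $e_k(w_1,\dotsc,w_n)$ with $q$-binomial coefficients. The main obstacle is squarely at this step: constructing the correct contour integral representation (with the proper $q$-Vandermonde kernel, normalization, and contours) so that residues at $u_a = w_{k_a}$ reproduce $K(I)$ with the right sign, and then carefully tracking the residues at $u_a = 1/t$ and at infinity through the iterated deformation. The extra $\prod_{i \in I} w_i$ factor in \eqref{eq:lem_id_TAZRP_1} alters the large-$u_a$ behavior of the integrand, producing additional nonzero residues at $\infty$ which account for the shifted summation range $k \ge s$ on the right-hand side of that identity.
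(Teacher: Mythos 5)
Your Step 1 reduction is correct: dividing by $B_n$ one indeed has $B_s(w_I)B_{n-s}(w_{I^c})G(I,I^c)=B_n(w_1,\dotsc,w_n)\,K(I)$ with $K(I)=\prod_{\alpha\in I,\beta\in I^c}\frac{qw_\alpha-w_\beta}{w_\alpha-w_\beta}$, and your Step 2 pole-cancellation induction is a complete, self-contained proof of \eqref{eq:sum_of_BB} (the paper does not prove this identity itself but refers to \cite{Lee-Wang17}), so that part is fine and genuinely different from the paper. The gap is in Step 3, which for \eqref{eq:lem_id_TAZRP_1} and \eqref{eq:lem_id_TAZRP_2} is a plan rather than a proof. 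Concretely: (i) with all $s$ contours taken as the same large circle, the kernel factors $(qu_a-u_b)^{-1}$ have poles at $u_a=qu_b$ which, for $0<q<1$, lie \emph{inside} the $u_a$-contour, so the claimed identity ``integral $=\sum_{|I|=s}K(I)\phi(I)$'' does not follow from summing only the residues at $u_a=w_{k_a}$; one needs nested contours or a symmetrization argument to dispose of these extra poles, and none is supplied. (ii) Even granting the representation, the assertion that the outward deformation ``reduces, via Step 2 and \eqref{eq:tau_identity}, to the stated combination of $e_k$'s with $q$-binomial coefficients'' is precisely the content of the two identities being proved: the residue at $u_a=1/t$ produces factors $\prod_k(q-tw_k)/(1-tw_k)$ rather than elementary symmetric polynomials, the multivariable cross kernel must be evaluated at mixed configurations of $1/t$, $\infty$, and $w_k$'s, and none of this bookkeeping is carried out. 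So the hardest two-thirds of the lemma is asserted, not established.

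For comparison, the paper's proof of \eqref{eq:lem_id_TAZRP_1} (and, mutatis mutandis, of \eqref{eq:lem_id_TAZRP_2}) avoids contour integrals entirely. Multiplying the left side by $q^{s(s-1)/2}\prod_{1\le i<j\le n}(qw_i-w_j)$ clears all denominators, and the resulting sum over $I$ is recognized, by multilinearity of the determinant in its rows, as the coefficient of $z^s$ in $\det\bigl((1-tw_i)w_i^{j-1}+zw_i(qw_i)^{j-1}\bigr)_{i,j=1}^n$: choosing the $z$-term in exactly the rows indexed by $I$ yields $z^s\prod_{i\in I}w_i\prod_{j\in I^c}(1-tw_j)$ times a Vandermonde determinant in $\{qw_i\}_{i\in I}\cup\{w_j\}_{j\in I^c}$, which accounts for all the factors and the sign $\prod_{\alpha\in I,\beta\in I^c,\alpha>\beta}(-1)$. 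The determinant is then evaluated in closed form as $\prod_{i<j}(w_i-w_j)$ times an explicit polynomial in $z$ and $t$ whose coefficients are the $e_k(w_1,\dotsc,w_n)$, and extracting the $z^s$ coefficient (using \eqref{eq:tau_identity} to produce the $q$-binomials) gives the right side. If you want to complete your argument along your own lines you will have to fix the contour nesting and do the residue bookkeeping in full; alternatively, a determinantal packaging of your reduced sum $\sum_{|I|=s}K(I)\phi(I)$ turns each identity into a short computation.
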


The proof of the Lemma \ref{lem:B_lem} is given at the end of this subsection in Section \ref{sec:misc_proofs}.

We compute the one-point function for the periodic $q$-TAZRP analogous to \eqref{eq:aux_003}. We have
\begin{equation} \label{eq:prob_x_N=a}
  \begin{split}
    \prob_Y(X_N(t) = a) = {}& \sum_{X \in \Weylchamber_N(L),\ x_N = a} \prob_Y(X; t) \\
    = {}& (-1)^N \sum_{\Lattice \in \intZ^N(0)} \dashint dw_1 \dotsi \dashint dw_N D_{\Lattice}(w_1, \dotsc, w_N) \\
    & \times \sum_{X \in \Weylchamber_N(L) \, x_N = 0} C(w_1, \dotsc, w_N; X) \prod^N_{j = 1} (1 - w_j)^{y_j - a - 1} e^{-w_j t}
  \end{split}
\end{equation}
with
\begin{equation}
  C(w_1, \dotsc, w_N; X) = \frac{1}{W(X)} \sum_{\sigma \in S_N} A_{\sigma}(w_1, \dotsc, w_N) \prod^N_{j = 1} (1 - w_{\sigma(j)})^{-x_j},
\end{equation}
parameters $b_{[1]} = b_{[2]} = \dotsb = b_{[N]} = 1$, $\Lattice = (\ell_1, \dotsc, \ell_N)$, and $D_{\Lattice}(w_1, \dotsc, w_N)$ defined by \eqref{eq:general_D_L_qTAZRP}.

Let
\begin{equation} \label{eq:defn_F_n(z_1...z_n)}
  F_n(z_1, \dotsc, z_n) = \sum_{L > x_1 \geq x_2 \geq \dotsb \geq x_n > 0} \frac{1}{W(X)} \prod^n_{j = 1} z^{x_j}_j.
\end{equation}
Then, consider the summation in the last line of \eqref{eq:prob_x_N=a}. We have
\begin{multline} \label{eq:sum_of_C_TAZRP}
  \sum_{X \in \Weylchamber_N(L),\ x_N = 0} C(w_1, \dotsc, w_N; X) = \sum_{\sigma \in S_N} A_{\sigma}(w_1, \dotsc, w_N) \\
  \times \sum^N_{r = 1} \sum^{N - r}_{p = 0} \frac{1}{[p + r]_q!} \prod^p_{k = 1} (1 - w_{\sigma(k)})^{-L} F_{N - p - r} \left( \frac{1}{1 - w_{\sigma(p + 1)}}, \dotsc, \frac{1}{1 - w_{\sigma(N - r)}} \right).
\end{multline}
Set $I_1 = \{ \sigma(1), \dotsc, \sigma(p) \}$, $I_2 = \{ \sigma(N - r + 1), \dotsc, \sigma(N) \}$ and $I_3 = \{ 1, \dotsc, N \} \setminus (I_1 \cup I_2)$ so that we may rewrite the right side of \eqref{eq:sum_of_C_TAZRP}. Using the identity (\cite[Formula (5.1)]{Wang-Waugh16})
\begin{equation}
  \sum_{\sigma \in S_n} A_{\sigma}(w_1, \dotsc, w_n) = [n]_q! B_n(w_1, \dotsc, w_n),
\end{equation}
we have that

\begin{multline} \label{eq:C(w_1w_NX)_in_F_n}
  \sum_{X \in \Weylchamber_N(L),\ x_N = 0} C(w_1, \dotsc, w_N; X)    
  = \sum^N_{r = 1} \sum^{N - r}_{p = 0} \sum_{\substack{|I_1| = p, |I_3| =r, \\ I_1 \sqcup I_2 \sqcup I_3 = \{ 1, \dots, N\}}}  B_p(w_{I_1}) B_r(w_{I_3}) \prod_{1 \leq j<k \leq 3} G(I_j, I_k)  \\
  \times \prod_{\alpha \in I_1} (1 - w_{\alpha})^{-L} \frac{[p]_q! [r]_q!}{[p + r]_q!} \sum_{\mu \in S_{N - p - r}} A_{\mu}(w_{I_3}) F_{N - p - r} \left( \frac{1}{1 - w_{\mu (I_3)}}\right).
\end{multline}

so that the third sum on the right side is over all disjoint subsets $I_1, I_2, I_3 \subset \{1, \dots, N\}$ with $|I_1| =p$, $|I_2|=r$, and $|I_3| = N-p-r$.

Now, we set up some preliminary definitions to simplify $F_{N - p - r}$ in \eqref{eq:C(w_1w_NX)_in_F_n}. For any positive integer $n$, a \emph{$k$-composition} $C^{(k)}(n; s_1, s_2, \dotsc, s_{k - 1})$ is given by the partition $\{ s_0 + 1 = 1, 2, \dotsc, s_1 \}$, $\{ s_1 + 1, \dotsc, s_2 \}$, \dots, $\{ s_{k - 1} + 1, \dotsc, s_k = n \}$ with $0 = s_0 < s_1 < s_2 < \dotsb < s_{k - 1} < s_k = n$. We denote
\begin{equation}
\Gamma_n = \{ \text{all $k$-compositions of $n$ with $k = 1, \dotsc, n$} \}.
\end{equation}
 For a composition $\sigma$ of $n$, we write $\ell(\sigma) = k$ if $\sigma$ is a $k$-composition. The total number of compositions of $n$ is $2^{n - 1}$, and for each $k = 1, \dotsc, n$, the number of $k$-compositions of $n$ is $\binom{n - 1}{k - 1}$. Let $\sigma = C^{(k)}(n; s_1, \dotsc, s_{k - 1})$ be a $k$-composition of $n$ and $\tau = C^{(k)}(n; t_1, \dotsc, t_{l - 1})$ be an $l$-composition of $n$; we say $\tau$ is a \emph{refinement} of $\sigma$, or $\tau \prec \sigma$, if  $\{ s_1, \dotsc, s_{k - 1} \} \subseteq \{ t_1, \dotsc, t_{l - 1} \}$. It is clear that $\prec$ is a partial order, and $\tau \prec \sigma$ implies $\ell(\tau) \geq \ell(\sigma)$. The unique maximum among compositions of $n$ is the $1$-composition $C^{(1)}(n; -)$ and the unique minimum is the $n$-composition $C^{(n)}(n; 1, 2, \dotsc, n - 1)$.

We define real constants $c_1, c_2, \dotsc$ recursively by the identities
\begin{equation} \label{eq:id_defn_c_k}
  \sum_{\sigma \in \Gamma_n} c(\sigma) = \frac{1}{[n]_q!}, \quad \text{with} \quad c(C^{(k)}(n; s_1, \dotsc, s_{k - 1})) = \prod^k_{j = 1} c_{s_j - s_{j - 1}}.
\end{equation}

\begin{lemma} \label{eq:sum_of_comp_prec_C^k}
  For any $k$-composition $\sigma = C^{(k)}(n; s_1, \dotsc, s_{k - 1})$ of $n$,
  \begin{equation} \label{eq:general_sumation_c(tau)}
    \sum_{\tau \in \Gamma_n \text{ and } \tau \prec \sigma} c(\tau) = \prod^k_{j = 1} \frac{1}{[s_j - s_{j - 1}]_q!}.
  \end{equation}
\end{lemma}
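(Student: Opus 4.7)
The plan is to prove Lemma~\ref{eq:sum_of_comp_prec_C^k} by a direct combinatorial bijection that reduces the sum over refinements of a general $k$-composition to a product of the defining identity \eqref{eq:id_defn_c_k} applied to each block. This will make the result fall out immediately without any new ingredient beyond the recursive definition of the constants $c_m$.

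First I would observe the following elementary structural fact: a composition $\tau = C^{(l)}(n; t_1, \dotsc, t_{l - 1})$ refines $\sigma = C^{(k)}(n; s_1, \dotsc, s_{k - 1})$ precisely when, for each $j = 1, \dotsc, k$, the partition marks $t_i$ lying in the interval $(s_{j - 1}, s_j)$ form a composition of the block $\{ s_{j - 1} + 1, \dotsc, s_j \}$ (which, after shifting indices by $-s_{j - 1}$, is a composition of the integer $m_j := s_j - s_{j - 1}$). This gives a bijection
\begin{equation*}
  \{ \tau \in \Gamma_n : \tau \prec \sigma \} \;\longleftrightarrow\; \Gamma_{m_1} \times \Gamma_{m_2} \times \dotsb \times \Gamma_{m_k}, \qquad \tau \longleftrightarrow (\tau_1, \dotsc, \tau_k).
\end{equation*}

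Next I would note that, under this bijection, the weight $c(\tau)$ factorizes as $c(\tau) = \prod_{j = 1}^k c(\tau_j)$. Indeed, by the definition $c(C^{(l)}(n; t_1, \dotsc, t_{l - 1})) = \prod_{i} c_{t_i - t_{i - 1}}$ the factors $c_{t_i - t_{i - 1}}$ partition according to which block of $\sigma$ they come from, and the factors coming from block $j$ assemble into exactly $c(\tau_j)$ (viewing $\tau_j$ as a composition of $m_j$).

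Combining these two observations, the sum factorizes as
\begin{equation*}
  \sum_{\tau \in \Gamma_n,\ \tau \prec \sigma} c(\tau) \;=\; \prod_{j = 1}^k \Bigg( \sum_{\tau_j \in \Gamma_{m_j}} c(\tau_j) \Bigg) \;=\; \prod_{j = 1}^k \frac{1}{[m_j]_q!} \;=\; \prod_{j = 1}^k \frac{1}{[s_j - s_{j - 1}]_q!},
\end{equation*}
where the middle equality is precisely the defining identity \eqref{eq:id_defn_c_k} for the constants $c_m$, applied with $n$ replaced by each $m_j$ in turn. Since neither the bijection nor the factorization of $c(\tau)$ requires any computation beyond unpacking definitions, I do not anticipate a hard step here; the content of the lemma is essentially the observation that both sides of \eqref{eq:general_sumation_c(tau)} are multiplicative over blocks of~$\sigma$.
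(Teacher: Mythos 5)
Your proposal is correct and is essentially identical to the paper's own proof: both establish the bijection between refinements of $\sigma$ and $k$-tuples of compositions of the blocks $s_j - s_{j-1}$, observe that $c(\tau)$ factorizes accordingly, and then apply the defining identity \eqref{eq:id_defn_c_k} block by block. No gaps.
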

\begin{proof}
  It is clear that \eqref{eq:general_sumation_c(tau)} is true for $k = 1$, which is directly given by identity \eqref{eq:id_defn_c_k}. For $k > 1$, we have a one-to-one mapping between the set of compositions $\tau$ which are refinements of $C^{(k)}(n; s_1, \dotsc, s_{k - 1})$ and the set of $k$-tuples of compositions $(\tau_1, \dotsc, \tau_k)$ so that $\tau_i$ is a composition of $s_i - s_{i - 1}$. In particular, if $ C^{(l)}(n; t_1, \dotsc, t_{l - 1}) = \tau \prec \sigma = C^{(k)}(n; s_1, \dotsc, s_{k - 1})$ with $s_1 = t_{i_1}, s_2 = t_{i_2}, \dotsc, s_{k - 1} = t_{i_{k - 1}}$, then
  \begin{equation}
    \tau_j = C^{(i_j - i_{j - 1})}(s_j - s_{j - 1}; t_{i_{j - 1} + 1} - s_{j - 1}, t_{i_{j - 1} + 2} - s_j, \dotsc, t_{i_j - 1} - s_j), \quad j = 1, 2, \dotsc, k
  \end{equation}
with $i_0 = 0$ and $i_k = l$. It is clear that $c(\tau) = c(\tau_1) \dotsm c(\tau_k)$. Hence,
  \begin{equation}
    \sum_{\tau \in \Gamma_n \text{ and } \tau \prec \sigma} c(\tau) = \prod^k_{j = 1} \left( \sum_{\tau_j \in \Gamma_{s_j - s_{j-1}}}c(\tau_j)  \right) = \prod^k_{j = 1} \frac{1}{[s_j - s_{j - 1}]_q!}.
  \end{equation}
\end{proof}

Next, we define
\begin{equation} \label{eq:defn_f_tau}
  f_{\tau}(z_1, \dotsc, z_n) = \sum_{L > x_{s_1} \geq \cdots \geq x_{s_k}>0} \prod_{i=1}^k \left( \prod_{j = s_{i-1}+1}^{s_i} z_j\right)^{x_{s_i}}
\end{equation}
for any $k$-composition $\tau = C^{(k)}(n; s_1, \dotsc, s_{k - 1})$ of $n$.
\begin{lemma} \label{lem:F_n_in_f_tau}
Take $F_n$ as given in \eqref{eq:defn_F_n(z_1...z_n)} and $f_{\tau}$ as given in \eqref{eq:defn_f_tau}. Then,
  \begin{equation} \label{eq:decomposition_of_partial_F}
    F_n(z_1, \dotsc, z_n) = \sum_{\tau \in \Gamma_n} c(\tau) f_{\tau}(z_1, \dotsc, z_n).
  \end{equation}
\end{lemma}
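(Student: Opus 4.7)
The plan is to prove the identity by decomposing both sides according to the pattern of strict versus weak decreases in the weakly decreasing sequences. For a weakly decreasing tuple $X = (x_1 \geq x_2 \geq \dotsb \geq x_n)$, let $\sigma^*(X) = C^{(k)}(n; s_1, \dotsc, s_{k-1})$ be the unique composition in $\Gamma_n$ such that $x_a = x_b$ whenever $a,b$ lie in a common $\sigma^*$-block and $x_{s_j} > x_{s_j+1}$ at every boundary of $\sigma^*$. Since the clusters of $X$ have sizes $s_j - s_{j-1}$, we get $W(X) = \prod_{j=1}^{k}[s_j - s_{j-1}]_q!$. I will introduce the auxiliary function, for each $\sigma = C^{(k)}(n; s_1, \dotsc, s_{k-1}) \in \Gamma_n$,
\[
g_{\sigma}(z_1, \dotsc, z_n) = \sum_{L > y_1 > y_2 > \dotsb > y_k > 0} \prod_{i=1}^{k} \left( \prod_{j=s_{i-1}+1}^{s_i} z_j \right)^{y_i}.
\]

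Grouping the terms in the definition of $F_n$ according to $\sigma^*(X)$ will give
\[
F_n(z_1, \dotsc, z_n) = \sum_{\sigma \in \Gamma_n} \left( \prod_{j=1}^{k} \frac{1}{[s_j - s_{j-1}]_q!} \right) g_{\sigma}(z_1, \dotsc, z_n).
\]
Next I will rewrite each $f_{\tau}$ in terms of the same $g_{\sigma}$. Given $\tau = C^{(l)}(n; t_1, \dotsc, t_{l-1})$ and a weakly decreasing $l$-tuple indexed by $\tau$-endpoints, record the subset of $\tau$-boundaries at which strict decrease occurs; this produces a composition $\sigma$ whose boundary set is contained in $\{t_1, \dotsc, t_{l-1}\}$, i.e.\ $\tau \prec \sigma$. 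Conversely every such $\sigma$ arises. Because the common value of $x_{t_{i'}}$ on the $\tau$-blocks making up a single $\sigma$-block multiplies exactly the product of $z_j$ over that whole $\sigma$-block, the contribution to $f_{\tau}$ from sequences with this refinement pattern is precisely $g_{\sigma}$. Thus
\[
f_{\tau}(z_1, \dotsc, z_n) = \sum_{\sigma \in \Gamma_n,\; \tau \prec \sigma} g_{\sigma}(z_1, \dotsc, z_n).
\]

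Combining the two decompositions and interchanging the order of summation,
\[
\sum_{\tau \in \Gamma_n} c(\tau) f_{\tau}(z_1, \dotsc, z_n) = \sum_{\sigma \in \Gamma_n} g_{\sigma}(z_1, \dotsc, z_n) \sum_{\tau \in \Gamma_n,\; \tau \prec \sigma} c(\tau).
\]
Now I would invoke Lemma \ref{eq:sum_of_comp_prec_C^k}, which evaluates the inner sum as $\prod_{j=1}^{k} 1/[s_j - s_{j-1}]_q!$ for $\sigma = C^{(k)}(n; s_1, \dotsc, s_{k-1})$. Matching term by term with the formula for $F_n$ established above finishes the proof. The main technical point, and essentially the only nontrivial ingredient, is the bijective classification showing $f_{\tau} = \sum_{\tau \prec \sigma} g_{\sigma}$; everything else is either bookkeeping or a direct application of the previous lemma. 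No analytic obstruction arises because all sums are finite in the $x$-variables (the ranges $L > \cdots > 0$) and formal in the $z$-variables.
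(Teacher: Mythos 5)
Your proof is correct and follows essentially the same route as the paper's: both arguments classify the terms of $F_n$ and of each $f_{\tau}$ by the strict-decrease pattern $\sigma$ of the weakly decreasing tuple, observe that a given pattern contributes to $f_{\tau}$ exactly when $\tau \prec \sigma$, and then invoke Lemma \ref{eq:sum_of_comp_prec_C^k} to match the resulting coefficient $\prod_j 1/[s_j - s_{j-1}]_q!$ with the weight $1/W(X)$ in \eqref{eq:defn_F_n(z_1...z_n)}. The only difference is presentational — you package the comparison through the auxiliary generating functions $g_{\sigma}$, whereas the paper compares coefficients of individual monomials — so no further comment is needed.
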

\begin{proof}
  Consider any monomial $z^{x_1}_1 \dotsm z^{x_n}_n$ such that 
  \begin{equation}
    x_1 = \dotsb = x_{s_1} = y_1, \quad x_{s_1 + 1} = \dotsb = x_{s_2} = y_2, \quad \dotsc, \quad x_{s_{k - 1} + 1} = \dotsb = x_n = y_k,
  \end{equation}
  with $L > y_1 > \dotsb > y_k > 0$. Then, this monomial occurs only once in the polynomial $f_{\tau}(z_1, \dotsc, z_n)$ on the right side of \eqref{eq:decomposition_of_partial_F} if $\tau \prec C^{(k)}(n; s_1, \dotsc, s_{k - 1})$, and otherwise, the monomial does not occur in the polynomial $f_{\tau}(z_1, \dotsc, z_n)$. Thus, the coefficient of this $z^{x_1}_1 \dotsm z^{x_n}_n$ on the right side of \eqref{eq:decomposition_of_partial_F} is $\prod^k_{j = 1} ([s_j - s_{j - 1}]_q!)^{-1}$ by Lemma \ref{eq:sum_of_comp_prec_C^k}, and this agrees with the coefficient on the left side of \eqref{eq:decomposition_of_partial_F} given by \eqref{eq:defn_F_n(z_1...z_n)}. 
\end{proof}
We want to express $f_{\tau}(z_1, \dotsc, z_n)$ in a compact and explicit form. It suffices to consider the case with $\tau = C^{(n)}(n; 1, 2, \dotsc, n - 1)$ since
\begin{equation}
  f_{C^{(k)}(n; s_1, \dotsc, s_{k - 1})}(z_1, \dotsc, z_n) = f_{C^{(k)}(k; 1, 2, \dotsc, k - 1)} \left( \prod^{s_1 - 1}_{i = 1} z_i, \prod^{s_2 - 1}_{i = s_1} z_i, \dotsc, \prod^n_{i = s_{k - 1}} z_i \right). 
\end{equation}
by \eqref{eq:defn_f_tau}. To this end, we define
\begin{equation}
  g_n(z_1, \dotsc, z_n) = \prod^n_{j = 1} \frac{z_j}{1 - z_j z_{j - 1} \dotsm z_1} \quad \text{for $n = 1, 2, \dotsc$}, \quad \text{so that} \quad g_0(-) = 1
\end{equation}
and also
\begin{multline}
  G_n(z_1, \dotsc, z_n) = \\
  \sum^n_{k = 0} (-1)^k \sum_{s_0 <  \dotsb < s_k } g_{n - s_k}(z_{s_k + 1}, \dotsc, z_n) \prod^{s_k}_{i = 1} z^{L - 1}_i \prod^k_{j = 1} g_{s_j - s_{j - 1}}(z_{s_{j - 1} + 1}, \dotsc, z_{s_j}).
\end{multline}
Then, we have
\begin{equation}
  f_{C^{(n)}(n; 1, 2, \dotsc, n - 1)}(z_1, \dotsc, z_n) = G_n(z_1, \dotsc, z_n).
\end{equation}
by using Lemma \ref{lem:Baik-Liu} and the expression
\begin{equation} \label{eq:sumation_f_n_min_comp}
  f_{C^{(n)}(n; 1, 2, \dotsc, n - 1)}(z_1, \dotsc, z_n) = \sum_{L > x_1 \geq \dotsb \geq x_n > 0} z^{x_j}_j = \prod^n_{i = 1} z^{i - n}_i \sum_{L + n - 1 > y_1 > \dotsb > y_n > 0} z^{y_j}_j,
\end{equation}
with $y_j = x_j - j + n$. So, it is straightforward to check that
\begin{multline} \label{expr_of_f}
  c(C^{(l)}(n; t_1, \dotsc, t_{l - 1})) f_{C^{(l)}(n; t_1, \dotsc, t_{l - 1})}(z_1, \dotsc, z_n) = \\
    \sum^l_{k = 0} \sum_{\substack{s_0 <  \dotsb < s_k  \\ \{ s_1, \dotsc, s_k \} \subseteq \{ t_1, \dotsc, t_l \}}} h^{(\tau; s_k + 1, n)}_{n - s_k}(z_{s_k + 1}, \dotsc, z_n) \prod^{s_k}_{i = 1} z^{L - 1}_i \prod^k_{j = 1} h^{(\tau; s_{j - 1} + 1, s_j)}_{s_j - s_{j - 1}}(z_{s_{j - 1} + 1}, \dots, s_j),
  \end{multline}
for an $l$-composition $C^{(l)}(n; t_1, \dotsc, t_{l - 1})$ with 
\begin{equation} \label{eq:h_in_g}
  h^{(\tau; t_i + 1, t_j)}_{t_j - t_i}(z_{t_i + 1}, \dotsc, z_{t_j}) = \prod^j_{k = i + 1} c_{t_k - t_{k - 1}} g_{j - i} \left( \prod^{t_{i + 1}}_{k = t_i + 1} z_k, \prod^{t_{i + 2}}_{k = t_{i + 1} + 1} z_k, \dotsc, \prod^{t_j}_{k = t_{j - 1} + 1} z_k \right),
\end{equation}
for $0 \leq i < j \leq l$  so that $h^{(\tau; n + 1, n)}_0(-) = 1$ and $\tilde{\tau} = C^{(j - i)}(t_j - t_i; t_{i + 1} - t_i, \dotsc, t_{j - 1} - t_i)$ a $(j - i)$-composition of $t_j - t_i$. Let

\begin{equation} \label{eq:defn_h_n}
  h_n(z_1, \dotsc, z_n) = \sum_{\substack{\tau \in \Gamma_n \\ \tau = C^{(l)}(n; t_1, \dotsc, t_{l - 1}) } } c(\tau) g_l \left( \prod^{t_1}_{k = 1} z_k, \prod^{t_2}_{k = t_1 + 1} z_k, \dotsc, \prod^{t_l}_{k = t_{l - 1} + 1} z_k \right),
\end{equation}
and
\begin{equation} \label{eq:defn_H_n}
  H_n(z_1, \dotsc, z_n) = \sum^n_{k = 1} (-1)^k \sum_{ s_0 < \dotsb  < s_{k}} \left(\prod^k_{i = 1} h_{s_i - s_{i - 1}}( z_{s_{i - 1} + 1}, \dotsc, z_{s_i}) \right).
\end{equation}

\begin{lemma} \label{lem:Baik-Liu_like}
Take $F_n$ as given in \eqref{eq:defn_F_n(z_1...z_n)}, $h_n$ as given in \eqref{eq:defn_h_n} and $H_n$ as given in \eqref{eq:defn_H_n}. Then,
  \begin{equation} \label{eq:sum_for_whole}
    F_n(z_1, \dotsc, z_n) = \sum^n_{s = 0} \prod^s_{i = 1} z^{L - 1}_i H_s(z_1, \dotsc, z_s) h_{n - s}(z_{s + 1}, \dotsc, z_n).
  \end{equation}
\end{lemma}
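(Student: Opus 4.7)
The plan is to combine Lemma \ref{lem:F_n_in_f_tau}, which writes $F_n(z_1, \dotsc, z_n) = \sum_{\tau \in \Gamma_n} c(\tau) f_{\tau}(z_1, \dotsc, z_n)$, with the decomposition \eqref{expr_of_f} of each term $c(\tau) f_{\tau}$. Recall that \eqref{expr_of_f} follows by applying the identity $f_{C^{(n)}(n; 1, \dotsc, n-1)} = G_n$ (which carries the alternating sign $(-1)^k$ coming from the $G_n$ definition) after the change-of-variable reduction of $f_\tau$ to $f_{C^{(l)}(l; 1, \dotsc, l-1)}$. Thus for $\tau = C^{(l)}(n; t_1, \dotsc, t_{l-1})$ with $t_l := n$, each $c(\tau) f_\tau$ is a signed sum over $k \geq 0$ and over nested breakpoints $0 = s_0 < s_1 < \dotsb < s_k \leq n$ drawn from $\{t_1, \dotsc, t_l\}$, whose summand is $(-1)^k \prod_{i=1}^{s_k} z_i^{L-1}$ times a product of factors $h^{(\tau;\, s_{j-1}+1, s_j)}_{s_j - s_{j-1}}$ that, by \eqref{eq:h_in_g}, depend on $\tau$ only through its restriction to the corresponding interval $(s_{j-1}, s_j]$ (respectively $(s_k, n]$ for the tail factor).

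Next, swap the order of summation so that $(k, s_0, \dotsc, s_k)$ is fixed on the outside and $\tau$ varies on the inside, subject to the constraint $\{s_1, \dotsc, s_k\} \subseteq \{t_1, \dotsc, t_l\}$. Such a $\tau$ is equivalent to a choice of composition of each segment $(s_{j-1}, s_j]$ for $j = 1, \dotsc, k$ together with a composition of the tail $(s_k, n]$; moreover $c(\tau)$ factorizes across these segments. Consequently, by the very definition \eqref{eq:defn_h_n} of $h_m$, the inner sum over $\tau$ collapses on each segment to an $h$, giving
\begin{equation*}
  \prod_{j=1}^{k} h_{s_j - s_{j-1}}(z_{s_{j-1}+1}, \dotsc, z_{s_j}) \cdot h_{n - s_k}(z_{s_k+1}, \dotsc, z_n),
\end{equation*}
with the convention $h_0 = 1$ covering the boundary case $s_k = n$.

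Finally, regroup the remaining signed sum by the value $s := s_k \in \{0, 1, \dotsc, n\}$. The prefactor $\prod_{i=1}^{s} z_i^{L-1}$ and the tail $h_{n-s}(z_{s+1}, \dotsc, z_n)$ pull out of the inner sum, and what remains is
\begin{equation*}
  \sum_{k \geq 0} (-1)^k \sum_{0 = s_0 < s_1 < \dotsb < s_k = s} \prod_{j=1}^{k} h_{s_j - s_{j-1}}(z_{s_{j-1}+1}, \dotsc, z_{s_j}),
\end{equation*}
which equals $H_s(z_1, \dotsc, z_s)$ by \eqref{eq:defn_H_n} for $s \geq 1$ and equals $1$ (the empty-product $k=0$ term) for $s = 0$, matching the natural convention $H_0 := 1$. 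Assembling the pieces gives \eqref{eq:sum_for_whole}.

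The main obstacle is the combinatorial bookkeeping in the second step: one must verify both that each $h^{(\tau; \cdot)}$-factor in \eqref{expr_of_f} and the weight $c(\tau)$ depend on $\tau$ only through its restriction to a single segment, so that the inner refinement sum genuinely decouples and reduces to an explicit product of $h$'s. Once that factorization is pinned down, the rest is a reindexing that separates the head (with the $z_i^{L-1}$ factors and alternating signs, assembling into $H_s$) from the tail (a single $h_{n-s}$).
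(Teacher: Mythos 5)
Your proof is correct and follows essentially the same route as the paper's: both rest on Lemma \ref{lem:F_n_in_f_tau}, the expansion \eqref{expr_of_f} with \eqref{eq:h_in_g}, and the segment-wise factorization of $c(\tau)$ and the $h^{(\tau;\cdot)}$-factors; the paper phrases the final step as comparing coefficients of the basis elements $I_{(\tau; i_1, \dotsc, i_k)}$, while you carry out the equivalent exchange of summation explicitly. You also correctly reinstate the alternating sign $(-1)^k$ inherited from $G_n$ (absent from \eqref{expr_of_f} as printed) and the convention $H_0 = 1$ needed for the $s = 0$ term.
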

\begin{proof}
First, we introduce some notation. Let $Z_i = \prod^{t_i}_{j = t_{i - 1} + 1} z_j$ for $i = 1, \dotsc, l$ with $t_0 = 0$ and $t_l = n$, and define
\begin{equation}
	I_{(\tau, i_1, \dotsc, i_k)} = g_{l - i_k}(Z_{i_k + 1}, \dotsc, Z_l) \prod^{i_k}_{j = 1} Z^{L - 1}_j \prod^k_{j = 1} g_{i_j - i_{j - 1}}(Z_{i_{j - 1} + 1}, \dotsc, Z_{i_j})
\end{equation}
with $\tau = (t_1, \dotsc, t_{l - 1})$ an $l$-composition of $n$ and $\{ i_1 < \dotsb < i_k \} \subseteq \{ 1, 2, \dotsc, l \}$.

  By Lemma \ref{lem:F_n_in_f_tau}, the left side of \eqref{eq:sum_for_whole} is a sum of terms $c(\tau) f_{\tau}(z_1, \dotsc, z_n)$, which are weighted sums of $I_{(\tau; i_1, \dotsc, i_k)}$ by \eqref{expr_of_f} and \eqref{eq:h_in_g}. On the other hand, the right side of \eqref{eq:sum_for_whole} may also be expressed as a weighted sum of $I_{(\tau; i_1, \dotsc, i_k)}$. By comparing the coefficients of $I_{(\tau; i_1, \dotsc, i_k)}$, the lemma follows.
\end{proof}
We also use the following identity for the functions $h_n$ and $H_n$.
\begin{lemma} \label{lem:h_n_H_n}
  Take $h_n$ and $H_n$ as defined in \eqref{eq:defn_h_n} and \eqref{eq:defn_H_n}. We have
  \begin{equation} \label{eq:Lee-Wang17_result}
    \sum_{\sigma \in S_n} A_{\sigma}(w_1, \dotsc, w_n) h_n \left( \frac{1}{1 - w_{\sigma(1)}}, \dotsc, \frac{1}{1 - w_{\sigma(n)}} \right) = (-1)^n \prod^n_{k = 1} \frac{1}{w_k} B_n(w_1, \dotsc, w_n).
  \end{equation}
  and
  \begin{equation} \label{eq:intemediate_L_sum}
    \sum_{\sigma \in S_n} A_{\sigma}(w_1, \dotsc, w_n) H_n \left( \frac{1}{1 - w_{\sigma(1)}}, \dotsc, \frac{1}{1 - w_{\sigma(n)}} \right) = q^{n(n - 1)/2} \prod^n_{k = 1} \frac{1}{w_k} B_n(w_1, \dotsc, w_n).
  \end{equation}
\end{lemma}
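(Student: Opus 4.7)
The plan is to prove \eqref{eq:Lee-Wang17_result} first and then deduce \eqref{eq:intemediate_L_sum} from it by exploiting the inclusion-exclusion structure \eqref{eq:defn_H_n} of $H_n$ in terms of the $h_k$'s. The identity \eqref{eq:Lee-Wang17_result} is a purely symmetric-function statement about the Bethe ansatz coefficients $A_\sigma$, and the label chosen by the authors suggests it is already accessible from techniques in \cite{Lee-Wang17, Wang-Waugh16}. Concretely, I would expand $h_n$ via \eqref{eq:defn_h_n} as a weighted sum over compositions $\tau = C^{(l)}(n; t_1, \dotsc, t_{l-1})$, and for each fixed $\tau$ evaluate the permutation sum $\sum_\sigma A_\sigma g_l\bigl(\prod_{k=1}^{t_1}(1-w_{\sigma(k)})^{-1}, \dotsc\bigr)$ by partial fractions applied to the geometric-like factors of $g_l$, combined with the Bethe ansatz Vandermonde identity from \cite{Wang-Waugh16}. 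The telescoping identity of Lemma \ref{eq:sum_of_comp_prec_C^k} would then be used to collapse the $c(\tau)$-weighted sum of composition contributions into the simple normalization $(-1)^n \prod_k w_k^{-1} B_n$.

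For \eqref{eq:intemediate_L_sum}, I would substitute the definition \eqref{eq:defn_H_n} of $H_n$ and interchange the sum over $\sigma \in S_n$ with the sum over compositions $0 = s_0 < s_1 < \dotsb < s_k = n$. For each fixed composition, each $\sigma$ is uniquely specified by an ordered set partition $V_1, \dotsc, V_k$ of $\{1, \dotsc, n\}$ with $|V_i| = s_i - s_{i-1}$ together with intra-block bijections $\sigma_i \colon \{s_{i-1}+1, \dotsc, s_i\} \to V_i$. Using \eqref{eq:expr_A_sigma_qTAZRP}, one verifies the factorization
\begin{equation*}
  A_\sigma(w_1, \dotsc, w_n) = \Bigl(\prod_{1 \le a < b \le k}\; \prod_{\substack{i \in V_a,\, j \in V_b \\ i > j}} -\frac{qw_i - w_j}{qw_j - w_i}\Bigr)\, \prod_{i=1}^{k} A_{\sigma_i}(w_{V_i}),
\end{equation*}
so that the intra-block permutation sums decouple. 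Applying \eqref{eq:Lee-Wang17_result} to each block produces $(-1)^{|V_i|}\, B_{|V_i|}(w_{V_i}) \prod_{l \in V_i} w_l^{-1}$, reducing \eqref{eq:intemediate_L_sum} to a purely algebraic identity: a signed sum over ordered set partitions of $\{1, \dotsc, n\}$ of a product of $B$-factors on each block multiplied by the inter-block sign above must equal $(-1)^n q^{n(n-1)/2} B_n(w_1, \dotsc, w_n) \prod_l w_l^{-1}$ divided by the overall $\prod_l w_l^{-1}$.

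The main obstacle is establishing this remaining combinatorial $q$-identity. I expect to handle it by induction on $n$, singling out the block containing the index $n$: the recursive shape of $H_n$ with respect to its last block mirrors the recursion $q^{n(n-1)/2} = q^{n-1} \cdot q^{(n-1)(n-2)/2}$, and the telescoping of the inter-block cross terms between $w_n$ and $w_1, \dotsc, w_{n-1}$ should produce precisely the factor $q^{n-1}$. The delicate point is the sign bookkeeping arising from the three independent sources of $(-1)$'s, namely \eqref{eq:expr_A_sigma_qTAZRP}, the definition of $G$, and the $(-1)^k$ in \eqref{eq:defn_H_n}; here the detailed case distinction in the definition of inversions of $\sigma$ relative to the set partition has to be tracked carefully. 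As an alternative, one can try to recognize the full signed sum as a $q$-deformed inclusion-exclusion evaluation analogous to the generating-function identity \eqref{eq:tau_identity}, which may give a shorter derivation avoiding the induction.
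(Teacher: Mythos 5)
Your reduction of \eqref{eq:intemediate_L_sum} is sound and structurally parallel to the paper's: the block factorization of $A_\sigma$ into intra-block factors times the cross factor $G(V_1,\dotsc,V_k)$ is exactly right, and applying \eqref{eq:Lee-Wang17_result} blockwise does reduce the claim to a signed sum over ordered set partitions. The paper organizes this more economically via the two-term recursion $H_n(z_1,\dotsc,z_n) = -\sum_{s=1}^n h_s(z_1,\dotsc,z_s) H_{n-s}(z_{s+1},\dotsc,z_n)$, which lets it split off only the \emph{first} block and invoke the inductive hypothesis for \eqref{eq:intemediate_L_sum} itself on the remaining variables, rather than first expanding into all blocks. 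The real issue is your closing step. The ``telescoping of the inter-block cross terms producing a factor $q^{n-1}$'' is not the mechanism that makes the identity work: nothing telescopes term by term. What actually closes the induction is the pair of facts (i) $\sum_{\lvert I\rvert = s} B_s(w_I)B_{n-s}(w_{I^c})G(I,I^c) = \qbinom{n}{s}B_n(w_1,\dotsc,w_n)$, i.e.\ \eqref{eq:sum_of_BB} from Lemma \ref{lem:B_lem}, which collapses the sum over first blocks of fixed size $s$ into a $q$-binomial multiple of $B_n$; and (ii) the evaluation
\begin{equation*}
  \sum_{s=1}^{n} (-1)^s \qbinom{n}{s} q^{(n-s)(n-s-1)/2} = -\,q^{n(n-1)/2},
\end{equation*}
which follows from the $q$-binomial theorem \eqref{eq:tau_identity} at $x=-1$ (the full alternating sum is $\prod_{i=0}^{n-1}(1-q^{-i})=0$, so everything cancels against the $s=0$ term). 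Your alternative suggestion of a $q$-deformed inclusion--exclusion via a generating function is in fact the right instinct, but without \eqref{eq:sum_of_BB} you cannot get from the partition sum to a statement about $q$-binomial coefficients at all; that lemma is the indispensable ingredient your sketch omits.

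For \eqref{eq:Lee-Wang17_result} your plan diverges from the paper more substantially. The paper does not re-derive it by partial fractions on the composition expansion; it observes that $g_n(z_1,\dotsc,z_n)=\sum_{1\le x_n\le\dotsb\le x_1<\infty}\prod_k z_k^{x_k}$, hence $h_n(z_1,\dotsc,z_n)=\sum_{x_1\ge\dotsb\ge x_n\ge 1}\frac{1}{W(X)}\prod_k z_k^{x_k}$ by Lemma \ref{eq:sum_of_comp_prec_C^k}, which identifies the left side of \eqref{eq:Lee-Wang17_result} with the integrand computed in the proof of \cite[Proposition 2.1]{Lee-Wang17} (the probability that the leftmost $q$-TAZRP particle is positive). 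Your from-scratch route would have to reproduce that computation; it is not obviously wrong, but it is a significant amount of unverified work where a one-line identification suffices, and the telescoping Lemma \ref{eq:sum_of_comp_prec_C^k} is most naturally used exactly to establish that series identification rather than to ``collapse'' composition-by-composition permutation sums.
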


\begin{proof}[Proof of Lemma \ref{lem:h_n_H_n}]
Note that we may write the multi-series expansion
  \begin{equation}
    g_n(z_1, \dotsc, z_n) = \sum_{1 \leq x_n \leq x_{n - 1} \leq \dotsb \leq x_1 < \infty} \prod^n_{k = 1} z^{x_k}_k,
  \end{equation}
 if $\lvert z_k \rvert < 1$ for all $k = 1, \dotsc, n$. Then,
  \begin{equation}
    h_n(z_1, \dotsc, z_n) = \sum_{1 \leq x_n \leq x_{n - 1} \leq \dotsb \leq x_1 < \infty} \frac{1}{W(X)} \prod^n_{k = 1} z^{x_k}_k
  \end{equation}
for $X = (x_1, \dotsc, x_n)$.

Identity \eqref{eq:Lee-Wang17_result} is proved in \cite{Lee-Wang17}. More specifically, \cite[Proposition 2.1]{Lee-Wang17} yields that
  \begin{multline} \label{eq:Lee_Wang_2.1}
    (-1)^n \dashint \frac{dw_1}{w_1} \dotsi \dashint \frac{dw_n}{w_n} \prod^n_{k = 1} (1 - w_k)^{y_j - 1} \sum_{\sigma \in S_n} A_{\sigma}(w_1, \dotsc, w_n) h_n \left( \frac{1}{1 - w_{\sigma(1)}}, \dotsc, \frac{1}{1 - w_{\sigma(n)}} \right) = \\
    \dashint \frac{dw_1}{w_1} \dotsi \dashint \frac{dw_n}{w_n} \prod^n_{k = 1} (1 - w_k)^{y_j - 1} \prod^n_{k = 1} \frac{1}{w_k} B_n(w_1, \dotsc, w_n),
  \end{multline}
with $N$ replaced by $n$, $M = 0$, $b_i = 1$ for all $i \in \intZ$ and any $y_1 > y_2 > \dotsb > y_n$.  The left side of \eqref{eq:Lee_Wang_2.1} is the  probability that the leftmost particle is $> 0$ in the $n$-particle $q$-TAZRP with initial condition $Y = (y_1, \dotsc, y_n)$. The proof of \cite[Proposition 2.1]{Lee-Wang17} actually verifies \eqref{eq:Lee-Wang17_result}, that the integrands on both sides of \eqref{eq:Lee_Wang_2.1} are equal.
  
  We prove identity \eqref{eq:intemediate_L_sum} by induction. The $n = 1$ case is obviously true. We express
  \begin{equation}
    H_n(z_1, \dotsc, z_n) = -\sum^n_{s = 1} h_s(z_1, \dotsc, z_s) H_{n - s}(z_{s + 1}, \dotsc, z_n).
  \end{equation}
  Suppose \eqref{eq:intemediate_L_sum} holds for $n - 1, n - 2, \dotsc, 1$. Then, we multiply the left side of \eqref{eq:intemediate_L_sum} by $(-1) \prod^n_{k = 1} w_k$, and it becomes
  \begin{multline} \label{eq:proof_BBG_sum}
    \sum^n_{s = 1} (-1)^{s} q^{\frac{(n - s)(n - s - 1)}{2}} \sum_{\substack{|I| = s \\ I \subset \{1, \dots, n \}}} B_s(w_{I})  B_{n - s}(w_{I^c}) G(I, I^c)  = \\ B_n(w_1, \dotsc, w_n) \sum^n_{s = 1} (-1)^s \qbinom{n}{s} q^{\frac{(n - s)(n - s - 1)}{2}}
  \end{multline}
by applying the $n \to s$ case of \eqref{eq:Lee-Wang17_result} and the $n \to n - s$ case of \eqref{eq:intemediate_L_sum} and using \eqref{eq:sum_of_BB} for the last step. Lastly, we evaluate the sum over $s$ in \eqref{eq:proof_BBG_sum} by \cite[Corollary 10.2.2(c)]{Andrews-Askey-Roy99}, and this gives \eqref{eq:intemediate_L_sum} for $n$.
\end{proof}

Now, we evaluate the sum over $\mu \in S_{N - p - r}$ in \eqref{eq:C(w_1w_NX)_in_F_n} by using Lemmas \ref{lem:Baik-Liu_like} and \ref{lem:h_n_H_n}. We have
\begin{multline}
  \sum_{X \in \Weylchamber_N(L),\ x_N = 0} C(w_1, \dotsc, w_N; X) = \sum^N_{r = 1} \sum^{N - r}_{p = 0} \sum^{N - p - r}_{s = 0} \frac{[p]_q! [r]_q!}{[p + r]_q!} q^{s(s - 1)/2} (-1)^{N - p - r - s} \\
  \times \sum_{\substack{|I_1| =p, |I_2| = s, |I_4| = r \\ I_1 \sqcup I_2 \sqcup I_3 \sqcup I_4 = \{1, \dots, N \}}} \prod_{i=1}^4 B_{|I_i|} (w_{I_i}) \prod_{1 \leq j < k \leq 4}G(I_j , I_k)  \prod_{\alpha \in I_1} (1 - w_{\alpha})^{-L} \prod_{\beta \in I_2} (1 - w_{\beta})^{1 - L} \prod_{\gamma \in I_2 \cup I_3} \frac{1}{w_{\gamma}}.
\end{multline}
so that the inner-most sum on the right side is over disjoint subsets $I_1, I_2, I_3, I_4 \subset \{1, \dots, N \}$ with $|I_1| = p, |I_2| = s, |I_3| = N-p - r- s$, and $|I_4| =r$.

Analogous to \eqref{eq:sum_DC} in the ASEP case, we have

\begin{multline} \label{eq:sum_with_general_ell_TAZRP}
  D_{\Lattice}(w_1, \dotsc, w_N) \sum_{\substack{X \in \Weylchamber_N(L) \\ x_N = 0}} C(w_1, \dotsc, w_N; X) = \sum^N_{r = 1} \sum^{N - r}_{p = 0} \sum^{N - p - r}_{s = 0} \frac{[p]_q! [r]_q!}{[p + r]_q!} q^{s(s - 1)/2} (-1)^{N - p - r - s} \\
  \times \sum_{\substack{|I_2| = r, |I_3| = p, |I_4| = s \\ I_1 \sqcup I_2 \sqcup I_3 \sqcup I_4 =\{ 1, \dots, N\}}} (-1)^{(p + s)(N - p - s)} D_{\Lattice(I_3 \cup I_4)}(w_1, \dotsc, w_N)\\
\times \prod_{i=1}^4 B_{|I_i| } (w_{I_i}) \prod_{1 \leq j < k \leq 4} G(I_j, I_k) \prod_{\alpha \in I_4} (1 - w_{\alpha}) \prod_{\beta \in I_1 \cup I_4} \frac{1}{w_{\beta}},
\end{multline}

with $\Lattice = (\ell_1, \dotsc, \ell_N)$ and $\Lattice(I_3 \cup I_4) = (\ell'_1, \dotsc, \ell'_N)$ defined by $\ell'_k = \ell_k - 1$ if $k \in I_3 \cup I_4$ and $\ell'_k = \ell_k$ otherwise. Note that $\Lattice(I_3 \cup I_4) \in \intZ^N(-p - s)$ if $\Lattice \in \intZ^N(0)$. Then, $\Lattice(I_3 \cup I_4)$ runs over $\intZ^N(-p - s)$  if $\Lattice$ runs over $\intZ^N(0)$ for any fixed $I_3 \cup I_4$. So, we have the identity
\begin{multline} \label{eq:formula_in_S(K)}
  \sum^{N - 1}_{K = 0} S(K) = \sum_{\Lattice \in \intZ^N(0)} \dashint dw_1 \dotsi \dashint dw_N D_{\Lattice}(w_1, \dotsc, w_N) \\
  \times \sum_{X \in \Weylchamber_N(L), \, x_N = 0} C(w_1, \dotsc, w_N; X)  \prod^N_{j = 1} (1 - w_j)^{y_j - a - 1} e^{-w_j t},
\end{multline}
with
\begin{multline} \label{eq:S(K)_in_S_Kpr}
  S(K) = (-1)^{K(N - K)} \sum_{\Lattice \in \intZ^N(-K)} \dashint dw_1 \dotsc \dashint dw_N \sum^K_{p = 0} \sum^{N - K}_{r = 1} D_{\Lattice}(w_1, \dotsc, w_N) \\
  \times S_{K, p, r}(w_1, \dotsc, w_N)  \prod^N_{j = 1} (1 - w_j)^{y_j - a - 1} e^{-w_j t},
\end{multline}
and

\begin{multline} \label{eq:general_S_Kpr}
  S_{K, p, r}(w_1, \dotsc, w_N) = \frac{[p]_q! [r]_q!}{[p + r]_q!} q^{\frac{(K-p)(K-p - 1)}{2}} (-1)^{N -K  - r } \\ 
\times \sum_{\substack{|I_2| = r, |I_3| = p, |I_4| =K-p \\ I_1 \sqcup I_2 \sqcup I_3 \sqcup I_4 = \{1, \dots, N \}}} \prod_{i =1}^4 B_{|I_i|}(w_{I_i}) \prod_{1 \leq j <k \leq 4 } G(I_j, I_k)  \prod_{\alpha \in I_4} (1 - w_{\alpha}) \prod_{\beta \in I_1 \cup I_4} \frac{1}{w_{\beta}}.
\end{multline}

so that $K$ is an integer that corresponds to $p + s$ in \eqref{eq:sum_with_general_ell_TAZRP}. Recall, by Lemma \ref{lem:B_lem} and \eqref{eq:sum_of_BB},  that

\begin{equation}
  \sum_{\substack{|I | = p \\ I \subset U}  } B_p(w_{I}) B_r(w_{I^c}) G(I^c, I) = \frac{[p + r]_q!}{[p]_q! [r]_q!} B_{p + r}(w_{U}).
\end{equation}

if $U$ is a subset of $\{ 1, \dotsc, N \}$ with $\lvert U \rvert = p + r$ and $U = \{ l_1 < \dotsb < l_{p + r} \}$. Then, applying this identity to \eqref{eq:general_S_Kpr}, we have 

\begin{multline}
  S_{K, p, r}(w_1, \dotsc, w_N) = q^{\frac{(K - p)(K - p - 1)}{2}} (-1)^{N - K - r} \\
\times  \sum_{\substack{ |I_2| = p +r, |I_3| = K-p \\ I_1 \sqcup I_2 \sqcup I_3 = \{1, \dots , N \}}} \prod_{i=1}^3 B_{|I_i|}(w_{I_i}) \prod_{1 \leq j< k\leq 3} G(I_j, I_k) \prod_{\alpha \in I_3} (1 - w_{\alpha}) \prod_{\beta \in I_1 \cup I_3} \frac{1}{w_{\beta}}.
\end{multline}

We note that for each $K = 0, \dotsc, N - 1$, $S(K)$ is well-defined because the the right side of \eqref{eq:S(K)_in_S_Kpr} is absolutely convergent by Lemma \ref{lem:estimate_qTAZRP}. Only finitely many terms on the left side of \eqref{eq:formula_in_S(K)} are non-vanishing by Lemmas \ref{lem:finiteness_of_large_max(L)} and \ref{lem:qTAZRP_conv}.

Now, by Lemma \ref{lem:B_lem}  and the $t = 1$ case of \eqref{eq:lem_id_TAZRP_1}, we have

\begin{multline} \label{eq:interm_S_K(w)}
  S_{K, p, r}(w_1, \dotsc, w_N) = q^{\frac{(K - p)(K - p - 1)}{2}} (-1)^{N - K - r}  \prod^N_{k = 1} \frac{1}{w_k}\\
\times  \sum_{\substack{|I| =K +r \\ I \subset \{1, \dots , N\} } } B_{K+r}(w_I) B_{N-K-r}(w_{I^c})G(I^c, I)  \sum^{K + r}_{k = p + r} (-1)^{k+p+r} q^{(p + r)(K + r - k)} \qbinom{k}{p + r} e_k(w_{I})
\end{multline}

Moreover, if we compare the $t^k$ coefficient on both sides of \eqref{eq:lem_id_TAZRP_2} in Lemma \ref{lem:B_lem}, we have 

\begin{equation} \label{eq:most_complicated_BBG}
  \sum_{\substack{|I| = s \\ I \subset \{1, \dots, N \}}} B_s(w_{I}) B_{n - s}(w_{I^c}) e_k(w_{I^c}) G(I, I^c) =   B_n(w_1, \dotsc, w_n) \qbinom{n - k}{s} e_k(w_1, \dotsc, w_n)
\end{equation}

for $k \leq n - s$. Then, by applying \eqref{eq:most_complicated_BBG} in \eqref{eq:interm_S_K(w)}, we have
\begin{equation} \label{eq:formula_S_Kpr}
  S_{K, p, t}(w_1, \dotsc, w_N) = \prod^N_{k = 1} \frac{1}{w_k} B_N(w_1, \dotsc, w_N) \sum^{K + r}_{k = p + r} (-1)^k s_{K, p, r}(k) e_k(w_1, \dotsc, w_N),
\end{equation}
with the coefficients
\begin{equation}
  s_{K, p, r}(k) = (-1)^{N - K + p} q^{\frac{p^2}{2} + (r - k + \frac{1}{2})p + \frac{K^2 - K}{2} + r(K + r - k)} \qbinom{k}{p + r} \qbinom{N - k}{N - K - r}.
\end{equation}
Furthermore, if we assume that $s_{K, p, r}(k) = 0$ if $k < p + r$ or $k > K + r$, we will have
\begin{equation} \label{eq:exact_s(k)}
  \sum^K_{p = 0} \sum^{N - K}_{r = 1} s_{K, p, r}(k) = (-1)^{N - K} q^{(K^2 + K)/2} \qbinom{N - 1}{K},
\end{equation}
for $K = 0, 1, \dotsc, N - 1$. We give the proof of \eqref{eq:exact_s(k)} at the end of this subsection in the Section \ref{sec:misc_proofs}. Then, we have that 
\begin{equation}
  \sum^N_{k = 1} (-1)^k \sum^K_{p = 0} \sum^{N - K}_{r = 1} s_{K, p, r}(k) e_k(w_1, \dotsc, w_N) = (-1)^{N - K} q^{(K^2 + K)/2} \qbinom{N - 1}{K} \left( \prod^N_{i = 1} (1 - w_i) - 1 \right),
\end{equation}
by \eqref{eq:exact_s(k)}. In turn, we may write
\begin{multline} \label{eq:S(K)_final}
  S(K) = (-1)^{(K - 1)(N - K)} q^{(K^2 + K)/2} \qbinom{N - 1}{K} \sum_{\Lattice \in \intZ^N(-K)} \dashint \frac{dw_1}{w_1} \dotsi \dashint \frac{dw_N}{w_N} D_{\Lattice}(w_1, \dotsc, w_N) \\
  \times B_N(w_1, \dotsc, w_N) \prod^N_{j = 1} (1 - w_j)^{y_j - a - 1} e^{-w_j t} \left( \prod^N_{i = 1} (1 - w_i) - 1 \right),
\end{multline}
by \eqref{eq:S(K)_in_S_Kpr} and \eqref{eq:formula_S_Kpr}. Similar to \eqref{eq:S(K)_in_S_Kpr}, only finitely many terms on the right side of \eqref{eq:S(K)_final} are non-vanishing. We simplify the integrand on the right side of \eqref{eq:prob_x_N=a} using \eqref{eq:S(K)_final} and \eqref{eq:formula_in_S(K)}. Next, we obtain the one-point function
\begin{multline}
  \prob_Y(x_N(t) = a) = \frac{(-1)^N}{2\pi i} \oint_0 \frac{dz}{z} C_N(z) \sum^{N - 1}_{k = 0} z^{-k} \sum_{\Lattice \in \intZ^N(-k)} \dashint \frac{dw_1}{w_1} \dotsi \dashint \frac{dw_N}{w_N} D_{\Lattice}(w_1, \dotsc, w_N) \\
  \times B_N(w_1, \dotsc, w_N) \prod^N_{j = 1} (1 - w_j)^{y_j - a - 1} e^{-w_j t} \left( \prod^N_{i = 1} (1 - w_i) - 1 \right),
\end{multline}
by using identity \eqref{eq:tau_identity} with $\tau$ replaced by $q$ and $C_N(z)$ defined in \eqref{eq:C_N_qTAZRP}. Also, due the estimate in Lemma \ref{lem:qTAZRP_conv}, we have the summation formula
\begin{equation}
  \begin{split}
    \prob_Y(x_N(t) > M) = {}& \sum^{\infty}_{a = M + 1} \prob_Y(x_N(t) = a) \\
    = {}& \frac{(-1)^N}{2\pi i} \oint_0 \frac{dz}{z} C_N(z) \sum^{N - 1}_{k = 0} z^{-k} \sum_{\Lattice \in \intZ^N(-k)} \dashint \frac{dw_1}{w_1} \dotsi \dashint \frac{dw_N}{w_N} D_{\Lattice}(w_1, \dotsc, w_N) \\
    & \times B_N(w_1, \dotsc, w_N) \prod^N_{j = 1}  e^{-w_j t} \sum^{\infty}_{a = M + 1} \left( \prod^N_{j = 1} (1 - w_j)^{y_j - a} - \prod^N_{j = 1} (1 - w_j)^{y_j - a - 1} \right).
  \end{split}
\end{equation}
Hence, we prove the $m = 1$ case of Theorem \ref{thm:one_pt_qTAZRP}.

\subsubsection{Miscellaneous proofs}\label{sec:misc_proofs}
\begin{proof}[Proof of \eqref{eq:exact_s(k)}]
Equation \eqref{eq:exact_s(k)} verified easily for $k=1$. Then, by an induction argument, we only need to show that $\sum^K_{p = 0} \sum^{N - K}_{r = 1} (s(k + 1) - s(k)) = 0$ for $k = 1, \dotsc, N - 1$. To this end, we write
  \begin{equation}
    s_{K, p, r}(k) = q^{N(N - 1)/2} f^{(k)}_{p + r} g^{(k)}_{N - K - r},
  \end{equation}
  with
  \begin{equation}
    f^{(k)}_j =  q^{\binom{j}{2}} \qbinom{k}{j} (-q^{1 - k})^j, \quad g^{(k)}_j = q^{\binom{j}{2}} \qbinom{N - k}{j} (-q^{1 - N})^j.
  \end{equation}
These terms have the following generating functions:
  \begin{equation}
    \prod^{k - 1}_{i = 0} (1 - q^{-i} x) = \sum^k_{j = 0} f^{(k)}_j x^j, \quad \prod^{N - 1}_{i = k} (1 - q^{-i} x) = \sum^{N - k}_{j = 0} g^{(k)}_j x^j.
  \end{equation}
  So,
  \begin{equation}
   s_{K, p, r}(k) = \sum_{\substack{\{ a_1 < \dotsb < a_{p + r} \} \subseteq \{ 0, \dotsc, k - 1 \} \\  \{ b_1 < \dotsb < b_{N - K - r} \} \subseteq \{ k, \dotsc, N - 1 \}}} \prod^{p + r}_{i = 1} (-q^{-a_i}) \prod^{N - K - r}_{i = 1}(-q^{-b_i}).
  \end{equation}
  By comparison, we have
  \begin{equation}
    \sum^K_{p = 0} \sum^{N - K}_{r = 1} (s(k + 1) - s(k)) = 
    \begin{cases}
      0 & \text{if $k \geq K$}, \\
      \displaystyle \left( \sum^k_{p = 0} \sum_{\{ a_1, \dotsc, a_p \} \subseteq \{ 0, \dotsc, k - 1 \}} \prod^p_{i = 1} (-q^{a_i}) \right) (-q^k) g^{(k + 1)}_{N - K} & \text{if $k < K$}.
    \end{cases}
  \end{equation}
  Noting that
  \begin{equation}
    \sum^k_{p = 0} \sum_{\{ a_1, \dotsc, a_p \} \subseteq \{ 0, \dotsc, k - 1 \}} \prod^p_{i = 1} (-q^{a_i}) = \prod^{k - 1}_{i = 0} (1 - q^{-i}) = 0
  \end{equation}
for $k \geq 1$. Thus, we have that $\sum^K_{p = 0} \sum^{N - K}_{r = 1} (s(k + 1) - s(k)) = 0$ for $k = 1, \dotsc, N - 1$. This proves \eqref{eq:exact_s(k)}.
\end{proof}

\begin{proof}[Proof of Lemma \ref{lem:B_lem}]
 We only prove \eqref{eq:lem_id_TAZRP_1} since \eqref{eq:lem_id_TAZRP_2} is similar to \eqref{eq:lem_id_TAZRP_1}. Also, we have that \eqref{eq:sum_of_BB} is equivalent to \cite[Formulas (98) and (105)]{Lee-Wang17}, which are proved in the same way as we prove \eqref{eq:lem_id_TAZRP_1} below. So, we also omit the proof of \eqref{eq:sum_of_BB}.
  
  We multiply the left side of \eqref{eq:lem_id_TAZRP_1} by the factor $q^{s(s - 1)/2} \prod_{1 \leq i < j \leq N} (qw_i - w_j)$, and it becomes
  \begin{multline}
    \sum_{\substack{|I|= s \\ I \subset \{1, \dots , n \}}} \prod_{1 \leq k < l \leq s} (qw_{i_k} - qw_{i_l}) \prod_{\alpha \in I} w_{\alpha} \prod_{1 \leq k < l \leq n - s} (w_{j_k} - w_{j_l}) \prod_{\beta \in I^c} (1 - t w_{\beta}) \\
    \times \prod_{\alpha \in I} \prod_{\beta \in I^c} (qw_{\alpha} - w_{\beta}) \prod_{\alpha \in I,\ \beta \in I^c,\ \alpha > \beta} (-1).
  \end{multline}
  We recognize that this can be written as the $z^s$ coefficient of 
  \begin{equation}
    \det \left( (1 - tw_i)w^{j - 1}_i + zw_i(qw_i)^{j - 1} \right)^n_{i, j = 1}.
  \end{equation}
  By the determinantal representation of $e_k(w_1, \dotsb, w_n)$, we have that the determinant above is equal to
  \begin{equation} \label{eq:proof_of_lemma_BBG_other}
    \prod_{1 \leq i < j \leq n} (w_i - w_j) \left( \sum^n_{k = 0} \prod^k_{i = 1} (q^{n - k}z - t) e_k(w_1, \dotsc, w_n) \right).
  \end{equation}
  It is straightforward to check that the $z^s$ term of \eqref{eq:proof_of_lemma_BBG_other} is equal to the right side of \eqref{eq:lem_id_TAZRP_1} multiplied by the factor $q^{s(s - 1)/2} \prod_{1 \leq i < j \leq N} (qw_i - w_j)$. 
\end{proof}

\subsection{The $m = 2, \dotsc, N$ cases of Theorems \ref{thm:one_pt_ASEP} and \ref{thm:one_pt_qTAZRP}} \label{subsubsec:general_m_ASEP}

In this subsection we concentrate on the ASEP, and prove the $m = 2, \dotsc, N$ cases of Theorem \ref{thm:one_pt_ASEP} by the $m = 1$ result and the cyclic invariance of the ASEP on a ring. The argument for the $q$-TAZRP is analogous. Thus, we omit the proof for Theorem \ref{thm:one_pt_qTAZRP}.

We note that 
\begin{equation}
  \prob_Y(x_m(t) \geq M) = \prob_{\tilde{Y}}(x_1(t) \geq M)
\end{equation}
by the cyclic invariance of the model, for the initial conditions $Y = (y_1, \dotsc, y_N) \in \confs_N(L)$ and $\tilde{Y} = (y_m, y_{m + 1}, \dotsc, y_N, y_1 + L, \dotsc, y_{m - 1} + L) \in \confs_N(L) $. Then,
\begin{multline} \label{eq:general_m_ASEP}
  \prob_Y(x_m(t) \geq M) = 
\frac{p^{N(N - 1)/2}}{2\pi i} \oint \frac{dz}{z^m} C_N(z) \sum^{N - m}_{k = 1 - m} z^{-k} \sum_{\Lattice \in \intZ^N(-k)} \dashint_C \frac{d\xi_1}{1 - \xi_1} \dotsi \dashint_C \frac{d\xi_N}{1 - \xi_N} \\
    \times \prod^N_{j = m} \xi^{M - y_j - 1}_{j - m + 1} \prod^{m - 1}_{j = 1} \xi^{M - y_j - 1 - L}_{j + N - m + 1} \prod^N_{k = 1} e^{\epsilon(\xi_k)t} \prod_{1 \leq i < j \leq N} \frac{\xi_j - \xi_i}{p + q\xi_i\xi_j - \xi_i} D_{\Lattice}(\xi_1, \dotsc, \xi_N).
\end{multline}
by the $m = 1$ special case of Theorem \ref{thm:one_pt_ASEP}. Then, we use an identity analogous to \eqref{eq:change_level_id_ASEP}. We have
\begin{multline}
  \prod^{m - 1}_{j = 1} \xi^{-L}_{j + N - m + 1} \prod_{1 \leq i < j \leq N} \frac{\xi_j - \xi_i}{p + q\xi_i\xi_j - \xi_i} D_{\Lattice}(\xi_1, \dotsc, \xi_N) = \\
  (-1)^{(m - 1)(N - m + 1)} \prod_{1 \leq i < j \leq N} \frac{\eta_j - \eta_i}{p + q\eta_i\eta_j - \eta_i} D_{\Lattice''}(\eta_1, \dotsc, \eta_N),
\end{multline}
with the change of variables $\Lattice = (\ell_1, \dotsc, \ell_N) \mapsto \Lattice'' = (\ell''_1, \dotsc, \ell''_N)$ and $\xi \mapsto \eta$ given by
\begin{equation}
  \eta_j =
  \begin{cases}
    \xi_{j + N - m + 1} & \text{for $j = 1, \dotsc, m - 1$}, \\
    \xi_{j - m + 1} & \text{for $j = m, \dotsc, N$},   
  \end{cases}
  \quad
  \ell'' =
  \begin{cases}
    \ell_{j + N - m + 1} - 1 & \text{for $j = 1, \dotsc, m - 1$}, \\
    \ell_{j - m + 1} & \text{for $j = m, \dotsc, N$}.
  \end{cases}
\end{equation}
Hence,
\begin{multline}
  \sum_{\Lattice \in \intZ^N(-k)} \dashint_C \frac{d\xi_1}{1 - \xi_1} \dotsi \dashint_C \frac{d\xi_N}{1 - \xi_N} \prod^N_{j = m} \xi^{M - y_j - 1}_{j - m + 1} \prod^{m - 1}_{j = 1} \xi^{M - y_j - 1 - L}_{j + N - m + 1} \prod^N_{k = 1} e^{\epsilon(\xi_k)t} \\
  \times \prod_{1 \leq i < j \leq N} \frac{\xi_j - \xi_i}{p + q\xi_i\xi_j - \xi_i} D_{\Lattice}(\xi_1, \dotsc, \xi_N)  \\
= (-1)^{(m - 1)(N - m + 1)} \sum_{\Lattice \in \intZ^N(-k - m)} 
  \dashint_C \frac{d\eta_1}{1 - \eta_1} \dotsi \dashint_C \frac{d\eta_N}{1 - \eta_N}\\
\times  \prod^N_{j = 1} \eta^{M - y_j - 1}_j e^{\epsilon(\eta_j)t} \prod_{1 \leq i < j \leq N} \frac{\eta_j - \eta_i}{p + q\eta_i\eta_j - \eta_i} D_{\Lattice}(\eta_1, \dotsc, \eta_N).
\end{multline}
Plugging this into \eqref{eq:general_m_ASEP}, we prove Theorem \ref{thm:one_pt_ASEP} for $m = 2, \dotsc, N$.

\section{One Point Function for TASEP on the Ring and ASEP on the Line}\label{sec:one_point_limits}

We show that the formula \eqref{eq:one_point_distribution} from Theorem \ref{thm:one_pt_ASEP} for the ASEP on the ring agrees with the corresponding formulas for the ASEP on the line from Tracy and Widom in \cite{Tracy-Widom08} and the TASEP on the ring from Baik and Liu in \cite{Baik-Liu18}. In the case of the ASEP on the line, we take the length of the ring $L \in \mathbb{Z}_{\geq 1}$ approaching $\infty$. In the case of the TASEP on the ring, we take the asymmetry parameter $p \in [0,1]$ and set it to $p=1$.

\paragraph{TASEP on the Ring}

The argument for the one point function is similar to that for the transition probability in Section \ref{sec:limits}.

\begin{corollary}
  If $p=1$, the formula for $\mathbb{P}_Y(x_m (t) \geq M)$ from Theorem~\ref{thm:one_pt_ASEP} is equivalent to the one-point formula in \cite[Proposition 6.1]{Baik-Liu18} for the TASEP on the ring:
  \begin{multline}
    \mathbb{P}_{Y}(x_m(t) \geq M) = \\
    \frac{(-1)^{(m-1)(N+1)}}{2 \pi i} \oint_0 \det \left[\frac{1}{L} \sum_{w \in R_z} \frac{w^{j-k+1-m} (1 + w)^{y_j -j-M+m +1} e ^{t w}}{w + \rho}\right]_{j, k =1}^N \frac{d z}{z^{1- (m-1)L}},
  \end{multline}
  where $\rho = N/L$ and $R_z$ is defined in \eqref{eq:bethe_roots}.
\end{corollary}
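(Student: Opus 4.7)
The plan is to adapt the argument given for the transition-probability corollary in Section \ref{sec:limits} to the one-point formula \eqref{eq:one_point_distribution}. Setting $p=1$ (hence $q=0$, $\tau=0$), we have $C_N(z)\equiv 1$, $\epsilon(\xi)=\xi^{-1}-1$, and $p+q\xi_i\xi_j-\xi_i = 1-\xi_i$. The quantity $D_{\Lattice}$ partially decouples: for $\Lattice\in\intZ^N(k)$,
\begin{equation*}
D_{\Lattice}(\xi_1,\dotsc,\xi_N) = \Bigl(\prod_{a=1}^N (1-\xi_a)\Bigr)^{k}\prod_{j=1}^N\left(\frac{\xi_j^L}{(1-\xi_j)^N}\right)^{\ell_j},
\end{equation*}
and the second product is a product of single-variable terms, which is precisely the feature that allows the Cauchy--Binet reduction to a determinant.

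Next, I would interchange the sums and integrals (justified by Lemma \ref{lm:absolutely_convergent}) and absorb the outer $\sum_{k=1-m}^{N-m} z^{-k}\sum_{\Lattice\in\intZ^N(k)}$ into a single sum over $\Lattice$ weighted by $z^{-\sum_j\ell_j}$. Truncating to $\min(\Lattice)\ge -K$ for a large $K$, and using (as in the derivation of \eqref{eq:det_trans_ASEP}) the fact that the integral in the variable achieving the minimum vanishes below the cutoff, each per-variable geometric series evaluates to
\begin{equation*}
\sum_{\ell\ge -K}\left(\frac{\xi^L}{z\,(1-\xi)^N}\right)^{\ell}=\frac{\bigl(z\,(1-\xi)^N/\xi^L\bigr)^{K}}{1-\xi^L/\bigl(z\,(1-\xi)^N\bigr)}.
\end{equation*}
Together with the Vandermonde factor $\prod_{i<j}(\xi_j-\xi_i)$ and the antisymmetrization $\sum_\sigma \sgn(\sigma)$ (which is implicit in the symmetrized one-point integrand), Cauchy--Binet converts the expression into an $N\times N$ determinant whose $(j,k)$ entry is a single-variable contour integral, directly analogous to the transition-probability computation in Section \ref{sec:limits}.

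Then I apply the change of variables $\xi=1/(1+w)$ of \eqref{eq:TASEP_change_var}, mapping the small $\xi$-contour to a large $w$-contour. Under this substitution, $\xi^L/(1-\xi)^N=1/\bigl(w^N(1+w)^{L-N}\bigr)$, $e^{\epsilon(\xi)t}=e^{wt}$, and the geometric factor becomes $z^L(1+w)^{L-N}w^N/\bigl(z^L(1+w)^{L-N}w^N-1\bigr)$, whose poles inside the large circle are exactly the Bethe roots $R_z$ of \eqref{eq:bethe_roots}. Evaluating each $w_j$-integral by residues, exactly as in \eqref{eq:residue_TASEP_trans}, produces the matrix entry
\begin{equation*}
\frac{1}{L}\sum_{w\in R_z}\frac{w^{j-k+1-m}(1+w)^{y_j-j-M+m+1}e^{tw}}{w+\rho}.
\end{equation*}
The exponents in $w$ and $1+w$ receive the shift by $m$ through the $z^{-k}$ summation combined with the outer $\oint dz/z^m$; the contributions of $z^{-K}$ from the truncation conspire with this shift to produce the overall measure $dz/z^{1-(m-1)L}$ in the Baik--Liu formula.

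The main obstacle will be the careful bookkeeping of the sign $(-1)^{(m-1)(N+1)}$ and the index shifts in $w^{j-k+1-m}$ and $(1+w)^{y_j-j-M+m+1}$, which have to be matched against the prefactor $(-1)^{(m-1)(N-1)}p^{N(N-1)/2}$ of \eqref{eq:one_point_distribution} through the Jacobian of $\xi\mapsto 1/(1+w)$ and the row/column reorderings inside the determinant. A clean way to sidestep this bookkeeping is to first establish the $m=1$ case in full by the argument above and then promote it to general $m$ using the cyclic invariance (Lemma \ref{lem:cyclic_invariance}) in the same manner as Section \ref{subsubsec:general_m_ASEP}; under the cyclic shift $\tilde Y$, the extra factors of $L$ in the exponents $y_j-j-M+m+1$ and the overall power of $z$ arise naturally from the $+L$ shift in the last $m-1$ entries of $\tilde Y$, yielding the $z^{(m-1)L}$ enhancement in the Baik--Liu measure.
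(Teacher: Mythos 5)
Your proposal follows essentially the same route as the paper's proof: specialize to $p=1$ so that $C_N(z)\equiv 1$ and $D_{\Lattice}$ decouples into single-variable factors (the cross factor $\bigl(\prod_a(1-\xi_a)\bigr)^{1-m}$ being constant on $\intZ^N(1-m)$), truncate at $\min(\Lattice)\geq -K$ and sum the geometric series, reduce to an $N\times N$ determinant of one-dimensional integrals, substitute $\xi=1/(1+w)$, and evaluate by residues at the Bethe roots exactly as in \eqref{eq:residue_TASEP_trans}. The paper carries out the sign and exponent bookkeeping for general $m$ directly rather than via the $m=1$ case plus cyclic invariance, but both of your suggested routes are sound.
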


\begin{proof}
  We start with \eqref{eq:one_point_distribution} and set $p =1$ and $q=0$. We have that 
  \begin{equation}
    \tau =0, \quad C_N(z)=1, \quad \epsilon (\xi) = \xi^{-1} -1.
  \end{equation}
  Then, with the change of variable $\xi_j \to (w_j + 1)^{-1}$, like in \eqref{eq:TASEP_change_var}, we rewrite \eqref{eq:one_point_distribution} as 
  \begin{multline}\label{eq:point_tasep}
    \prob_Y(x_m(t) \geq M) = \sum_{\mathbb{L} \in \intZ^N} \frac{(-1)^{(m-1)(N-1)}}{2 \pi i} \oint_0 \frac{dz}{ z^{1 - L (m-1)}} \dashint d w_1 \cdots \dashint d w_N \det [w_j^{-k}]_{j, k=1}^N \\
    \times\prod_{j=1}^N \left[\left( \frac{z^L}{ w_j^N (w_j +1)^{L-N}} \right)^{\ell_j} \left(\frac{ w_j^{j - m} e^{t w_j}}{(w_j+1)^{M-y_j  +j -m } }\right) \right].
  \end{multline}
  where the contour for $w_j$ can be taken as $\lvert w_j \rvert = R$ with $R$ large enough. Although \eqref{eq:point_tasep} involves a summation over $\mathbb{Z}^N$, actually the summation is over $\mathbb{Z}^N(1-m)$, since the integral with respect to $dz$ will be zero unless $\ell_1 + \cdots + \ell_N =1-m$. Moreover, only for finitely many $\Lattice$ the $N$-fold integral is nonzero. To see this, we can take $K$ to be a large enough positive integer, and find that if $\min(\ell_1, \dotsc, \ell_N) \leq -K$, then the $N$-fold integral associated to $\Lattice = (\ell_1, \dotsc, \ell_N)$ vanishes. Hence we have
\begin{multline}
  \prob_Y(x_m(t) \geq M) = \frac{(-1)^{(m-1)(N-1)}}{2 \pi i} \oint_0 \frac{dz}{ z^{1 - L (m-1)}} \dashint d w_1 \cdots \dashint d w_N \det [w_j^{-k}]_{j, k =1}^N \\
  \times\prod_{j=1}^N \left[ \left( \sum^{\infty}_{\ell_j = -K + 1} \left( \frac{z^L}{ w_j^N (w_j +1)^{L-N}} \right)^{\ell_j} \right) \left(\frac{ w_j^{j - m} e^{t w_j}}{(w_j+1)^{M-y_j  +j -m } }\right) \right].
\end{multline}
Using identity
\begin{equation}
  \sum^{\infty}_{\ell_j = -K} \left( \left( \frac{z^L}{ w_j^N (w_j +1)^{L-N}} \right)^{\ell_j} \right) = \frac{(w^N_j (w_j + 1)^{L - N})^K}{w^N_j (w_j + 1)^{L - N} - z^L},
\end{equation}
we have, similar to \eqref{eq:det_trans_ASEP}, that
\begin{multline}
  \prob_Y(x_m(t) \geq M) = \frac{(-1)^{(m-1)(N-1)}}{2 \pi i} \oint_0 \frac{dz}{ z^{1 - L (m-1)}} \\
  \det \left[ \dashint w^{j - k - m} (1 + w)^{-j - M + y_j + m} \frac{(w^N_j (w_j + 1)^{L - N})^K}{w^N_j (w_j + 1)^{L - N} - z^L} dw \right]^N_{j, k = 1}.
\end{multline}
Then, similar to \eqref{eq:residue_TASEP_trans}, we have
\begin{multline}
  \dashint dw w^{j - k - m} (1 + w)^{-j - M + y_j + m} \frac{(w^N_j (w_j + 1)^{L - N})^K}{w^N_j (w_j + 1)^{L - N} - z^L} = \\
  \sum_{w \in R_z} \frac{w^{j-i+1-m} (1 + w)^{y_j -j-M+m +1} e ^{t w}}{w + \rho},
\end{multline}
and this concludes the proof.
\end{proof}

\paragraph{ASEP on the Line}

\begin{lemma}
  In the limit $L \rightarrow \infty$ with other parameters fixed, the formula for $\mathbb{P}_Y(x_1 (t) \geq M)$ from Theorem~\ref{thm:one_pt_ASEP} is equivalent to the formula in \cite[Theorem 3.1]{Tracy-Widom08} for the ASEP on the line:
  \begin{equation}\label{eq:one_point_infinite}
    \prob_Y(x_1(t) \geq M) = p^{N(N - 1)/2} \dashint_C \frac{d\xi_1}{1 - \xi_1} \dotsi \dashint_C \frac{d\xi_N}{1 - \xi_N} \prod^N_{j = 1} \xi^{M - y_j - 1}_j e^{\epsilon(\xi_j)t} \prod_{1 \leq i < j \leq N} \frac{\xi_j - \xi_i}{p + q\xi_i\xi_j - \xi_i}.
  \end{equation}
\end{lemma}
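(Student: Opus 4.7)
The plan is to split the inner double sum in \eqref{eq:one_point_distribution} (at $m=1$) into the contribution of $\Lattice = (0,\dotsc,0)$ (which can only occur at $k=0$) and the contribution of all other $\Lattice$, then show that the latter vanishes in the limit $L \to \infty$ while the former collapses onto \eqref{eq:one_point_infinite} after evaluating the $z$-integral by a residue at the origin.

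First, I would observe that for any $k \in \{0,1,\dotsc,N-1\}$ and any $\Lattice \in \intZ^N(k)$ with $\Lattice \neq (0,\dotsc,0)$, we have $\max(\Lattice) \geq 1$: if $k \geq 1$ the constraint $\ell_1 + \dotsb + \ell_N = k$ forces this, and if $k=0$ the same conclusion follows from $\Lattice \neq (0,\dotsc,0)$. Next, I would invoke a bound of the same form as Lemma \ref{lem:ASEP_conv_tech} (the estimate that drove the transition-probability limit in Section \ref{sec:limits}) to control the $\xi$-integrand: for $L$ sufficiently large, the modulus of the $\xi$-integrand attached to $\Lattice$ is dominated by $1/[L\max(\Lattice)/2]!$, uniformly in $z$ on the small circle used in the $z$-integral. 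The one-point integrand differs from the transition-probability integrand only by the bounded prefactors $\prod_j(1-\xi_j)^{-1}$ and by the regrouped symmetric factor $\prod_{i<j}(\xi_j-\xi_i)/(p+q\xi_i\xi_j-\xi_i)$, so those bounds transfer with only cosmetic changes. Combined with Lemma \ref{lm:absolutely_convergent} to swap the $L \to \infty$ limit past the sums in $k$ and $\Lattice$, this yields
\begin{equation*}
\lim_{L\to\infty}\,\sum^{N-1}_{k=0}z^{-k}\sum_{\substack{\Lattice\in\intZ^N(k)\\ \Lattice\neq (0,\dotsc,0)}}(\text{associated contour integral}) \;=\; 0.
\end{equation*}

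It remains to treat the $\Lattice=(0,\dotsc,0)$ term, which by \eqref{eq:HK} has $D_{\Lattice}(\xi_1,\dotsc,\xi_N)=1$, so the surviving piece of \eqref{eq:one_point_distribution} is
\begin{equation*}
\frac{p^{N(N-1)/2}}{2\pi i}\oint_0 \frac{dz}{z}\, C_N(z)\cdot I_{\mathrm{TW}},
\end{equation*}
where $I_{\mathrm{TW}}$ is the $\xi$-integral appearing on the right of \eqref{eq:one_point_infinite} and is independent of $z$. Since $C_N(0)=\prod_{j=1}^{N-1}(1+0)=1$, the residue at $z=0$ pulls out $p^{N(N-1)/2}\,I_{\mathrm{TW}}$, which is exactly the right-hand side of \eqref{eq:one_point_infinite}.

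The main obstacle I expect is a clean uniform-in-$L$ bound on the $\xi$-integrand when $\Lattice$ contains negative components: the factor $\xi_j^{\ell_j L}$ in $D_{\Lattice}$ blows up as $L\to\infty$ for $\ell_j<0$ on the small contour. Handling this requires the same careful accounting as in the transition-probability limit, namely that after the integrations the net decay rate is governed only by $\max(\Lattice)\,L$, the contributions with $\ell_j<0$ being absorbed by the residue structure near $\xi_j=0$ and the regularity of the Bethe factors on the contour. Once Lemma \ref{lem:ASEP_conv_tech} is quoted in this stronger form (or verified by the same deformation of the $\xi_m$-contour to infinity as in Lemma \ref{lm:initial_qTAZRP}), the rest of the argument is routine bookkeeping.
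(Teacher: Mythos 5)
Your proposal is correct and follows essentially the same route as the paper: isolate the $\Lattice=(0,\dotsc,0)$ term (whose $z$-integral picks up the residue $C_N(0)=1$ and yields the Tracy--Widom formula), and kill every other term via the factorial bound of Lemma \ref{lem:ASEP_conv_tech} together with the counting estimate of Lemma \ref{lem:finiteness_of_large_max(L)} to justify exchanging the limit with the sum over $\Lattice$. The concern you raise about negative components of $\Lattice$ is exactly what Lemma \ref{lem:ASEP_conv_tech} (both parts, with $M=\max(\Lattice)L-m$) is designed to handle, so no additional argument is needed.
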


\begin{proof}
  Starting from \eqref{eq:one_point_distribution}, we write
  \begin{equation} \label{eq:conve_to_line_1pt}
    \prob_Y(x_1(t) \geq M) = \sum_{k=0}^{N-1} \sum_{\mathbb{L} \in \mathbb{Z}^N(k)} P_Y^{\mathbb{L}}(M)
  \end{equation}
  with
  \begin{equation}
    \begin{split}
      P_Y^{\mathbb{L}}(M) = {}& \frac{p^{N(N - 1)/2}}{2\pi i} \oint \frac{C_N(z) dz}{z^{k+1}}\dashint_C \frac{d\xi_1}{1 - \xi_1} \dotsi \dashint_C \frac{d\xi_N}{1 - \xi_N} \prod^N_{j = 1} \xi^{M - y_j - 1}_j e^{\epsilon(\xi_j)t}\\
      &\times  \prod_{1 \leq i < j \leq N} \frac{\xi_j - \xi_i}{p + q\xi_i\xi_j - \xi_i}  \prod^N_{j = 1} \left( \xi^L_j \prod^N_{k = 1} \left( \frac{p + q\xi_k \xi_j - \xi_k}{p + q\xi_k \xi_j - \xi_j} \right) \right)^{\ell_j}.
    \end{split}
  \end{equation}
  We consider the term $P_Y^{\mathbb{L}}(M)$ for two cases:
  \begin{enumerate*}[label=(\roman*)]
  \item \label{enu:case_zero}
    $\mathbb{L} = (0, \dots, 0)$, and 
  \item \label{enu:case_nonzero}
    $\mathbb{L} \neq (0, \dots, 0)$.  
  \end{enumerate*}

  In Case \ref{enu:case_zero} it is easy to see that $P^{\Lattice}_Y(M)$ is the right-hand side of \eqref{eq:one_point_infinite}.

  In Case \ref{enu:case_nonzero}, we show that
  \begin{equation}
    \lim_{L \rightarrow \infty} P_Y^{\mathbb{L}}(M) = 0
  \end{equation} 
  for $\mathbb{L} \neq (0, \dots, 0)$ and the convergence is exponentially fast as $\max(\Lattice) \to \infty$. 
  
  When $\mathbb{L} \neq (0, \dots, 0)$ and $\Lattice \in \intZ^N(k)$ with $0 \leq k \leq N-1$, we have $\max(\Lattice) \geq 1$. Then, by Lemma \ref{lem:ASEP_conv_tech}, we have 
  \begin{equation}
    |P_Y^{\mathbb{L}}(M)| \leq \frac{1}{[\max(\Lattice)L/2]!}
  \end{equation}
  if $L$ is large enough. Thus, not only do we have that $P_Y^{\mathbb{L}}(M) \rightarrow 0$ as $L \rightarrow \infty$, but we also have that the sum of $P_Y^{\mathbb{L}}(M)$ on the right-hand side of \eqref{eq:conve_to_line_1pt}, except for $\Lattice = (0, \dots , 0)$, converges to $0$ absolutely in the limit $L \rightarrow \infty$. Hence the proof is done with the help of Lemma \ref{lem:finiteness_of_large_max(L)}.
\end{proof}

\section{Bethe root formula for ASEP}\label{sec:root_formula}

We give an additional formula for the transitional probability function for the ASEP on a ring, which we call the \emph{Bethe root formula}. In particular, we take the formula from Theorem \ref{thm:main} and derive the Bethe root formula by residue calculations. It turns out that the residues that we consider are located at the solutions of a system of algebraic equations called the \emph{Bethe equations}, which arise in the Bethe ansatz. (See Sec.~6.2 of \cite{Sutherland04} or Ch.~4 of \cite{Gaudin14} for a review on the Bethe ansatz.) In the end, the Bethe root formula will be an integral on a single variable over a finite linear combination of solutions to algebraic functions.

The Bethe equations are the system of equations
\begin{equation}\label{eq:bth}
  \xi_j^L = z^{-1} \prod_{k=1}^N \frac{ p + q \xi_k \xi_j - \xi_j}{p + q \xi_k \xi_j - \xi_k}, \quad j= 1, \dots, N,
\end{equation}
with some complex parameter $z\in \mathbb{C}$. Assuming $\epsilon > 0$ is small enough, we take the solutions to the Bethe equations with each component $\xi_j$ bounded in absolute value by $\epsilon$ and $|z| = \epsilon^{-L}(1 - \sqrt{\epsilon})^L$, and call them \emph{Bethe roots}. Note that the Bethe equations have more solutions, and they are not the Bethe roots considered in our paper. The existence of Bethe roots is guaranteed by part \ref{enu:third_Rouche} of Lemma \ref{lm:bth_zero}, and we have that for a fixed $z$, there are $L^N$ Bethe roots. For some $\delta (\epsilon) \rightarrow 0$ as $\epsilon \rightarrow 0$, we index the roots as follows:
\begin{equation}
  (\xi^{(\beta_1)}_1, \dotsc, \xi^{(\beta_N)}_N), \quad \text{with} \quad \beta_i = 1, \dotsc, L, \quad \text{such that} \quad \lvert \xi^{(k)}_j - e^{2\pi i k/L} z^{-1/L} \rvert \leq \delta(\epsilon) \epsilon^{3/2}.
\end{equation}
We denote for $k, j = 1, \dotsc, N$
\begin{equation}
  \begin{split}
    \gamma_{kj}(\xi_1, \dotsc, \xi_N; z) = {}& \frac{\partial}{\partial \xi_k} \left( 1 - z^{-1} \xi^{-L}_j \prod^N_{l = 1} \frac{p + q \xi_l \xi_j - \xi_j}{p + q \xi_l \xi_j - \xi_l} \right) \\
    = {}& -\frac{p + q \xi^2_j - \xi_j}{(p + q \xi_k \xi_j - \xi_j)(p + q \xi_k \xi_j - \xi_k)} \\
    & + \delta_{kj} \left( L \xi^{-1}_j + \sum^N_{l = 1} \frac{p + q \xi^2_l - \xi_l}{(p + q \xi_l \xi_j - \xi_j)(p + q \xi_l \xi_j - \xi_l)} \right).
  \end{split}
\end{equation}

\begin{rem}\label{r:completeness}
The Bethe equations \eqref{eq:bth} we obtain recover the usual Bethe equations (see equation (3.7) in \cite{Gwa-Spohn92}) when we set $z=1$. Our Bethe equations contain an additional parameter of freedom (i.e.~$z \in \mathbb{C}$) since we introduced a winding number to our model, making the configuration space $\mathcal{X}_{N}(L)$ infinite (see \eqref{eq:ASEP_Weyl_chanmber}). Moreover, our Bethe equations only appear as a consequence of the residue analysis, carried out below, of our transition probability function. In other periodic models without winding number (e.g.~\cite{Gwa-Spohn92}), the Bethe equations appear as a consistency/periodicity condition for the eigenvectors of the corresponding Markov operator. Consequently, the results in this section (or the rest of the paper) don't have any direct implications about the completeness of the Bethe ansatz for the periodic ASEP model (i.e.~diagonalizability of the Markov generator via the Bethe ansatz). The completeness questions is out of the scope of the current work since it requires more delicate results such as classification and linear independence of non-trivial solutions of the Bethe equations.
\end{rem}

The following proposition gives the Bethe root formula for the transition probability function for the ASEP on a ring. 

\begin{proposition}\label{prp:root_formula}
  Let $\mathbb{P}_Y (X;t)$ be the transition probability for the ASEP with $N$ particles on a ring of length $L$ with initial configuration $Y =(y_1, \dots, y_N) \in \confs_N(L)$ at time $t =0$ and configuration $X =(x_1, \dots, x_N) \in \confs_N(L)$ at time $t \in \mathbb{R}_{\geq 0}$. Then, 
  \begin{equation} \label{eq:Gaudin-like}
    \prob_Y(X; t) = \oint \frac{dz}{z} \sum_{\substack{\beta_i = 1, \dotsc, L, \, i = 1, \dotsc, N \\ (\xi_1, \dotsc, \xi_N) = (\xi^{(\beta_1)}_1, \dotsc, \xi^{(\beta_N)}_N)}} \sum_{\sigma \in S_N} A_{\sigma}(\xi_1, \dots, \xi_N)\prod_{j=1}^N \frac{\xi_j^{x_j - y_{\sigma (j) } -1} e^{\epsilon(\xi_j)t}}{\det(\gamma_{kj}(\xi_1, \dotsc, \xi_N; z))^N_{k, j = 1}},
  \end{equation}
with $\epsilon > 0$ a small enough constant and $z \in \mathbb{C}$ on the circle $|z| = \epsilon^{-L}(1 - \sqrt{\epsilon})^L$.
\end{proposition}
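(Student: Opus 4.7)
The plan is to derive the Bethe root formula from Theorem~\ref{thm:main} in three conceptual steps: trade the sum $\sum_{\Lattice\in\intZ^N(0)}$ for an auxiliary contour integral in a new variable $z$; sum the resulting geometric series to produce the factor $\prod_j(1-zf_j)^{-1}$ whose zero locus cuts out the Bethe variety~\eqref{eq:bth}; and apply the multivariate residue theorem to reduce the $N$-fold $\xi$-integral to a sum over Bethe roots.

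More precisely, using the identity $\id_{\ell_1+\cdots+\ell_N=0}=\oint\frac{dz}{2\pi iz}\,z^{\ell_1+\cdots+\ell_N}$ (exactly as in Section~\ref{sec:u_Y(X0)_and_I}) and writing $f_j=\xi_j^L\prod_k\frac{p+q\xi_k\xi_j-\xi_k}{p+q\xi_k\xi_j-\xi_j}$, I would first rewrite
\[
\prob_Y(X;t)=\oint\frac{dz}{2\pi iz}\sum_{\sigma\in S_N}\dashint d\xi_1\cdots\dashint d\xi_N\,A_\sigma\prod_{j=1}^N\xi_{\sigma(j)}^{x_j-y_{\sigma(j)}-1}e^{\epsilon(\xi_j)t}\sum_{\Lattice}\prod_j(zf_j)^{\ell_j}.
\]
The sum over $\Lattice$ would be given precise meaning either by a cutoff argument (restrict to $\min(\Lattice)\geq -K$ and use vanishing/smallness of $\Lambda^{\Lattice}_Y$ as in Section~\ref{sec:limits}), or by choosing contours so that $|zf_j|<1$ uniformly; on the prescribed contour $|z|=\epsilon^{-L}(1-\sqrt{\epsilon})^{L}$ with $|\xi_j|=r$ sufficiently small, this holds. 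The collapsed series is then $\prod_j(1-zf_j)^{-1}$, times a cutoff factor $\prod_j(zf_j)^{-K}$ that evaluates to $1$ at every Bethe root (where each $zf_j=1$).

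Next, I would deform each $\xi_j$-contour outward from $|\xi_j|=r$ to a slightly larger radius that encloses the Bethe roots $\xi^{(\beta_j)}_j$ (which, by the Rouch\'e-type part of Lemma~\ref{lm:bth_zero}, cluster in a thin annulus near $|\xi_j|\approx\epsilon(1-\sqrt{\epsilon})^{-1}$), while staying well inside $|\xi_j|\approx p$ so as not to cross the rational singularities $p+q\xi_k\xi_j-\xi_i=0$. The deformation converts the integral into a sum of multivariate residues at the common zeros of $\{1-zf_j\}_{j=1}^{N}$. Using the elementary identity $1-zf_j=-zf_j(1-z^{-1}f_j^{-1})$ together with $zf_j=1$ at a Bethe root, one computes $\partial(1-zf_j)/\partial\xi_k\big|_{\text{Bethe}}=-\gamma_{kj}\big|_{\text{Bethe}}$, so the Jacobian of $\{1-zf_j\}$ at a Bethe root equals $(-1)^N\det(\gamma_{kj})$. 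The standard multivariate residue formula then produces~\eqref{eq:Gaudin-like}, once signs are absorbed and one verifies that the $L^N$ Bethe roots indexed by $(\beta_1,\dots,\beta_N)\in\{1,\dots,L\}^N$ exhaust the relevant residues inside the deformed contour.

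The main obstacle will be the convergence and deformation bookkeeping. First, the geometric series does not collapse uniformly on a single $\xi$-contour in the ASEP case (unlike the TASEP case of Section~\ref{sec:limits}, where the variables decouple), so a delicate cutoff or limiting argument is needed to interchange summation and integration rigorously. Second, the $\xi_j$-deformations must account for every residue crossed: in particular, one must show that residues at $\xi_j=0$ (where $e^{\epsilon(\xi_j)t}$ contributes an essential singularity) are canceled by the auxiliary cutoff contributions $\prod_j(zf_j)^{-K}$, and that the rational poles at $\xi_j=p/(1-q\xi_k)$ remain outside the deformed contour. Finally, I would need to justify simplicity of the Bethe roots so that $\det(\gamma_{kj})\neq 0$ and the multivariate residue formula applies without modification; this should follow from an implicit-function construction of $\xi^{(\beta_j)}_j$ as a perturbation of the leading order $e^{2\pi i\beta_j/L}z^{-1/L}$, again using Lemma~\ref{lm:bth_zero}.
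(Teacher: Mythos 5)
Your proposal is correct and follows essentially the same route as the paper: the indicator-function $z$-integral together with a one-sided cutoff in $K$ collapses the $\Lattice$-sum into a product of geometric-series denominators (the paper's Lemma \ref{lem:sum}), Rouch\'e's theorem locates the $L^N$ Bethe roots (Lemma \ref{lm:bth_zero}), and linearizing the denominators at a simple common zero produces the Gaudin determinant $\det(\gamma_{kj})$ with the signs you computed. The only substantive difference is organizational: you cut off from below ($\min(\Lattice)\geq -K$) and deform the $\xi_j$-contours outward so that the non-Bethe-root contribution becomes an outer contour integral that vanishes as $K\to\infty$ (in which case the residues at $\xi_j=0$ that you worry about are never isolated at all), whereas the paper cuts off from above ($\max(\Lattice)\leq K$), keeps the contours fixed, decomposes them into boxes around the roots plus a smaller circle $C_0$, and uses the resulting factor $\xi_j^{LK}$ to show the residue at the origin decays factorially in $K$ --- a mirror image of the same bookkeeping.
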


\begin{rem}
The determinant in the integrand of \eqref{eq:Gaudin-like} is essentially the Gaudin determinant up to a scalar factor \cite{Gaudin-McCoy-Wu81,Korepin82}.
\end{rem}

We give a proof to Proposition \ref{prp:root_formula} at the end of this section. First, we establish preliminary results. The starting point to obtain the Bethe root formula is the transition probability function from Theorem \ref{thm:main}; it consist of an infinite sum of integrals over the lattice $\mathbb{Z}^N(0)$. In the computations below, we proceed by first performing the sum over $\mathbb{Z}^N(0)$ in Lemma \ref{lem:sum}, and then, evaluating the residues of the resulting integral. Throughout this section, we fix the contour $C = \{ \lvert \xi \rvert = \epsilon \}$.

\begin{lemma} \label{lem:sum}
  Let $\mathbb{P}_Y (X;t)$ be the transition probability for the ASEP with $N$ particles on a ring of length $L$ with initial configuration $Y =(y_1, \dots, y_N) \in \confs_N(L)$ at time $t =0$ and configuration $X =(x_1, \dots, x_N) \in \confs_N(L)$ at time $t \in \mathbb{R}_{\geq 0}$. Then,
  \begin{equation}\label{eq:lim_prob}
    \prob_Y(X; t) = \lim_{K \to +\infty} \prob^{(K)}_Y(X; t)
  \end{equation}
  for 
  \begin{equation}\label{eq:partial_prob}
    \prob^{(K)}_Y(X; t) = \oint \frac{dz}{z} U^{(K)}_Y(X; t; z)
  \end{equation}
  with the contour of $z$, a large positively oriented circle centered at zero of radius $(1 - \sqrt{\epsilon})^{-L} \epsilon^{-L}$ for some small positive $\epsilon \ll 1$, and 
  \begin{equation}\label{eq:UK}
    U^{(K)}_Y(X; t; z) = \dashint_C d\xi_1 \dotsi \dashint_C d\xi_N \sum_{\sigma \in S_N}A_{\sigma} \prod^N_{j = 1} \frac{z^K \xi^{K L + x_j -y_{\sigma(j)} -1}_j e^{\epsilon(\xi_j)t}} {1 - z^{-1} \xi^{-L}_j \prod^N_{k = 1} \frac{p + q \xi_k \xi_j - \xi_j}{p + q \xi_k \xi_j - \xi_k}}
  \end{equation}
  with the contours $C$, positively oriented circles centered around the origin of radius $\epsilon$ for the same small positive $\epsilon \ll 1$.
\end{lemma}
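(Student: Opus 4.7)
The strategy is to recognise $U^{(K)}_Y(X;t;z)$, after expanding its denominators as geometric series, as a generating function in $z$ whose $z^0$-coefficient coincides with the truncation of $u_Y(X;t)$ to those $\Lattice\in\intZ^N(0)$ satisfying $\ell_j\le K$ for every $j$. Taking $K\to\infty$ then recovers $\prob_Y(X;t)$ via the absolute convergence from Lemma \ref{lm:absolutely_convergent}.

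Concretely, set
$$Q_j(\xi_1,\dotsc,\xi_N) := \prod_{k=1}^N \frac{p+q\xi_k\xi_j-\xi_j}{p+q\xi_k\xi_j-\xi_k},$$
so that $D_\Lattice(\xi_1,\dotsc,\xi_N) = \prod_j (\xi_j^L Q_j^{-1})^{\ell_j}$. On the prescribed contours $|\xi_j|=\epsilon$ and $|z|=\epsilon^{-L}(1-\sqrt{\epsilon})^{-L}$ we have $|z^{-1}\xi_j^{-L}|=(1-\sqrt{\epsilon})^{L}$, while $|Q_j|$ is close to $1$ for $\epsilon$ small, so the geometric series
$$\frac{1}{1-z^{-1}\xi_j^{-L}Q_j} = \sum_{m_j\ge 0}\bigl(z^{-1}\xi_j^{-L}Q_j\bigr)^{m_j}$$
converges uniformly on the contours. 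Multiplying by $z^K\xi_j^{KL}$ and reindexing $\ell_j := K-m_j$, the crucial point is the cancellation
$$\prod_{j=1}^N Q_j(\xi_1,\dotsc,\xi_N) \equiv 1,$$
which holds because the factor for the ordered pair $(j,k)$ is the reciprocal of the factor for $(k,j)$. Hence $\prod_j Q_j^{K-\ell_j}=\prod_j Q_j^{-\ell_j}$ and the $K$-dependence collapses into $\prod_j z^{\ell_j}\xi_j^{\ell_j L}Q_j^{-\ell_j} = z^{\sum_j\ell_j}D_\Lattice(\xi_1,\dotsc,\xi_N)$.

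Substituting back into \eqref{eq:UK} and interchanging the (uniformly convergent) sum with the $\xi$-integrals, one arrives at
$$U^{(K)}_Y(X;t;z) = \sum_{\substack{\Lattice\in\intZ^N\\\ell_j\le K\ \forall j}} z^{\ell_1+\dotsb+\ell_N}\,\widetilde{\Lambda}_Y^\Lattice(X;t),$$
where $\widetilde{\Lambda}_Y^\Lattice$ is precisely the integrand of $\sum_{\sigma}\Lambda_Y^\Lattice(X;t;\sigma)$ from \eqref{eq:int_Lambda_ASEP}, up to a relabelling of the $\xi$-integration variables inside each $\sigma$-term that converts $\xi_j^{x_j - y_{\sigma(j)}-1}$ into $\xi_{\sigma(j)}^{x_j - y_{\sigma(j)}-1}$. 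Integrating $\oint \frac{dz}{z}$ around $|z|=\epsilon^{-L}(1-\sqrt{\epsilon})^{-L}$ picks out only the contribution with $\ell_1+\dotsb+\ell_N=0$, giving
$$\prob^{(K)}_Y(X;t) = \sum_{\substack{\Lattice\in\intZ^N(0)\\\ell_j\le K\ \forall j}} u_Y^\Lattice(X;t).$$
As $K\to\infty$ the index set exhausts $\intZ^N(0)$, and by Lemma \ref{lm:absolutely_convergent} the tail vanishes, so the limit equals $\sum_{\Lattice\in\intZ^N(0)} u_Y^\Lattice(X;t) = \prob_Y(X;t)$ by Theorem \ref{thm:main}.

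The main obstacle will be the uniform estimate $|Q_j|\le (1-\sqrt{\epsilon})^{-L/2}$ (or similar) on the $\xi$-contours, which is exactly what makes $|z^{-1}\xi_j^{-L}Q_j|$ stay strictly below $1$ and what legitimises term-by-term integration; the specific radius $\epsilon^{-L}(1-\sqrt{\epsilon})^{-L}$ has evidently been chosen to leave a $(1-\sqrt{\epsilon})^{L/2}$ safety margin. This estimate, together with the careful bookkeeping required to match the $\xi_j$-versus-$\xi_{\sigma(j)}$ conventions of \eqref{eq:UK} and \eqref{eq:int_Lambda_ASEP}, forms the technical core of the argument; the residue extraction in $z$ and the $K\to\infty$ passage by dominated convergence on an already absolutely convergent series are then routine.
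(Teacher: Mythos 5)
Your proposal follows essentially the same route as the paper's proof: expand each denominator of \eqref{eq:UK} as a geometric series in $z^{-1}\xi_j^{-L}\prod_k\frac{p+q\xi_k\xi_j-\xi_j}{p+q\xi_k\xi_j-\xi_k}$ (valid on the stated contours), interchange the absolutely convergent sum with the integrals, let the $\oint dz/z$ extract the $\ell_1+\dotsb+\ell_N=0$ terms so that $\prob^{(K)}_Y(X;t)=\sum_{\Lattice\in\intZ^N(0),\ \max(\Lattice)\le K}u^{\Lattice}_Y(X;t)$, and pass to the limit using Theorem \ref{thm:main} and the absolute convergence of Lemma \ref{lm:absolutely_convergent}. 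Your explicit observation that $\prod_j Q_j\equiv 1$ (which makes the $K$-dependence collapse) and your flagging of the $\xi_j$-versus-$\xi_{\sigma(j)}$ relabelling are points the paper handles only implicitly, but the argument is the same one.
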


\begin{proof}
  Take the series expansion
  \begin{equation}
    \prod^N_{j = 1} \frac{z^K \xi^{LK}_j}{1 - z^{-1} \xi^{-L}_j \prod^N_{k = 1} \frac{p + q \xi_k \xi_j - \xi_j}{p + q \xi_k \xi_j - \xi_k}} = \sum_{\ell_1 = - \infty}^K \cdots  \sum_{\ell_N =- \infty}^K z^{\ell_1 + \dotsb + \ell_N} D_{(\ell_1, \dots, \ell_N)} (\xi_1, \dotsc, \xi_N)
  \end{equation}
  for $|z| = (1 - \sqrt{\epsilon})^{-L} \epsilon^{-L}$ and $|\xi_i| = \epsilon$. Note that this sum is absolutely convergent since by Lemma \ref{lem:ASEP_conv_tech} 
  \begin{equation}
    \left| z^{\ell_1 + \dotsb + \ell_N} D_{(\ell_1, \dots, \ell_N)} (\xi_1, \dotsc, \xi_N) \right| \leq c(\epsilon)^{\ell_1 + \cdots + \ell_N}
  \end{equation}
  for some positive constant $c(\epsilon)$ that is less than one for $\epsilon$ small enough. We use this identity to write 
  \begin{equation}\label{eq:u_sum}
    U^{(K)}_Y(X; t; z) = \dashint_C d\xi_1 \dotsi \dashint_C d\xi_N \sum_{\Lattice \in \mathbb{Z}^N, \,  \max(\Lattice) \leq K} \sum_{\sigma \in S_N} z^{\Lattice} A_{\sigma}  D_{\Lattice} (\xi_1, \dotsc, \xi_N) \prod^N_{j = 1}  \xi^{-x_j - y_{\sigma(j)-1}}_j e^{\epsilon(\xi_j)t}
  \end{equation}
  with the shorthand $z^{\Lattice}= z^{\ell_1 + \cdots + \ell_N}$ for $\Lattice = (\ell_1, \dots, \ell_N)$. Then, we use \eqref{eq:u_sum} in \eqref{eq:partial_prob} and switch the order of integration in \eqref{eq:partial_prob} so that we take the integral with respect to $z$ before any other integral or summation. Note that the integral with respect to $z$ will vanish unless $\ell_1 + \cdots + \ell_N = 0$ for $\Lattice = (\ell_1, \dots, \ell_N)$. Thus, we have that 
  \begin{equation}
    \prob^{(K)}_Y(X; t) = \sum_{\Lattice \in \mathbb{Z}^N(0), \, \max(\Lattice) \leq K } u_{Y}^{\Lattice}(X; t)
  \end{equation}
  with $u_{Y}^{\Lattice}(X; t)$ given by \eqref{eq:transition_probability}. Therefore, by Theorem \ref{thm:main}, the limit \eqref{eq:lim_prob} follows.
\end{proof}

We obtain the probability formula in Proposition \ref{prp:root_formula} by simplifying the formula in Lemma \ref{lem:sum}. In particular, we compute the nested contour integrals for the function $U^{(K)}_Y(X; t; z)$, given in \eqref{eq:UK}, by residue calculation. So, in the next lemma, we find the poles of the integrand in $U^{(K)}_Y(X; t; z)$ that are inside the contours $C$.

\begin{lemma} \label{lm:bth_zero}
  Let $\epsilon > 0$ be a small enough constant, and $j \in \{ 1, \dots, N\}$. Assume  $|\xi_k| \leq \epsilon $ for all $k \in \{ 1, \dots, N\} \setminus \{ j\}$ and $|z| = (1 - \sqrt{\epsilon})^{-L} \epsilon^{-L} $ for some small enough $\epsilon >0$. Then:
  \begin{enumerate}
  \item
    if $j \neq i$, the equation
    \begin{equation} \label{eq:equations_for_Bethe_roots1}
      1 - z^{-1} \xi^{-L}_i \prod^N_{k = 1} \frac{p + q \xi_k \xi_i - \xi_j}{p + q \xi_k \xi_i - \xi_k} = 0
    \end{equation}
    has no solution with respect to $\xi_j$ so that $|\xi_j| \leq \epsilon$;
  \item
    if $j \neq i$, the equation
    \begin{equation}
      L \xi^{-1}_i + \sum_{k \neq i} \frac{p + q \xi^2_k - \xi_k}{(p + q \xi_k \xi_i - \xi_j)(p + q \xi_k \xi_i - \xi_k)} = 0
    \end{equation}
    has no solution with respect to $\xi_j$ with $|\xi_j| \leq \epsilon$; and
  \item \label{enu:third_Rouche}
    the equation
    \begin{equation} \label{eq:equations_for_Bethe_roots2}
      1 - z^{-1} \xi^{-L}_j \prod^N_{k = 1} \frac{p + q \xi_k \xi_j - \xi_j}{p + q \xi_k \xi_j - \xi_k} = 0,
    \end{equation}
    has $L$ solutions with respect to $\xi_j$ with $|\xi_j| \leq \epsilon$ so that the solutions $\xi^{(k)}_j$ ($k = 1, \dotsc, L$) satisfy
    \begin{equation} \label{eq:approx_Bethe_roots}
      \lvert \xi^{(k)}_j - e^{2\pi i k/L} z^{-1/L} \rvert \leq \delta(\epsilon) \epsilon^{3/2},
    \end{equation}
    where $z^{-1/L}$ takes the principal branch with $\arg(z) \in (-\pi, \pi]$, and $\delta(\epsilon) \to 0$ as $\epsilon \to 0$.
  \end{enumerate}
\end{lemma}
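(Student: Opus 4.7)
All three parts reduce to modulus estimates on the disk $\{|\xi_j|\le\epsilon\}$ exploiting the calibration $|z|=(1-\sqrt\epsilon)^{-L}\epsilon^{-L}$, which places $|z^{-1/L}|=(1-\sqrt\epsilon)\epsilon$ inside the disk with a buffer of size $\sim\sqrt\epsilon\cdot\epsilon$. Working on the contour $|\xi_k|=\epsilon$ for $k\neq j$, the plan is to treat each part by direct bounds, with Rouch\'e's theorem supplying the counting in part \ref{enu:third_Rouche}.

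For parts (1) and (2), the strategy is domination. In (1), each factor of the product $\prod_{k=1}^N(p+q\xi_k\xi_i-\xi_j)/(p+q\xi_k\xi_i-\xi_k)$ equals $1+O(\epsilon/p)$ since both numerator and denominator differ from $p$ by $O(\epsilon)$; hence the product lies in $[(1-C\epsilon)^N,(1+C\epsilon)^N]$ for a universal $C$. Combined with $|z^{-1}\xi_i^{-L}|=(1-\sqrt\epsilon)^L$, the total modulus is bounded by $(1-\sqrt\epsilon)^L(1+O(N\epsilon/p))<1$ for small $\epsilon$, so the equation cannot hold. In (2), the term $L\xi_i^{-1}$ contributes a modulus of at least $L/\epsilon$, while each of the $N-1$ summands in the remainder is bounded by $O(1/p)$; the resulting sum $O(N/p)$ cannot cancel $L/\epsilon$ once $\epsilon<Lp/(2N)$, so the expression is nonzero.

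For part \ref{enu:third_Rouche}, I intend to clear denominators and rewrite the Bethe equation as the polynomial identity
\begin{equation*}
  h(\xi_j):=\xi_j^L\prod_{k=1}^N(p+q\xi_k\xi_j-\xi_k)-z^{-1}\prod_{k=1}^N(p+q\xi_k\xi_j-\xi_j)=0,
\end{equation*}
and compare it on $|\xi_j|=\epsilon$ with the reference $f(\xi_j):=\xi_j^L\prod_k(p+q\xi_k\xi_j-\xi_k)$. The estimates $|h-f|\le(1-\sqrt\epsilon)^L\epsilon^L(p+2\epsilon)^N$ and $|f|\ge\epsilon^L(p-2\epsilon)^N$ give $|h-f|<|f|$ provided $(1-\sqrt\epsilon)^L((p+2\epsilon)/(p-2\epsilon))^N<1$, which holds for small $\epsilon$ by the balance $L\sqrt\epsilon\gg N\epsilon/p$. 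Since $f$ has exactly $L$ zeros in the open disk $|\xi_j|<\epsilon$ (the $L$-fold root at $0$; the other $N$ zeros $\xi_j=(\xi_k-p)/(q\xi_k)$ have moduli $\gtrsim p/(q\epsilon)\gg\epsilon$), Rouch\'e's theorem yields the same count for $h$. Their locations then follow from the fixed-point form $\xi_j^L=z^{-1}(1+O(\epsilon))$: extracting the $L$ branches of the principal $L$-th root of $z^{-1}$ produces $\xi_j^{(k)}=e^{2\pi ik/L}z^{-1/L}(1+O(\epsilon/L))$, whence $|\xi_j^{(k)}-e^{2\pi ik/L}z^{-1/L}|=O(\epsilon^2/L)=o(\epsilon^{3/2})$, as required.

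The main obstacle is the quantitative balance $L\sqrt\epsilon\gtrsim N\epsilon/p$ which drives all three estimates and pins down the peculiar radius $(1-\sqrt\epsilon)^{-L}\epsilon^{-L}$ for $|z|$: the plain choice $|z|=\epsilon^{-L}$ would place $|z^{-1/L}|$ exactly on the contour $|\xi_j|=\epsilon$ and destroy both the Rouch\'e inequality in part \ref{enu:third_Rouche} and the strict $<1$ in part (1), while the correction factor $(1-\sqrt\epsilon)^L$ provides the minimal uniform buffer that simultaneously makes all three parts go through and still delivers the sharp approximation bound $\delta(\epsilon)\epsilon^{3/2}$.
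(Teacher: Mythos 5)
Your treatment of part \ref{enu:third_Rouche} follows the paper's route for the count: both arguments are Rouch\'e comparisons against (a multiple of) $\xi_j^L$ on the circle $\lvert \xi_j\rvert=\epsilon$ — the paper keeps the rational form, you clear denominators first, and you correctly discard the extra zeros of your reference function at $\xi_j=(\xi_k-p)/(q\xi_k)$ as lying far outside the disk — and the inequality $(1-\sqrt\epsilon)^L\left((p+2\epsilon)/(p-2\epsilon)\right)^N<1$ is the right quantitative content. Where you diverge is the localization step, and there is a gap there. From $\xi_j^L=z^{-1}(1+O(\epsilon))$ you conclude that every root lies within $O(\epsilon^2/L)$ of \emph{some} point $e^{2\pi ik/L}z^{-1/L}$, and that there are $L$ roots in total; but this does not yet give the bijective labeling asserted in \eqref{eq:approx_Bethe_roots} — a priori two roots could cluster near the same $L$-th root of $z^{-1}$ while another attracts none. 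The paper closes exactly this by a second Rouch\'e application, comparing the full function with $\xi_j^L-z^{-1}$ on each disk $R_\ell$ of radius $\delta\epsilon^{3/2}$ centered at $e^{2\pi i\ell/L}z^{-1/L}$, which yields one root per disk; you need this (or a contraction/homotopy argument) because the one-root-per-sector property is precisely what the contour decomposition in the proof of Proposition \ref{prp:root_formula} uses.

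For parts (1) and (2) the paper gives no details (``the reader may check''), so your direct domination arguments are a legitimate substitute; part (2) is fine, since $L/\epsilon$ swamps the $O(N/p)$ remainder. But your part (1) asserts $\lvert z^{-1}\xi_i^{-L}\rvert=(1-\sqrt\epsilon)^L$, which uses $\lvert\xi_i\rvert=\epsilon$; the stated hypothesis only gives $\lvert\xi_i\rvert\le\epsilon$, so this quantity is bounded \emph{below} by $(1-\sqrt\epsilon)^L$ and can exceed $1$. When $\lvert\xi_i\rvert$ is well below $(1-\sqrt\epsilon)\epsilon$ the conclusion still follows from the reverse inequality (the modulus is then $>1$ uniformly in $\xi_j$), but your argument as written does not cover that regime, nor the band $\lvert\xi_i\rvert\approx(1-\sqrt\epsilon)\epsilon$ where the modulus is near $1$ and domination in either direction fails. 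You should split into the two regimes and make explicit what restriction on $\lvert\xi_i\rvert$ (e.g.\ that it lies on one of the contours used in Section \ref{sec:root_formula}, bounded away from $(1-\sqrt\epsilon)\epsilon$) is actually needed for the statement to hold.
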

\begin{proof}
  All three statements are based on Rouch\'{e}'s theorem. It states that two holomorphic functions $f$ and $g$ (on a closed region $R$) have the same number of zeros in the interior of $R$ if $|f(z) - g(z)| <|f(z)| + |g(z)|$ for all $z \in \partial R$. Since all the statements follow similar arguments, we only discuss part \ref{enu:third_Rouche} in more detail and the reader may check the other cases.

  First, we show that \eqref{eq:equations_for_Bethe_roots2} has exactly $L$ solutions with respect to $\xi_j$ so that $|\xi_j| \leq \epsilon$. Take $f(\xi_j) = \xi_j^L$ and $g(\xi_j) =  \xi_j^L- z^{-1} \prod^N_{k = 1} \frac{p + q \xi_k \xi_j - \xi_j}{p + q \xi_k \xi_j - \xi_k}$. Then,
  \begin{equation}
    \left| f(\xi_j) - g(\xi_j) \right| = \left| z^{-1} \prod^N_{k = 1} \frac{p + q \xi_k \xi_j - \xi_j}{p + q \xi_k \xi_j - \xi_k} \right| = \epsilon^L (1 +\sqrt{\epsilon})^L (1 + \mathcal{O}(\epsilon) ),
  \end{equation}
  and $\left| f(\xi_j) \right| = \epsilon^L$. By Rouch\'{e} theorem, this means that $f(\xi_j)$ and $g(\xi_j)$ have the same number of zeros inside of the contour $C$ for $\xi_j$. Therefore, \eqref{eq:equations_for_Bethe_roots2} with respect to $\xi_j$ has exactly $L$ solutions so that $|\xi_j| \leq \epsilon$.
  
  Now, we find $L$ distinct solutions to \eqref{eq:equations_for_Bethe_roots2} with respect to $\xi_j$ so that $|\xi_j| \leq \epsilon$. For any $\delta > 0$, let $R_\ell$ be the region enclosed by a circle of radius $\delta \epsilon^{3/2}$ and centered at $e^{2 \pi i \ell /L} z^{-1/L}$ for $\ell =0, \dots, L-1$. Note that $|\xi_j| \leq \epsilon$ for all $\xi_j \in R_{\ell}$. We apply the Rouch\'{e} theorem again. Take $f(\xi_j) = \xi_N^{L} - z^{-1}$ and $g(\xi_j) =  \xi_j^L- z^{-1} \prod^N_{k = 1} \frac{p + q \xi_k \xi_j - \xi_j}{p + q \xi_k \xi_j - \xi_k}$.Then, by a series expansion, we have that
  \begin{equation}
    \left| f(\xi_j) - g(\xi_j) \right| =|z^{-1}| \left|1 -  \prod^N_{k = 1} \frac{p + q \xi_k \xi_j - \xi_j}{p + q \xi_k \xi_j - \xi_k} \right| < c_1(N,L) \epsilon^{L+1} (1- \sqrt{\epsilon})^{L}
  \end{equation}
  for some constant $c_1(N, L)$ independent of $\epsilon$ if $\epsilon$ is small enough. Additionally, 
  \begin{equation}
    |f(\xi_j)| = \prod_{\ell =0}^{L-1} \left|\xi_j - e^{2 \pi i \ell /L} z^{-1/L} \right| \geq c_2(N,L) \delta \epsilon^{L+1/2} (1- \sqrt{\epsilon})^{L-1}
  \end{equation}
  for some constant $c_2(N, L)$ independent of $\delta$ and $\epsilon$ if $\epsilon$ is small enough. Then, by Rouch\'{e} theorem, $f(\xi_j)$ and $g(\xi_j)$ have the same number of roots inside the region $R_\ell$ for $\ell =0, 1, \dots, L-1$, given that $\epsilon$ is small enough. Therefore, the solutions to \eqref{eq:equations_for_Bethe_roots2} with respect to $\xi_j$ so that $|\xi_j| \leq \epsilon$ are approximated by \eqref{eq:approx_Bethe_roots}.
\end{proof}

\begin{proof}[Proof of Proposition \ref{prp:root_formula}]
  We decompose the contour $C$ as $C_1 + C_2 + \dotsb + C_L + C'_1 + C'_2 + \dotsb + C'_L + C_0$, with $C_0 = \{ \lvert \xi \rvert = (1 - 2\sqrt{\epsilon}) \epsilon \}$ a circular contour, $C_k$ a contour with four sides:
  \begin{enumerate}
  \item 
    the arc on $C$ from $\epsilon e^{\arg(z) + 2\pi i k/L - \sqrt{\epsilon}}$ to $\epsilon e^{\arg(z) + 2\pi i k/L + \sqrt{\epsilon}}$,
  \item
    the line segment from $\epsilon e^{\arg(z) + 2\pi i k/L + \sqrt{\epsilon}}$ to $(1 - 2\sqrt{\epsilon}) \epsilon e^{\arg(z) + 2\pi i k/L + \sqrt{\epsilon}}$,
  \item
    the arc on $C_0$ from $(1 - 2\sqrt{\epsilon}) \epsilon e^{\arg(z) + 2\pi i k/L - \sqrt{\epsilon}}$ to $(1 - 2\sqrt{\epsilon}) \epsilon e^{\arg(z) + 2\pi i k/L - \sqrt{\epsilon}}$,
  \item
    and the line segment from $(1 - 2\sqrt{\epsilon}) \epsilon e^{\arg(z) + 2\pi i k/L - \sqrt{\epsilon}}$ to $\epsilon e^{\arg(z) + 2\pi i k/L - \sqrt{\epsilon}}$.
  \end{enumerate}
  and $C'_k$ also a contour with four sides:
  \begin{enumerate}
  \item 
    the arc on $C$ from $\epsilon e^{\arg(z) + 2\pi i k/L + \sqrt{\epsilon}}$ to $\epsilon e^{\arg(z) + 2\pi i (k + 1)/L - \sqrt{\epsilon}}$,
  \item
    the line segment from $\epsilon e^{\arg(z) + 2\pi i (k + 1)/L - \sqrt{\epsilon}}$ to $(1 - 2\sqrt{\epsilon}) \epsilon e^{\arg(z) + 2\pi i (k + 1)/L - \sqrt{\epsilon}}$,
  \item
    the arc on $C_0$ from $(1 - 2\sqrt{\epsilon}) \epsilon e^{\arg(z) + 2\pi i (k + 1)/L - \sqrt{\epsilon}}$ to $(1 - 2\sqrt{\epsilon}) \epsilon e^{\arg(z) + 2\pi i k/L + \sqrt{\epsilon}}$,
  \item
    and the line segment from $(1 - 2\sqrt{\epsilon}) \epsilon e^{\arg(z) + 2\pi i k/L + \sqrt{\epsilon}}$ to $\epsilon e^{\arg(z) + 2\pi i k/L + \sqrt{\epsilon}}$.
  \end{enumerate}
  See Figure \ref{fig:C_Ctilde} for an illustration.
  
  \begin{figure}[htb]
    \centering
    \includegraphics{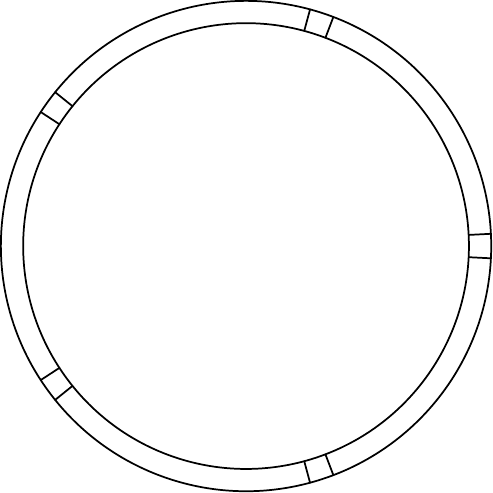}
    \caption{Schematic shape for $C_i$ and $\tilde{C}_i$ for $L = 5$ and $\arg z = 0$.}
    \label{fig:C_Ctilde}
  \end{figure}
  We assume all these contours have a positive orientation. Also, we write $\tilde{C} = C_1 + \dotsb + C_L + C'_1 + \dotsb + C'_L$, which is an annulus.  Moreover, given an array $A = (\alpha_1, \dotsc, \alpha_N)$ whose components are $\alpha_i = 0$ or $1$, we introduce integrals with the different type of contours
  \begin{equation}
    U^{(K)}_Y(X; t; z; A) = \dashint_{C^{(1)}} d\xi_1 \dotsi \dashint_{C^{(N)}} d\xi_N F(\xi_1, \dotsc, \xi_N) \prod^N_{j = 1} \frac{z^K \xi^{LK - \tilde{M}}_j e^{\epsilon(\xi_j)t}} {1 - z^{-1} \xi^{-L}_j \prod^N_{k = 1} \frac{p + q \xi_k \xi_j - \xi_j}{p + q \xi_k \xi_j - \xi_k}} ,
  \end{equation}
with $\tilde{M} = y_1 - x_N + 1$ and
  \begin{equation}
    F(\xi_1, \dotsc, \xi_N) = \sum_{\sigma \in S_N}A_{\sigma} \prod^N_{j = 1} \xi^{x_j - y_{\sigma(j)} - 1 + \tilde{M}}_j,
  \end{equation}
so that 
	\begin{equation}
	C^{(k)} =
      \begin{cases}
         C_0 & \text{if $\alpha_k = 0$}, \\
         \tilde{C} & \text{if $\alpha_k = 1$},
      \end{cases}
	\end{equation}
for $k =1, \dots, N$. Then, we have
  \begin{equation}
    U^{(K)}_Y(X; t; z) = \sum_{A \in \{ 0, 1 \}^N} U^{(K)}_Y(X; t; z; A).
  \end{equation}
For any $A \neq (1, 1, \dotsc, 1)$, take a component, say $\alpha_j$, equal to $0$. If we integrate $\xi_j$ over $C_0$ first, the result is an $N - 1$ fold integral over $C^{(1)} \times \dotsb \times C^{(j - 1)} \times C^{(j + 1)} \times \dotsb \times C^{(N)}$. Since $0$ is the only pole within $C_0$ for $\xi_j$, we integrate $\xi_j$ by the residue theorem, and find that the integrand of the $N - 1$ fold integral is bounded by
  \begin{equation}
    c_1(1 - \sqrt{\epsilon})^{-LNK} \epsilon^{- LNK} e^{\epsilon^{-1} t(N - 1)} \frac{c_2^{L(K + 1) - \tilde{M}}}{(L(K + 1) - \tilde{M})!},
  \end{equation}
with a constant $c_1$ and $c_2$ that don't depend on $K$ but may depend on $\epsilon$ and $F$. It is clear that $U^{(K)}_Y(X; t; z) \to 0$ as $K \to \infty$, and then
  \begin{equation}
    \lim_{K \to \infty} U^{(K)}_Y(X; t; z) - U^{(K)}_Y(X; t; z; \vec{1}) = 0, \quad \vec{1} = (1, 1, \dotsc, 1).
  \end{equation}
  Hence we only need to evaluate $U^{(K)}_Y(X; t; z; \vec{1})$.
  
  Next, we denote $B = (\beta_1, \dotsc, \beta_N)$ as an array whose components are $\beta_i \in \{ 1, \dotsc, L \} \cup \{ -1, \dotsc, -L \}$, and have
  \begin{equation}
    U^{(K)}_Y(X; t; z; \vec{1}) = \sum_{B \in \{ -L, \dotsc, -1, 1, \dotsc, L \}^N} \tilde{U}^{(K)}_Y(X; t; z; B),
  \end{equation}
  with
  \begin{equation} \label{eq:formula_tilde_U^(K)_Y(XtzB)}
    \tilde{U}^{(K)}_Y(X; t; z; B) =  \dashint_{\hat{C}^{(1)}} d\xi_1 \dotsi \dashint_{\hat{C}^{(N)}} d\xi_N F(\xi_1, \dotsc, \xi_N) \prod^N_{j = 1} \frac{z^K \xi^{LK - \tilde{M}}_j e^{\epsilon(\xi_j)t}} {1 - z^{-1} \xi^{-L}_j \prod^N_{k = 1} \frac{p + q \xi_k \xi_j - \xi_j}{p + q \xi_k \xi_j - \xi_k}} 
  \end{equation}
so that 
	\begin{equation}
	\hat{C}^{(k)} =
      \begin{cases}
         C_{\beta_k} & \text{if $\beta_k > 0$}, \\
         C'_{-\beta_k} & \text{if $\beta_k < 0$}.
      \end{cases}
	\end{equation}
  If $B$ contains a negative component, say $\beta_j < 0$, we integrate $\xi_j$ over $C'_{-\beta_j}$ first when evaluating $\tilde{U}^{(K)}_Y(X; t; z; B)$, and find that the integral vanishes since $\xi_j$ has no pole within $C'_{-\beta_j}$. Hence, $\tilde{U}^{(K)}_Y(X; t; z; B)$ is nontrivial only for $B$ with all positive components.

  Now, suppose $B = (\beta_1, \dotsc, \beta_N)$ has all positive components. Then, $\tilde{U}^{(K)}_Y(X; t; z; B)$ is an $N$-fold contour integral over $C_{\beta_1} \times \dotsb \times C_{\beta_N}$. Also, note that the Bethe roots $(\xi^{(\beta_1)}_1, \dotsc, \xi^{(\beta_N)}_N)$ are labeled so that $\xi^{(\beta_k)}_k$ lies inside the contour $C_{\beta_k}$. Hence, near the Bethe root $(\xi^{(\beta_1)}_1, \dotsc, \xi^{(\beta_N)}_N)$, the denominator in the integrand of \eqref{eq:formula_tilde_U^(K)_Y(XtzB)} is 
  \begin{equation}
    \prod^N_{j = 1} \left( \sum^N_{k = 1} \gamma_{kj}(\xi^{(\beta_1)}_1, \dotsc, \xi^{(\beta_N)}_N; z) (\xi_k - \xi^{(\beta_k)}_k) \right) + \bigO \left( \max^N_{k = 1} \lvert \xi_k \rvert^2 \right).
  \end{equation}
  Then, we evaluate $\tilde{U}^{(K)}_Y(X; t; z; B)$ by shrinking the contours $C_{\beta_k}$ into very small circles around $\xi^{(\beta_k)}_k$. Note that if all the $N$ contours shrink at the same scale, the contours do not cross any singularity and so the integral is constant. As the contours approach the singularity $\xi^{(\beta_k)}_k$, we find that the integral \eqref{eq:formula_tilde_U^(K)_Y(XtzB)} can be computed as
  \begin{equation}
    \frac{1}{\det(\gamma_{kj}(\xi_1, \dotsc, \xi_N))^N_{k, j = 1}} \left. F(\xi_1, \dotsc, \xi_N) \prod^N_{k = 1} (\xi_k)^{\tilde{M}} e^{\epsilon(\xi_k) t} \right\rvert_{(\xi_1, \dotsc, \xi_N) = (\xi^{(\beta_1)}_1, \dotsc, \xi^{(\beta_N)}_N)}.
  \end{equation}
  Summing up all these $\tilde{U}^{(K)}_Y(X; t; z; B)$ and taking the limit $K \to \infty$, we derive the result.
\end{proof}

\appendix

\section{Convergence lemmas for ASEP and $q$-TAZRP on a ring} \label{sec:convergence}

We need to make sure that the right side of \eqref{eq:transition_probability} and \eqref{eq:defn_u_Y(X)} are well-defined before we prove Theorems \ref{thm:main} and \ref{thm:trans_prob}. That is, we need to show that the sum of $\Lambda^{\Lattice}_Y(X; t; \sigma)$ over the infinite lattice $\intZ^N(0)$ converges. Also, we need to show that the integrands of the $z$-integral on the right side of \eqref{eq:one_point_distribution} and \eqref{eq:one_point_distribution_2} converge in Theorems \ref{thm:one_pt_ASEP} and \ref{thm:one_pt_qTAZRP}. For these integrands, the problem is more serious since we need to deal with convergence of other series in the proofs of Theorems \ref{thm:one_pt_ASEP} and \ref{thm:one_pt_qTAZRP}. In this appendix, we show that all the required convergences hold.

We establish some notation and a basic estimate to facilitate the upcoming proofs in this section. Recall the notation set up in \eqref{eq:defn_mM}. It is straightforward to see that given any $k$ and $n$, there are only finitely many $\Lattice \in \intZ^N(n)$ such that $\max(\Lattice) < k$. We have a very rough estimate.
\begin{lemma} \label{lem:finiteness_of_large_max(L)}
The number of $\Lattice \in \intZ^N(n)$ with $\max(\Lattice) < k$ is no more than $N^N (k - \lfloor n/N \rfloor)^N$ if $n < Nk$, and there is no $\Lattice \in \intZ^N(n)$ with $\max(\Lattice) < k$ if $n \geq Nk$.
\end{lemma}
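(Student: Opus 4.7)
The statement has two parts, and both reduce to elementary inequalities once we use the bound $\ell_i \leq k - 1$ and the sum constraint $\ell_1 + \cdots + \ell_N = n$.

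First, for the easy direction, suppose $n \geq Nk$ and assume toward contradiction that there exists $\Lattice = (\ell_1, \dotsc, \ell_N) \in \intZ^N(n)$ with $\max(\Lattice) < k$. Then every $\ell_i \leq k - 1$, so
\begin{equation*}
n = \ell_1 + \cdots + \ell_N \leq N(k-1) < Nk \leq n,
\end{equation*}
which is absurd. Hence no such $\Lattice$ exists.

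For the main counting bound, assume $n < Nk$ and fix any $\Lattice \in \intZ^N(n)$ with $\max(\Lattice) < k$. The key observation is that each coordinate $\ell_i$ is bounded both above and below: from $\ell_i \leq k - 1$ for all $i$ and $\sum_j \ell_j = n$, we obtain
\begin{equation*}
\ell_i = n - \sum_{j \neq i} \ell_j \geq n - (N-1)(k-1),
\end{equation*}
so each $\ell_i$ lies in the integer interval $[n - (N-1)(k-1),\, k - 1]$, which contains at most $Nk - N - n + 1$ integers. I would then compare this with the claimed single-coordinate bound $N(k - \lfloor n/N \rfloor)$: since $N \lfloor n/N \rfloor \leq n$, we have $N(k - \lfloor n/N \rfloor) \geq Nk - n \geq Nk - N - n + 1$, so each $\ell_i$ takes at most $N(k - \lfloor n/N \rfloor)$ values. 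Note that $n < Nk$ forces $\lfloor n/N \rfloor \leq k - 1$, so this bound is a positive integer.

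Finally, simply taking the product over the $N$ coordinates (ignoring the sum constraint, which only reduces the count) yields
\begin{equation*}
\#\{ \Lattice \in \intZ^N(n) : \max(\Lattice) < k \} \leq \bigl( N(k - \lfloor n/N \rfloor) \bigr)^N = N^N (k - \lfloor n/N \rfloor)^N,
\end{equation*}
as claimed. There is no real obstacle here; the bound is deliberately loose, so it suffices to produce any range for each coordinate and multiply. The only item that requires a moment of care is confirming that $k - \lfloor n/N \rfloor \geq 1$ under the hypothesis $n < Nk$, which I handled above.
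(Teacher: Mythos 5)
Your proof is correct and follows essentially the same route as the paper: bound each coordinate $\ell_i$ in an interval using $\ell_i \leq k-1$ together with the sum constraint, then multiply the per-coordinate counts. The only cosmetic difference is that you treat general $n$ directly and compare interval lengths with $N(k - \lfloor n/N\rfloor)$, whereas the paper first reduces to the $n=0$ case by shifting all coordinates by $\lfloor n/N \rfloor$; both are valid.
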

\begin{proof}
  The statement for $n \geq Nk$ is obvious. For $n < Nk$, we assume that $n = lN$ with $l$ an integer without loss of generality. Note that, if $\Lattice = (\ell_1, \dotsc, \ell_N) \in \intZ^N(n)$, then $\Lattice' (\ell_1 - l, \ell_2 - l, \dotsc, \ell_N - l) \in \intZ^N(0)$. Then, the inequality $\max(\Lattice) < k$ is equivalent to $\max(\Lattice') < k - l$. Hence, the lemma is reduced to the $n = 0$ case.

  Suppose $n = 0$ and $\Lattice = (\ell_1, \dotsc, \ell_N) \in \intZ^N(0)$ satisfies $\max(\Lattice) < k$. Then, if there is an $\ell_i$ with $\ell_i < (1 - N)k$, the sum of the other $(N - 1)$ components is greater than $(N - 1)k$, and at least one among them is greater than $k$, a contradiction. So, the value of each $\ell_i$ is less than $k$ and no less than $(1 - N)k$. Hence, each component $\ell_i$ has no more than $Nk$ possible values and $\Lattice$ has no more than $(Nk)^N$ possible choices.
\end{proof}

\subsection{$q$-TAZRP case}

In this subsection, we assume $C = \{ \lvert z \rvert = R \}$ is a circular contour with positive orientation and $R$ a large constant. Recall that $b_{[1]}, \dotsb, b_{[L]}$ are positive constants defined in Section \ref{subsubsec:q_TAZRP_defn}. We assume $g(w_1, \dotsc, w_N)$ is an analytic function on $\compC$ and 
\begin{equation} \label{eq:defn_tilde_g}
  \tilde{g}(w_1, \dotsc, w_N) = g(w_1, \dotsc, w_N) \prod^N_{j = 1} \prod^L_{i = 1} (b_{[i]} - w_j)^{-m}
\end{equation}
for some $m > 0$. In this subsection, we take $D_{\Lattice}$ as defined by \eqref{eq:general_D_L_qTAZRP}. 

\begin{lemma} \label{lem:qTAZRP_conv}
  Denote
  \begin{equation}
    \Lambda^{\Lattice} = \dashint_C dw_1 \dotsi \dashint_C dw_N g(w_1, \dotsc, w_N) D_{\Lattice}(w_1, \dotsc, w_N) \prod_{1 \leq i < j \leq N} (qw_i - w_j)^{-1}.
  \end{equation}
  Then $\Lambda^{\Lattice} = 0$ if $\max(\Lattice) > m$.
\end{lemma}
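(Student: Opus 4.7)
The plan is to show that under the hypothesis $\ell_M := \ell_{M(\Lattice)} = \max(\Lattice) > m$, the integrand of $\Lambda^{\Lattice}$, viewed as a function of $w_M$ with the remaining $w_k$ held fixed on the contour $\{\lvert w_k\rvert = R\}$, extends holomorphically to the closed disk $\{\lvert w_M\rvert \leq R\}$. Cauchy's theorem then forces $\oint_C dw_M(\cdots) = 0$, so $\Lambda^{\Lattice} = 0$. This approach sidesteps the ``deform to infinity'' strategy used in the $\min(\Lattice)$ argument of Lemma \ref{lm:initial_qTAZRP} and consequently requires no growth control on $g$ at infinity.

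To carry out the analyticity check, I would enumerate the candidate singularities inside $\lvert w_M\rvert \leq R$. They are: the origin; the conductance points $w_M = b_{[i]}$ (inside because $R > b_{[i]}$); and the points $w_M = q w_j$ for $j \neq M$ (inside because $qR < R$). Any apparent singularities at $w_M = w_j/q$ sit outside $C$ since $R/q > R$, so they play no role. At the origin, every factor of the integrand is manifestly finite and nonzero. At $w_M = b_{[i]}$, the definition \eqref{eq:defn_tilde_g} forces $g$ to vanish to order at least $m$, while the $j = M$ factor of $D_{\Lattice}$ contributes an additional zero of order $\ell_M$ via $(b_{[i]} - w_M)^{\ell_M}$; the combined vanishing order $m + \ell_M$ is strictly positive by hypothesis, giving analyticity.

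The substantive step is the third case. At $w_M = q w_j$ with $j \neq M$, three contributions combine: a zero of order $\ell_M$ from the $j' = M$ factor of $D_{\Lattice}$ (the product $\prod_k (qw_k - w_M)^{\ell_M}$ contributes $(qw_j - w_M)^{\ell_M}$); a pole of order $\ell_j$ from the $j' = j$ factor of $D_{\Lattice}$ (via the denominator $(qw_j - w_M)^{\ell_j}$); and, when $j < M$, an additional simple pole from the factor $(qw_j - w_M)^{-1}$ appearing in $\prod_{i < j'}(qw_i - w_{j'})^{-1}$. The net order at $w_M = qw_j$ is therefore
\[
  \ell_M - \ell_j - \id_{\{j < M\}}.
\]
Here is where the ``first-index'' convention in \eqref{eq:defn_mM} is decisive: $\ell_j < \ell_M$ strictly for $j < M$, and $\ell_j \leq \ell_M$ for $j > M$, so both cases yield a nonnegative order, and analyticity follows.

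The main obstacle I expect is the careful bookkeeping of multiplicities and signs in $D_{\Lattice}$ and in $\prod_{i<j}(qw_i - w_j)^{-1}$, and ruling out singularities coming from the other factors of $D_{\Lattice}$ indexed by $j' \notin \{M,j\}$; these are dispatched by observing that on the contours $\lvert w_k\rvert = R$, the points $w_{j'} = q w_j$ and $w_{j'} = q^2 w_j$ are avoided generically because their moduli are $qR$ and $q^2 R$ respectively, not $R$. Once this bookkeeping is verified, the disappearance of all interior singularities, and hence $\Lambda^{\Lattice} = 0$, follows immediately from Cauchy's theorem applied to the $w_M$-integral.
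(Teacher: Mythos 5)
Your proof is correct and is essentially the paper's own argument: the paper's proof consists of the single observation that, integrating first in the variable attached to the index $M(\Lattice)$ attaining $\max(\Lattice)$, the contour $C$ encircles no poles, so the integral vanishes. Your enumeration of the candidate singularities, and in particular the order count $\ell_M - \ell_j - \id_{\{j<M\}} \geq 0$ at $w_M = qw_j$ using the first-index convention of \eqref{eq:defn_mM}, is exactly the verification the paper leaves implicit.

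One bookkeeping slip, which does not affect the conclusion: \eqref{eq:defn_tilde_g} does not force $g$ to vanish at $b_{[i]}$ — $g$ is entire, and it is $\tilde g$ that carries a pole of order at most $m$ there. With the integrand as literally written (containing $g$), the order of the integrand at $w_M = b_{[i]}$ is simply $\ell_M > 0$, and the hypothesis $\max(\Lattice) > m$ plays no role in this case; with the integrand containing $\tilde g$ (which is how the lemma is applied to \eqref{eq:int_Lambda}, whose $\prod'$ factors have poles of bounded order at the $b_{[i]}$), the order is $\ell_M - m$, and it is precisely here that the hypothesis is consumed. Your count $m + \ell_M$ is not the relevant quantity, but since the order is nonnegative under every reading, the conclusion stands.
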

\begin{proof}
  Consider the integral over $w_{m(\Lattice)}$. We have that the contour $C$ encircles no poles with respect to $w_{m(\Lattice)}$. Then, the integral of $w_{m(\Lattice)}$ over $C_{m(\Lattice)}$ vanishes, which implies that $\Lambda^{\Lattice}$ vanishes.
\end{proof}

Next, we consider the special case with $b_{[1]} = \dotsb = b_{[N]} = 1$. Then, the function $\tilde{g}(w_1, \dotsc, w_N)$ defined in \eqref{eq:defn_tilde_g} becomes $g(w_1, \dotsc, w_N)  \prod^N_{j = 1} (1 - w_j)^{-mL} $. Furthermore, we assume that the radius $R$ of the contour $C$ is bigger than $N$. We denote
\begin{equation}
  M_g = \max_{w_i \in C,\ i = 1, \dotsc, N} \lvert g(w_1, \dotsc, w_N) \rvert.
\end{equation}
\begin{lemma} \label{lem:estimate_qTAZRP}
  Denote
  \begin{equation} \label{eq:int_tilde_Lambda_001}
    \tilde{\Lambda}^{\Lattice} = \dashint_C \frac{dw_1}{w_1} \dotsi \dashint_C \frac{dw_N}{w_N} \tilde{g}(w_1, \dotsc, w_N) D_{\Lattice}(w_1, \dotsc, w_N) \prod_{1 \leq i < j \leq N} (qw_i - w_j)^{-1},
  \end{equation}
  and $\tilde{\Lambda}_s = \sum_{\Lattice \in \intZ^N(s)} \lvert \tilde{\Lambda}^{\Lattice} \rvert$ for some $s \in \intZ$. Furthermore, assume the radius $R$ of the contour $C$ satisfies
  \begin{equation}
    \frac{R^{2LN}}{(R - 1)^{L(2N - 1)}} \left( \frac{1 + q}{1 - q} \right)^{2N^2} < 1.
  \end{equation}
  Then, $\tilde{\Lambda}_s < C M_g$ for some constant $C$ that depends on $m, s$ but not $g$.
\end{lemma}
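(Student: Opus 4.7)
The plan is to derive a pointwise bound for the integrand of $\tilde{\Lambda}^{\Lattice}$ on the torus $\{\lvert w_j \rvert = R\}_{j=1}^N$ that is uniform in $\Lattice$ up to an explicit $\Lattice$-dependent factor, and then sum that factor over $\Lattice \in \intZ^N(s)$. First I would use that $\tilde{g}\, D_{\Lattice}$ factors as $g(w) \cdot \prod_j (1-w_j)^{L(\ell_j - m)} \cdot \prod_{j,k}\bigl(\tfrac{qw_k - w_j}{qw_j - w_k}\bigr)^{\ell_j}$, reducing the problem to bounding each of these three pieces along with $\prod_{i<j}(qw_i - w_j)^{-1}$ and $\prod_j w_j^{-1}$ on the contour.

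The crucial algebraic observation is that on $\lvert w_j \rvert = \lvert w_k \rvert = R$ one computes $\lvert qw_k - w_j \rvert^2 = R^2(1 + q^2 - 2q\cos(\arg w_k - \arg w_j)) = \lvert qw_j - w_k \rvert^2$, so the ratio $\lvert (qw_k - w_j)/(qw_j - w_k)\rvert$ is identically $1$. This collapses the $\Lattice$-dependence of the $S$-matrix product to modulus one, uniformly in $\Lattice$. The remaining estimates are direct: $\lvert 1/w_j \rvert = R^{-1}$, $\lvert qw_i - w_j\rvert \geq (1-q)R$, and $\lvert 1 - w_j \rvert \in [R-1, R+1]$, giving $\lvert (1-w_j)^{L(\ell_j - m)}\rvert \leq (R+1)^{L(\ell_j-m)_+}(R-1)^{-L(m-\ell_j)_+}$. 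Combined, these yield a uniform pointwise bound of the form
\begin{equation*}
  \lvert \tilde{\Lambda}^{\Lattice}\rvert \leq M_g \cdot C_0(N,q,R) \cdot \prod_{j=1}^N (R+1)^{L(\ell_j-m)_+} (R-1)^{-L(m-\ell_j)_+}.
\end{equation*}

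To finish, I would sum this bound over $\Lattice \in \intZ^N(s)$ by substituting $a_j = \ell_j - m$ (so $\sum a_j = s - Nm$), organizing by the partition of indices into those with $a_j \geq 0$ and those with $a_j < 0$, and counting compositions of a given size. The resulting sum is geometric-type in $\sum (a_j)_+$ and $\sum(a_j)_-$, and the hypothesized inequality $\tfrac{R^{2LN}}{(R-1)^{L(2N-1)}}\bigl(\tfrac{1+q}{1-q}\bigr)^{2N^2} < 1$ is precisely what ensures its convergence. The main obstacle is exactly this convergence step: the naive geometric ratio is $\tfrac{R+1}{R-1} > 1$, so one must leverage the constraint $\sum \ell_j = s$ together with the hypothesis on $R$ in a nontrivial way to dominate the worst-case growth; should the direct estimate fall short, a secondary argument invoking Lemma \ref{lem:qTAZRP_conv} (adapted to $\tilde\Lambda^{\Lattice}$ by expanding the residues at $w_j = 0$ introduced by the $1/w_j$ factors) can be used to truncate the range of $\max(\Lattice)$ contributing non-trivially, reducing the tail to a finite sum.
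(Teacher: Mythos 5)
Your pointwise estimate is essentially right as far as it goes --- and your observation that $\lvert (qw_k - w_j)/(qw_j - w_k)\rvert \equiv 1$ on $\lvert w_j\rvert = \lvert w_k\rvert = R$ is correct and in fact sharper than the bound $\tfrac{1+q}{1-q}$ per factor used in the paper. But the argument does not close, and the two escape routes you sketch both fail. With $a_j = \ell_j - m$ and $\sum_j a_j = s - Nm$ fixed, your bound is
\begin{equation*}
(R+1)^{L\sum_j (a_j)_+}\,(R-1)^{-L\sum_j (a_j)_-} = (R+1)^{L(s-Nm)}\left(\frac{R+1}{R-1}\right)^{L\sum_j (a_j)_-},
\end{equation*}
which diverges as $\sum_j (a_j)_- \to \infty$ for \emph{every} $R$, since $\tfrac{R+1}{R-1}>1$ unconditionally; no hypothesis on $R$ can repair this, so "leveraging the constraint $\sum\ell_j = s$" is a dead end. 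The fallback via Lemma \ref{lem:qTAZRP_conv} is also not available: that lemma gives vanishing for $\max(\Lattice) > m$ precisely because the integrand there has no pole inside $C$ in the variable indexed by the maximizer, whereas here the factors $1/w_j$ create a genuine residue at the origin for every $\Lattice$. The terms with large $\max(\Lattice)$ are generically nonzero, so the sum is \emph{not} reduced to a finite one --- it must be shown to decay.

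The missing idea, which is the heart of the paper's proof, is the following. Partition $\intZ^N(s)$ into the finite sets $Z_n$ of those $\Lattice$ with $m + (n-1)L \le L\max(\Lattice) < m + nL$ (finiteness and the count $\lvert Z_n\rvert \lesssim (Nn)^N$ come from Lemma \ref{lem:finiteness_of_large_max(L)}). For $\Lattice \in Z_n$ the integrand, stripped of the factor $1/w_{J}$ with $J$ the index of the maximizer, has a zero of order at least $(n-1)L$ at $w_{J} = 1$. Inserting the identity
\begin{equation*}
\frac{1}{w} = \frac{1}{w(1-w)^{(n-1)L}} - \sum_{k=1}^{(n-1)L}\frac{1}{(1-w)^{k}}
\end{equation*}
in the variable $w_J$, the subtracted terms integrate to zero (they are analytic inside $C$ in $w_J$), and the surviving term carries the extra damping factor $(1-w_J)^{-(n-1)L}$, of size $(R-1)^{-(n-1)L}$ on the contour. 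It is exactly this extra $(R-1)^{-L}$ per unit of $\max(\Lattice)$, combined with the hypothesis on $R$, that overcomes the $\bigl(\tfrac{R+1}{R-1}\bigr)^{LN}$-per-unit growth of $\lvert D_{\Lattice}\rvert$ and makes $\sum_n \sum_{\Lattice\in Z_n}\lvert\tilde\Lambda^{\Lattice}\rvert$ a convergent geometric-type series. Without this step your estimate bounds each term but not the sum.
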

\begin{proof}
  We partition $\intZ^N(s)$ into disjoint subsets $Z_0 \cup Z_1 \cup Z_2 \cup \dotsb$, with 
	\begin{equation}
	\begin{split}
	Z_0 &= \{ \Lattice \in \intZ^N(s) \mid L \max(\Lattice) < m \}\\
	 Z_n &= \{ \Lattice \in \intZ^N(s) \mid m + (n - 1)L \leq L\max(\Lattice) < m + nL \}
	\end{split}
	\end{equation}
 for $n = 1, 2, \dotsc$. By Lemma \ref{lem:finiteness_of_large_max(L)}, each $Z_n$ is a finite set and $\lvert Z_n \rvert < N^N (n + \lceil m/L \rceil - \lfloor s/N \rfloor)^N$ if $n + m/L - s/N > 0$. Without loss of generality, we assume $m = s = 0$ below for notational convenience. Then, in this case, $Z_0 = \emptyset$ and we only need to consider $n \geq 1$.

  For $\Lattice = (\ell_1, \dotsc, \ell_N) \in Z_n$, we have that the integrand in \eqref{eq:int_tilde_Lambda_001} has no pole with respect to $w_{m(\Lattice)}$ within $C$ and has a zero of order at least $(n-1)L$ at $1$. So, by the identity
  \begin{equation}
    \frac{1}{w} = \frac{1}{w(1 - w)^{(n-1)L}} - \sum^{(n-1)L}_{k = 1} \frac{1}{(1 - w)^k},
  \end{equation}
  we have
  \begin{equation} \label{eq:int_tilde_Lambda}
    \tilde{\Lambda}^{\Lattice} = \dashint_C \frac{dw_1}{w_1} \dotsi \dashint_C \frac{dw_N}{w_N} \frac{ g(w_1, \dotsc, w_N)}{(1 - w_{m(\Lattice)})^{(n-1)L}} D_{\Lattice}(w_1, \dotsc, w_N) \prod_{1 \leq i < j \leq N} (qw_i - w_j)^{-1}.
  \end{equation}

  Also, we have $\sum^N_{i = 1} \lvert \ell_i \rvert < 2nN$ since $\max^N_{i = 1} (\ell_i) < n$ and $\sum^N_{i = 1} \ell_i = 0$. Then, it is not hard to see that
  \begin{equation}
    \lvert D_{\Lattice}(w_1, \dotsc, w_N) \rvert < \left( \left( \frac{R}{R - 1} \right)^L \left( \frac{1 + q}{1 - q} \right)^N \right)^{2nN},
  \end{equation}
if $w_i \in C$ for all $i = 1, \dotsc, N$.  Additionally, we have 
  \begin{equation}
    \left\lvert \frac{ g(w_1, \dotsc, w_N)}{(1 - w_{m(\Lattice)})^{(n-1)L}} \prod_{1 \leq i < j \leq N} (qw_i - w_j)^{-1} \right\rvert < \frac{M_g}{(R - 1)^{(n-1)L}} ((1 + q)R)^{N(N - 1)/2}.
  \end{equation}
  Thus, we obtain the following estimate,
  \begin{equation}
    \sum_{\Lattice \in Z_n} \lvert \tilde{\Lambda}^{\Lattice} \rvert <  (N n)^N M_g (R-1) ((1 + q)R)^{N(N - 1)/2} \left( \frac{R^{2N L}}{(R - 1)^{(2N+1) L}} \left( \frac{1 + q}{1 - q} \right)^{2N^2} \right)^n,
  \end{equation}
  by combining the previous two estimates. Hence, we obtain the result by summing the last estimate over $n>1$.
\end{proof}

\subsection{ASEP case}

Later in this subsection, we assume that $C = \{ \lvert z \rvert = p^2 \}$ is a circular contour with positive orientation. Let $f(\xi_1, \dotsc, \xi_N)$ be a meromorphic function such that it is analytic on the multi-cylinder $\{ \lvert \xi_i \rvert \leq p^2 \mid i = 1, \dotsc, N \}$ with $\max_{\lvert \xi_i \rvert = r,\ i = 1, \dotsc, N} \lvert f(\xi_1, \dotsc, \xi_N) \rvert = M_f$. Let $t \in \realR_{>0}$, $m \in \intZ_{>0}$ and $s \in \intZ$ be constants. Then, we have the following estimate.
\begin{lemma}\label{lm:absolutely_convergent}
  Denote
  \begin{equation}
    \Lambda^{\Lattice} = \dashint_C d\xi_1 \dotsi \dashint_C d\xi_N f(\xi_1, \dotsc, \xi_N) D_{\Lattice}(\xi_1, \dotsc, \xi_N) \prod^N_{j = 1} \xi^{-m}_j e^{-\epsilon(\xi_j) t}, \quad \text{and} \quad  \Lambda_{s} = \sum_{\Lattice \in \intZ^N(s)} \lvert \Lambda^{\Lattice} \rvert.
  \end{equation}
  We have $\Lambda_s < C_1 M_f$ for a constant $C_1$ that depends on $m, s$ but not on $f(\xi_1, \dotsc, \xi_N)$.
\end{lemma}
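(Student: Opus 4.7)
The plan is to partition $\intZ^N(s)$ by $M := \max(\Lattice)$ and show that $\lvert \Lambda^{\Lattice} \rvert$ decays factorially in $M$. Since Lemma \ref{lem:finiteness_of_large_max(L)} bounds the number of $\Lattice \in \intZ^N(s)$ with $\max(\Lattice) \leq M$ polynomially in $M$, and since imposing $\max(\Lattice) \leq M_0$ together with $\sum_j \ell_j = s$ confines $\Lattice$ to a bounded box, this reduces the problem to convergence of a single series in $M$ with factorial decay.

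For each fixed $\Lattice$, let $j_0$ be an index with $\ell_{j_0} = M$ and hold $\xi_k$ for $k \neq j_0$ on the contour $\lvert \xi_k \rvert = p^2$ while performing the $\xi_{j_0}$-integral. As a function of $\xi_{j_0}$, the integrand factors as $\xi_{j_0}^{LM - m} e^{-\epsilon(\xi_{j_0})t} H(\xi_{j_0})$, where $H$ is analytic on the closed disc $\lvert \xi_{j_0} \rvert \leq p^2$: one checks directly that every potential pole arising from the ratios $A_{jk}/B_{jk}$ or their reciprocals sits near $\lvert \xi_{j_0} \rvert \approx p$, comfortably outside the small contour. Hence the $\xi_{j_0}$-integral equals $\mathrm{Res}_{\xi_{j_0} = 0}$ of the integrand, and may be represented as $\dashint_{\lvert \xi_{j_0} \rvert = \rho} J \, d\xi_{j_0}$ for any $\rho \in (0, p^2]$. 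Since $\lvert e^{-\epsilon(\xi_{j_0})t} \rvert$ is of order $e^{pt/\rho}$ on this circle, the optimal choice $\rho^* = pt/(LM - m + 1)$ (admissible once $M$ is large enough that $\rho^* \leq p^2$), combined with Stirling's asymptotic $(e/n)^n \sim \sqrt{2\pi n}/n!$, will yield
\begin{equation*}
  \lvert \Lambda^{\Lattice} \rvert \leq C \, M_f \, (t/p)^{LM} \, \beta^{N \sum_j \lvert \ell_j \rvert} / (LM)!
\end{equation*}
for $M$ sufficiently large, where $\beta > 1$ depends only on $p, q$ and uniformly bounds $\lvert A_{jk}/B_{jk} \rvert^{\pm 1}$ on the closed polydisc, and $C$ depends on $m, s, N, L, p, q, t$ (absorbing a polynomial prefactor in $M$) but not on $f$ or $\Lattice$.

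To conclude, for $\Lattice \in \intZ^N(s)$ with $\max(\Lattice) = M$ one has $\sum_j \lvert \ell_j \rvert \leq N^2 M + N \lvert s \rvert$ since each $\ell_j$ lies in $[s - (N-1)M, M]$, so the bound above is majorized by a polynomial in $M$ times $\alpha^M/(LM)!$ for some constant $\alpha > 0$, which is summable; the finitely many $\Lattice$ with $\max(\Lattice)$ below the threshold where the residue estimate applies are handled by crude modulus estimates on the small contour. The main obstacle will be the factorial estimate itself: the naive bound on the small contour gives $\lvert D_\Lattice \rvert$ growing exponentially in $\sum_j \lvert \ell_j \rvert$ with no compensating decay in $M$ (since $\prod_j \lvert \xi_j \rvert^{L \ell_j}$ on $\lvert \xi_j \rvert = p^2$ depends only on the fixed sum $s$), so convergence is invisible from pointwise moduli alone, and one must carefully exploit the interplay between the essential singularity of $e^{-\epsilon(\xi)t}$ at $0$ and the analyticity of $H$ on the full closed disc containing the small contour.
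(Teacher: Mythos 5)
Your proposal is correct and follows essentially the same route as the paper's proof: partition $\intZ^N(s)$ by $M=\max(\Lattice)$, use Lemma \ref{lem:finiteness_of_large_max(L)} for the polynomial count, observe that in the variable $\xi_{j_0}$ with $\ell_{j_0}=M$ the integrand is $\xi_{j_0}^{LM-m}$ times the essential singularity of the exponential times a factor analytic on the closed disc $\lvert\xi_{j_0}\rvert\le p^2$, and extract factorial decay in $LM$ while all remaining factors grow only exponentially in $M$. The single (and interchangeable) difference is the device used for the factorial bound: you shrink the $\xi_{j_0}$-contour to the optimal radius $\rho^*\sim pt/(LM)$ and invoke Stirling, whereas the paper (Lemma \ref{lem:ASEP_conv_tech}) splits $e^{tp/\xi_{j_0}}$ into its degree-$(LM-m)$ Taylor polynomial, whose contribution vanishes because the resulting integrand has no pole inside $C$, plus the remainder $E_{LM-m}$, bounded on the fixed contour $\lvert\xi_{j_0}\rvert=p^2$ by $2(t/p)^{LM-m+1}/(LM-m+1)!$ — the same Cauchy estimate in a different guise.
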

For the proof of Lemma \ref{lm:absolutely_convergent}, we need to estimate $\Lambda^{\Lattice}$ for each $\Lattice$. We decompose the exponential factor $e^{\epsilon(\xi_{m(\Lattice)})t}$ into the sum of two terms: one is the partial sum of the Taylor expansion of the exponential function, and the other is the remainder term. To be precise, we write
\begin{equation}
  e^z = P_n(z) + E_n(z), \quad \text{so that} \quad P_n(z) = \sum^n_{k = 0} \frac{z^k}{k!}, \quad E_n(z) = \sum^{\infty}_{k = n + 1} \frac{z^k}{k!}.
\end{equation}
Then, we write 
\begin{equation}
  \Lambda^{\Lattice} = \Lambda^{\Lattice; P}(n) + \Lambda^{\Lattice; E}(n),
\end{equation}
for any $n \geq 0$ with 
\begin{multline} \label{eq:int_rep_square}
  \Lambda^{\Lattice; \square}(n) = \dashint_C d\xi_1 \dotsi \dashint_C d\xi_N f(\xi_1, \dotsc, \xi_N) D_{\Lattice}(\xi_1, \dotsc, \xi_N) \\
  \times \left( \prod^N_{j = 1} \xi_{\sigma(j)}^{-m} \prod_{j = 1, \dotsc, N\, j \neq m(\Lattice)} e^{\epsilon(\xi_j)t} \right) \square_n(tp \xi^{-1}_{m(\Lattice)}) e^{t(q \xi_{m(\Lattice)} - 1)}
\end{multline}
for $\square = P$ or $E$.
\begin{lemma} \label{lem:ASEP_conv_tech}
  Suppose $\max(\Lattice)L - m = M \geq 0$. Then
  \begin{enumerate}
  \item \label{enu:lem:ASEP_conv_tech_a}
    $\Lambda^{\Lattice; P}(M) = 0$.
  \item \label{enu:lem:ASEP_conv_tech_b}
    If $\Lattice \in \intZ^N(s)$, we have $\lvert \Lambda^{\Lattice; E}(M) \rvert < C_2 M_f e^{c_3 M + c'_3 L} /M!$ for some constant $C_2, c_3, c'_3 > 0$ depending on $s$, $t$, $m$ but independent of $f$ and $L$. 
  \end{enumerate}
\end{lemma}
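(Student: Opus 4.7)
The strategy is to perform the contour integral in the variable $\xi_{i_{\ast}}$ with $\ell_{i_{\ast}} = \max(\Lattice)$ first (this is $M(\Lattice)$ in the notation of \eqref{eq:defn_mM}), and to exploit the fact that the combination $\xi_{i_{\ast}}^{L\max(\Lattice) - m} P_M(tp\xi_{i_{\ast}}^{-1})$ is a polynomial in $\xi_{i_{\ast}}$ of degree exactly $M = L\max(\Lattice) - m$.

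For part \ref{enu:lem:ASEP_conv_tech_a}, view the integrand of $\Lambda^{\Lattice; P}(M)$ as a function of $\xi_{i_{\ast}}$ with the other $\xi_j$ fixed on the contour $\{|\xi| = p^2\}$. Its $\xi_{i_{\ast}}$-dependence splits as: the zero $\xi_{i_{\ast}}^{L\max(\Lattice)}$ from the $j = i_{\ast}$ factor of $D_{\Lattice}$; the pole $\xi_{i_{\ast}}^{-m}$ from $\prod_j \xi_j^{-m}$; the polynomial $P_M(tp\xi_{i_{\ast}}^{-1}) = \sum_{k=0}^M (tp)^k \xi_{i_{\ast}}^{-k}/k!$; the entire function $e^{t(q\xi_{i_{\ast}} - 1)}$; and the rational factors $F_{k, i_{\ast}}^{\max(\Lattice)}$ and $F_{i_{\ast}, j}^{\ell_j}$ ($j \ne i_{\ast}$) from $D_{\Lattice}$, where $F_{k, j} := (p + q\xi_k\xi_j - \xi_k)/(p + q\xi_k\xi_j - \xi_j)$. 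A direct check shows each such factor is analytic and nonvanishing at $\xi_{i_{\ast}} = 0$, and its only singularities in $\xi_{i_{\ast}}$ occur at $\xi_{i_{\ast}} = p/(1 - q\xi_j) \approx p$ or at $(\xi_j - p)/(q\xi_j)$ of modulus $\gtrsim (qp)^{-1}$, both strictly outside the contour $|\xi_{i_{\ast}}| = p^2$. The first three items combine into $\sum_{k=0}^M (tp)^k \xi_{i_{\ast}}^{M - k}/k!$, a polynomial of degree $M$ in $\xi_{i_{\ast}}$. Hence the full integrand is analytic in $\xi_{i_{\ast}}$ on the disk $|\xi_{i_{\ast}}| \le p^2$, and Cauchy's theorem gives $\Lambda^{\Lattice; P}(M) = 0$.

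For part \ref{enu:lem:ASEP_conv_tech_b}, bound $|\Lambda^{\Lattice; E}(M)|$ by $(p^2)^N$ times the supremum of the integrand on the contour. On $|\xi_{i_{\ast}}| = p^2$ one has $|E_M(tp\xi_{i_{\ast}}^{-1})| \le \sum_{k > M} (t/p)^k / k! \le \tilde{C}(t, p)\, e^{c_3 M}/M!$, yielding the factorial decay. Every other factor apart from $D_{\Lattice}$ is bounded by a constant depending only on $t, m, p, q$. For $D_{\Lattice}$: each $F_{k, j}$ is uniformly bounded in modulus both above and below on the contour by constants $c(p, q)^{\pm 1}$, hence $|D_{\Lattice}| \le c^{N \sum_j |\ell_j|}\, p^{2Ls}$. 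Using $\sum_j |\ell_j| \le 2N \max(\Lattice) + |s| = 2N(M + m)/L + |s|$, and absorbing the factor $p^{2Ls}$ (which contributes $e^{O(L)}$ when $s < 0$) into a term $e^{c_3' L}$, one arrives at the asserted estimate $|\Lambda^{\Lattice; E}(M)| \le C_2 M_f e^{c_3 M + c_3' L}/M!$.

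The main obstacle I expect is the analyticity check in (a): one must verify carefully that, for each $j \ne i_{\ast}$ with $\ell_j < 0$, the denominator zeros of $F_{i_{\ast}, j}$ (which become poles because of the negative exponent $\ell_j$) still lie outside the contour, and similarly for every rational factor of $D_{\Lattice}$ carrying a negative exponent. The specific contour radius $p^2 < p$ is precisely what ensures the dangerous singularities clustered near $\xi_{i_{\ast}} = p$ stay outside; any larger contour would pick up stray residues that would spoil the vanishing in (a).
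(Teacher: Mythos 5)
Your proof is correct and follows essentially the same route as the paper's: for part (a) you integrate first in the variable carrying $\max(\Lattice)$ and observe that, after truncating the essential singularity to $P_M$, the combination $\xi_{i_\ast}^{L\max(\Lattice)-m}P_M(tp\xi_{i_\ast}^{-1})$ is a polynomial and all remaining factors are analytic inside $\{\lvert\xi\rvert = p^2\}$ (you even supply the pole-location check that the paper leaves implicit), so the integral vanishes by Cauchy's theorem. For part (b) you use the same sup-norm bound on the contour together with the Taylor-remainder estimate for $E_M$, yielding the $1/M!$ decay with the $e^{c_3 M + c_3' L}$ factors absorbing the growth of $D_{\Lattice}$; there are no gaps.
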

\begin{proof}
  The proof of statement \ref{enu:lem:ASEP_conv_tech_a} is similar to the proof of Lemma \ref{lem:qTAZRP_conv}. We first note that the contour $C$ encloses no pole with respect to $\xi_{m(\Lattice)}$. In particular, there is no pole at $\xi_{m(\Lattice)} =0$ due to the assumption that $\max(\Lattice) = \ell_{m(\Lattice)} \geq m$. Hence, the integral with respect to $\xi_{m(\Lattice)}$ over $C_{m(\Lattice)}$ vanishes for the multiple integral that defines $\Lambda^{\Lattice; P}_Y(X; t; \sigma; M)$, and so does $\Lambda^{\Lattice; P}_Y(X; t; \sigma; M)$.

  For statement \ref{enu:lem:ASEP_conv_tech_b}, we note that $\lvert \xi_i \rvert = p^2$ if $\xi_i$ is on the contour $C$. Then,
  \begin{equation}
    \left\lvert \prod^N_{j = 1} \xi^{-m}_j \prod_{j \neq m(\mathbb{L})} \xi^{L \ell_j}_j \right\rvert = p^{2(-mN + L(s - m(\mathbb{L})L))}
  \end{equation}
for all $\Lattice \in \intZ^N(s)$. Also, we have
  \begin{equation}
    \left\lvert e^{t(q \xi_{m(\Lattice)} - 1)} \prod_{j = 1, \dotsc, N, \, j \neq m(\Lattice)} e^{\epsilon(\xi_j)t} \right\rvert \leq e^{N c_4 t}.
  \end{equation}
with $c_4 = (1 + p^2)(p + p^{-2}q)$ since $\Re \epsilon(\xi_j) \leq c_1$ and $\Re(q \xi_j - 1) \leq c_4$ if $\lvert \xi_j \rvert = p^2$. Next, we have that
  \begin{equation}
    \prod^N_{j = 1} \prod^N_{k = 1} \left\lvert \frac{p + q\xi_k \xi_j - \xi_k}{p + q\xi_k \xi_j - \xi_j} \right\rvert^{\ell_j} \leq \prod^N_{j = 1} c^{\lvert \ell_j \rvert}_5 \leq c^{N \max(\Lattice)}_5 = c^{N(M + m)}_5.
  \end{equation}
with $c_5 = (1 + p + p^3q)/(q - p^3q)$ for all $i = 1, \dotsc, N$ since
  \begin{equation}
    p(q - p^3q) \leq \lvert p + q\xi_i \xi_j - \xi_i \rvert \leq p(1 + p + p^3q).
  \end{equation}
At last, we have an adequate upper bound of the integrand in \eqref{eq:int_rep_square} for $\Lambda^{\Lattice; E}_Y(X; t; \sigma; M)$ by using the estimate
  \begin{equation}
    \lvert E_M(tp \xi^{-1}_{m(\Lattice)}) \rvert \leq \frac{2}{(M + 1)!} (tp^{-1})^{M + 1},
  \end{equation}
  which corresponds to the estimate of the error term in a Taylor expansion. Then, we have
  \begin{equation}
    \lvert \Lambda^{\Lattice; E}(M) \rvert \leq M_f p^{2(-m(N+1) + Ls)} c^{N(M + m)}_5 \frac{2}{(M + 1)!} (tp^{-1})^{M + 2} e^{N c_4 t}
  \end{equation}
after evaluating the integral. This implies statement \ref{enu:lem:ASEP_conv_tech_b} of the lemma.
\end{proof}

\end{document}